\DeclareMathOperator{\Pic}{Pic}
\DeclareMathOperator{\Spec}{Spec}
\DeclareMathOperator{\id}{Id}
\DeclareMathOperator{\NE}{NE}
\DeclareMathOperator{\dist}{dist}
\DeclareMathOperator{\Tr}{Tr}
\DeclareMathOperator{\Limits}{Limits}
\DeclareMathOperator{\PIN}{PIN}
\DeclareMathOperator{\NIN}{NIN}
\DeclareMathOperator{\IN}{IN}
\DeclareMathOperator{\val}{val}
\DeclareMathOperator{\Val}{{\bf val}}
\DeclareMathOperator{\SL}{SL}
\DeclareMathOperator{\trop}{trop}
\DeclareMathOperator{\Conv}{{\bf Conv}}
\DeclareMathOperator{\Newt}{Newt}
\DeclareMathOperator{\Residue}{Res}
\DeclareMathOperator{\Spf}{Spf}
\DeclareMathOperator{\PGL}{PGL}
\let\bb=\mathbb
\let\rar=\rightarrow
\let\f=\mathfrak
\let\s=\mathcal
\let\wh=\widehat
\let\wt=\widetilde
\let\mb=\mbox
\newcommand {\kk} {\Bbbk}
\title{Tropical theta functions and log Calabi-Yau surfaces}
\author{Travis Mandel}
\address{University of Utah\\
Department of Mathematics\\
155 S 1400 E RM 233\\
Salt Lake City, UT, 84112-0090}
\email{mandel{\char'100}math.utah.edu}
\theoremstyle{plain}
 \newtheorem{thm}{Theorem}[section]
 \newtheorem{lem}[thm]{Lemma}
 \newtheorem{lemdfn}[thm]{Lemma/Definition}
 \newtheorem{prop}[thm]{Proposition}
 \newtheorem{conj}[thm]{Conjecture}
 \newtheorem{cor}[thm]{Corollary}
\theoremstyle{definition}
 \newtheorem{dfn}[thm]{Definition}
  \newtheorem{dfns}[thm]{Definitions}
 \newtheorem{ntn}[thm]{Notation}
  \newtheorem{convention}[thm]{Convention}
 \newtheorem{eg}[thm]{Example}
 \newtheorem{egs}[thm]{Examples}
\theoremstyle{remark} 
 \newtheorem{rmk}[thm]{Remark}
\begin{document}

\maketitle           

\begin{abstract}
We generalize the standard combinatorial techniques of toric geometry to the study of log Calabi-Yau surfaces.    The character and cocharacter lattices are replaced by certain integral linear manifolds described in \cite{GHK1}, and monomials on toric varieties are replaced with the canonical theta functions defined in \cite{GHK1} using ideas from mirror symmetry.  We describe the tropicalizations of theta functions and use them to generalize the dual pairing between the character and cocharacter lattices.  We use this to describe generalizations of dual cones, Newton and polar polytopes, Minkowski sums, and finite Fourier series expansions.  We hope that these techniques will generalize to higher-rank cluster varieties.
\end{abstract}

\setcounter{tocdepth}{1}
\tableofcontents  

\section{Introduction}

The main goal behind this paper is to use ideas from mirror symmetry to generalize the powerful techniques of toric geometry to log Calabi-Yau varieties---those admitting a holomorphic volume form with simple poles along the boundary divisors of certain compactifications.  This class of varieties includes several highly studied objects, with one of the simplest classes of log Calabi-Yau varieties, i.e., cluster varieties (and certain partial compactifications thereof), including such objects as character varieties, flag manifolds, and semisimple groups.  We will focus here on log Calabi-Yau surfaces, which are roughly the same as the fibers of rank $2$ cluster $\s{X}$-varieties (cf. \cite{GHK3}).

\subsection{Some Main Results}\label{Results}

We assume throughout that our log Calabi-Yau surface $U$ is ``positive,'' unless otherwise stated.  See \S \ref{IntroSetup} for this and several other relevant definitions.

Toric varieties are typically understood by studying their character and cocharacter lattices, denoted $M$ and $N$, respectively.  \cite{GHK1} generalizes the cocharacter lattice by defining the tropicalization $U^{\trop}$ of a log Calabi-Yau surface $U$.  $U^{\trop}$ is an integral linear manifold (cf. \S \ref{ils}), and the integral points $q\in U^{\trop}(\bb{Z})$ correspond to boundary divisors $D_q$ for certain compactifications of $U$. \cite{GHK1} then uses toric degenerations, modified by scattering diagrams, to construct a 
 mirror family $\s{V}\rar \Spec B$ of log Calabi-Yau surfaces, with $U^{\trop}(\bb{Z})$ serving as a generalization of the character lattice for $\s{V}$.    That is, points $q\in U^{\trop}(\bb{Z})$ correspond to canonical ``theta functions'' $\vartheta_q$ forming a $B$-module basis for $A:=H^0(\s{V},\s{O}_{\s{V}})$; i.e., $A = \bigoplus_{q\in U^{\trop}(\bb{Z})} B \cdot \vartheta_q$.

Let $V$ be a generic fiber of the mirror.  \cite{GHK2} shows that $V$ is deformation equivalent to $U$, and so in particular, we can consider the tropicalization $V^{\trop}$ whose integral points correspond to boundary divisors of compactifications of $V$.  For $f$ a regular function on $V$ and $v\in V^{\trop}(\bb{Z})$, we define $f^{\trop}(v) := \val_{D_v}(f)$.  Thus, we can define a pairing $\langle\cdot,\cdot\rangle:U^{\trop}(\bb{Z})\times V^{\trop}(\bb{Z}) \rar \bb{Z}$ by $\langle q,v\rangle : = \vartheta_q^{\trop}(v)$.  This pairing extends continuously and equivariantly under scaling to a pairing $U^{\trop}\times V^{\trop}\rar \bb{R}$, also denoted $\langle \cdot,\cdot\rangle$.  One should view $\langle \cdot,\cdot \rangle$ as a generalization of the dual pairing between $M_{\bb{R}}$ and $N_{\bb{R}}$ in the toric situation.

In \S \ref{App} we show how to use $\langle \cdot,\cdot \rangle$ to preform the basic constructions of toric geometry, such as defining dual cones (as in the construction of toric varieties from fans) and Newton and polar polytopes.  
 We show that these polytopes have the same geometric interpretations as in the toric situation.  For example, integral points in the polytope correspond to global sections of an associated line bundle, and convexity corresponds to ampleness of this line bundle (cf. Corollary \ref{AmpleConvex}).

As an application, we prove certain orthogonality properties of a canonical pairing on $A$.  Briefly, \cite{GHK2} defines a canonical homology class $\gamma$ in $\s{V}$ (the class of a conjectural SYZ fibration) and shows that the trace pairing $\Tr(f,g):=\int_{\gamma} fg\Omega$ is non-degenerate.  $\Omega$ here is the unique holomorphic volume form on $V$ with simple poles along the boundary such that $\int_{\gamma} \Omega=1$.  Equivalently, $\Tr(f,g)$ is the coefficient of $\vartheta_0:=1$ in the theta function expansion of $fg$.  $\Tr$ thus makes $A$ into a Frobenius algebra.

\S 0.4 of \cite{GHK1} conjectures that $\Tr(\vartheta_{q_1},\vartheta_{q_2})$ is given by a certain log Gromov-Witten count of curves.  In particular, although $\Tr(\vartheta_q,\vartheta_0)=\delta_{q,0}$, the theta functions certainly do not form a orthogonal basis with respect to $\Tr$.  However, V.V. Fock made the remarkable conjecture that something similar does hold: he predicted that one does have $\Tr_q(\vartheta_p):=Tr(\vartheta_p,\vartheta_q^{-1})=\delta_{p,q}$.  This turns out to be false in general, but in \S \ref{period} we give the following general collection of conditions in which this relationship does hold:
\begin{thm}[\ref{Fourier}]\label{FourierIntro}
Let $f=\sum_q c_{q} \vartheta_q$ be a function on $V$.  Suppose that at least one of the following holds:
\begin{itemize}[noitemsep] 
\item $r$ is not in the ``strong convex hull'' of any point $q\in \Newt(f)\cap U^{\trop}(\bb{Z})$, except possibly $q=r$.  In particular, this includes cases where
 $r$ is a vertex of $\Newt(f)$, as well as cases where $r$ is in the complement of $\Newt(f)$.
\item $r\in U^{\trop}(\bb{Z})$ is in the cluster complex (i.e., $r=0$ or $\langle r,v\rangle >0$ for some $v\in V^{\trop}$). 
\end{itemize}
 Then $c_{r}=\Tr_r(f)$.
In particular, if every point of $\Newt(f)\cap U^{\trop}(\bb{Z})$ which is not a vertex is in the cluster complex, then
\begin{align}\label{FSeriesIntro}
f=\sum_{r\in U^{\trop}(\bb{Z})} \Tr_r (f) \vartheta_r.
\end{align}
\end{thm}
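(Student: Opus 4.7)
The plan is to reduce the first assertion of the theorem to the pointwise identity $\Tr_r(\vartheta_q)=\delta_{q,r}$ for every $q\in \Newt(f)\cap U^{\trop}(\bb{Z})$, under either of the two bulleted hypotheses on $r$. Granted this, bilinearity of $\Tr$ yields
\[ \Tr_r(f)=\sum_q c_q \Tr_r(\vartheta_q)=c_r. \]
The concluding Fourier identity \eqref{FSeriesIntro} then follows by applying the pointwise identity to every $r\in U^{\trop}(\bb{Z})$: under the ``in particular'' hypothesis, the vertices of $\Newt(f)$ satisfy the first bullet (which, as noted in the statement, subsumes vertices), the remaining lattice points of $\Newt(f)$ lie in the cluster complex by assumption, and any $r$ outside $\Newt(f)$ is covered by the ``complement of $\Newt(f)$'' clause of the first bullet.

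To establish the identity under the strong convex hull hypothesis, I would work with the theta function structure constants $\vartheta_q\vartheta_t=\sum_s \alpha^{q,t}_s\vartheta_s$ produced by the broken-line construction of \cite{GHK1}. The central combinatorial input is that the support $\{s:\alpha^{q,t}_s\ne 0\}$ sits inside (an appropriate translate of) the strong convex hull of $q$, because every broken line contributing to the product begins with the exponent of $\vartheta_q$ and is constrained by the convex geometry of the scattering diagram. Consequently, if $r$ does not lie in the strong convex hull of $q$ for any $q\ne r$ appearing in $\Newt(f)$, then no combination of broken lines can produce $\vartheta_r$ in the expansion of $\vartheta_q$ times any theta function appearing in (the appropriate realization of) $\vartheta_r^{-1}$, so the $\vartheta_0$-component of $\vartheta_q\vartheta_r^{-1}$ vanishes, i.e.\ $\Tr_r(\vartheta_q)=0$. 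The case $q=r$ is immediate from $\vartheta_r\vartheta_r^{-1}=\vartheta_0$.

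For the cluster complex condition, the strategy is to pass to a cluster chart in which $\vartheta_r$ becomes a Laurent monomial $z^{m_r}$; then $\vartheta_r^{-1}=z^{-m_r}$ is literally a regular function in that chart, and $\Tr_r(\vartheta_q)$ reduces to the coefficient of $z^{m_r}$ in the Laurent expansion of $\vartheta_q$. Using the leading-term property of theta functions in cluster charts---each $\vartheta_q$ expands as $z^{m_q}$ plus terms strictly smaller in the relevant dominance order---together with the pairing description of $\langle\cdot,\cdot\rangle$ from \S \ref{Results}, this coefficient must equal $\delta_{q,r}$.

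The main obstacle is the combinatorial argument in the first bullet: matching the polyhedral notion of ``strong convex hull'' (defined earlier in terms of $\langle\cdot,\cdot\rangle$ and the Newton polytope machinery of \S \ref{App}) with the actual support of broken-line structure constants. This dictionary between the synthetic tropical geometry on $U^{\trop}$ and the scattering-diagram data of \cite{GHK1} is where the substantive work lies, and it will demand careful bookkeeping of broken-line endpoints across chamber walls. By comparison, the cluster-chart argument for the second bullet is routine once the correct chart is selected, and the global Fourier expansion then drops out by summing.
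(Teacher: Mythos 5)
Your overall decomposition is the same as the paper's: reduce the claim to the orthogonality relation $\Tr_r(\vartheta_q)=\delta_{q,r}$ under the bulleted hypotheses, then use linearity of $\Tr_r$ and sum over $r$ for the Fourier identity. Your treatment of the cluster complex bullet also tracks the paper: pass to a chamber $\sigma\ni r$ with no scattering rays in its interior so that $\vartheta_r$ is a Laurent monomial $z^{\wt{\varphi}(r)}$ on the corresponding torus chart, observe (via a backward-flowing broken line) that $\vartheta_r$ is the unique theta function whose expansion on that chart contains a $z^r$ term, and conclude by the standard residue computation on $(\bb{C}^*)^2$.

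The gap is in your argument for the first bullet. You propose to expand $\vartheta_q\vartheta_r^{-1}$ in the theta basis and read off the $\vartheta_0$-coefficient, but $\vartheta_r^{-1}$ is not a global regular function on $V$ (it has poles along the zero locus of $\vartheta_r$), so it has no expansion $\sum_s a_s\vartheta_s$; phrases like ``the appropriate realization of $\vartheta_r^{-1}$'' and ``an appropriate translate of the strong convex hull of $q$'' are where this approach breaks. In the cluster-complex case you can escape this by working in a chart where $\vartheta_r$ is monomial, but outside the cluster complex no such chart exists, and this is exactly the case the first bullet is designed to cover. The paper instead proves Lemma \ref{InnerProduct} analytically, bypassing any expansion of $\vartheta_r^{-1}$: if $r\notin\Conv(q)$ then by definition of the strong convex hull there exists a primitive $v\in V^{\trop}(\bb{Z})$ with $\langle r,v\rangle < \langle q,v\rangle$, hence $\val_{\f{D}_v}(\vartheta_q\vartheta_r^{-1})=\langle q,v\rangle-\langle r,v\rangle>0$; since $\Omega$ has only a simple pole along $\f{D}_v$, the $2$-form $\vartheta_q\vartheta_r^{-1}\Omega$ is (generically) regular along $\f{D}_v$, and because Lemma \ref{gammaClass} lets you represent $\gamma$ as an $S^1$-bundle over a small loop near $\f{D}_v$, the Residue Theorem gives $\int_\gamma\vartheta_q\vartheta_r^{-1}\Omega=0$. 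You should replace the structure-constants argument with this valuation-plus-residue argument; the Minkowski-sum bound you allude to is true but is not by itself the right tool here, since $\Tr_r$ is defined by integration against a nowhere-expandable $\vartheta_r^{-1}$.
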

The proof for the first condition is based on the residue theorem and the relationship between strong convex hulls and the zeroes and poles of theta functions.  The proof for the second condition follows from reducing to the toric case.

One may think of Equation \ref{FSeriesIntro} as a generalization of the formula for Fourier series expansions.  Indeed, the usual formula for (finite) Fourier expansions follows from applying the theorem to the case where $V$ is toric and then restricting to the orbits of the torus action.

As another application, we prove some Minkowski sum formulas for functions in $A$.  The Newton polytope of a function $f$ indicates which theta functions might show up in the theta function expansion of $f$, and Minkowski sums allow one to describe the Newton polytope of a product of functions. 
  More precisely, the Minkowski sum $\Newt(f)+\Newt(g)$ is defined to be $\Newt(fg)$.  Minkowski sums may therefore be viewed as a tropicalized version of multiplication.  $U^{\trop}$ contains a singular point that prevents addition from being defined as easily as in the toric case.  However, $U^{\trop}$ is covered by convex cones, and addition does of course make sense when restricting to these cones.

\begin{thm}[\ref{MinkowskiSum}]\label{MinkowskiSumIntro}
The Minkowski sum of a collection $Q_1,\ldots,Q_s$ of integral polytopes containing the origin is given by 
\begin{align*}
\Conv\left(\bigcup_{\sigma} \left\{ q_1+_{\sigma} \ldots +_{\sigma} q_s \left| q_i \in Q_i \cap \sigma \right. \right\}\right),
\end{align*} where the union is over all convex cones $\sigma$ in $U^{\trop}$, and $+_\sigma$ denotes addition as defined in $\sigma$.
\end{thm}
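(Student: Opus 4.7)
The plan is to prove the two inclusions separately, using the broken-line description of theta function multiplication together with the positivity of theta function structure constants established in \cite{GHK1}. Throughout, fix representatives $f_i=\sum_{q\in Q_i} c_q^{(i)} \vartheta_q$ with $\Newt(f_i)=Q_i$, so that $Q_1+\cdots+Q_s=\Newt(f_1\cdots f_s)$ by definition of the Minkowski sum.

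For the inclusion ``$\supseteq$'', fix a convex cone $\sigma\subset U^{\trop}$ and integral points $q_i\in Q_i\cap\sigma$. Because $\sigma$ is contained in a single chamber of the scattering diagram, the restrictions of the $\vartheta_{q_i}$ to the associated torus chart are honest monomials and multiply by adding exponents, so the product $\vartheta_{q_1}\cdots\vartheta_{q_s}$ contains $\vartheta_{q_1+_\sigma\cdots+_\sigma q_s}$ as a ``leading'' term with coefficient $1$. The positivity of structure constants then prevents cancellation with contributions from other $s$-tuples of theta functions appearing in the product $f_1\cdots f_s$, so $\vartheta_{q_1+_\sigma\cdots+_\sigma q_s}$ appears with positive coefficient in $f_1\cdots f_s$, placing $q_1+_\sigma\cdots+_\sigma q_s$ in $\Newt(f_1\cdots f_s)$. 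Taking the union over $\sigma$ and the $q_i$ and applying the convexity of Newton polytopes (established earlier in \S\ref{App}) yields the inclusion.

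For ``$\subseteq$'', it suffices by convexity to show that each vertex $r$ of $\Newt(f_1\cdots f_s)$ is of the form $q_1+_\sigma\cdots+_\sigma q_s$ for some convex cone $\sigma$ and some $q_i\in Q_i\cap\sigma$. Positivity again precludes cancellation, so the nonvanishing of the coefficient of $\vartheta_r$ forces $\vartheta_r$ to arise from some explicit product $\vartheta_{p_1}\cdots\vartheta_{p_s}$ with $p_i\in Q_i$. The broken-line description then produces an $s$-tuple of broken lines with initial exponents proportional to $-p_i$ meeting at a common endpoint $x$, whose final exponents $q_i$ lie in the single convex cone $\sigma$ of $U^{\trop}$ containing $x$ and satisfy $r=q_1+_\sigma\cdots+_\sigma q_s$. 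The remaining point is to verify that each $q_i$ actually lies in $Q_i$.

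I expect this last verification to be the main obstacle. The plan is to induct on the total number of wall crossings in the $s$-tuple of broken lines. A single wall crossing replaces the running exponent $q$ by $q+kn$ for a primitive wall direction $n$ and a nonnegative integer $k$ determined by the wall function. Using the hypothesis $0\in Q_i$ together with the description from \cite{GHK1} of wall functions as polynomials whose exponents point along segments terminating at the origin, each such bend of the $i$-th broken line keeps the running exponent inside $Q_i$ by convexity of $Q_i$; iterating shows $q_i\in Q_i$. A small subtlety is that the bending of the $i$-th broken line may force a compensating adjustment on some other broken line in the tuple (so that the total sum at $x$ remains $r$), but since every $Q_j$ is assumed to contain the origin the compensating contribution can always be absorbed. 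Combining this with the first inclusion completes the proof.
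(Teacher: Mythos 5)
Your approach is genuinely different from the paper's, and the $\subseteq$ direction has a gap. The paper's proof is purely tropical: it computes $f^{\trop}(v)=\sum_i f_i^{\trop}(v)=\min_S\sum_{q\in S}\vartheta_q^{\trop}(v)$ over choices $S$ of one vertex per $Q_i$, then for each generic $v$ and minimizing $S$ exhibits a single ray $\rho=\rho_{S,v}$ past which every $L_q^{d<0}$, $q\in S$, winds the same number of times before reaching $\rho_v$, so that Lemma~\ref{tpos} gives $\vartheta_q^{\trop}(v)=v\wedge_{\rho}\mu^j(q)$ for all $q\in S$ simultaneously and hence $\sum_{q\in S}\vartheta_q^{\trop}(v)=\vartheta_{q_1+_\rho\cdots+_\rho q_s}^{\trop}(v)$. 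The points entering the Minkowski formula are therefore \emph{vertices of the $Q_i$'s}, summed with respect to a $v$-dependent branch cut; the paper never needs to track broken-line monomials or identify endpoints.

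Your $\subseteq$ argument instead tries to show that the final broken-line exponents $q_i:=r_*(m_{\gamma_i})$ lie in $Q_i$, and the proposed induction step does not hold up: a wall crossing at $\rho$ shifts the running exponent by $-kv_\rho$ with $k\geq 0$, and the resulting segment is a ``segment terminating at the origin'' only when the running exponent already lies on $\rho$; in general a $-v_\rho$-translate of a point of $Q_i$ need not stay in $Q_i$. The coordination problem you acknowledge is also a real obstruction: the $s$ broken lines are constrained only by $\sum m_{\gamma_i}=\wt{\varphi}(r)$, which gives no control on each $q_i$ separately. And even if every $q_i$ landed in $Q_i$, the theorem requires a single convex cone $\sigma$ containing all the identified $q_i$'s with $r=q_1+_\sigma\cdots+_\sigma q_s$; identifying tangent vectors at $\gamma_i(0)$ with points of $U^{\trop}$ is chart-dependent, and the chamber at $\gamma_i(0)$ need not contain the identified points. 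These are exactly the issues the tropical computation is built to sidestep. Your $\supseteq$ step also overclaims that the $\vartheta_{q_i}$ restrict to monomials on the chart of every chamber; this is true only in the cluster complex. For an arbitrary convex cone $\sigma$ the right justification is to apply Theorem~\ref{thetamult} directly: the $s$-tuple of straight broken lines along the $\rho_{q_i}\subset\sigma$ contributes positively to the coefficient of $\vartheta_{q_1+_\sigma\cdots+_\sigma q_s}$ in $\prod_i\vartheta_{q_i}$, and by positivity (Remark~\ref{nocancel}) this cannot cancel, which is essentially what the paper does.
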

In fact, we will see that only finitely many convex cones and $s$-tuples are needed, so Minkowski sums really are computable.  We will also give two other version of the above theorem: one says that we can take Minkowski sums by working on the universal cover of $U^{\trop}$ and doing addition in a very natural way.  The other applies only to ``finite type'' cases (in the cluster sense), and says that the Minkowski sum can be computed by taking the unions of the Minkowski sums with respect to each seed.  \cite{Shen} proved this version for cluster varieties of type $A_n$.

We also prove several properties of the pairing $\langle \cdot,\cdot \rangle$.  For example, we prove that it satisfies the following generalization of bilinearity: call a function on $U^{\trop}$ or $V^{\trop}$ tropical if it is integral, piecewise-linear, and convex along broken lines.\footnote{Convexity along broken lines is a notion from \cite{GHKK} which we show is, in our situation, equivalent to \cite{FG1}'s notion of ``convex with respect to every seed.''}  The tropical functions form a min-plus algebra, and we call a tropical function $\varphi$ {\it indecomposable} if it cannot be written as a minimum of two other tropical functions, neither of which is $\varphi$.  The tropical functions generalize convex integral piecewise-linear functions on $N_{\bb{R}}$ and $M_{\bb{R}}$, and the indecomposable functions generalize the linear functions.  \cite{GHKK} conjectures that tropicalizations of regular functions are tropical for {\it any} log Calabi-Yau variety, and \cite{FG1} conjectures that the theta functions---not just their tropicalizations---satisfy a related indecomposability condition (now known to be false in general).  For the log Calabi-Yau surface cases, we show:

\begin{thm}[\ref{IndeTropical}]\label{IndeTropicalIntro}
 The tropical functions are exactly the tropicalizations of regular functions, and the indecomposable tropical functions are exactly the tropicalizations of theta functions.
\end{thm}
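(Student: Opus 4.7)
The plan is to establish the four inclusions implicit in the two ``exactly'' statements, working throughout with tropical functions on $V^{\trop}$ and regular functions on $V$ (the $U$ side is symmetric).

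First I would show every $f^{\trop}$ is tropical. Writing $f = \sum_q c_q \vartheta_q$, the key identity is $f^{\trop}(v) = \min_{q} \langle q,v\rangle$ with $q$ ranging over the lattice points of $\Newt(f)$. The inequality $\geq$ is formal from the ultrametric property of $\val_{D_v}$, but the reverse requires a no-cancellation argument for the leading terms in the local expansion of $f$ along $D_v$; I would derive this from positivity of the scattering diagrams defining the theta basis. Granting this identity, $f^{\trop}$ is a finite minimum of the integral piecewise-linear functions $\vartheta_q^{\trop}(\cdot) = \langle q, \cdot\rangle$, and convexity along broken lines reduces to the single-$\vartheta$ case, which follows from the broken-line construction of $\vartheta_q$ together with scattering-diagram positivity.

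Next I would realize any tropical $\varphi$ as some $f^{\trop}$: attach to $\varphi$ its Newton polytope $Q_\varphi \subset U^{\trop}$, an integral convex polytope by tropicality, and set $f := \sum_{q \in Q_\varphi \cap U^{\trop}(\bb{Z})} \vartheta_q$. The polytope-to-section correspondence recorded in Corollary~\ref{AmpleConvex} ensures $f$ is a regular function on $V$, and the identity from the previous paragraph gives $f^{\trop} = \varphi$, completing the first half of the theorem.

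For the indecomposability half, suppose $\varphi = f^{\trop}$ is indecomposable. The identity $\varphi = \min_{q \in \Newt(f) \cap U^{\trop}(\bb{Z})} \vartheta_q^{\trop}$ combined with indecomposability forces $\Newt(f)$ to be a single lattice point $q$ (otherwise, one splits the vertex set nontrivially and, using that distinct $\vartheta_q^{\trop}$ are pointwise incomparable on $V^{\trop}$, obtains a nontrivial decomposition of $\varphi$), giving $\varphi = \vartheta_q^{\trop}$. Conversely, each $\vartheta_q^{\trop}$ is indecomposable: a hypothetical decomposition $\vartheta_q^{\trop} = \min(\phi_1, \phi_2)$ would, by the first half, express $\langle q, \cdot\rangle$ as a nontrivial minimum of finitely many $\langle r, \cdot\rangle$'s with some $r \neq q$, which is impossible since for each such $r$ one finds $v \in V^{\trop}$ with $\langle q, v\rangle < \langle r, v\rangle$. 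The main obstacle throughout is the no-cancellation identity for $f^{\trop}$; once this is established via scattering-diagram positivity, everything else reduces to formal manipulations with convex integral polytopes and the min-plus algebra structure.
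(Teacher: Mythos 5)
Your proposal contains a genuine gap in the second step. The difficulty is the implicit ``biduality'' claim: you define $Q_\varphi$ (presumably as $\{q \in U^{\trop} \mid \langle q,v\rangle \geq \varphi(v) \ \forall v\}$), set $f = \sum_{q\in Q_\varphi\cap U^{\trop}(\bb{Z})}\vartheta_q$, and assert that ``the identity from the previous paragraph gives $f^{\trop}=\varphi$.'' But the identity you established in the first paragraph only runs in one direction: for a \emph{regular} $f$, $f^{\trop} = \min_q \langle q,\cdot\rangle$. What you need here is the converse: that an \emph{arbitrary} tropical $\varphi$ satisfies $\varphi(v) = \min_{q\in Q_\varphi}\langle q,v\rangle$, i.e.\ that $\varphi$ is recovered from its dual polytope. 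A priori one only has $\varphi(v) \leq \min_{q\in Q_\varphi}\langle q,v\rangle$; the possible strict inequality is exactly the obstruction, and it is where convexity along broken lines (not just along straight lines) must be used. Because of the nontrivial monodromy of $U^{\trop}$, this is a genuinely substantive step, not a formality. The paper's proof handles it by analyzing each domain of linearity $\sigma$ of $\varphi$ directly: if $\varphi|_\sigma$ is positive, convexity along broken lines forces $\sigma$ to contain no scattering rays, and one extends the fiber $\{\varphi|_\sigma = d\}$ into a maximally broken line $\f{L}_q^{d>0}$ to exhibit $q$ with $\varphi|_\sigma = \vartheta_q^{\trop}|_\sigma$; if $\varphi|_\sigma$ is negative, one develops the fiber into a (possibly wrapping) straight line and checks the wrapping is compatible with convexity. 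The conclusion $\varphi \leq \vartheta_q^{\trop}$ globally then follows from the fact that $\vartheta_q^{\trop}$ is linear with respect to some seed or branch cut, and $\varphi$ is convex there. This domain-of-linearity analysis is the heart of the direction ``tropical $\Rightarrow$ tropicalization of a regular function,'' and your proposal does not supply a substitute for it.

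This gap also propagates into your second half. Your argument that $\vartheta_q^{\trop}$ is indecomposable relies on the first half to write any hypothetical $\phi_i$ in a decomposition as a min of $\vartheta_r^{\trop}$'s, so it inherits the gap. By contrast, the paper proves indecomposability of $\vartheta_q^{\trop}$ by a \emph{direct} argument that does not use the first half at all: it shows that any decomposable tropical function must bend nontrivially (along every broken line crossing them) across at least two rays, whereas $\vartheta_q^{\trop}$ bends nontrivially in this sense across at most one ray (either the single ray $\rho_{b_q}$ if $\vartheta_q^{\trop}$ is nowhere positive, or the scattering rays in $\sigma_{q_-,q_+}$---but across these, $\vartheta_q^{\trop}$ is linear along the maximally broken lines $\f{L}_q^{d>0}$). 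If you wanted to retain your min-of-$\vartheta_r^{\trop}$ route, you would also need to invoke Proposition~\ref{unique} more carefully: from $\vartheta_q^{\trop} = \min_{r\in S}\vartheta_r^{\trop}$, pigeonhole on a domain of linearity gives $\vartheta_q^{\trop} = \vartheta_r^{\trop}$ on an open cone for some $r\in S$, whence $q=r\in S$, and then one of the two factors in the decomposition coincides with $\vartheta_q^{\trop}$---rather than the pointwise-incomparability argument you sketch, which only shows $\vartheta_q^{\trop}\neq\vartheta_r^{\trop}$ and does not rule out $\vartheta_q^{\trop}$ being their minimum.

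Finally, a smaller point: the citation of Corollary~\ref{AmpleConvex} to conclude that $f := \sum_q\vartheta_q$ is regular is unnecessary, since any finite $\kk$-linear combination of theta functions is automatically a global regular function on $V$; what you actually need from that polytope machinery is finiteness of $Q_\varphi\cap U^{\trop}(\bb{Z})$ and the biduality identity discussed above.
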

Thus, we may say that the pairing $\langle \cdot,\cdot\rangle$ is ``integral bi-indecomposable-tropical,'' meaning that if we fix either entry to be some integral point, then the pairing is an indecomposable tropical function in the other entry.  This generalizes the (integral) bilinearity of the usual dual pairing.  See Remark \ref{NegativeTropical} for an extension of Theorem \ref{IndeTropicalIntro} to the non-positive cases.

\subsection{Setup}\label{IntroSetup}

Throughout this paper, $Y$ will denote a smooth, projective, rational surface over an algebraically closed field $\kk$ of characteristic $0$.  The {\it boundary} $D$ is a choice of nodal anti-canonical divisor in $Y$, and $U$ will denote $Y\setminus D$.  Here, $D=D_1+\ldots D_n$ is a either a cycle of smooth irreducible rational curves $D_i$ with normal crossings, or if $n=1$, $D$ is an irreducible curve with one node.  By a {\it compactification} of $U$, we mean such a pair $(Y,D)$ (\cite{GHK_MLP} calls these ``compactifications with maximal boundary'').  We call $(Y,D)$ a {\it Looijenga pair}, as in \cite{GHK1}, and we call $U$ a {\it log Calabi-Yau surface} or a {\it Looijenga interior}.

For a Looijenga pair $(Y,D)$, we define a {\it toric blowup} to be a Looijenga pair $(\wt{Y},\wt{D})$ together with a birational map $\wt{Y}\rar Y$ which is a  blowup at a nodal point of the boundary $D$, such that $\wt{D}$ is the preimage of $D$.  Note that taking a toric blowup does not change the interior $U = Y\setminus D = \wt{Y}\setminus \wt{D}$.  We also use the term toric blowup to refer to finite sequences of such blowups.

By a {\it non-toric blowup} $(\wt{Y},\wt{D})\rar (Y,D)$, we will always mean a blowup $\wt{Y}\rar Y$ at a non-nodal point of the boundary $D$ such that $\wt{D}$ is the proper transform of $D$.  Let $(\?{Y},\?{D})$ be a Looijenga pair where $\?{Y}$ is a toric variety and $\?{D}$ is the toric boundary.  We say that a birational map $Y \rar \?{Y}$ is a {\it toric model} of $(Y,D)$ (or of $U$) if it is a finite sequence of non-toric blowups.  Every Looijenga pair has a toric blowup which admits a toric model (\cite{GHK1}, Prop. 1.19).

In the language of cluster varieties, toric model corresponds to choices of {\it seeds} for cluster structures on $U$.  We will therefore use the term ``seed'' interchangably with the term ``toric model.''

According to \cite{GHK2}, all deformations of $U$ come from sliding the non-toric blowup points along the divisors $\?{D}_i\subset D$ without ever moving them to the nodes of $D$.  We call $U$ {\it positive} if some deformation of $U$ is affine.  This is equivalent to saying that $D$ supports an effective $D$-ample divisor, meaning a divisor whose intersection with each component of $D$ is positive.  We will always take the term $D$-ample to imply effective, unless otherwise stated.  
 We will assume that $U$ is positive throughout Sections 3-6, unless otherwise stated.

\subsection*{Outline of the Paper}

\subsection{The Tropicalization of $U$}\label{Betas} In \S \ref{tropu}, we review \cite{GHK1}'s construction of the tropicalization of $U$, an integral linear manifold denoted $U^{\trop}$.  The integral points $U^{\trop}(\bb{Z})\subset U^{\trop}$ generalize the cocharacter lattice $N$ for toric varieties.  If $q\in U^{\trop}(\bb{Z})$ is primitive (i.e., nonzero and not a positive integral multiple of some other element of $U^{\trop}(\bb{Z})$), then it corresponds to an irreducible divisor $D_q$ in the boundary of some compactification of $U$.  If $q$ is a multiple $|q|\in \bb{Z}_{\geq 0}$ times a primitive element, then the corresponding divisor is $|q|D_q$.  We call $|q|$ the {\it index} of $q$.  

$U^{\trop}$ is homeomorphic to $\bb{R}^2$, but it comes with an integral linear structure (singular at the origin) that captures the intersection data of the boundary divisors. 
 We analyze the integral piecewise-linear functions on $U^{\trop}$ using the intersection theory on compactifications of $U$: an integral piecewise-linear functions $\varphi$ on $U^{\trop}$ corresponds to a Weil divisor $W_{\varphi}:=\sum \varphi(v_i) D_{v_i}$ on a compactification $(Y,D=\sum D_{v_i})$ of $U$, and the ``bending parameter'' of $\varphi$ across $\rho_{v_i}$ is the intersection number $W_{\varphi}\cdot D_{v_i}$.  Let $\beta_{v_1,\ldots,v_s}$ denote the set of functions which have bending parameter $|v_i|$ along the ray $\rho_{v_i}$ generated by $v_i$, for each $i$, and otherwise has no other bends.  As a consequence of the symmetry of the intersection product, we find:
\begin{prop}[\ref{BetaSymmetry}]
If the intersection matrix $H=(D_i\cdot D_j)$ for some compactification of $U$ is invertible, then $\beta_v$ consists of a single function for each $v$, and $\beta_{v}(w) = \beta_{w}(v)$ for all $v,w\in U^{\trop}(\bb{Q})$.
\end{prop}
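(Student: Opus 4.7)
The plan is to derive the uniqueness of $\beta_v$ directly from invertibility of $H$, and then to deduce the symmetry $\beta_v(w) = \beta_w(v)$ from the symmetry of the intersection pairing on a compactification of $U$.

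\emph{Uniqueness.} Fix a compactification $(Y, D = \sum D_{v_i})$ whose intersection matrix $H$ is invertible. A piecewise-linear function $\varphi$ on $U^{\trop}$ whose only bends lie along the rays $\rho_{v_i}$ is determined by its values $(\varphi(v_i))_i$ at the primitive generators, and these values are exactly the coefficients of the associated Weil divisor $W_\varphi = \sum_i \varphi(v_i) D_{v_i}$. By the description recalled above, the map from value data to bending-parameter data is multiplication by $H$. When $H$ is invertible, the prescribed bending data (equal to $|v|$ along $\rho_v$ and zero elsewhere) is realized by a unique $\varphi$, so $\beta_v$ is a single function.

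\emph{Symmetry.} Assume first that $v$ and $w$ are primitive and both appear as boundary rays of the fixed compactification, say $v = v_a$ and $w = v_b$. Set $W_v := W_{\beta_v}$ and $W_w := W_{\beta_w}$. By construction $W_v \cdot D_{v_i} = \delta_{i,a}$ and $W_w \cdot D_{v_i} = \delta_{i,b}$, so bilinearity of the intersection pairing gives
\begin{align*}
W_v \cdot W_w &= \sum_i \beta_w(v_i)\,(W_v \cdot D_{v_i}) = \beta_w(v),\\
W_w \cdot W_v &= \sum_i \beta_v(v_i)\,(W_w \cdot D_{v_i}) = \beta_v(w).
\end{align*}
Symmetry of the intersection form on $Y$ forces $\beta_v(w) = \beta_w(v)$. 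For general $v, w \in U^{\trop}(\bb{Q})$, write $v = qv'$ and $w = pw'$ with $v', w'$ primitive. Uniqueness and linearity of the bending data in $|v|$ yield $\beta_v = q\beta_{v'}$ and $\beta_w = p\beta_{w'}$, while piecewise-linearity along the rays $\rho_{v'}, \rho_{w'}$ gives $\beta_v(w) = qp\,\beta_{v'}(w')$ and $\beta_w(v) = qp\,\beta_{w'}(v')$, reducing the general statement to the primitive case.

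The main obstacle I anticipate is accommodating primitive directions $v', w'$ that do not simultaneously appear as boundary rays of the given compactification. The natural remedy is to pass to a toric blowup in which both $v'$ and $w'$ label boundary divisors, but one then needs either (i) a determinantal check that a nodal toric blowup preserves invertibility of $H$ (expanding along the new $(-1)$-curve and using that the self-intersections of the two adjacent boundary components each drop by one), or (ii) a compatibility statement showing that $\beta_v$ produced on a larger compactification restricts consistently so that the symmetry argument can be run on any compactification containing both rays. Once such a common compactification is available, the primitive-case computation above applies verbatim.
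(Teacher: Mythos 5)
Your proof follows essentially the same route as the paper's: the paper writes $\beta_v(w) = D_w^T H^{-1} D_v$ and appeals to symmetry of $H$, which is exactly your $W_v\cdot W_w$ computation phrased in matrix form, and your uniqueness argument is the one given just before the proposition in the text. Regarding your flagged concern, option (i) does work and closes the gap: under a nodal (toric) blowup $\pi$, the span of the proper transforms together with the exceptional curve $E$ equals the span of $\pi^*D_1,\ldots,\pi^*D_n,E$ via a unimodular change of basis, and on that basis the intersection form is $H\perp(-1)$, so invertibility of $H$ is preserved and one may always pass to a common compactification containing both $\rho_{v'}$ and $\rho_{w'}$ (the paper implicitly relies on this via its standing convention that enough toric blowups have been taken).
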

This Proposition is in fact closely related to the symmetry in Theorem \ref{symdualIntro} below.  At the end of \S \ref{tropu}, we give the definitions of lines and polygons in $U^{\trop}$, as introduced in \cite{GHK2}.

\subsection{Constructing the Mirror and the Theta Functions} In \S \ref{mirror} we review \cite{GHK1}'s construction of the mirror family $\s{V}$ of $U$.  The theta functions $\vartheta_q$, $q\in U^{\trop}(\bb{Z})$, are defined in terms of broken lines, which are certain piecewise-straight lines in $U^{\trop}$ with attached monomials.

In \S \ref{SeedScatter} we describe an alternate construction of $U^{\trop}$ using scattering diagrams and broken lines.  This explains the relationship between the canonical integral linear struture on $U^{\trop}$ and the vector space structures corresponding to various seeds.  In \S \ref{clustercomplex}, we describe a particularly nice part of the scattering diagram called the ``cluster complex'' (technically the intersection of \cite{FG1}'s cluster complex with $U^{\trop}$, cf. Proposition 4.3 of \cite{Man2}), and we show that the theta functions corresponding to integral points in the cluster complex are cluster monomials (i.e., they each restrict to a monomial on some seed torus).  Since the initial posting of this paper, \cite{GHKK} has proven this for arbitrary cluster varieties.

At the end of \S \ref{mirror}, we review \cite{GHK2}'s construction of compactifications of $\s{V}$.

\subsection{Theta Functions and their Tropicalizations}
In \S \ref{trop}, we explicitely describe the tropicalizations of theta functions, as defined above in \S \ref{Results}, and we investigate some of their properties.  We begin by describing a way to identify $U^{\trop}$ with $V^{\trop}$ for computational purposes (analogous to using the standard inner product to identify $N_{\bb{R}}$ with $M_{\bb{R}}$ in the toric situation).  We find an explicit description of $\langle \cdot,\cdot\rangle$ in \S \ref{ValFun} and \S \ref{ttf}.  For example, as investigated in \S \ref{negbend}, tropical theta functions which are negative everywhere bend along at most a single ray.  On the other hand, each seed 
 induces a different integral linear structure on $U^{\trop}$, and the tropical theta functions which are positive somewhere are linear with respect to some seed.  See Corollary \ref{level} and Proposition \ref{levelpos} for explicit descriptions of the fibers of these tropical theta functions.

In \S \ref{sympair}, we use these explicit descriptions to prove the following symmetry of $\langle \cdot,\cdot \rangle$: since $V$ is itself log Calabi-Yau, we can choose a compactification and construct a family $\s{U}$ mirror to $V$.  We describe how to identify $U^{\trop}$ with the tropicalization of a fiber of $\s{U}$ (in a way which is in fact induced by an identification of $U$ with a fiber of $\s{U}$), and note that this allows one to define a second, a priori different pairing between $U^{\trop}$ and $V^{\trop}$, given by $\langle q,v\rangle^{\vee}:= \val_{D_q}(\vartheta_v)$ (that is, we have switched the roles of $U$ and $V$).
\begin{thm}[\ref{symdual}]\label{symdualIntro}
The two pairings $\langle\cdot,\cdot\rangle$ and $\langle \cdot,\cdot \rangle^{\vee}$ are in fact the same.
\end{thm}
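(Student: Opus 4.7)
The plan is to reduce both pairings to a common intersection-theoretic or toric computation on a seed chart and then invoke the symmetry of the $\beta$-functions from Proposition \ref{BetaSymmetry}. Both $\langle\cdot,\cdot\rangle$ and $\langle\cdot,\cdot\rangle^{\vee}$ are piecewise-linear and equivariant under scaling, so it suffices to check equality at primitive integral points $q\in U^{\trop}(\bb{Z})$ and $v\in V^{\trop}(\bb{Z})$. A preliminary step is to record that the identification of $U^{\trop}$ with the tropicalization of a fiber of $\s{U}$, constructed just before the statement, matches boundary divisors of compactifications of $U$ with boundary divisors of compactifications of a fiber of $\s{U}$, so that the divisor $D_q$ appearing in $\val_{D_q}(\vartheta_v)$ and the tropical point $q$ used to build $\vartheta_q$ on $V$ really refer to the same datum.

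The first case is when $q$ or $v$ lies in the cluster complex. By \S \ref{clustercomplex} the corresponding theta function is a cluster monomial, i.e., a Laurent monomial on some seed torus, and on that seed chart both pairings reduce to the classical dual pairing between the character and cocharacter lattices. Symmetry of that pairing is standard, so $\langle q,v\rangle = \langle q,v\rangle^{\vee}$ in this case.

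The second case is when neither point is in the cluster complex, so that both $\vartheta_q^{\trop}$ and $\vartheta_v^{\trop}$ are negative everywhere. By \S \ref{negbend} each such tropical theta function bends along at most a single ray, and combined with the explicit level-set descriptions in Corollary \ref{level} and Proposition \ref{levelpos} this pins down $\vartheta_q^{\trop}$ (after rescaling by the index) as the function $\beta_q$ introduced in \S \ref{Betas}, and analogously $\vartheta_v^{\trop}$ as $\beta_v$. The evaluations $\beta_q(v)$ and $\beta_v(q)$ become intersection numbers on a common compactification via the bending-parameter interpretation, and Proposition \ref{BetaSymmetry} gives exactly $\beta_q(v)=\beta_v(q)$, hence $\langle q,v\rangle = \langle q,v\rangle^{\vee}$.

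The main obstacle is the mixed case where exactly one of $q,v$ is in the cluster complex: the toric reduction applies on one side but only the $\beta$-description is available on the other, so neither argument applies verbatim. I would handle this by fixing a seed in which, say, $\vartheta_v$ is a monomial. On the corresponding chart, $\val_{D_q}(\vartheta_v)$ is read off as the natural pairing of the exponent of $\vartheta_v$ with the seed-coordinates of $q$, while $\vartheta_q^{\trop}(v)$ is computed from the broken lines defining $\vartheta_q$, which on this seed chart contribute the same linear functional evaluated at $v$. Equivalently, one verifies that both pairings transform identically across each wall of the scattering diagram, so that agreement on a single seed chart propagates globally over $U^{\trop}$ and $V^{\trop}$ and completes the proof.
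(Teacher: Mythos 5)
Your proposal takes a genuinely different route from the paper's one-paragraph proof, but it has gaps that appear essential rather than cosmetic. In the case where neither point is in the cluster complex, you identify $\vartheta_q^{\trop}$ (up to index rescaling) with the function $\beta_q$ from \S\ref{Betas}. This is not what \S\ref{negbend} shows: the negative part of $\vartheta_q^{\trop}$ agrees with $\beta_{b_q}$, where $b_q=V_-+V_+$ is typically $q-\mu(q)$ (or $\mu(q)-\mu^2(q)$ in the $E_8$ case), and $E_8$ is in fact the only situation where $b_q=q$. Even granting the corrected identification, Proposition~\ref{BetaSymmetry} gives $\beta_{b_q}(b_v)=\beta_{b_v}(b_q)$, not the needed $\beta_{b_q}(v)=\beta_{b_v}(q)$, so the symmetry does not close the argument. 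Proposition~\ref{BetaSymmetry} is also stated only for invertible $H$, which is not automatic for positive $U$ (e.g.\ $\bb{P}^2$ with a triangle of lines has $H$ of rank one), so a reduction step would be required.

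The cluster-complex case is also under-argued: when only one of $q,v$ lies in the cluster complex, one of the two pairings involves a non-monomial theta function, and even when both lie in the cluster complex they may be cluster monomials on distinct seed tori, so there is no single chart on which ``both pairings reduce to the classical dual pairing.'' You acknowledge this under the mixed case, but the closing paragraph (``both pairings transform identically across each wall'') is a plausible idea, not an argument. The paper avoids this entirely: after noting that $w_U$ sends $L_q^{d,0}$ and $\f{L}_q^d$ to $L_{w_U(q)}^{-d,0}$ and $\f{L}_{w_U(q)}^{-d}$ (the sign flip coming from the orientation convention on $V^{\trop}$), it simply matches the fiber descriptions of $\vartheta_q^{\trop}$ from Corollary~\ref{level} and Proposition~\ref{levelpos} against the definition of $\val_{w_U(q)}$ from \S\ref{ValFun}, and $\val_{w_U(q)}(v)=\langle q,v\rangle^{\vee}$ by the $V$-side analogue of Theorem~\ref{ValFibers}. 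No case division between cluster and non-cluster points is needed, and none of the $\beta$-function machinery is invoked.
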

A generalization of this for cluster varieties has been conjectured by \cite{GHKK}.

\S \ref{Convexity} introduces the tropical functions mentioned above in \S \ref{Results}.  Convexity along a broken line locally means convexity with respect to a linear structure in which the broken line is straight.  Tropical functions are defined to be convex along all broken lines, and we show that this is equivalent (for globally defined piecewise-linear functions) to being convex with respect to the linear structure induced by each seed.  We then prove Theorem \ref{IndeTropicalIntro} and make several conjectures about how this might generalize to higher dimensional cluster varieties.

\subsection{Toric Constructions for Log Calabi-Yau's}

In \S \ref{App} we use the pairing $\langle \cdot,\cdot\rangle$ to generalize several constructions from toric geometry.  \S \ref{BundlePolygons} focuses on constructions involving polytopes.  For example, we define the {\it strong convex hull} of a set $Q\subset U^{\trop}$ as
\begin{align*}
\Conv(Q) = \left\{ x \in U^{\trop}|\langle x,v\rangle \geq \inf_{q\in Q} \langle q,v\rangle \mbox{ for all } v\in V^{\trop}\right\}.
\end{align*}
We call a polytope strongly convex if it equals its own strong convex hull.  Such polytopes and their Minkowski sums also appear in the literature on cluster varieties (cf. \cite{FG2} and \cite{Shen}).
 We show:
\begin{thm}[\ref{StrongConvexLines}]
A rational polytope $Q$ is strongly convex if and only if any broken line segment with endpoints in $Q$ is entirely contained in $Q$.
\end{thm}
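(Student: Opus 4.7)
The plan is to prove the equivalence in both directions, with the reverse implication being the main substantive part. The forward direction reduces quickly to the tropical convexity property of theta functions, while the reverse requires constructing broken line segments through points in $\Conv(Q)\setminus Q$.

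For the forward direction, fix $v\in V^{\trop}(\bb{Z})$ and set $\varphi_v(x):=\langle x,v\rangle$. By Theorem~\ref{symdualIntro}, $\varphi_v=\vartheta_v^{\trop}$, the tropicalization of the theta function $\vartheta_v$ on $U$, which by Theorem~\ref{IndeTropicalIntro} is an indecomposable tropical function on $U^{\trop}$. In particular $\varphi_v$ is convex along every broken line, which unpacks as the inequality $\varphi_v(x)\ge\min(\varphi_v(q_1),\varphi_v(q_2))$ whenever $x$ lies on a broken line segment from $q_1$ to $q_2$. Taking $q_1,q_2\in Q$ and passing to the infimum over $Q$ gives $\varphi_v(x)\ge\inf_{q\in Q}\varphi_v(q)$; since $v$ was arbitrary, $x\in\Conv(Q)=Q$.

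For the reverse direction I argue the contrapositive: assuming $Q\subsetneq\Conv(Q)$, I will exhibit a broken line segment with endpoints in $Q$ exiting $Q$. Pick $x\in\Conv(Q)\setminus Q$; the strategy is to construct such a broken line segment passing through $x$. Choose an initial direction at $x$ inside some chamber of the scattering diagram, extend to a broken line $\ell$ in both directions (bending at walls as prescribed by the scattering rules), and follow $\ell$ until it first leaves $\Conv(Q)$. This produces a broken line segment through $x$ with endpoints $q_1,q_2\in\partial\Conv(Q)$. It remains to choose the initial direction so that $q_1,q_2$ land in $Q$, rather than on the portion of $\partial\Conv(Q)$ not contained in $Q$.

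The crucial geometric input is that $U^{\trop}$ is two-dimensional and $Q$ is a rational polytope with only finitely many edges, so $\Conv(Q)\setminus Q$ decomposes into finitely many ``pocket'' regions, each bounded by arcs of $\partial Q$ and $\partial\Conv(Q)$. Varying the initial direction at $x$ moves the endpoints $q_1,q_2$ continuously along $\partial\Conv(Q)$, so an intermediate-value argument in the space of directions at $x$ should force at least one direction to send both endpoints into $\partial\Conv(Q)\cap Q$. Making this rigorous is the main obstacle: one must account for the singular integral affine structure at the origin of $U^{\trop}$, the bendings of $\ell$ at walls, and the different scattering behavior in the cluster complex versus the Badlands region. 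I would handle these by reducing to individual chambers (where broken lines are straight and strong convexity reduces to classical convexity) and using that a broken line segment of bounded ``length'' within $\Conv(Q)$ meets only finitely many walls.
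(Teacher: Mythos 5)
Your forward direction is essentially the paper's argument: $\langle\cdot,v\rangle$ is the tropicalization of a theta function, hence convex along broken lines, and a convex function is bounded below on a broken line segment by its values at the endpoints. This part is fine.

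The reverse direction, however, has a genuine gap which you yourself flag. Your plan is to pick an interior point $x\in\Conv(Q)\setminus Q$, shoot a broken line through $x$, and vary the initial direction so that an intermediate-value argument forces both endpoints of the resulting segment to land in $Q$. There is no clear topological invariant that makes this work: as the direction at $x$ rotates, the two endpoints trace $\partial\Conv(Q)$, but nothing prevents at least one endpoint from always landing in a ``pocket opening'' of $\partial\Conv(Q)\setminus Q$. Worse, the endpoints need not vary continuously once the broken line starts crossing different walls or passes near the origin, and you would also need to rule out the possibility that the segment never re-enters $\Conv(Q)$ in a controlled way. As written this is a sketch of a strategy, not a proof, and I do not see a quick way to fill the hole.

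The paper argues instead at the boundary of $Q$ itself, which avoids all of this. One shows that each boundary point lies on a positive-length interval $\Gamma\subset\partial Q$ which extends to a fiber $\wt{\Gamma}=\{\langle\cdot,v_\Gamma\rangle = a_\Gamma\}$ for some $v_\Gamma\in V^{\trop}(\bb{Z})$, $a_\Gamma\in\bb{Q}$, with $Q$ locally on the $\ge a_\Gamma$ side. The claim is then that \emph{all} of $Q$ is on the $\ge a_\Gamma$ side, for each such $\Gamma$; this gives $Q$ as an intersection of half-spaces $\{\langle\cdot,v_\Gamma\rangle\ge a_\Gamma\}$, which is strong convexity by Proposition~\ref{ConvexCondition}. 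If some $q'\in Q$ had $\langle q',v_\Gamma\rangle<a_\Gamma$, one may take $q'$ rational and at level just barely below $a_\Gamma$, choose $p$ in the interior of $\Gamma$, and rotate $\wt{\Gamma}$ slightly about $p$ (in the seed structure where $\wt{\Gamma}$ is straight) until it passes through $q'$. The segment from $p$ to $q'$ is then a broken line segment with both endpoints in $Q$ but which exits $Q$ immediately after leaving $p$ (it drops to level $<a_\Gamma$ near $p$, where $Q$ lies strictly above), contradicting the hypothesis. This localized perturbation argument is both shorter and actually closes; I recommend replacing your interior-point/IVT plan with it.
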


Consider a regular function $f:=\sum_{q\in Q} a_q \vartheta_q$, $Q\subset U^{\trop}(\bb{Z})$, $a_q\neq 0$.  The {\it Newton polytope} of $f$ is defined to be $\Conv(Q)$.  On the other hand, a Weil divisor $W$ supported on the boundary of a compactification of $V$ corresponds to a piecewise-linear function $\varphi_W$ on $V^{\trop}$, hence to a polytope $\Delta_W:=\{\varphi_W \leq 1\}$ in $V^{\trop}$.  $\Delta_W^{\vee}\subset U^{\trop}$ is then defined to be the Newton polytope of a generic section of $\s{O}(W)$, and if $W$ is effective, this agrees with the polar polytope
\begin{align*}
\Delta_W^{\circ}:=\{q\in U^{\trop}|\langle q,v\rangle \geq -1 \mbox{ for all } v\in \Delta_W\}.
\end{align*}  The theta functions corresponding to integral points in $\Delta_W^{\vee}$ form a canonical basis of global sections for $\s{O}(W)$.  This relationship was previously examined in \cite{GHK2} for $W$ strictly effective (i.e., for $\varphi_W \geq 0$, or for $\Delta_W^{\vee}$ containing the origin in its interior).

Other properties of polytopes from the toric situation now easily generalize.  For example, we find exactly as in the toric situation that the number of lattice points on edges of $\Delta_W^{\vee}$ is related to certain intersection numbers of $W$ with the boundary divisors (cf. Proposition \ref{PointsIntersection}).  

In \S \ref{DualCones} we note that the notion of {\it dual cones} also generalizes from toric varieties: the dual to a cone $\sigma\subset V^{\trop}$ is the cone 
\begin{align*}
\sigma^{\vee}:=\{ q \in U^{\trop}|\langle q,v\rangle \geq 0 \mbox{ for all } v\in \sigma\}.
\end{align*}
If $\sigma^{\vee}$ is two-dimensional, then $\Spec$ of the ring generated by the $\vartheta_q$'s with $q\in \sigma^{\vee}$ is obtained from $V$ by gluing boundary divisors corresponding to the boundary rays of $\sigma$ and then contracting the $(-1)$-curves which intersect these boundary divisors (see Proposition \ref{ConeAffine}).

In \S \ref{Minkowski} we introduce the Minkowski sums mentioned above in \S \ref{Results}, and we prove Theorem \ref{MinkowskiSumIntro}, along with the two other Minkowski sum formulas mentioned above.  The key idea behind the proofs is that the only broken lines which contribute to the tropicalization are actually straight lines in $U^{\trop}$.

\subsection{Relation to Cluster Varieties}\label{ClusterRelation}
\cite{FG1} defines certain varieties, called {\it cluster varieties}, constructed by gluing together algebraic tori in a certain combinatorial way.  \cite{GHK3} interpreted this gluing geometrically and showed that generic Looijenga interiors can be identified, up to codimension $2$, with fibers of certain cluster $\s{X}$ varieties.  As previously mentioned, what \cite{FG1} calls a {\it seed} has roughly the same data as that of a toric model for $U$.  \cite{FG1} defines tropicalizations $\s{A}^{\trop}$ and $\s{X}^{\trop}$ of their cluster $\s{A}$ and $\s{X}$ varieties, and \cite{GHK1}'s $U^{\trop}$ can be identified with a certain fiber of $\s{X}^{\trop}$ (this fiber is the image of a canonical map from $\s{A}^{\trop}$ to $\s{X}^{\trop}$).  What we call the {\it cluster complex} in $U^{\trop}$ is really the intersection of \cite{FG1}'s cluster complex (a certain subset of $\s{X}^{\trop}$) with $U^{\trop}$.  See \cite{Man2} for a more detailed summary of \cite{GHK3} and the relationship between cluster varieties and Looijenga interiors.

\subsection{Acknowledgements}
This paper is based on part of the author's thesis, which was written while in graduate school at the University of Texas at Austin.  I would like to thank my advisor, Sean Keel, for introducing me to this topic and for all his suggestions, insights, and support, and also for allowing me access to preliminary versions of his papers with Paul Hacking and Mark Gross.  I also want to thank Andy Neitzke, James Pascaleff, Yuan Yao, and Yuecheng Zhu for many valuable conversations, as well as the referee for numerous helpful suggestions.  The author was partially supported by the Center of Excellence Grant ``Centre for Quantum Geometry of Moduli Spaces'' from the Danish National Research Foundation (DNRF95).

\section{The Tropicalization of U} \label{tropu} 
This section examines $U^{\trop}$ with its integral linear structure as defined in \cite{GHK1}. 
 $U^{\trop}$ is a natural generalization of the cocharacter plane $N_{\bb{R}}$ corresponding to a toric surface, and the relationship between $U^{\trop}$ and the mirror is a natural generalization of the character plane $M_{\bb{R}}$.  We do not require $U$ to be positive in this section.  

\subsection{Some Generalities on Integral Linear Stuctures} \label{ils}

A manifold $B$ is said to be {\it (oriented) integral linear} if it admits charts to $\bb{R}^n$ which have transition maps in $\SL_n(\bb{Z})$.  We allow $B$ to have a set $O$ of singular points of codimension at least $2$, meaning that these integral linear charts only cover $B':=B\setminus O$.   $B'$ has a canonical set of {\it integral points} which come from using the charts to pull back $\bb{Z}^n\subset \bb{R}^n$.  Our space of interest, $B=U^{\trop}$, will be homeomorphic to $\bb{R}^2$ and will typically have a singular point at $0$ (which we say is also an integral point).

$B'$ admits a flat affine connection, defined using the charts to pull back the standard flat connection on $\bb{R}^n$.  Furthermore, pulling back along these charts give a local system $\Lambda$ of integral tangent vectors on $B'$, along with a dual local system $\Lambda^*$ in the cotangent bundle.  
 Note that the monodromy $\mu$ of $\Lambda$ is contained in $SL_n(\bb{Z})$, 
 so the wedge form on any exterior product $\Lambda^{\bullet} TB$ 
 commutes with parallel transport.  For $B$ two-dimensional, we will often use $\wedge$ to denote the canonical skew-form on $TB'$.

Note that a chart $\psi$ with a connected set $\sigma$ in its domain induces an embedding of $\sigma$ into $T_pB'$ for any $p \in \sigma$, commuting with parallel transport in $\sigma$ and taking integral points to $\Lambda$.  When we talk about addition, scalar multiplication, or wedge products of points on $\sigma$, we will mean the corresponding operations induced by this identification with the tangent space (equivalently, induced by the identification with $\psi(\sigma)\subset \bb{R}^n$).  Because of the monodromy, these operations do depend on the choice of $\sigma$, but not on the specific choice of chart.

\subsubsection{Integral Linear Functions}\label{int lin fun}

By a {\it linear map} $\varphi:B_1\rar B_2$ of integral linear manifolds, we mean a continuous map such that for each pair of integral linear charts $\psi_i:U_i\rar \bb{R}^n$, $U_i\subset B_i'$ with $\varphi(U_1)\subset U_2$, we have that $\psi_2\circ \varphi\circ \psi_1^{-1}$ is linear in the usual sense.  $\varphi$ is {\it integral linear} if it also takes integral points to integral points.

Fix a finite rank lattice $P^{gp}$.  $P^{gp}_{\bb{R}}:=P^{gp}\otimes_{\bb{Z}} \bb{R}$ has an obvious integral linear structure with $P^{gp}$ as the integral points.  By a $P^{gp}$-valued {\it integral linear function}, we will mean an integral linear map to $P^{gp}_{\bb{R}}$.  We can thus define a sheaf $\s{L}_{P^{gp}}$ of integral linear functions on $B$.  We similarly define a sheaf $\s{PL}_{P^{gp}}$ of integral piecewise linear functions.

We note that to specify an integral linear structure on an integral piecewise linear manifold (i.e., a manifold where transition functions are piecewise linear), it suffices to identify which $\bb{R}$-valued piecewise linear functions are actually linear.  These functions can then be used to construct charts.  It therefore also suffices (in dimension $2$) to specify which piecewise-straight lines are straight, since (piecewise-)straight lines form the fibers of (piecewise-)linear functions.

\subsection{Constructing $U^{\trop}$} \label{Ut}

Fix a toric model $(Y,D) \rar (\?{Y},\?{D})$, and let $N$ be the cocharacter lattice corresponding to $(\?{Y},\?{D})$.  Let $\Sigma\subset N_{\bb{R}}$ be the corresponding fan.  $\Sigma$ has cyclically ordered rays $\rho_i$, $i=1,\ldots,n$, with primitive generators $v_i$, corresponding to boundary divisors $\?{D_i}\subset \?{D}$ and $D_i \subset D$.  We choose an orientation\footnote{Choosing a cyclic ordering for the components of $D$ (assuming $D$ has at least three components) is equivalent to choosing an orientation for $N_{\bb{R}}$ or $U^{\trop}$.  It is also equivalent to fixing the sign for the holomorphic volume form $\Omega$ on $U$, which we will use in \S \ref{period}.  We assume throughout the paper that such a choice has been fixed.} of $N_{\bb{R}}$ so that $\rho_{i+1}$ is counterclockwise of $\rho_i$.  Let $\sigma_{u,v}$ denote the closed cone bounded by two vectors $u,v$, with $u$ being the clockwise-most boundary ray.  In particular, if $u$ and $v$ lie on the same ray, we define $\sigma_{u,v}$ to be just that ray.  Denote $\sigma_{i,i+1}:=\sigma_{v_i,v_{i+1}}$.  We may use variations of this notation, such as $v_{\rho}$ for a primitive generator of some arbitrary ray $\rho$ with rational slope, but these variations should be clear from context.

We now use $(Y,D)$ to define an integral linear manifold $U^{\trop}$.  As an integral piecewise-linear manifold, $U^{\trop}$ is the same as $N_{\bb{R}}$, with $0$ being a singular point and $U^{\trop}(\bb{Z}):=N$ being the integral points.   
 Note that an integral $\Sigma$-piecewise linear (i.e., bending only on rays of $\Sigma$) function  $\varphi$ on $U^{\trop}$ can be identified with a Weil divisor 
 of $Y$ via $W_{\varphi}:=a_1D_1+\ldots + a_n D_n$, where $a_i = \varphi(v_i)\in \bb{Z}$.  We define the integer linear structure of $U^{\trop}$ by saying that a function $\varphi$ on the interior of $\sigma_{i-1,i}\cup\sigma_{i,i+1}$\footnote{We assume here that there are more than $3$ rays in $\Sigma$, so that $\sigma_{i-1,i}\cup \sigma_{i,i+1}$ is not all of $N_{\bb{R}}$.  This assumption can always be achieved by taking toric blowups of $(Y,D)$.  Alternatively, it is easy to avoid this assumption, but the notation and exposition becomes more complicated.  We will therefore continue to implicitely assume that there are enough rays for whatever we are trying to do, without further comment.} is linear if it is $\Sigma$-piecewise linear and $W_\varphi \cdot D_{i} = 0$.  This last condition is (for $n\geq 2$) equivalent to
\begin{align} \label{linear}
	a_{i-1} +D_{i}^2 a_{i} + a_{i+1} = 0.
\end{align}

\begin{rmk}\label{charts}
This construction of $U^{\trop}$ naturally generalizes to higher dimensions, but the two-dimensional case is special in that the linear structure on $U^{\trop}$ is canonically determined by $(Y,D)$; i.e., it does not depend on the choice of toric model.    This is evident from the following atlas for $U^{\trop}$ (from \cite{GHK1}): the chart on $\sigma_{i-1,i}\cup \sigma_{i,i+1}$ takes $v_{i-1}$ to $(1,0)$, $v_i$ to $(0,1)$, and $v_{i+1}$ to $(-1,-D_i^2)$, and is linear in between.

Furthermore, toric blowups and blowdowns do not affect the integral linear structure, so as the notation suggests, $U^{\trop}$ and $U^{\trop}(\bb{Z})$ depend only on the interior $U$.
\end{rmk}

\begin{eg}
If $(Y,D)$ is toric, then $U^{\trop}$ is just $N_{\bb{R}}$ with its usual integral linear structure.  This follows from the standard fact from toric geometry that 
$
	\sum_i (C \cdot D_i) v_i = 0
$ 
for any curve class $C$.  Taking non-toric blowups changes the intersection numbers, resulting in a non-trivial monodromy about the origin.
\end{eg}

\begin{rmk}\label{divisorial}
 Recall from standard toric geometry that any primitive vector $v\in N$ corresponds to a prime divisor $D_v$ supported on the boundary of some toric blowup of $(\?{Y},\?{D})$, and a general vector $kv$ with $k\in \bb{Z}_{\geq 0}$ and $v$ primitive corresponds to the divisor $kD_{v}$.  Two divisors on different toric blowups are identified if they determine the same discrete valuation on the function field of $\?{Y}$ (equivalently, if there is some common toric blowup on which their proper transforms are the same).  Since taking proper transforms under the toric model gives a bijection between boundary components of $(Y,D)$ and boundary components of $(\?{Y},\?{D})$ (and similarly for the boundary components of toric blowups), we see that points of $U_0^{\trop}(\bb{Z})$ correspond to the divisorial discrete valuations of $(Y,D)$ along which a certain form $\Omega$ has a pole.  Here, $\Omega$ is the canonical (up to scaling) holomorphic volume form on $U$ with a simple pole along $D$, and {\it divisorial} means the valuation corresponds to a divisor on some toric blowup of $(Y,D)$.  $0\in U^{\trop}(\bb{Z})$ of course corresponds to the trivial valuation.
\end{rmk}

\begin{eg} \label{cubicdevelop}
Consider the cubic surface $(Y,D=D_1+D_2+D_3)$ constructed by taking two non-toric blowups on each of the three boundary divisors $\?{D_1}$, $\?{D_2}$, and $\?{D_3}$ of $\?{Y}:=\bb{P}^2$.  So we have $D_i^2=-1$ for each $i$.  Consider the universal cover $\xi:\wt{U}^{\trop}\rar U^{\trop}_0$ with the pulled-back integral linear structure.  Let $v_1^0$ be an element in the preimage of $v_1\in U^{\trop}_0$.  Designate $v_1^0$ as the clockwise-most ray of the ``$0^{\tt{th}}$'' sheet of $\wt{U}^{\trop}$, and let $v_i^j$ denote the element of $\xi^{-1}(v_i)$ on the $i^{\tt{th}}$ sheet.  Similarly for the rays $\rho_i^j$ they generate.  Let $\delta:\wt{U}^{\trop}\rar \bb{R}^2$ denote the linear map which takes $v_1^0$ to $(1,0)$ and $v_2^0$ to $(0,1)$.  Then one finds that Equation \ref{linear} forces $\delta(v_3^0)=(-1,1)$ and $\delta(v_i^j) = (-1)^j \delta(v_i^0)$.  See Figure \ref{cubic dev fig}.  We thus see that the monodromy of $U^{\trop}$ in this case is $-\id$.

\begin{figure}
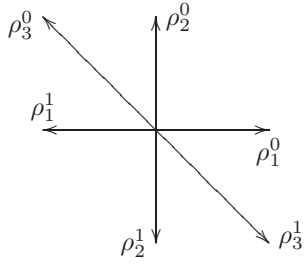

\[
\xy
{\ar (0,0); (15,0)}; (15,-3)*{\rho_1^0};
{\ar (0,0); (0,15)}; (3,15)*{\rho_2^0};
{\ar (0,0); (-15,15)}; (-18,14)*{\rho_3^0};
{\ar (0,0); (-15,0)}; (-15,3)*{\rho_1^1};
{\ar (0,0); (0,-15)}; (-3,-15)*{\rho_2^1};
{\ar (0,0); (15,-15)}; (18,-14)*{\rho_3^1};
\endxy
\]
\caption{Cubic surface developing map.}\label{cubic dev fig}
\end{figure} 
\end{eg}

\subsection{Convex Integral piecewise linear Functions on $U^{\trop}$}

 If we choose a monoid $P$ in our lattice $P^{gp}$, we can define what it means for a $\s{PL}_{P^{gp}}$ function $f$ to be convex along some ray $\rho$.  Let $\sigma^+$ and $\sigma^-$ denote disjoint open convex cones in $U^{\trop}$ with $\rho$ contained in each of their boundaries.  Let $n_{\rho}$ be the unique primitive element of $\Lambda^*$ which vanishes along the tangent space to $\rho$ and is positive on vectors pointing from $\rho$ into $\sigma^+$.  We note that $n_{\rho}$ may be viewed as $\pm v_\rho \wedge \cdot$, with the sign being positive if $\sigma_+$ is chosen to be counterclockwise of $\rho$.
 
 Observe than any integral linear function $f$ can be given on a cone $\sigma$ by some $f_{\sigma} \in \Lambda^*$, using the local embedding of $\sigma$ in its tangent spaces.  Since the cotangent spaces on either side of $\rho$ can be identified via parallel transport, we can compute
\begin{align*}
	 f_{\sigma^+}-f_{\sigma^-} = p_{\rho,f}n_{\rho}.
\end{align*}
Here, $p_{\rho,f}\in P^{gp}$ is called the {\it bending parameter} of $f$ along $\rho$.  Note that this is independent of which side of $\rho$ we call $\sigma^+$ and which we call $\sigma^-$.  We say that $f$ is {\it convex} (resp. strictly convex) along $\rho$ if $p_{\rho,f} \in P$ (resp. $P\setminus P^{\times}$, where $P^{\times}$ denotes the invertible elements of $P$).  We note that these notions naturally generalize to all integral linear manifolds.

For the rest of this section we will assume $P^{gp}=\bb{Z}$ and $P=\bb{Z}_{\leq 0}$.

\subsubsection{Piecewise Linear Functions in terms of Weil Divisors} \label{PLF}

Let $\varphi$ be a rational piecewise linear function on $U^{\trop}$ (that is, we are allowing rational values at integral points).  We will always assume that we have taken enough toric blowups of $(Y,D)$ so that $D_v \subset D$ for every $\rho_v$ along which $\varphi$ bends.  As in \S \ref{Ut}, we define a rational Weil divisor 
\begin{align*}
	W_{\varphi} := \sum_i \varphi(v_i) D_i.
\end{align*}
 Then it follows from Equation \ref{linear} that $p_i:= W_{\varphi} \cdot D_i$ is the bending parameter of $\varphi$ along $\rho_i$. 

Conversely, for any nonsingular compactification $(Y,D)$ of $U$ and any rational Weil divisor $W=\sum_i w_i D_i$ supported on $D$, there is a unique rational piecewise linear function $\varphi_W$ taking values $w_i$ on $v_i$ and bending only on the $\rho_i$'s.  $\varphi_W$ is integral if and only if $W$ is integral.  The bending parameter at $\rho_i$ is given by $W\cdot D_i$.  That is, if we view $W$ as a vector $W=(w_1,\ldots,w_n)$ in $\langle D \rangle$ (the lattice freely generated by the $D_i$'s), then the bending parameters of $\varphi_W$ are given by the vector 
\begin{align*}
	P=(p_1,\ldots,p_n)=H W,
\end{align*}
where $H =\left( D_i \cdot D_j\right)$ is the intersection matrix.  So given a collection of bending parameters $p_i$, there is a unique rational piecewise linear function on $U^{\trop}$ with these bending parameters if and only if $H$ is invertible, and it is given by the $\bb{Q}$-Weil divisor $W=H^{-1}P$.

Assume for now that $H$ is invertible over $\bb{Q}$.  Let $v\in U^{\trop}_0(\bb{Z})$.  We have $v=p_v v'$ for some non-negative integer $p_v$ and some primitive vector $v'$ on the ray $\rho_v$.  Let $\beta_{v}$ denote the unique rational piecewise linear function on $U^{\trop}$ which bends only on $\rho_v$ with bending parameter $-p_v$.  Note that the sums of functions of this form are exactly the convex rational piecewise linear functions on $U^{\trop}$ with integral bending parameters.

Let $\psi_{\rho_v}$ denote the unique convex {\it integral} piecewise linear function which bends only on $\rho_v$ with the smallest (in absolute value) possible nonzero bending parameter $b_v$ ($b_v$ may have to be less than $-1$ to ensure that $\psi_{\rho_v}$ can be integral).  The following proposition illustrates the utility of this Weil divisor perspective for understanding functions on $U^{\trop}$.

\begin{prop} \label{BetaSymmetry}
Assume $H$ is invertible over $\bb{Q}$.  For $v,w \in U^{\trop}(\bb{Z})$, we have $\beta_{v}(w) = \beta_{w}(v)$, and $\psi_{\rho_v}(b_w w') = \psi_{\rho_w}(b_v v')$
\end{prop}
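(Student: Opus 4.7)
The idea is to transport everything to the Weil divisor side via the correspondence $\varphi \leftrightarrow W_{\varphi}$ from \S\ref{PLF}, and then read off both identities as a consequence of the symmetry of the intersection matrix $H$.

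First I would take a common toric blowup $(Y,D)$ of $(Y,D)$ on which both $\rho_v$ and $\rho_w$ appear as rays of $\Sigma$. Say $D_v = D_i$ and $D_w = D_j$, with primitive generators $v' = v_i$ and $w' = v_j$. Because $H$ is assumed invertible over $\bb{Q}$, the discussion in \S\ref{PLF} implies that a rational piecewise linear function with prescribed bending parameters $P = (p_1,\ldots,p_n)$ exists uniquely, and its associated Weil divisor is $W = H^{-1} P$. In particular, $\beta_v$ exists uniquely because its prescribed bending parameter vector is $P_v = -p_v e_i$, and we obtain
\begin{align*}
W_{\beta_v} = -p_v\, H^{-1} e_i.
\end{align*}

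Next, since $\beta_v$ is linear along each ray and $\beta_v(v_k) = (W_{\beta_v})_k$ for every ray generator, I would compute
\begin{align*}
\beta_v(w) = \beta_v(p_w v_j) = p_w\, (W_{\beta_v})_j = -p_v p_w\, (H^{-1})_{ji},
\end{align*}
and likewise $\beta_w(v) = -p_v p_w\, (H^{-1})_{ij}$. The identity $\beta_v(w) = \beta_w(v)$ then follows from the symmetry $H^{T} = H$, which transfers to $H^{-1}$.

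The second identity is proved by exactly the same calculation with $-p_v$ replaced by the integer bending parameter $b_v$. Namely, $W_{\psi_{\rho_v}} = b_v H^{-1} e_i$ and $W_{\psi_{\rho_w}} = b_w H^{-1} e_j$, so
\begin{align*}
\psi_{\rho_v}(b_w w') = b_w\, (W_{\psi_{\rho_v}})_j = b_v b_w\, (H^{-1})_{ji} = b_v b_w\, (H^{-1})_{ij} = \psi_{\rho_w}(b_v v'),
\end{align*}
again by symmetry of $H^{-1}$. The only real input beyond unravelling the Weil divisor dictionary is the symmetry of the intersection product; the main (minor) obstacle is simply taking care of toric blowups so that $\rho_v$ and $\rho_w$ are simultaneously rays of some common $\Sigma$, which by Remark \ref{charts} leaves $U^{\trop}$ and its linear structure unchanged.
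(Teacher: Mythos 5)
Your proof is correct and follows the same strategy as the paper: pass to the Weil divisor side via the dictionary of \S\ref{PLF}, and read off both symmetries from the fact that $H^{-1}$ is symmetric. The only cosmetic difference is that you make the matrix computation fully explicit (and keep the sign $-p_v$ straight, which the paper's line $W_{\beta_v}=H^{-1}D_v$ elides) and verify the second identity directly rather than reducing it to the first via $\psi_{\rho_v}=\beta_{b_v v'}$.
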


\begin{proof}
Fix a compactification $(Y,D=D_1+\ldots+D_n)$, and view $D_v=p_v D_{v'}$ and $D_w=p_w D_{w'}$ as vectors in $\langle D_1,\ldots,D_n \rangle$.  Then $W_{\beta_v} = H^{-1} D_v$, and we have 
\begin{align*}
 \beta_{v}(w) = D_w^{T} H^{-1} D_v.
\end{align*}
So the first part of the proposition follows from the fact that the intersection form is symmetric.  The second part then follows because $\psi_{\rho_v} = \beta_{b_v v'}$.
\end{proof}

\subsection{Lines and Polygons in $U^{\trop}$} \label{valu}

Understanding lines and polygons in $U^{\trop}$ is important when studying compactifications of the mirror.  This will be essential when we investigate the tropicalizations of the theta functions in \S \ref{trop}.

\subsubsection{Lines in $U^{\trop}$} \label{lines}

By a ``line'' in $U^{\trop}$, we will mean a geodesic with respect to the canonical flat connection on $U^{\trop}_0$.  That is:
\begin{dfns}
A {\it parametrized line} in $U^{\trop}$ is a continuous map $L:\bb{R}\rar U^{\trop}_0$ 
 such that $L'(t_1)$ and $L'(t_2)$ are related by parallel transport along the image of $L$ for all $t_1,t_2\in \bb{R}$.   A {\it line} is the data of the image $L(\bb{R})$ and the vectors $L'(t)\in TU^{\trop}_0$, $t\in\bb{R}$, for some parametrized line $L$ (equivalently, a line is a parametrized line up to a choice of shift $t\mapsto t+c$ of the domain).  We may abuse notation by letting $L$ denote the unparametrized line or its image.

The {\it (signed) lattice distance} of a (parametrized) line from the origin is defined to be
\begin{align*}
	\dist(L,0): = L(t) \wedge L'(t)
\end{align*}
where $t$ is any point in $\bb{R}$, and the point $L(t)$ is identified with a vector in its tangent space.  Note that $d(L,0)>0$ means $L$ is going counterclockwise about the origin.

Now, for $q \in U^{\trop}_0$ and $d \in \bb{R}$, we define $L_q^d$ to be the line which goes to infinity parallel to $q$ and has lattice distance $d$ from the origin.  By {\it going to infinity parallel to} $q$ we mean that for any open cone $\sigma \ni q$, there is some $t_\sigma\in \bb{R}$ such that $t>t_\sigma$ implies $L(t) \in \sigma$ and $L'(t)=q$ under parallel transport in $\sigma$.  

We may similarly define {\it coming from infinity parallel to} $q$ by replacing $t>t_\sigma$ with $t<t_\sigma$ and replacing $L'(t)=q$ with $-L'(t)=q$.  We denote the directions in which a line $L$ goes to and comes from infinity by $L(\infty)$ and $L(-\infty)$, respectively.

\end{dfns}

\begin{rmk}
In general, a line need not go to or come from infinity at all.  In fact, one characterization of $U$ being positive is that every line in $U^{\trop}$ both goes to and comes from infinity.
\end{rmk}

\begin{dfn}\label{ZeroLine}
We define $L_q^0$ to be the limit of $L_q^{d}$ as $d$ approaches $0$ from below.  In other words, it consists of the ray coming in from the direction $L_q^{d<0}(-\infty)$ and hitting $0$, as well as the ray leaving the origin in the direction $q=L_q^{d<0}(\infty)$.  When we use the term ``line,'' we will be excluding the $d=0$ cases unless $L_q^0$ is invariant under the monodromy.
\end{dfn}

We say that a line $L_q$ {\it wraps} if it intersects every ray, except possibly $\rho_q$, at least once.  It wraps $k$ times if it hits each ray at at least $k$ times, except possibly for $\rho_q$, which it might only hit $k-1$ times.

We call the connected component of $U^{\trop}\setminus L$ containing the origin the $0$-side of $L$, denoted $Z(L)$.  We say a line $L_q^d$ has $0$ on the left if $d>0$, and on the right if $d<0$.  We will write $L_q^{d>0}$ or $L_q^{d<0}$ when we want to clarify that $0$ is on the left or right side, respectively, without having to specify $d$.  
  Let $L_q^{d,0} \subseteq L_q^d$ denote the boundary of the $0$-side.  Note that  $L_q^{d,0} = L_q^d$ exactly when the line does not self-intersect.

\begin{egs}
\hspace{.01 in}
\begin{itemize}[noitemsep]
\item If $(Y,D)$ is toric, then $U^{\trop} \cong \bb{R}^2$, and lines are just the usual notion of lines with a chosen constant velocity.
\item If $(Y,D)$ is the cubic surface from Example \ref{cubicdevelop}, then for any ray $\rho \subset {U^{\trop}}$, $U^{\trop}\setminus \rho$ is isomorphic (as an integral linear manifold) to an open half-plane.  Any line will go to and come from infinity in the same direction---we call such lines {\it self-parallel}.  If we now make a non-toric blowup on some $D_{\rho_q}$, then in the new integral linear manifold, $L_{q'}^{d}$ ($d\neq 0$) will self-intersect if $q'\neq q$, but will still be self-parallel if $q'=q$.  We will see in \S \ref{trop} that $L_{q'}^{d<0}$ self-intersecting corresponds to the theta function $\vartheta_{q'}$ having poles along every boundary divisor.
\item See Figure \ref{linefig} for illustrations of some possible lines.  
\end{itemize}
\end{egs}

\subsubsection{Polygons in $U^{\trop}$}

\begin{dfns} 
\hspace{.01 in}
\begin{itemize}[noitemsep] 
\item A {\it polytope} $\Delta \subset U^{\trop}$ is the closure of a set homeomorphic to an open $k$-ball for some $k\leq 2$ such that the boundary is a finite union of line segments and rays.  We also consider a point to be a polytope.  
 By {\it polygon}, we will mean a $2$-dimensional polytope.
 \item A polytope $\Delta$ is {\it convex} if any line segment in $U^{\trop}$ (including those which wrap around the origin) with endpoints $\Delta$ is entirely contained in $\Delta$.  
\item A polytope is {\it integral} (resp. {\it rational}) if all of its vertices are integral (resp. rational) points.  
\item A polygon is {\it nonsingular} if at each vertex of the form $v=F_1\cap F_2$ ($F_i$ edges), we have that primitive generators of $F_1$ and $F_2$ generate the lattice $\Lambda_p$ of integral tangent vectors at $p$.
\end{itemize}
\end{dfns}

We will be especially interested in polygons with $0$ in their interiors.

\begin{lemdfn}\label{PolyLemma} \begin{itemize}[noitemsep] Suppose that lines in $U^{\trop}$ all go to and come from infinity (equivalently, $U$ is positive).  Also, let $P^{gp}=\bb{Z}$, and $P=\bb{Z}_{\leq 0}$.    We then have:
\item A star-shaped (i.e., closed under multiplication by elements of $[0,1]$) {\it polygon} is a set $\Delta_{\varphi}\subset U^{\trop}$ of the form $\varphi \geq -1$ for some piecewise linear function $\varphi$ on $U^{\trop}$.
\item $\Delta_{\varphi}$ is {\it convex} if and only if $\varphi$ is convex.  Equivalently, the star-shaped polygon is convex if it is the closure of the intersection of a finite number of $0$-sides of lines in $U^{\trop}$, or equivalently, if it is convex on some cone-neighborhood of each vertex in the usual sense.  
\item $\Delta_{\varphi}$ is bounded if and only if $\varphi<0$ everywhere on $U_0^{\trop}$.  
\end{itemize}
\end{lemdfn}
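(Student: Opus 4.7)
The plan is to treat the three bullets separately, passing back and forth between polygons and their defining piecewise-linear functions via a radial gauge construction.

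For part (1), suppose $\varphi$ is piecewise linear on $U^{\trop}$. Since every linearity cone of $\varphi$ has the origin in its closure and $\varphi$ has no bend through the origin inside a single cone, $\varphi$ is linear in a neighbourhood of $0$; hence $\varphi(0)=0$ and $\varphi(tv)=t\varphi(v)$ for every $v\in U^{\trop}$ and $t\in[0,1]$. It follows that $\Delta_\varphi=\{\varphi\geq -1\}$ is closed under scaling by $[0,1]$, hence star-shaped, and its boundary is the union of the segments $\{\varphi=-1\}$ over linearity cones, a finite collection since $\varphi$ is piecewise linear. Conversely, given a star-shaped polygon $\Delta$ with $0$ in its interior, refine the fan of $(Y,D)$ so that every vertex-ray of $\Delta$ is a ray of the refinement; on each resulting cone define $\varphi$ to take value $-1$ at the unique point $\Delta\cap\rho$ on each bounding ray $\rho$, and extend linearly. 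Star-shapedness of $\Delta$ then gives $\Delta\cap\sigma=\{\varphi\geq -1\}\cap\sigma$ cone by cone.

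For part (2), the key local observation is that, with $P^{gp}=\bb{Z}$ and $P=\bb{Z}_{\leq 0}$, a piecewise-linear $\varphi$ is convex at a ray $\rho$ exactly when, on a two-cone neighbourhood of $\rho$, $\varphi=\min(\ell^+,\ell^-)$ for the integral linear extensions $\ell^{\pm}$ from either side. Each set $\{\ell^{\pm}\geq -1\}$ is the $0$-side of a line, since $\{\ell^{\pm}=-1\}$ is straight in the flat connection; positivity of $U$ (so every line goes to and comes from infinity) allows us to continue this segment to a full line $L$ in $U^{\trop}$. Iterating across each bending ray of $\varphi$ expresses $\Delta_\varphi$ as a finite intersection of $0$-sides of lines. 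Closure under line segments (including wrapping segments) is immediate for such an intersection, because any segment with endpoints in $\Delta_\varphi$ lies in each $Z(L)$ individually: it cannot cross the line $L$ without one of its endpoints being on the far side. Conversely, if $\Delta_\varphi$ is closed under segments, then at every vertex on a bending ray $\rho$ the local shape of $\partial\Delta_\varphi$ in a chart containing $\rho$ forces $p_{\rho,\varphi}\leq 0$; i.e., $\varphi$ is convex. The cone-neighbourhood characterisation is just a restatement of this local test at each vertex.

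For part (3), the easy direction uses radial linearity: if $\varphi(v_0)\geq 0$ for some $v_0\in U_0^{\trop}$, then $\varphi(tv_0)=t\varphi(v_0)\geq 0>-1$ for all $t\geq 0$, so the whole ray $\bb{R}_{\geq 0}\cdot v_0$ lies in $\Delta_\varphi$, which is unbounded. Conversely, if $\varphi<0$ on $U_0^{\trop}$, then on each linearity cone $\sigma$ the restriction $\varphi|_{\sigma}$ is a nonzero linear functional, negative away from the origin, hence tends to $-\infty$ along every ray in $\sigma$; in a chart on $\sigma$ the set $\{\varphi\geq -1\}\cap\sigma$ is bounded, so $\Delta_\varphi$ is bounded as a finite union of such pieces.

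The main obstacle is the ``closure under segments $\Leftrightarrow$ intersection of $0$-sides of lines'' equivalence in part (2): one must verify that each local $\min$-of-linears decomposition of $\varphi$ gives rise to a genuine line in $U^{\trop}$ with a well-defined $0$-side, and that wrapping segments which could potentially skirt the singular origin really do respect these $0$-sides. Both steps rely on positivity (so the candidate lines in fact go to and come from infinity) and on the fact that the integral linear structure on $U^{\trop}_0=U^{\trop}\setminus\{0\}$ is unambiguous, so the intersection of $0$-sides is well defined even though individual linear extensions $\ell^{\pm}$ have monodromy around the origin.
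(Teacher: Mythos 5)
The paper presents this as a Lemma/Definition with no accompanying proof, so there is no argument in the text to compare against; I will assess your argument on its own. Your treatments of part (1) (radial linearity from the vanishing of $\varphi$ at the origin, and the converse by refining the fan) and part (3) (radial scaling gives unboundedness when $\varphi\geq 0$ somewhere; strict negativity on each of finitely many linearity cones gives boundedness) are fine.

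Part (2) has a genuine gap at the segment-closure step. You assert that a segment with both endpoints in $Z(L)$ "cannot cross the line $L$ without one of its endpoints being on the far side." But when $L$ wraps, as it typically does in a positive non-toric $U^{\trop}$, the complement $U^{\trop}\setminus L$ has more than two components, so there is no single far side; and more to the point, a wrapping segment could a priori cross $L$ and return, leaving both endpoints on the $0$-side. Your closing remark about the unambiguity of the linear structure on $U^{\trop}_0$ addresses well-definedness of the $0$-sides, not this separation question. To make your route work you would have to show, for example, that the preimage of $Z(L)$ in the universal cover of $U^{\trop}_0$ is an honest intersection of half-planes bounded by the lifts $\mu^k\wt{L}$, which is true but not immediate. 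A cleaner way to get the direction "$\varphi$ convex $\Rightarrow\Delta_\varphi$ closed under segments" is to avoid $0$-sides entirely: for a geodesic $\gamma:[0,1]\to U^{\trop}_0$, the composite $\varphi\circ\gamma$ is piecewise linear in $t$, and at a time $t_0$ where $\gamma$ crosses a bending ray $\rho$ from $\sigma^-$ into $\sigma^+$, the jump in slope is $p_{\rho,\varphi}\,n_\rho(\gamma'(t_0))$, where $n_\rho(\gamma'(t_0))>0$ since $\gamma'(t_0)$ points into $\sigma^+$ and $p_{\rho,\varphi}\leq 0$ by convexity of $\varphi$. Hence $\varphi\circ\gamma$ is concave, so $\varphi(\gamma(t))\geq\min\bigl(\varphi(\gamma(0)),\varphi(\gamma(1))\bigr)\geq -1$; this holds for wrapping segments as well, giving segment-closure directly. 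One can then derive the finite-intersection-of-$0$-sides description as a consequence, and your converse (a positive bending parameter at $\rho$ produces a reflex vertex of $\partial\Delta_\varphi$, violating segment-closure locally) and the cone-neighbourhood characterization are fine as stated.
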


\section{Construction of Theta Functions and the Mirror} \label{mirror}

This section summarizes \cite{GHK1}'s construction of the mirror family. We assume from now on that $(Y,D)$ is positive, unless otherwise stated.  This assumption simplifies the details of the construction, the notation, and the statements of the theorems from \cite{GHK1}, but the basic ideas of the construction are unchanged.  In \S \ref{compact}, we describe how to obtain fiberwise compactifications of the mirror as in \cite{GHK2}.  These compactifications do require positivity.

\subsection{Setup} \label{setup}

Choose some lattice $P^{gp}$ and some convex rational polyhedral cone $P_{\bb{R}}\subset P^{gp}_{\bb{R}}:=P^{gp}\otimes \bb{R}$.  Define $P:=P_{\bb{R}}\cap P^{gp}$.  When constructing the mirror over $\Spec \kk[P]$, we will need a choice $\varphi$ of ``multi-valued'' convex integral $\Sigma$-piecewise linear function.  As in \S \ref{int lin fun}, $\s{L}_{P^{gp}}$ (resp., $\s{PL}_{P^{gp}}$) denotes the sheaf of $P^{gp}$-valued integral linear functions (resp., integral piecewise linear functions) on $U^{\trop}$.  
  The sheaf of {\it multi-valued} integral piecewise linear functions is defined to be the quotient sheaf $\s{PL}_{P^{gp}}/\s{L}_{P^{gp}}$.  Note that the equivalence class of such a function is uniquely determined by choosing its bending parameters.

\begin{egs}
\hspace{.01 in}
\begin{itemize}
 \item One may take $P^{gp}:=A_1(Y,\bb{Z}) \cong \Pic(Y)^{*}$ and $P$ to be the Mori Cone $\NE(Y)$.  The fact that $P_{\bb{R}}:=\NE(Y)_{\bb{R}}$ is finitely generated here follows from the Cone Theorem and our assumption that $(Y,D)$ is positive.\footnote{When working without the positivity assumption, \cite{GHK1} chooses some strictly convex rational polyhedral cone containing $\NE(Y)_{\bb{R}}$ to be $P_{\bb{R}}$ ($\sigma_P$ in their notation).}  Take $\varphi:=\varphi_{\NE(Y)}$ to be the multi-valued integral piecewise linear function whose bending parameter along $\rho_i$ is $[D_i]$ for each $i$.  We may refer to the resulting mirror as the ``universal mirror family.'' 
 \item   Taking another choice of $P^{gp}$ and $P$ together with a surjective monoid homomorphism $\eta:\NE(Y)\rar P$ will define a subfamily.  For example, since $(Y,D)$ is positive, there exists a strictly effective $D$-ample Weil divisor $W$ supported on $D$ (strictly effective meaning effective with support equal to $D$). 
  Define $\eta_W:A_1(Y,\bb{Z})\rar \bb{Z}=:P^{gp}$, $C\mapsto -W\cdot C$.  Then the multi-valued function $\eta_W\circ \varphi_{\NE(Y)}$ has bending parameters $-W\cdot D_i$ along $\rho_i$, and thus is represented by the single-valued function $-\varphi_W$ as in \S \ref{PLF}.  In particular, $-\varphi_W$ is strictly convex for $P:=Z_{\leq 0}$.   The resulting family is a $1$-parameter subfamily of the universal one.
 \item\label{GSlocus} Let $E_1,\ldots,E_s$ denote the exceptional divisors of some toric model $\pi$ for $(Y,D)$.  Let $\NE_{\pi}(Y)$ denote the subcone of $A_1(Y,\bb{Z})$ spanned by $\NE(Y)$ and $-[E_i]$, $i=1,\ldots,s$.  We can then take $\eta$ to be the inclusion $\eta_{\pi}:\NE(Y)\hookrightarrow \NE_{\pi}(Y)$.  The resulting open subfamily of the universal family is used in \S 3 of \cite{GHK1} to relate their scattering diagrams to those of \cite{GPS}.
 \end{itemize} 
\end{egs}

We will always assume that $\varphi$ is given by $\eta\circ \varphi_{\NE(Y)}$ as in the examples above.

\cite{GHK1} defines a certain $P_{\bb{R}}^{gp}$-principal bundle $r:\bb{P}_{\varphi}\rar U^{\trop}$ which we may view $\varphi$ as a section of.\footnote{For $\varphi=\eta_W\circ \varphi_{\NE(Y)}$ as in the second example above, we can take $\bb{P}_{\varphi}$ to be the trivial bundle with $\varphi=\varphi_W$.  Since we are really only interested in tropicalizations of theta functions in this paper, restricting to this situation would be sufficient, and one can in this way avoid worrying about multi-valued functions.}  $\bb{P}_{\varphi}$ is defined as follows: Let $U_i$ denote $\sigma_{v_{i-1},v_{i+1}}$.  Choose representatives $\varphi_{U_i}$ of $\varphi$ on $U_i$ for each $i$, and glue a local trivialization $U_1\times P_{\bb{R}}^{gp}$ to a local trivialization $U_2\times P_{\bb{R}}^{gp}$ by identifying $(x,p) \in (U_1\cap U_2)\times P_{\bb{R}}^{gp} \subset U_1 \times P_{\bb{R}}^{gp}$ with $(x,p+\varphi_{U_2}(x)-\varphi_{U_1}(x)) \in (U_1\cap U_2)\times P_{\bb{R}}^{gp} \subset U_1 \times P_{\bb{R}}^{gp}$.  Since $\varphi_{U_2}-\varphi_{U_1}$ is an integral linear function, we see that $\bb{P}_{\varphi}$ has an integral linear structure.  When viewing $\varphi$ as a section of $\bb{P}_{\varphi}$, we may write $\wt{\varphi}$ to avoid confusion.

\subsubsection{The Cone Bounded by $\varphi$}\label{conephi}

As mentioned above, $\bb{P}:=\bb{P}_{\varphi}$ has an integral linear structure.  The fiber over $0$ is the singular locus, and the integral points are $\bb{P}_{\bb{Z}}:=\wt{\varphi}(U^{\trop}(\bb{Z}))+P^{gp}$.  Define $\tau_{\bb{R}}:=\wt{\varphi}(U^{\trop})+P_{\bb{R}}$, and let $\tau:=\tau_{\bb{R}}\cap \bb{P}_{\bb{Z}}$.  Note that the flat connection on $U^{\trop}_0$ lifts to a flat connection on $r_0:\bb{P}_0\rar U^{\trop}_0$ (the subscript $0$ will always mean we are taking the complement of the $0$-fiber).

Recall that $\Sigma$ denotes the fan with a ray $\rho_i$ for each $D_i \subset D$, 
and $\sigma_{i,i+1}$ is the closed cone bounded by $\rho_i$ and $\rho_{i+1}$.  We have a cones $\tau_{i,i+1,\bb{R}}:=\tau_{\bb{R}} \cap r^{-1}(\sigma_{i,i+1})$ with integral points $\tau_{i,i+1}:=\tau \cap r^{-1}(\sigma_{i,i+1})$.

Now define $\s{P}_{\bb{R}}:=(r_0)_{*} T\bb{P}_0$, the pushforeward of the tangent bundle of $\bb{P}_0$, and the subset $\s{P}:=(r_0)_{*} \Lambda\bb{P}_0$, the pushforeward of the integral tangent vectors.  Note that for any point $x$ in a set $S\subset U^{\trop}$ which is contained in the complement of some ray, there is an identification of $r^{-1}(S)$ with a subset of $(\s{P}_{\bb{R}})_x$, unique up to a linear function and identifying the points in $r^{-1}(S)\cap \bb{P}_{\bb{Z}}$ with points in $\s{P}_x$.  Furthermore, this identification can be chosen to commute with parallel transport along any path contained in $S$.  We may therefore write $\s{P}_S$ to mean $\s{P}_x$ for arbitrary $x \in S$.  For example, we have embeddings of $\tau_{i,i+1,\bb{R}}$ and $\tau_{i,i+1}$ into $\s{P}_{\sigma_{i,i+1}}$.  We will similarly denote $\s{P}_{S,\bb{R}}:=(\s{P}_{\bb{R}})_x$ for $x\in S$.  We will use these identifications freely.

\subsubsection{The Toric Case}\label{toric case}

If $(Y,D)$ is a toric variety with its toric boundary, then $\bb{P}$ can be identified with the vector space $U^{\trop}\times P^{gp}_{\bb{R}}$ (using \cite{GHK1}, Lemma 1.14), and this induces a monoid structure on $\tau$ (usually the monodromy about the $0$-fiber prevents us from having this global monoid structure).  In this case, the mirror family $\s{V}$ is simply $\Spec(\kk[\tau]) \rar \Spec(\kk[P])$, where the morphism comes from the inclusion of $P$ into $r^{-1}(0)$.   This is the well-known Mumford degeneration.  The central fiber is $\bb{V}_n:= \bb{A}_{x_1,x_2}^2 \cup  \bb{A}_{x_2,x_3}^2 \cup \ldots \cup \bb{A}_{x_n,x_1}^2 \subset \bb{A}_{x_1,\ldots,x_n}^n$ ($n \geq 3$), and the general fiber is $(\kk^*)^2$ (cf. \cite{GHK1}, \S 1.2).

Also in the toric case, given a convex integral polygon $\Delta$ in $U^{\trop}$, we can define a convex integral polygon $\Delta^{\?{\varphi}}:= \?{\varphi}(\Delta)+P_{\bb{R}} \subset \bb{P}$. 
 The corresponding toric variety $\s{V}_{\Delta^{\?{\varphi}}}$ is then a (partial) compactification of $\s{V}$.

In a non-toric case we do not have a natural global way to add points of $\tau$.  However, the identification with a cone in the tangent space does give us a natural monoid structure on $r^{-1}(\sigma)$ for any convex cone $\sigma$ in $U^{\trop}$.  
 Consider $\tau_{\rho_i}:=\tau_{i-1,i}+\tau_{i,i+1} \subset \s{P}_{\rho_i}$. 
 Now for any $\rho \subseteq \sigma\subset U^{\trop}$ ($\rho$ and $\sigma$ cones of dimension $1$ or $2$), define 
\begin{align}
    \tau_{\rho,\sigma}:=\tau_{\rho}-\wt{\varphi}(\sigma\cap U^{\trop}(\bb{Z})) = \{ x-y\in \s{P}_{\rho,\bb{R}} | x\in \tau_{\rho}, y \in \wt{\varphi}(\sigma\cap U^{\trop}(\bb{Z}))\}.
\end{align}
That is, we allow negation of integral points on the image of $\wt{\varphi}|_{\sigma}$.  Define $R_{\rho,\sigma}:=\kk[\tau_{\rho,\sigma}]$, and $\s{V}_{\rho,\sigma}:=\Spec(R_{\rho,\sigma})$.  Note that $R_{\rho,\sigma}$ is the localization of $R_{\rho,\rho}$ by functions of the form $z^{\wt{\varphi}(x)}$ for $x\in \sigma\cap U^{\trop}(\bb{Z})$.

The plan for constructing the mirror family is then to glue $\s{V}_{\rho_i,\rho_i}$ to $\s{V}_{\rho_{i+1},\rho_{i+1}}$ for each $i$, via an isomorphism $R_{\rho_i,\sigma_{i,i+1}} \wt{\rar} R_{\rho_{i+1},\sigma_{i,i+1}}$.  We do naturally have $R_{\rho_i,\sigma_{i,i+1}}$ identified with $R_{\rho_{i+1},\sigma_{i,i+1}}$ by parallel transport in $\sigma_{i-1,i}\cup \sigma_{i,i+1}$, but this naive identification is not the correct gluing: it gives a flat deformation of $\bb{V}_n^0:=\bb{V}_n\setminus \{0\}$, but this does not extend to a deformation of $\bb{V}_n$ (except in the toric case).  The problem is essentially that locally defined functions generally do not commute with transportation around the origin.  We therefore need a modified version of this gluing.

The correct modifications are defined in terms of a certain canonical {\it scattering diagram} in $U^{\trop}$.  We will also need an automorphism of $R_{\rho_i,\rho_i}$ for each $i$, and we will think of these as isomorphisms between $R_{\rho_i,\rho_i}^+:=R_{\rho_i,\rho_i}$ (thought of as corresponding to the cone $\sigma_{i,i+1}$) and $R_{\rho_i,\rho_i}^-:=R_{\rho_i,\rho_i}$ (associated with the cone $\sigma_{i-1,i}$).  Plus signs and minus signs as superscripts will always have these meanings for us.

\subsection{The Consistent Scattering Diagram} \label{scat}
A scattering diagram $\f{d}$ includes the data of a set of rays in $U^{\trop}$ with associated functions which satisfy certain conditions.  These functions are used to define certain ring automorphisms, and for the ``consistent" scattering diagram which we will define, these automorphisms make it possible to construct the scheme we were after in the previous subsection.

For a ray $\rho \subset U^{\trop}$ with rational slope, let $D_{\rho}$ be the corresponding boundary divisor in $(\wt{Y},\wt{D})$ (some toric blowup $\pi$ of $(Y,D)$).  Let $\beta \in H_2(\wt{Y},\bb{Z})$ with $k_{\beta}:=\beta\cdot D_{\rho} \in \bb{Z}$, and $\beta\cdot D_{\rho'}=0$ for $\rho\neq \rho'$.  Let $F_{\rho}:= \?{D\setminus D_{\rho}}$, $\wt{Y}_{\rho}^{\circ} := \wt{Y}\setminus F_{\rho}$, and $D_{\rho}^0:=D\setminus F_{\rho}$.

Now, define $\?{\s{M}}(\wt{Y}^{\circ}_\rho/D^{\circ}_{\rho},\beta)$ to be the moduli space of stable relative maps\footnote{For details on relative Gromov-Witten invariants, see \cite{Li}, or see \cite{GPS} for a treatment of this particular situation.} of genus $0$ curves to $\wt{Y}^{\circ}_{\rho}$, representing the class $\beta$ and intersecting $D_{\rho}^{\circ}$ at one unspecified point with multiplicity $k_{\beta}$.  This moduli space has a virtual fundamental class with virtual dimension $0$.  Furthermore, $\?{\s{M}}(\wt{Y}^{\circ}_\rho/D^{\circ}_{\rho},\beta)$ is proper\footnote{See Theorem 4.2 of \cite{GPS}, or Lemma 3.2 of \cite{GHK1}.} over $\Spec \kk$.  Thus, we can define the relative Gromov-Witten invariant $N_{\beta}$ as
\begin{align*}
    N_\beta := \int_{[\?{\s{M}}(\wt{Y}_\rho/D_{\rho},\beta)]^{vir}} 1.
\end{align*}
This is a virtual count of the number of curves in $\wt{Y}$ of class $\beta$ which intersect $D$ at precisely one point on $D_{\rho}^{\circ}$.  If $N_\beta \neq 0$, we call $\beta$ an $\bb{A}^1$ class.

Recall that $\eta$ denotes a homomorphism from $\NE(Y)$ to $P$.  We now define
\begin{align*}
    f_{\rho}:= \exp \left[ \sum_\beta k_{\beta} N_{\beta} z^{\eta(\pi_{*}(\beta))-\wt{\varphi}(k_{\beta}v_{\rho})} \right] \in R_{\rho,\rho}.
\end{align*}
Here, the sum is over all $\beta \in \NE(\wt{Y})$ which have $0$ intersection with all boundary divisors except for $D_{\rho}$. 

\begin{eg} \label{ScatterExample}
Let $\?{Y}=\bb{P}^2$ and let $\?{D}=\?{D_1}+\?{D_2}+\?{D_3}$ be a triangle of generic lines in $\?{Y}$.  Consider the pair $(Y,D)$ obtained by preforming a single non-toric blowup at a point on $\?{D_1}$.  Let $\beta=E_1$ be the exceptional divisor.  Then $N_\beta = 1$.  Due to the stacky nature of $\?{\s{M}}(Y_\rho/D_{\rho},\beta)$, $N_{\beta}$ might not always be a positive integer.  For example, with $Y$ and $\beta$ as above, we have $N_{k\beta} = \frac{(-1)^{k-1}}{k^2}$ (see \cite{GPS}, Proposition 6.1).

These multiple covers of $E_1$ are the only $\bb{A}^1$ classes for $D_1$, so we can compute $f_{\rho_1}$.  Suppose $P^{gp} := A_1(Y)$ and $\eta:=\id$.  
  We have 
\begin{align*}
    f_{\rho_1} &= \exp \left[ \sum_{k\in \bb{Z}_{> 0}} k\left( \frac{(-1)^{k-1}}{k^2} \right)z^{k[E_1]-\varphi(kv_{\rho_1})-kv_{\rho_1}} \right] \\
               &= 1+z^{[E_1]-\varphi(v_{\rho_1})-v_{\rho_1}}.
\end{align*}

More generally, if the only $\bb{A}^1$-classes hitting $D_{\rho}$ are a set $\{E_1,\ldots,E_k\}$ of $(-1)$-curves, along with their multiple covers, then 
\begin{align*}
    f_{\rho} = \prod_{i=1}^k \left(1+z^{\eta(E_i) -\wt{\varphi}(v_{\rho})}\right)  
\end{align*}
\end{eg}

\subsection{Constructing the Mirror Family}

The family $\s{V}$ we wish to construct will be a flat affine deformation of $\bb{V}_n$, but we will first construct a flat formal deformation $\hat{\s{V}}$ of $\bb{V}_n$.  This of course comes from an inverse system of infinitesimal deformations ${\s{V}_k}$ of $\bb{V}_n$.

  Note that $P\setminus 0$ corresponds to a maximal ideal $\f{m}\subset \kk[P]$.  Thus, for any $\kk[P]$-algebra $R$ and any $k\in \bb{Z}_{\geq 0}$, we have an ideal $\f{m}^k R$.

As explained in \S \ref{toric case}, we want to use the scattering diagram to glue $\s{V}_{\rho_i,\rho_i}^+$ to $\s{V}_{\rho_{i+1},\rho_{i+1}}^-$ by identifying $\s{V}_{\rho_i,\sigma_{i,i+1}}$ with $\s{V}_{\rho_{i+1},\sigma_{i,i+1}}$. Since the scattering diagram generally has infinitely many rays, we cannot usually do this directly.

Instead, we note that there are only finitely many rays $\rho$ in the interior of $\sigma_{i,i+1}$ for which the function $f_{\rho}\not\equiv 1$ modulo $\f{m}^k R_{\rho,\rho} \subset \f{m}^k R_{\rho_i,\sigma_{i,i+1}} = \f{m}^k R_{\rho_{i+1},\sigma_{i,i+1}}$. This is because there are only finitely many points in $P\setminus k\f{m}_P$, and $\bb{A}^1$-classes with non-vanishing contributions live in $\NE(Y)\setminus k\f{m}_{\NE(Y)}$.  We therefore replace each ring $R$ of the construction with $R_k:=R/\f{m}^k R$.

Now, given a curve $\gamma:[0,1]\rar U^{\trop}_0$, we will define a corresponding homomorphism $\Pi_{\gamma}^{(\pm,\pm)}:\kk[\s{P}^{\pm}_{\gamma(0),k}] \rar \kk[\s{P}^{\pm}_{\gamma(1),k}]$.  The signs in the superscripts are explained below, and the $k$'s in the subscripts indicate that we are modding out by $\f{m}^k$.  This homomorphism comes from using parallel transport of $\s{P}$ along $\gamma$, except whenever $\gamma$ crosses a scattering ray $\rho$ with $f_\rho \not\equiv 1$ modulo $\f{m}^k R_{\rho,\rho}$, we apply the $\kk[\s{P}_{\rho,k}]$-automorphism  
\begin{align}\label{wallcross}
    z^u\mapsto z^u f_{\rho}^{\langle n_{\rho}, r_{*}(u)\rangle},
\end{align}
where $n_{\rho}$ is a primitive generator of $\Lambda_{\rho}^*$ which is $0$ along $\rho$ and positive on vectors pointing into the cone from which $\gamma$ came, and $\langle \cdot,\cdot\rangle$ denotes the dual pairing. 
 Of course, if $\gamma(0)$ and/or $\gamma(1)$ are contained in scattering rays, we need to specify whether or not we apply the automorphisms corresponding to these rays.  If the first sign of the superscript of $\Pi_{\gamma}^{(\pm,\pm)}$ is $+$ (resp. $-$), the decision of whether or not to begin with the scattering automorphism corresponding to $\gamma(0)$ is determined by viewing $\gamma(0)$ as lying infinitesimally counterclockwise (resp. clockwise) of the ray it sits on, and similarly for $\gamma(1)$ with the second sign.

    Now, we can identify $\s{V}^+_{\rho_i,\sigma_{i,i+1},k}:=\Spec(R^+_{\rho_i,\sigma_{i,i+1},k})$ with $\s{V}^-_{\rho_{i+1},\sigma_{i,i+1},k}$ using the $\kk[\s{P}_{\sigma_{i,i+1}}]$-automorphism given by $\Pi_{\gamma}^{+,-}$, where $\gamma(0) \in \rho_i$, $\gamma(1) \in \rho_{i+1}$, and $\gamma \subset \sigma_{i,i+1}$.  We thus glue $\s{V}_{\rho_i,\rho_i,k}^+$ to $\s{V}_{\rho_{i+1},\rho_{i+1},k}^-$ for all $i$.  
  Similarly, for each $i$, we can glue $\s{V}^-_{\rho_{i},\rho_i,k}$ to $\s{V}^+_{\rho_{i},\rho_i,k}$ via the automorphism $\Pi_{\gamma}^{-,+}$, where $\gamma(t)=v_i\in \rho_i$ for all $t\in [0,1]$.

Preforming all these gluings yields schemes $\s{V}_{k}^{\circ}$ which are flat infinitesimal deformations of $\bb{V}_n^{\circ}:=\bb{V}_n\setminus \{0\}$ over $\Spec (\kk[P]/\f{m}^k)$.  Then $\s{V}_k:=\Spec \Gamma(\s{V}_{k}^{\circ},\s{O}_{\s{V}_{k}^{\circ}})$ is a flat infinitesimal deformation of $\bb{V}_n$ over the same base.  Taking the inverse limit of the deformations $\s{V}_k\rar \Spec (\kk[P]/\f{m}^k)$ with respect to $k$ yields a flat formal deformation $\hat{\s{V}}\rar \Spf \wh{\kk[P]}$ of $\bb{V}_n$ over the formal spectrum of the $\f{m}$-adic completion of $\kk[P]$.  Finally, in these positive cases we can take the affinization \[\s{V}:=\Spec\Gamma(\hat{\s{V}},\s{O}_{\hat{\s{V}}})\] over $\Spec \kk[P]$.  This is the space on which we shall focus.

\subsection{Broken Lines and the Canonical Theta Functions} \label{theta}
In this section we describe a canonical $\kk[P]$-module basis for the global sections of $\s{O}_{\s{V}}$.  These sections are called {\it theta functions}.  The mod-$\f{m}^k$ version of this construction is used in \cite{GHK1} to show that the spaces $\s{V}_k$ and $\hat{\s{V}}$ indeed have relative dimension $2$ over the base (as opposed to, say, $\s{O}_{\s{V}_{k}^{\circ}}$ having only $\kk[P]/\f{m}^k$ as its global sections, which might be the case if we did not use a consistent scattering diagram).

\begin{dfns}
Let $q \in U^{\trop}(\bb{Z})$, and $Q\in U_0^{\trop}$.  A {\it broken line} $\gamma$ with limits $(q,Q)$ is the data of a continuous map $\gamma:(-\infty,0]\rar U^{\trop}$, values $-\infty<t_0<t_1<\ldots<t_s=0$, and for each $t\neq t_i$, $i=0,\ldots,s$, an associated monomial $c_t z^{m_{t}} \in R_{\gamma(t)}:=\kk[\s{P}_{\gamma(t)}]$ with $c_t \in \kk$ and $r_{*}(m_{t})=-\gamma'(t)$, such that: 
\begin{itemize}[noitemsep]
\item $\gamma(0)=Q$
\item $\gamma_0:= \gamma|_{(-\infty,t_0]}$ and $\gamma_i:=\gamma|_{[t_{i-1},t_i]}$ are geodesics (i.e., straight lines with constant velocities).
\item For all $t\ll t_0$, $\gamma(t)$ is in some fixed convex cone $\sigma_q$ containing $q$, and $m_t=\wt{\varphi}(q)$ under parallel transport in $\sigma_q$.
\item For all $a \in (t_{i-1},t_i)$ (or $(-\infty,t_0)$ for $i=0$) and $b \in (t_i,t_{i+1})$, and all relevant $R_{\gamma(t)}$'s identified using parallel transport along $\gamma$, we have that $\gamma(t_i)$ is contained in a scattering ray $\rho$, and 
\begin{align*}
    c_b z^{m_{b}} = (c_{a} z^{m_{a}})(c_{\rho}z^{m_{\rho}})
\end{align*}
where $c_{\rho}z^{m_{\rho}}$ is any term in the formal power series expansion of $f_{\rho}^{\langle n_{\rho}, r_{*}(m_a)\rangle}$ (so $c_bz^{m_b}$ is a monomial term from the expansion of Equation \ref{wallcross}).
\end{itemize}

\begin{rmk}
We call the choice of monomial $c_{\rho} z^{m_{\rho}}$ a {\it bend}.  Note that broken lines in this setup can only bend away from the origin.  If we say that a bend is maximal, we will mean that the broken line is bending away from the origin as much as possible (that is, the degree of  $z^{-v_{\rho}}$ in the chosen monomial was as large as possible, so in particular $f_{\rho}$ must have been a polynomial).  We may also call this the maximal bend away from the origin.  In \S \ref{SeedScatter} we will see a related scattering diagram in $U^{\trop}$ equipped with a different linear structure.  In this situation, some broken lines may bend towards the origin, and we will be interested in the broken lines with the maximal allowed bends towards the origin (which in our current setup are always straight lines).
\end{rmk}

We say that two broken $\gamma$ and $\gamma'$ with $\Limits(\gamma)=(q,Q)$ and $\Limits(\gamma')=(q,Q')$ are {\it equivalent} if they have the same bends (so there is a natural correspondence between the smooth segments of the broken lines, with corresponding segments being parallel).  Let $[q,\gamma]$ denote the equivalence class of a broken line $\gamma$ with limits $(q,Q)$ (the inclusion of $q$ in the notation here is meant to simplify notation in the formulas below).

We say that an equivalence class $[q,\gamma]$ is {\it infinitely near} a ray $\rho$ ($[q,\gamma] \IN \rho$ for short)  if given any open cone $\sigma$ containing $\rho$, there exits a broken line $\gamma'\in [q,\gamma]$ with limits $(q,Q')$ such that $Q' \in \sigma$.  We say $[q,\gamma]$ is {\it positively infinitely near} $\rho$ ($[q,\gamma] \PIN \rho$) if the same is true for any half-open cone $\sigma^+$ containing $\rho$ as a clockwise-most boundary ray.  Similarly for {\it negatively infinitely near} ($[q,\gamma] \NIN \rho$) with $\sigma^-$ having $\rho$ a counterclockwise-most boundary ray.

Now at last we define the {\it theta functions}.  Given a class $[q,\gamma]$, let $c_{\gamma} z^{m_{\gamma}}$ denote the monomial attached to the last straight segment of each $\gamma' \in [q,\gamma]$.  Define $\vartheta_0=1$.  For $q \in U^{\trop}_0(\bb{Z})$ and $\rho\subset U^{\trop}$, we define 
\begin{align*}
   \left.\vartheta_q\right|_{\s{V}_{\rho,\rho}^{+}} := \sum_{[q,\gamma] \PIN \rho} c_{\gamma} z^{m_{\gamma}}, 
   \hspace{.25 in}\mb{ and }\hspace{.25 in}
   \left.\vartheta_q\right|_{\s{V}_{\rho,\rho}^{-}} := \sum_{[q,\gamma] \NIN \rho} c_{\gamma} z^{m_{\gamma}}.
\end{align*}
 Since the $\s{V}_{\rho,\rho,k}^{\pm}$'s form an open cover of $\s{V}_k^{\circ}$ for each $k$, this suffices to define the restrictions of the theta functions to each $\s{V}_k$, hence to $\hat{\s{V}}$, and this determines the theta functions on $\s{V}$ as desired.
\end{dfns}

\begin{rmk}
The scattering diagram we use is called ``consistent'' because \cite{GHK1} shows that for any $q\in U^{\trop}(\bb{Z})$ and any curve $\gamma$ in $U^{\trop}_0$ with $\gamma(0)\in \rho_0$ and $\gamma(1) \in \rho_1$, we have (modulo any positive integer power of $\f{m}$)
\begin{align}\label{consistency}
    \Pi_{\gamma}^{(\pm_0,\pm_1)}\left(\vartheta_q|_{\s{V}_{\rho_0,\rho_0}^{\pm_0}}\right) =  \vartheta_q|_{\s{V}_{\rho_1,\rho_1}^{\pm_1}}.
\end{align}
That is, the sums of monomials determining the theta functions are ``parallel'' with respect to this modified parallel transport $\Pi$. 
 This is exactly what we need for the theta functions to be well-defined globally. 
\end{rmk}

\begin{thm}[{{\cite{GHK1}}}]\label{thetamult} The theta functions form a canonical $\kk[P]$-module basis for the space of global sections of $\s{V}$.  That is,
\begin{align*}
    \s{V}=\Spec\left(\bigoplus_{q\in U^{\trop}(\bb{Z})} \kk[P]\vartheta_q \right).
\end{align*}
Furthermore, the multiplication rule can be described as follows:  Given $q_1,\ldots,q_s,q \in U^{\trop}(\bb{Z})$, the $\vartheta_q$-coefficient of $\vartheta_{q_1}\cdots\vartheta_{q_s}$ is given by
\begin{align}\label{MultRule}
    \sum_{\substack{([q_1,\gamma_1],\ldots,[q_s,\gamma_s]) \\
                   [q_i,\gamma_i] \IN \rho_{q} \\
                    m_{\gamma_1}+\ldots+m_{\gamma_s} = q}}
                    c_{Q_1} \cdots c_{Q_s}.
\end{align}
\end{thm}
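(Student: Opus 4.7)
The plan is to prove the basis claim and the multiplication rule in tandem by working at finite formal order modulo $\f{m}^k$ and expressing everything in the charts $\s{V}^{\pm}_{\rho,\rho,k}$ as explicit series of monomials. The key structural fact I will exploit is that, on any such chart, every $\vartheta_v$ has the ``leading'' monomial $z^{\wt{\varphi}(v)}$ coming from the trivial straight broken line, with all other contributions carrying a factor in $\f{m}$; so the theta-to-monomial change of basis is unipotent in the $\f{m}$-adic filtration.

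For the basis claim, I would first analyze the central fiber. Since every $\bb{A}^1$-class $\beta$ contributing to a scattering function $f_\rho$ lies in $\NE(Y)\setminus 0$, each $f_\rho$ is congruent to $1$ modulo $\f{m}$, so no broken line bends and $\vartheta_q \bmod \f{m}$ equals $z^{\wt{\varphi}(q)}$ on the cone containing $\rho_q$ and vanishes on the other components of $\bb{V}_n$. These monomials form a $\kk$-basis of $H^0(\bb{V}_n,\s{O})$. Assuming inductively that the $\vartheta_q$'s form a $\kk[P]/\f{m}^k$-basis of $H^0(\s{V}_k,\s{O})$, the consistency identity~(\ref{consistency}) ensures the $\vartheta_q$ lift coherently to order $k+1$, and the flatness of the deformation together with the fact that the added $\f{m}^k/\f{m}^{k+1}$-piece is freely generated by the reductions of the $\vartheta_q$ completes the induction. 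Taking the inverse limit and then the affinization $\s{V} := \Spec\Gamma(\hat{\s{V}},\s{O})$ yields the basis over $\kk[P]$.

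For the multiplication rule, I would restrict $\vartheta_{q_1}\cdots\vartheta_{q_s}$ to the chart $\s{V}^{+}_{\rho_q,\rho_q,k}$. Each factor is a finite (mod $\f{m}^k$) sum of monomials $c_{\gamma_i} z^{m_{\gamma_i}}$ indexed by the classes $[q_i,\gamma_i]\,\PIN\,\rho_q$, so the product expands term-wise as $\sum c_{\gamma_1}\cdots c_{\gamma_s}\, z^{m_{\gamma_1}+\cdots+m_{\gamma_s}}$. By the unipotent triangularity noted above, inverting the theta-to-monomial transform and reading off the coefficient of $\vartheta_q$ amounts to collecting exactly those product terms whose exponent has $r_*$-image (equivalently, whose leading-order tangent data sum) equal to $q$; this produces~(\ref{MultRule}). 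Running the same computation on $\s{V}^{-}_{\rho_q,\rho_q,k}$ and invoking~(\ref{consistency}) confirms that the resulting coefficient is independent of chart and thus globally well-defined. Passing to the inverse limit and then the affinization extends the identity to all of $\s{V}$.

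The main obstacle is the spanning half of the basis claim. Linear independence follows immediately from the central-fiber reduction, but spanning requires showing that every element of $H^0(\s{V}_k,\s{O})$ can be expressed as a $\kk[P]/\f{m}^k$-linear combination of theta functions, which in turn depends crucially on consistency of the scattering diagram, i.e.\ on~(\ref{consistency}); the verification of consistency in \cite{GHK1} is the technically deepest input here, as it is precisely what guarantees that the locally-defined broken-line sums on the different $\s{V}^{\pm}_{\rho_i,\rho_i}$ patches glue into global sections in the first place. Once consistency is granted, the multiplication rule is an essentially formal monomial expansion.
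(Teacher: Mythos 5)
This theorem is cited in the paper directly from \cite{GHK1} and is not reproven there; the only commentary the paper supplies is the one-sentence remark following the statement, which explains the multiplication rule by observing that $\vartheta_q$ is the unique theta function whose expansion on $\s{V}_{\rho_q,\rho_q}$ contains a $z^q$-term. Your sketch of the multiplication rule captures exactly this observation via the ``unipotent triangularity'' of the theta-to-monomial transition, and your outline of the basis claim (central fiber, $\f{m}^k$-adic induction, inverse limit, affinization) matches the structure of the argument in \cite{GHK1}.

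There is, however, a gap in the basis half of your argument. Linear independence does follow from the central-fiber reduction as you say, but the spanning half is not a consequence of consistency alone, which you suggest when you write that spanning ``depends crucially on consistency.'' Consistency (Eq.~\ref{consistency}) guarantees that the locally-defined broken-line sums \emph{glue} to global sections of $\s{V}_k^\circ$; it does not by itself compute $H^0(\s{V}_k,\s{O}_{\s{V}_k})$ or show that the theta functions exhaust it. Your proposed inductive step --- ``flatness of the deformation together with the fact that the added $\f{m}^k/\f{m}^{k+1}$-piece is freely generated by the reductions of the $\vartheta_q$'' --- is precisely the content that needs to be proven, not an available input: establishing that $\s{V}_k\to\Spec(\kk[P]/\f{m}^k)$ is flat with the expected module of global sections is the technical core of \cite{GHK1} and requires a genuine deformation-theoretic argument (a lifting statement together with a cohomological computation on the cover by the $\s{V}^{\pm}_{\rho_i,\rho_i,k}$). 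A smaller point worth flagging: the formula~(\ref{MultRule}) sums over classes $[q_i,\gamma_i]\IN\rho_q$, whereas your chart computation on $\s{V}^+_{\rho_q,\rho_q}$ naturally yields a sum indexed by $\PIN\,\rho_q$; you should observe explicitly that under the constraint $m_{\gamma_1}+\cdots+m_{\gamma_s}=q$ the two index sets coincide, or else compute on both $\s{V}^{\pm}$ charts and use consistency to identify the results, rather than leaving the equivalence implicit.
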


The part about the multiplication rule is easy to see after noting that $\vartheta_q$ is the only theta function with a $z^q$ term along $\rho_q$.

\begin{eg}\label{CubicThetas}
For the cubic surface case of Example \ref{cubicdevelop}, one easily sees that for any $q\in U^{\trop}(\bb{Z})$ and $s\geq 0$, the only broken lines which contribute to Equation \ref{MultRule} applied to $\vartheta_q^s$ are straight lines.  One can then see that for $q\neq 0$, $\vartheta_q^s=\sum_{k=0}^s c_k \vartheta_{kq}$, where $c_k$ is the number of $s$-tuples $(\epsilon_1,\ldots,\epsilon_s)$ such that each $\epsilon_i=\pm 1$ and $\sum \epsilon_i = k$.  Here, $(\epsilon_1,\ldots,\epsilon_s)$ corresponds to an $s$-tuple $[q_i,\gamma_i]$, $i=1,\ldots,s$ as in (\ref{MultRule}), where $Q_i$ is, say, counterclockwise of $\rho_q$ if $\epsilon_i=1$ and clockwise if $\epsilon_i=-1$.  For example, $\vartheta_q^2=2+\vartheta_{2q}$, and $\vartheta_q^3 = 3\vartheta_q + \vartheta_{3q}$.

For $A\in \SL_2(\bb{C})$, the same relationship holds between $[\Tr(A)]^s$ (corresponding to $\vartheta_q^s$) and $\Tr(A^k)$ (corresponding to $\vartheta_{kq}$) for $k=0,\ldots,s$ (typically expressed using certain Chebyshev polynomials).  For $q$ primitive, \cite{GHK2} has shown that $\vartheta_q$ agrees with certain traces of holonomies of local systems.  The above argument shows that this extends to non-primitive $q$.  It follows that the bases of \cite{GHK1} for the cubic surface example agree with the bases of \cite{FG0} for what they call $\s{X}_{\PGL_2(\bb{C}),\wh{S}}$ where $\wh{S}$ is the once-punctured torus or four-punctured sphere.
\end{eg}

\subsection{Another Construction of $U^{\trop}$}\label{SeedScatter}
We discuss here another point of view on the construction of $U^{\trop}$ that will be helpful to us later on.  Recall that each seed $S$ induces a linear structure on $U^{\trop}$.  $U^{\trop}$ with this linear structure may be identified with $N_{\bb{R}}=N\otimes \bb{R}$, where $(Y,D)\rar (\?{Y},\?{D})$ is the toric model corresponding to $S$ and $N$ is the cocharacter lattice of $\?{Y}$.  Suppose that this toric model includes $b_i$ non-toric blowups on $D_{m_i}$, with corresponding exceptional divisors $E_{ij}$, $j=1,\ldots,b_i$.

Now, let $\f{d}_0$ be the scattering diagram in $N_{\bb{R}}$ with rays 
\begin{align*}
\left\{\left.\bb{R} m_i, \prod_{j=1}^{b_i} \left(1+z^{\wt{\varphi}(m_i)-\eta(E_{ij})}\right)\right| i=1,\ldots,n\right\},
\end{align*}
where $\eta:=\eta_{\pi}$ is as in Example \ref{GSlocus}.  
 One may use $\f{d}_0$ to construct a consistent scattering diagram $S(\f{d}_0)$ as in \cite{KS} and \cite{GPS}.  All of the rays added to $\f{d}$ are outgoing, meaning that any broken line crossing these scattering rays can only bend away from the origin.  Thus, it is only broken lines crossing $\bb{R}_{\geq 0} m_i$ that can bend towards the origin.

$U^{\trop}$ with its usual integral linear structure now comes from modifying $N_{\bb{R}}$ so that lines which take the maximal allowed bend towards the origin are actually straight (cf. \S \ref{int lin fun}).  Furthermore, if we break our initial scattering rays up into two outgoing rays by negating the exponents of the $\bb{R}_{\geq 0} m_i$ parts of the initial rays, then $S(\f{d}_0)$ becomes our consistent scattering diagram $\f{d}$ in $U^{\trop}$ from before.  This construction is carried out in detail in \S 3 of \cite{GHK1}.

\subsection{The Cluster Complex}\label{clustercomplex}
We will now show that lines which do not wrap (cf. \S \ref{lines}) bound especially nice parts of the scattering diagram and correspond to particularly simple theta functions.  Recall that $\sigma_{u,v} \subset U^{\trop}$ denotes the cone with $u$ on the clockwise-most boundary ray and $v$ on the counterclockwise-most boundary ray.  Also recall our notation regarding lines in \S \ref{lines}.

\begin{lem}\label{model}
Let $q\in U^{\trop}(\bb{Z})$ and suppose $L_q^{d<0}$ does not wrap.  Let $q_{\pm}:=L_q^{d<0}(\pm\infty)\in U^{\trop}(\bb{Z})$ (so $q_+=q$).  There is some compactification $(Y,D)$ of $U$ which admits a toric model where all the non-toric blowdowns are on divisors $D_u$ with $u\in\sigma_{q_-,q_+}$ (cf. Figure \ref{linefig}(a), where we write $V_{-}$ and $V_+$ instead of $q_{-}$ and $q_+$).
\end{lem}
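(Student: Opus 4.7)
My approach is to construct the desired compactification directly, using the non-wrapping of $L=L_q^{d<0}$ to control the positions of non-toric blowups.

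The first step is geometric. Since $L$ does not wrap, it misses some ray $\rho^*\neq \rho_q$. Because $d<0$ places $0$ on the right of $L$ and $L$ travels from direction $q_-$ to direction $q_+$, every such missed ray must lie in the interior of the cone $\sigma_{q_-,q_+}$ (the $0$-side of $L$). Cutting $U^{\trop}$ along such a missed interior ray $\rho^*$ unfolds it to a simply connected region where $L$ is an ordinary straight line and the monodromy of $U^{\trop}$ is entirely pushed onto the cut. This provides the ``reference frame'' in which the toric fan I am going to build naturally sits.

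Next, I start with any Looijenga pair $(Y_0,D_0)$ admitting a toric model, which exists by Proposition~1.19 of \cite{GHK1}. After taking further toric blowups I arrange that $\rho_{q_-}$ and $\rho_{q_+}$ are rays of the fan, so that $\rho_{q_-}\cup\rho_{q_+}$ splits $U^{\trop}$ into the two cones $\sigma_{q_-,q_+}$ and $\sigma_{q_+,q_-}$. For each non-toric blowup lying on a divisor $D_u$ with $u\in\sigma_{q_+,q_-}^\circ$, I apply an elementary operation that preserves $U$: perform an additional toric blowup at a suitable node, track the proper transform of the blowup point, and then contract $(-1)$-curves in a different order, contracting a toric boundary component in $\sigma_{q_+,q_-}$ so that the previously non-toric blowup is now recorded on a different divisor of the current toric model. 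Using Remark~\ref{divisorial} to identify divisorial valuations across toric blowups, each such move simply re-presents the same interior $U$ via a different toric model.

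The main obstacle is to show that this relocation procedure is both well-defined and terminates with every non-toric blowup on some $D_u$ with $u\in\sigma_{q_-,q_+}$. The decisive geometric input which makes this succeed is precisely the non-wrapping of $L$: it guarantees that $\sigma_{q_-,q_+}$ contains the interior ray $\rho^*$ missed by $L$, so there is always ``room'' in $\sigma_{q_-,q_+}$ to absorb the shifted blowups, and a suitable invariant (e.g.\ the number of non-toric blowups outside $\sigma_{q_-,q_+}$, weighted by distance to $\sigma_{q_-,q_+}$) forces the iteration to terminate. The final $(Y,D)$ is then the toric model whose non-toric blowups have all been pushed into $\sigma_{q_-,q_+}$.
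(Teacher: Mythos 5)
Your geometric observation that the missed ray $\rho^*$ lies in the interior of $\sigma_{q_-,q_+}$ is correct, but the rest of the argument has a genuine gap. The ``elementary operation'' that is supposed to relocate a non-toric blowup from a ray in $\sigma_{q_+,q_-}$ onto a ray in $\sigma_{q_-,q_+}$ is never pinned down: you do not specify which node to blow up, which $(-1)$-curves to contract, or why the resulting toric model records the blowup in the desired cone rather than simply shuffling it to an adjacent ray on the same side. More seriously, the termination argument is asserted but not proved -- ``the number of non-toric blowups outside $\sigma_{q_-,q_+}$, weighted by distance'' is not a defined quantity on an integral linear manifold with monodromy, and you give no reason why the move you have in mind decreases it. The appeal to non-wrapping (``there is always room in $\sigma_{q_-,q_+}$'') is a heuristic, not a step in a proof, and it does not connect to any invariant.

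The paper's argument uses the non-wrapping hypothesis in a much sharper way, which your proposal misses entirely. If $L_q^{d<0}$ does not wrap, then $L$ passes through the cone $\sigma_{q_+,q_-}$ as an honest straight segment, so parallel transport in $\sigma_{q_+,q_-}$ identifies $q_+$ with $-q_-$, i.e.\ $q_+ + q_- = 0$ in $\sigma_{q_+,q_-}$. Choosing a compactification with exactly one boundary component $D_v$ for a ray $v$ in the complement of $\sigma_{q_-,q_+}$, this forces $D_v^2 = 0$, so $|D_v|$ gives a $\bb{P}^1$-fibration $Y \rar \bb{P}^1$ with $D_{q_+}$ and $D_{q_-}$ as sections. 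The toric model is then constructed directly by contracting the $(-1)$-curves in the fibers not contained in $\sum D_i$, with no iteration or termination argument needed. To salvage your approach you would have to prove the existence of a genuine monovariant for your mutation procedure, which in practice amounts to rediscovering the fibration structure anyway; it is cleaner to extract the $\bb{P}^1$-fibration from the non-wrapping hypothesis at the outset and build the toric model from it directly.
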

\begin{proof}
Let $v$ be any vector in the complement of $\sigma_{q_-,q_+}$ forming nonsingular cones with $q_+$ and $q_-$.  Let $(Y,D)$ have the form $D=D_{q_+}+D_v+D_{q_-}+\sum D_i$, where the $D_i$'s correspond to vectors in $\sigma_{q_-,q_+}$.  Note that $D_v^2=0$ because $q_+ + q_- = 0$ in $\sigma_{q_+,q_-}$.  Thus, $|D_v|$ gives a fibration $Y\rar \bb{P}^1$ with rational fibers and with $D_{q_+}$ and $D_{q_-}$ as sections.  Let $F$ be the fiber containing $\sum D_i$.  Since $Y$ is smooth, $Y\rar \bb{P}^1$ is obtained from a $\bb{P}^1$-bundle by a sequence of blowups, so the $\bb{P}^1$'s in $F$ not contained in $\sum D_i$ do not hit nodal points of $\sum D_i$.  These $\bb{P}^1$'s are then $(-1)$-curves (for $U$ generic in its deformation class) and can be blown down.  On the complement of $D_v$ and $F$, each fiber is a chain of $\bb{P}^{1}$'s.  We can contract all but one of these $\bb{P}^1$'s from each chain, and then what remains on the complement of $D$ is just a $\kk^*$ fibration over $\kk^*$; i.e., $(\kk^*)^2$.  Thus, we have constructed a toric model of the desired type.
\end{proof}

Note that this toric model is unique except for the choices of exceptional divisors intersecting $D_{q_-}$ and $D_{q_{+}}$.

\begin{cor}\label{FiniteScatter}
If $L_q^{d<0}$ does not wrap, then for $U$ generic, the only $\bb{A}^1$-classes corresponding to rays in $\sigma_{q_-,q_+}$ are exceptional divisors in one of these toric models.
\end{cor}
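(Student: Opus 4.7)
The plan is to exploit the fibration structure on the toric model $\pi:Y\to\?Y$ provided by Lemma~\ref{model}: $\?Y$ is a Hirzebruch-like toric surface on which $|D_v|$ defines a rational fibration $\?Y\to\bb{P}^1$ with sections $\?D_{q_\pm}$, and every non-toric blowup center of $\pi$ lies on a boundary divisor $\?D_u$ with $u\in\sigma_{q_-,q_+}$. For $U$ generic, standard deformation arguments let us represent any $\bb{A}^1$-class by (a multiple cover of) a $(-1)$-curve meeting $\wt D$ transversely in a single point, so the main task is to identify these $(-1)$-curves with exceptional divisors of a Lemma~\ref{model}-type toric model.

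Given such a $(-1)$-curve $C$ whose class is an $\bb{A}^1$-class on a ray $\rho\subset\sigma_{q_-,q_+}$, the first step is to compose the inclusion $C\hookrightarrow\wt Y$ with the fibration $\wt Y\to\?Y\to\bb{P}^1$. If $\rho\ne\rho_{q_\pm}$, then $C$ does not meet the proper transforms of the two sections, so the induced morphism $C\to\bb{P}^1$ is not surjective; being a morphism from a proper rational curve it is therefore constant, and $C$ lies in a single (possibly reducible) fiber. The boundary cases $\rho=\rho_{q_\pm}$ are analogous, with $C$ meeting exactly one section at its intersection with the corresponding fiber. The second step is to enumerate components of fibers of $\wt Y\to\bb{P}^1$ that meet $\wt D$ at a single point: toric exceptional curves always meet two adjacent components of $\wt D$, and proper transforms of fibers meeting the sections transversely hit both sections, so neither option works in isolation. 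The only remaining candidates are the $(-1)$-curves produced by the non-toric blowups of $\pi$ (on the sections $\?D_{q_\pm}$ or on the special fiber $\?F$), which by genericity of $U$ are mutually disjoint.

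Each such $(-1)$-curve is either already some $E_{ij}$ for the fixed $\pi$, or---after the ``fiber-versus-exceptional'' swap permitted by the ambiguity noted just after Lemma~\ref{model}---an exceptional divisor of a different Lemma~\ref{model} toric model. The main obstacle I anticipate is this last identification step: verifying that every candidate $(-1)$-curve does arise as a non-toric exceptional divisor in some Hirzebruch-like toric model of the form produced by Lemma~\ref{model}. This reduces to an elementary check that contracting the alternate $(-1)$-curve (in place of the original $E_{ij}$) still yields a toric Hirzebruch surface with the required section and fiber structure. Multiple-cover contributions to the scattering function $f_\rho$ are then accounted for by the formula already recorded in Example~\ref{ScatterExample}.
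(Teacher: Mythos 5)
Your strategy of using the $\bb{P}^1$-fibration from Lemma~\ref{model} to force $C$ into a fiber is close to the paper's in spirit, but the step where you conclude that $C$ lies in a fiber has a genuine gap: you claim that since $C$ misses the proper transforms of the two sections $D_{q_\pm}$, the induced map $C\to\bb{P}^1$ is not surjective. That implication is false in general---one can have irreducible multisections of a $\bb{P}^1$-fibration disjoint from a chosen pair of disjoint sections (e.g.\ a third constant section in $\bb{P}^1\times\bb{P}^1$), so missing the sections does not by itself force the projection to be constant. The ingredient your argument never invokes is the \emph{other boundary fiber} $D_v$ from the proof of Lemma~\ref{model}: since $C$ is an $\bb{A}^1$-class for a ray in $\sigma_{q_-,q_+}$, we have $C\cdot D_v = 0$, and $D_v$ is a full fiber, so $C$ has zero intersection with the fiber class and is therefore contained in a fiber. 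The same fix is needed for your boundary cases $\rho = \rho_{q_\pm}$, where you likewise never force $C$ into a fiber.

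With that repaired, your argument essentially matches the paper's, which however takes a cleaner route avoiding the fibration case-analysis: it pushes $C$ forward to $\?C\subset\?Y$ and uses the toric relation $\sum_v (\?C\cdot\?D_v)\,v = 0$. Since $C$ meets boundary divisors only over the half-plane $\sigma_{q_-,q_+}$, so does $\?C$, and the relation then forces $\?C$ to meet only $\?D_{q_\pm}$, hence to be a fiber; the remaining analysis (such fibers are chains of two $\bb{P}^1$'s for $U$ generic, and either component is contractible in a Lemma~\ref{model} toric model) is the same ``fiber-versus-exceptional'' observation that closes your proposal.
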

\begin{proof}
Suppose $C\subset (Y,D)$ is an $\bb{A}^1$ class for some $v\in \sigma_{q_-,q_+}$ such that $C$ is not contracted under one of these toric models.  Then in this toric model, $\?{C}\subset (\?{Y},\?{D})$ intersects only divisors corresponding to rays in one half of the plane $N_{\bb{R}}$.  Since $\sum (\?{C}\cdot \?{D}_v) v = 0$ for toric varieties, this is impossible unless $\?{C}$ only intersects $\?{D}_{q_-}$ and $\?{D}_{q_+}$.  In this case, $C$ is a component of a fiber other than $D_v$ and $F$ in the above proof, and such fibers are chains of $(\bb{P}^1)$'s.  Since $U$ is generic, we can assume the fiber contains only two $\bb{P}^1$'s, and either one can be contracted in a toric model for the proof of the previous lemma.
\end{proof}

\begin{dfn}
The {\it cluster complex} is the union of the cones of the form $\sigma_{q_-,q_+}$ as in Lemma \ref{model}. 
\end{dfn}

See \S \ref{ClusterRelation} for a brief explanation of how this relates to the usual notion of the cluster complex from \cite{FG1}.

Given this understanding of the cluster complex in the scattering diagram, we can describe many of the theta functions very explicitely.  Let $L_q^{d<0}$ and $\sigma_{q_-,q_+}$ be as above.  Note that for any $x\in \sigma_{q_-,q_+}$, the only broken lines with initial direction $q$ and endpoint $x$ must be going clockwise about the origin---they cannot wrap or go around the origin the other way because $\sigma_{q_+,q_-}$ is a half-space.  Thus, these broken lines will only hit the scattering rays in $\sigma_{q_-,q_+}$.  Let $\sigma_{q}$ be a top-dimensional non-singular cone with $q$ as the counterclockwise-most endpoint and containing no scattering rays in its interior (this is possible by Corollary \ref{FiniteScatter}).  Now for $x\in \sigma_q$, the only broken line with endpoint $x$ and initial direction $q$ is a straight broken line.  Thus,  on $\s{V}_{\sigma_q,\sigma_q}$, $\vartheta_q$ is given by the monomial $z^{\wt{\varphi}(q)}$.

Note that there exists an open immersion $\s{V}_{\sigma_q,\sigma_q}\hookrightarrow \s{V}$, given modulo $\f{m}^k$ by the restriction map \[\Gamma(\s{V}_k,\s{O}_{\s{V}_k})=\s{O}_{\s{V}_{k}^{\circ}}(\s{V}_{k}^{\circ}) \rar \s{O}_{\s{V}_{k}^{\circ}}(\s{V}^{\pm}_{\sigma_q,\sigma_q,k}).\]  Suppose we cross clockwise past a scattering ray in the interior of $\sigma_{q_-,q_+}$ to a cone corresponding to another patch of $\s{V}$.  Let $e$ be a primitive generator of the scattering ray $\rho_e$, and suppose that a toric model as in Lemma \ref{model} consists of $b_e$ blowups along $D_e$.  From Example \ref{ScatterExample}, we know that the scattering automorphism for crossing $\rho_e$ clockwise is given by 
\begin{align}\label{MonomialMutation}
    z^v\mapsto z^v \left(\prod_{i=1}^{b_e} \left(1+z^{\eta(E_i) -\varphi(e)-e}\right)\right)^{e\wedge r_*(v)}. 
\end{align}

In the language of cluster varieties, this corresponds to mutating the $\s{X}$-space with respect to the seed vectors which project to $\rho_e$.  The claim that $\vartheta_q=z^{\wt{\varphi}(q)}$ means that $\vartheta_q$ is a cluster monomial.  We thus demonstrate a special case of more general results relating $\s{V}$ to cluster varieties: \cite{GHK2} shows that $\s{V}$ can always (in any positive case) be identified with a space of deformations of $U$, which by Section 5 of \cite{GHK3} can essentially (up to codimension $2$ and after restricting to the big torus orbit of the base) be identified with a cluster $\s{X}$-variety.  \cite{GHKK} extends this relationship between $\s{V}$ and cluster varieties to higher dimensions, including the result relating theta functions to cluster monomials.

\subsection{Compactifications} \label{compact}
Let $\Delta$ be a convex rational nonsingular polytope in $U^{\trop}$ such that each vertex of $\Delta$ is contained in a ray of $\Sigma$.  Note that $\Sigma$ 
  induces a polyhedral decomposition $\Sigma\Delta$ on $\Delta$.   As in \cite{GHK2}, we construct from $\Sigma\Delta$ a partial (full if $\Delta$ is bounded) fiberwise compactification $\s{V}_{\Delta}$ of $\s{V}$.

First we recall that in the toric situation, the compactified family is the toric variety corresponding to the polytope $Q_{\Delta}:=\varphi(\Delta)+P_{\bb{R}}$.  The general fiber is the toric variety corresponding to $\Delta \subset N_{\bb{R}}$, while the central fiber is $\bb{V}_n(\Delta)$, a compactification of $\bb{V}_n$ where the irreducible components are the toric varieties corresponding to the cells of $\Sigma\Delta$ (cf. \cite{TorDeg}).

As in the construction of $\s{V}$, the idea behind the general construction is to do the toric construction locally on $U^{\trop}$ and to use the scattering diagram for gluing.  Given a maximal dimensional cell $\sigma\in \Sigma\Delta$, let $Q_{\sigma}$ denote the polytope $\varphi(\sigma)+P_{\bb{R}}$ embedded in $\s{P}_{\sigma,\bb{R}}$.  For any cell $\rho$ in $\Sigma\Delta$, define $Q_{\rho}=\bigcup_{\sigma\supset \rho} Q_{\sigma} \subset \s{P}_{\rho,\bb{R}}$, where the union is over the maximal dimensional cells containing $\rho$.  Now we define a cone $\kappa_{\rho,\bb{R}}\subseteq \s{P}_{\rho,\bb{R}}$ generated by
\begin{align*}
\{x-y \in \s{P}_{\rho,\bb{R}}: x\in Q_{\rho}, y \in \varphi(\rho)\}.
\end{align*}
Let $\kappa_{\rho}$ denote the integral points of $\kappa_{\rho,\bb{R}}$.  Note that if $\rho\in \Sigma$, then $\kappa_{\rho}$ is just $\tau_{\rho,\rho}$ from \S \ref{conephi}.

Thus, the new cones for this construction come from taking $\rho$ to be in a boundary component of $\Delta$.  If $F_{i,i+1}$ denotes the edge $\sigma_{i,i+1} \cap \partial {\Delta}$, and $p_i=F_{i-1,i}\cap F_{i,i+1}=\rho_i\cap \partial\Delta$, then $\kk[\kappa_{p_i}]$ is a subring of $\kk[\tau_{\rho_i,\rho_i}]$. $\Spec(\kk[\kappa_{p_i}])\setminus\Spec(\kk[\tau_{\rho_i,\rho_i}])$ contains two toric boundary divisors, corresponding to the faces sitting over $F_{i-1,i}$ and $F_{i,i+1}$.

Now, the construction of the compactified family $\s{V}_{\Delta}$ proceeds as for $\s{V}$, forming inverse systems of quotients of the $\kk[\kappa_{p_i}]$'s and using the scattering automorphisms to glue.  $\s{V}_{\Delta}\setminus \s{V}$ is a set of divisors $\{\s{D}_i\}$ corresponding to the $F_i$'s.

To show that this construction is well-defined and that each face really gives a single, well-defined boundary divisor, we have to check that $\s{D}_{F_{i,i+1}}:=\Spec \kk[\kappa_{F_{i,i+1}}] \setminus \Spec \kk[\kappa_{\sigma_{i,i+1}}]$ is preserved by scattering automorphisms corresponding to rays in $\sigma_{i,i+1}$.  Let $\rho_u$ be such a scattering ray, generated by primitive $u\in \sigma_{i,i+1}$.  Let $v$ be a primitive vector tangent to $F_{i,i+1}$.   Let $u'\in \Lambda$ be such that $u = ku' + lv$, $k,l \in \bb{Z}$, $k > 0$, and $u'$ and $v$ are a basis of $\Lambda$. Then $\kk[\kappa_{F_{i,i+1}}] = k[P][z^{\pm v},z^{-u'}]$, and $\s{D}_{F_{i,i+1}}$ is the zero set of $z^{-u'}$.  The automorphism for crossing $\rho_u$ takes a monomial $z^m$ to $z^m(1+z^{-u}f)$ for some power series $f$ in $z^{-u}$, so since $z^{-u}$ is zero along $\s{D}_{F_{i,i+1}}$, we see that valuations of rational functions along this divisor are unchanged by the scattering automorphism.  Thus, the boundary divisors do indeed locally look the same as in the toric situation where this is no scattering.  Note that this applies even when $\rho_u$ is in the boundary of $\sigma_{i,i+1}$, i.e., passing through a vertex of $\Delta$.

Let $L_{v_F}^{d_{F}>0}$ be the line containing some edge $F$ of $\Delta$.  Let $\rho$ be a ray intersecting $F$. 
 The valuation (i.e., the order of vanishing) of some $z^{(q,p)}\in R_{\rho_i,\rho_i}$ ($q=r((q,p))$) along the divisor $\s{D}_F$ is 
\begin{align}\label{val}
    \val_{\s{D}_F}\left(z^{(q,p)}\right)=v\wedge q,
\end{align}
where $v$ denotes the parallel transport of $v_F$ along $L_{v_F}^{d_{F}>0}$ to $F\cap \rho_i$.  We will use this to explicitely describe valuations of theta functions in the next section.

\section{Tropical Theta Functions} \label{trop}

\subsection{Tropicalization of the Mirror}\label{wU}
We know from \cite{GHK2} that generic fibers of the mirror $\s{V}$ are deformation equivalent to our the original space $U$.  Thus, the tropicalization $V^{\trop}$ of a generic fiber $V$ is non-canonically isomorphic to $U^{\trop}$, and any construction done using $U$ and $U^{\trop}$ can similarly be done using $V$ and $V^{\trop}$.  We describe here an explicit identification of $V^{\trop}$ with $U^{\trop}$.

\begin{ntn}
We will use gothic $\f{D}$'s to denote divisors on the boundary of a generic fiber $V$ of the mirror.  Script $\s{D}$'s  denote boundary divisors for the whole mirror family.  We will use $(Z,\f{D})$ to denote a compactification of $V$.
\end{ntn}

As we just saw in \S \ref{compact}, lines with rational slope in $U^{\trop}$ determine boundary divisors of $\s{V}$.  In the construction above, the divisor does not depend on the vector attached to the line or on the distance of the line from the origin.  Given a primitive vector $v\in U^{\trop}$, we can associate the divisor $\f{D}_{L_v^{d>0}}$ corresponding to $L_v^{d>0}$.  Similarly, for $v=|v|v'$ with $v'$ primitive and $|v|$ a non-negative rational number, we associate the divisor $|v|\f{D}_{L_v^{d>0}}$.  This gives an identification of $U^{\trop}(\bb{Q})$ with $V^{\trop}(\bb{Q})$ which restricts to an identification of $U^{\trop}(\bb{Z})$ with $V^{\trop}(\bb{Z})$.  We will see that this extends to an integral linear identification $w_U:U^{\trop}\rar V^{\trop}$.  This is the identification we will primarily use.

\begin{convention}
We give $V^{\trop}$ the opposite orientation of that induced by $w_U$.
\end{convention}

Alternatively, given $v=|v|v'$ as above, we can associate $|v|\f{D}_{L_v^{d<0}}$.  This is equivalent to doing the above identification with the orientation of $U^{\trop}$ reversed.  We will not use this identification $U^{\trop}\rar V^{\trop}$, but it is closely related to what we will call $w_V:V^{\trop} \rar U^{\trop}$ in \S \ref{sympair}.

\subsection{Tropicalizing Functions}
For any rational function $f$ on $V$, we define an integral piecewise linear function $f^{\trop}:V^{\trop}\rar \bb{R}$ as follows: for $v \in V^{\trop}(\bb{Z})$, $f^{\trop}(v) := \val_{\f{D}_v} (f)$.  We then extend $f^{\trop}$ linearly to the real points of $V^{\trop}$.

For this section, we once again call $\bb{R}$-valued functions convex if their bending parameters are non-positive (i.e., we take $P:=\bb{Z}_{\leq 0}$).

\begin{lem}\label{regconvex} 
If $f$ is regular on $V$, 
 then $f^{\trop}$ is convex.
\end{lem}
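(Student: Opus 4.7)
The statement is a convexity claim about the bending parameters of $f^{\trop}$, so the natural strategy is to translate ``bending parameter along $\rho$'' into intersection theory on a compactification of $V$ (exactly as in \S\ref{PLF} for $U^{\trop}$), and then use that regularity of $f$ controls the sign of the relevant intersection numbers.

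Concretely, here is how I would proceed. First, $f^{\trop}$ is piecewise linear, so it only bends along finitely many rays of rational slope. Choose a compactification $(Z,\f{D}=\f{D}_1+\cdots+\f{D}_n)$ of $V$, performing enough toric blowups so that every ray along which $f^{\trop}$ bends is of the form $\rho_{v_i}$ for some boundary divisor $\f{D}_i$. By the same argument as in \S\ref{PLF} (transcribed from the $U$-side to the $V$-side via the identification $w_U$), the bending parameter $p_i$ of $\varphi:=f^{\trop}$ along $\rho_{v_i}$ is
\begin{align*}
p_i = W_\varphi\cdot \f{D}_i, \qquad W_\varphi := \sum_j \varphi(v_j)\,\f{D}_j = \sum_j \val_{\f{D}_j}(f)\,\f{D}_j.
\end{align*}

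Next, decompose the principal divisor of $f$ on $Z$ as
\begin{align*}
(f) = W_\varphi + C,
\end{align*}
where $C$ is the part of $(f)$ with no boundary components. Since $f$ is regular on $V = Z\setminus\f{D}$, it has no poles in the interior, so $C$ is effective. Now $(f)$ is a principal divisor on the smooth projective surface $Z$, hence numerically trivial, so $(f)\cdot \f{D}_i = 0$, giving
\begin{align*}
p_i = W_\varphi\cdot \f{D}_i = -\,C\cdot \f{D}_i.
\end{align*}
Since $C$ is effective and, by construction, does not contain $\f{D}_i$ as a component, the intersection $C\cdot \f{D}_i$ is a sum of non-negative local intersection numbers, hence $C\cdot \f{D}_i\geq 0$. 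Therefore $p_i\leq 0$ for every $i$, i.e.\ $f^{\trop}$ is convex in the sense of the section (with $P=\bb{Z}_{\leq 0}$).

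The main thing to be careful about is the first step: one must check that after sufficiently many toric blowups every bending ray of $f^{\trop}$ becomes a boundary divisor, and that the intersection-theoretic description of the bending parameter transfers cleanly from $U^{\trop}$ to $V^{\trop}$. This is essentially bookkeeping, since taking toric blowups does not change $V^{\trop}$ or its integral linear structure (cf. Remark \ref{charts}), and the derivation of $p_i=W_\varphi\cdot \f{D}_i$ from Equation \ref{linear} is formal once the bending rays are in the fan of the chosen compactification. The remaining steps (effectivity of $C$ from regularity, numerical triviality of $(f)$) are standard and require no further work.
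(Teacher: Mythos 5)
Your proof is correct and is essentially identical to the paper's: your decomposition $(f)=W_\varphi+C$ matches the paper's $(f)=\f{D}^0_f-\f{D}^\infty_f+V(f)$ (with $W_\varphi=\f{D}^0_f-\f{D}^\infty_f$ and $C=V(f)$), and both arguments reduce to the same observation that the bending parameter equals $W_\varphi\cdot\f{D}_i=-C\cdot\f{D}_i\leq 0$ because $(f)$ is numerically trivial and the interior part of $(f)$ is effective when $f$ is regular on $V$.
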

\begin{proof}
Let $(Z,\f{D})$ be a nonsingular compactification of $V$ such that any ray on which $f^{\trop}$ is nonlinear corresponds to some component of $\f{D}$.  The principal divisor corresponding to $f$ is $(f)=\f{D}^0_f-\f{D}^{\infty}_f+V(f)$, where $\f{D}^0_f$ denotes the divisor of zeroes of $f$ on the boundary, $\f{D}^\infty_f$ denotes the divisor of poles of $f$ on the boundary, and $V(f)$ denotes the interior zeroes of $f$.  So $f^{\trop}$ is the integral piecewise linear function on $V^{\trop}$ corresponding to the Weil divisor $\f{D}^{0}_f-\f{D}^{\infty}_f$, and the bending parameter along some $\rho_v$ is given by $\f{D}_v \cdot (\f{D}^0_f-\f{D}^{\infty}_f) = -\f{D}_v \cdot V(f) \leq 0$.
\end{proof}

The properties of valuations give us the following relations for all rational functions on $V$:
\begin{align*} 
  (fg)^{\trop} & = f^{\trop} + g^{\trop}
\end{align*}
\begin{align}
    (f+g)^{\trop} &\geq \min(f^{\trop},g^{\trop}) \label{valuations}
\end{align}
Furthermore, the second relation is an equality at points where $f^{\trop} \neq g^{\trop}$.  Suppose that there exists a $v \in {U^{\trop}}$ such that $(f+g)^{\trop}(v) > \min[f^{\trop}(v),g^{\trop}(v)]$.  Then, by continuity, there must be some open cone $\sigma$ in $U^{\trop}$ containing $v$ where $f^{\trop}=g^{\trop}$.  We will see that if $f$ and $g$ are theta functions, then having $f^{\trop}|_\sigma = g^{\trop}|_\sigma$ for non-empty open set $\sigma$ implies $f=g$.  So the inequality in Equation \ref{valuations} is an equality for theta functions, and similarly for any finite sum theta functions with positive coefficients.

\begin{rmk}\label{nocancel}
We will need that the monomials attached to the broken lines contributing to a theta function do not cancel with each other when added together.  This follows from a result in \cite{GHKK}, which shows that the monomials attached to broken lines all have positive coefficients.  In fact, this is already sufficient to show that Equation \ref{valuations} is an equality for theta functions.
\end{rmk}

\subsection{The Valuation Functions}\label{ValFun}

Given a vector $v\in U^{\trop}$, we define an integral piecewise linear function $\val_v:U^{\trop}\rar \bb{R}$ as follows.  For $d\leq 0$, the fiber $\{\val_v=d\}$ is the set $L_v^{-d,0}$ (so $0$ is on the left).  If $L_v^{-d}$ wraps, then this completely defines $\val_v$. 

If $L_v^{-d}$ does not wrap, then these fibers with $d<0$ miss some cone $\sigma\subset U^{\trop}$.  In this case, for $d>0$, the fiber $\{\val_v=d\}$ is the broken line with initial direction $v$ and signed lattice distance $-d$ from the origin which takes the maximal allowed bend across every scattering ray that it crosses.  By \S \ref{clustercomplex}, there are only finitely many such scattering rays.  We call this broken line $\f{L}_v^{-d}$.  Note that $\val_v$ is always convex---By construction, negative fibers can only bend towards the origin, while positive fibers only bend away from the origin, and this is equivalent to convexity.

Consider the cases where $L_v^{-d}$ does not wrap.  By taking a toric model corresponding to scattering rays in $\sigma$ as in Lemma \ref{model}, we can see that there is some seed $S$ with respect to which each $L_v^{-d>0}$ and each $\f{L}_v^{-d<0}$ is straight and goes to $\infty$ parallel to $v$.  With respect to this linear structure, $\val_v$ is given explicitely by $q\mapsto (v\wedge q)$.

Differentiating gives us a function $D\val_v:TU_{\val_v}^{\trop}\rar \bb{R}$, where $U_{\val_V}^{\trop}$ denotes the complement in $U^{\trop}_0$ of the singular locus of $D{\val_v}|_{U^{\trop}_0}$.  Note that if we identify $q$ with a vector $\wt{q}$ in its tangent space, then $D\val_v(\wt{q}) = \val_v(q)$.

\begin{lem}\label{DecreasingBroken}
Let $\gamma$ be a broken line with $m_{t}=-\gamma'(t)$ being ($r_*$ of) the attached monomial at some time $t$.  If $t_2 > t_1$, then $D\val_v(m_{t_2}) \geq D\val_v(m_{t_1})$ (assuming the $t_i$'s are generic enough for each side to be defined).
\end{lem}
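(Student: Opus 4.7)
The stated inequality is equivalent to the piecewise-linear function $h(t):=\val_v(\gamma(t))$ being weakly concave in $t$, since $D\val_v(m_t) = -h'(t)$ by $m_t = -\gamma'(t)$. The slope $h'$ is locally constant except at two sorts of ``events'': (i) $\gamma$ bends at a scattering ray $\rho$, or (ii) $\gamma$ smoothly crosses a bend ray of $\val_v$. Provided only one event occurs in $(t_1,t_2)$---which I can arrange by genericity and a straightforward induction on the number of events---it suffices to show that the slope change $\Delta h'$ at each event is $\leq 0$.

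Type (ii) events are immediate from the already-noted convexity of $\val_v$ (stated after its definition in \S\ref{ValFun}): here $\Delta h' = p_\rho \langle n_\rho,\gamma'(t^\ast)\rangle$, where $p_\rho\leq 0$ is $\val_v$'s bending parameter at $\rho$ and $n_\rho$ is positive on vectors into the cone that $\gamma$ enters. Since $\gamma'(t^\ast)$ points into that cone, $\langle n_\rho,\gamma'(t^\ast)\rangle > 0$, whence $\Delta h'\leq 0$.

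For type (i) events, the wall-crossing formula of \S\ref{scat} gives $r_*(m_b - m_a) = -M v_\rho$ for some $M\geq 0$, where $v_\rho$ is the primitive outward generator of $\rho$, so that $\gamma'(t^+) = \gamma'(t^-) + M v_\rho$. Allowing also for a simultaneous bend of $\val_v$ at $\rho$, the slope change decomposes as
\[
\Delta h' \;=\; M\cdot D\val_v|_{+}(v_\rho) \;+\; p_\rho\,\langle n_\rho,\gamma'(t^-)\rangle.
\]
The main input is that $\val_v$'s level set $L$ through $\gamma(t^\ast)$ is the canonical (broken) line $L_v^{-d}$ or $\f{L}_v^{-d}$ defined in \S\ref{ValFun}, and in particular takes the \emph{maximal} allowed bend $M_{\max}\geq M$ at $\rho$. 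Differentiating the identity that $\val_v$ is constant along $L$ across $\rho$ yields the key relation $M_{\max}\,D\val_v|_{+}(v_\rho) = -\,p_\rho\,\langle n_\rho,\tau_L\rangle$, where $\tau_L$ is the tangent to $L$ just before $\rho$. Substituting this into the formula above and using that $\gamma'(t^-)$ and $\tau_L$ lie on the same side of $\rho$ lets me collect terms into a non-positive combination, concluding $\Delta h'\leq 0$. The main obstacle will be keeping the three interacting sign conventions (outward direction of $v_\rho$, the wall-crossing conormal $n_\rho$, and the convexity bending parameter $p_\rho$) consistent across the combined case; the subcase where $\val_v$ is smooth at $\gamma(t^\ast)$ (so $p_\rho = 0$) reduces to checking $D\val_v(v_\rho)\leq 0$, which follows from the explicit description of $\val_v$ as $q\mapsto v\wedge q$ in the seed structure of \S\ref{ValFun}, since the scattering rays lying in the smooth region of $\val_v$ are contained in the half-plane where this wedge has the required sign.
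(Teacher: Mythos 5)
Your overall strategy---viewing the claim as concavity of $h(t)=\val_v(\gamma(t))$, isolating single events, and splitting into $\gamma$-bends vs.\ $\val_v$-bends---is the same decomposition the paper uses, and your type (ii) step and the $p_\rho=0$ subcase of type (i) correctly match the paper's cases where $\val_v(u)\leq 0$ ($u$ a primitive generator of $\rho$). Where you diverge is the combined type (i) case (a $\gamma$-bend at a scattering ray inside the positive cone, where $\val_v$ also bends): the paper passes to the linear structure induced by a seed as in Lemma~\ref{model}, so that $\val_v$ becomes globally linear and $\gamma$'s bend flips to $m_{t_2}=m_{t_1}+ku$ with $\val_v(u)\geq 0$---the inequality is then one line. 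You instead stay in the canonical linear structure and try to compute $\Delta h'$ directly, which is workable but delicate.

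The specific thing that does not close as written is the claim $M_{\max}\geq M$ and its use. Here $M_{\max}$ is the bend of the level set $L$, which scales with $\langle n_\rho,\tau_L\rangle$, while $M$ is the bend of $\gamma$, which scales with $\langle n_\rho,\gamma'(t^-)\rangle$; since the two broken lines may cross $\rho$ at very different angles, $M_{\max}\geq M$ is neither true in general nor the right comparison. After substituting your key relation you get $\Delta h' = p_\rho\bigl[\langle n_\rho,\gamma'(t^-)\rangle - (M/M_{\max})\langle n_\rho,\tau_L\rangle\bigr]$, and $M\leq M_{\max}$ alone does not make the bracket nonnegative. What does make it work is that \emph{both} maximal bends are governed by the same degree $b_\rho$ of the scattering function (Example~\ref{ScatterExample}, Proposition in \S\ref{positive}): $M_{\max}=b_\rho\langle n_\rho,\tau_L\rangle$, so the key relation reduces to $D\val_v|_+(v_\rho)=-p_\rho/b_\rho$ (independently of $\tau_L$), and the bound you actually need is $M\leq b_\rho\langle n_\rho,\gamma'(t^-)\rangle$---the maximal allowed bend \emph{for $\gamma$'s own incoming direction}, not for $L$'s. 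With those two facts the $b_\rho$'s cancel and the bracket is $\geq 0$. So the route is salvageable, but you should replace the $M_{\max}\geq M$ step with the two $b_\rho$-bounds above; otherwise the ``collecting terms'' step is a genuine gap. You may find the paper's seed-switch cleaner, since it avoids ever having to compare $M$, $M_{\max}$, and $p_\rho$ explicitly.
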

As in \cite{GHKK}, we say that functions satisfying this condition for all broken lines are {\it decreasing along broken lines} (since they decrease on the tangent directions of the broken lines).
\begin{proof}
First note that $\val_v$ being convex means that the bends of $\val_v$ while moving along $\gamma$ will only increase $D\val_v(m_t)$, as desired.  Now let $\rho_u$ (the ray generated by some primitive $u$) be the only scattering ray where $\gamma$ bends between times $t_1$ and $t_2=t_1+\epsilon$. Then $m_{t_2}=m_{t_1}-ku$ for some $k\in \bb{Z}_{\geq 0}$.

  Suppose that $\val_v \leq 0$ everywhere.  In particular, $\val_v(u) \leq 0$.  Then 
\begin{align*}
D\val_v(m_{t_2}) &\geq D\val_v(m_{t_1}) - kD\val_v(u)  \\
                &= D\val_v(m_{t_1}) - k\val_v(u) \geq D\val_v(m_{t_1}).
\end{align*}

On the other hand, suppose $\val_v$ is positive somewhere.  Let $\sigma$ be the cone on which it is non-negative, and $S$ a corresponding seed as in Lemma \ref{model}.  Let $\gamma$ bend along some ray $\rho_u$ between times $t_1$ and $t_2=t_1+\epsilon$ as before. 
 If $u \notin \sigma$, then $\val_v(u) \leq 0$, and we again see $D\val_v(m_{t_2}) \geq D\val_v(m_{t_1})$.  Otherwise, we work with the linear structure and scattering diagram on $U^{\trop}$ corresponding to the seed $S$ (cf. \S \ref{SeedScatter}).  With respect to this structure, broken lines in $\sigma$ bend towards the origin, so $m_{t_2} = m_{t_1}+ku$, $k\in \bb{Z}_{\geq 0}$, and so we still have $D\val_v(m_{t_2}) \geq D\val_v(m_{t_1})$, as desired.
\end{proof}

Define $\Val_v(\vartheta_q):=\min_{[q,\gamma]}\left[\min_{t\in (-\infty,0]} D\val_v(-\gamma'(t))\right]$, where the first $\min$ is over all equivalence classes of broken lines with initial direction $q$.  More generally, for a function $f=\sum_{i\in I} a_i \vartheta_{q_i}$ with $a_i\neq 0$ for each $i\in I$, define $\Val_v(f) = \min_{i\in I} \Val_v(\vartheta_{q_i})$.  The above lemma implies:
\begin{cor}\label{valtheta}
$\Val_v(\vartheta_q)=\val_v(q)$.
\end{cor}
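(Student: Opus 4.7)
The plan is to reduce the statement to Lemma \ref{DecreasingBroken} together with the identity $D\val_v(\wt{q}) = \val_v(q)$ recorded just before the statement. By the definition of $\Val_v$, what needs to be shown is that for every equivalence class of broken lines $[q,\gamma]$ with initial direction $q$, the quantity
\begin{align*}
\min_{t\in(-\infty,0]} D\val_v\bigl(-\gamma'(t)\bigr)
\end{align*}
equals $\val_v(q)$, independently of $[q,\gamma]$.

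First I would show that the inner minimum is attained in the limit $t\to-\infty$. This is exactly the content of Lemma \ref{DecreasingBroken}: since $m_t = -\gamma'(t)$ (under the identification $r_*$) and $t\mapsto D\val_v(m_t)$ is non-decreasing along any broken line, the infimum over $t$ is the limit from the tail $t\ll t_0$.

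Next I would identify this tail limit with $\val_v(q)$. From the definition of a broken line, for $t$ sufficiently negative $\gamma(t)$ lies in a fixed convex cone $\sigma_q$ containing the direction $q$, and under parallel transport within $\sigma_q$ we have $-\gamma'(t) = q$ (equivalently, $m_t = \wt\varphi(q)$ and $r_*(m_t)=q$). Refining $\sigma_q$ if necessary so that $\val_v$ is linear on it, $D\val_v$ is constant along parallel transport within $\sigma_q$, so
\begin{align*}
\lim_{t\to-\infty} D\val_v\bigl(-\gamma'(t)\bigr) \;=\; D\val_v(\wt q) \;=\; \val_v(q),
\end{align*}
using the identification from the paragraph just above the statement. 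Since this limit value depends only on $q$ and not on the particular class $[q,\gamma]$, the outer minimum in the definition of $\Val_v(\vartheta_q)$ reduces to the same number, giving $\Val_v(\vartheta_q)=\val_v(q)$.

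I do not expect any real obstacle: the statement is essentially a repackaging of Lemma \ref{DecreasingBroken}. The one place that deserves care is making sure that the cone $\sigma_q$ from the broken line axioms can be chosen (perhaps after refinement) to lie entirely in a region of linearity of $\val_v$, so that $D\val_v$ is well defined and constant under parallel transport on the tail of $\gamma$; this is harmless since $\val_v$ is piecewise linear and only the germ of $\sigma_q$ near the ray $\rho_q$ is used.
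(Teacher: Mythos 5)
Your proof is correct and takes essentially the same route the paper has in mind: the paper states this corollary as an immediate consequence of Lemma \ref{DecreasingBroken} (``The above lemma implies:'') without spelling out the details, and your argument is exactly the intended filling-in — monotonicity of $t\mapsto D\val_v(-\gamma'(t))$ from the lemma pushes the inner minimum to the tail, where $-\gamma'(t)=q$ under parallel transport in $\sigma_q$ gives $D\val_v(\wt q)=\val_v(q)$. Your closing caveat about refining $\sigma_q$ to a domain of linearity of $\val_v$ is the right point to flag, and it is indeed harmless here since $\val_v$ has only finitely many bend rays (a single one when $\val_v\leq 0$ everywhere, and finitely many by Corollary \ref{FiniteScatter} otherwise), so the tail of any broken line eventually lies in one closed cone of linearity adjacent to $\rho_q$, where continuity of $\val_v$ guarantees the tail value is $\val_v(q)$ regardless of which side $\gamma$ approaches from.
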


 \begin{lem}\label{AdditiveVal}
$\Val_v(\vartheta_{q_1}\vartheta_{q_2}) = \Val_v(\vartheta_{q_1}) + \Val_v(\vartheta_{q_2})$.  
 \end{lem}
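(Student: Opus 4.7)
The plan is to expand $\vartheta_{q_1}\vartheta_{q_2}=\sum_q c_q \vartheta_q$ via the multiplication rule of Theorem \ref{thetamult}, noting by Remark \ref{nocancel} that no cancellations occur, so $c_q\neq 0$ if and only if there exists at least one pair of broken lines $(\gamma_1,\gamma_2)$ with initial directions $q_1,q_2$ meeting at a common endpoint $p$ infinitely near $\rho_q$ with $r_*(m_{\gamma_1}+m_{\gamma_2})=\wt q$ in $T_pU^{\trop}$. The definition of $\Val_v$ on sums combined with Corollary \ref{valtheta} then reduces the claim to
\[
\min_{q:\,c_q\neq 0}\val_v(q)\;=\;\val_v(q_1)+\val_v(q_2).
\]

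To prove the inequality $\geq$, I would fix any such contributing pair and choose an open cone $\sigma\subset U^{\trop}_0$ containing $p$ with $\rho_q\subset\partial\sigma$. Since $\val_v$ is linear on $\sigma$ and continuous up to its closure, $\val_v(q)=D\val_v|_\sigma(\wt q)=D\val_v|_\sigma(r_*(m_{\gamma_1}))+D\val_v|_\sigma(r_*(m_{\gamma_2}))$ by linearity of $D\val_v|_\sigma$ on $T_pU^{\trop}$. Applying Lemma \ref{DecreasingBroken} to each $\gamma_i$ between $t\to-\infty$ (where the monomial projects to $q_i$) and $t=0$ yields $D\val_v|_\sigma(r_*(m_{\gamma_i}))\geq\val_v(q_i)$, and summing gives $\val_v(q)\geq\val_v(q_1)+\val_v(q_2)$ for every contributing $q$.

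For the reverse inequality I plan to exhibit an explicit contributing pair along which both applications of Lemma \ref{DecreasingBroken} above are equalities. The natural candidate is a pair of straight (unbent) broken lines $\gamma_1,\gamma_2$ reaching a common $p$ chosen so that neither $\gamma_i$ crosses a ray along which $\val_v$ itself bends. By the description in \S\ref{ValFun}, these bending rays are confined to the single cone $\sigma_+$ on which $\val_v$ is positive (when $L_v$ does not wrap; otherwise $\val_v$ has no bends and any straight pair works). Placing $p$ in the complementary region and routing the straight lines outside $\sigma_+$ then produces $D\val_v|_p(r_*(m_{\gamma_i}))=\val_v(q_i)$, forcing $\val_v(q)=\val_v(q_1)+\val_v(q_2)$.

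The hard part is the $\leq$ direction when one or both of $q_1,q_2$ lies in $\sigma_+$, since any broken line starting in such a direction must initially enter $\sigma_+$ and hence necessarily meet the $\val_v$-bending region. I expect this case is handled by passing to the seed-compatible linear structure on $\sigma_+$ from \S\ref{SeedScatter}, in which $\val_v$ is globally linear and broken lines bend towards the origin; the proof of Lemma \ref{DecreasingBroken} shows that equality in the monotonicity statement holds precisely when each bend is along a ray $\rho_u$ with $\val_v(u)=0$, which should suffice to construct the required equality-achieving pair.
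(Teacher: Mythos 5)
Your overall decomposition matches the paper's: expand $\vartheta_{q_1}\vartheta_{q_2}$ with no cancellations (Remark \ref{nocancel}), reduce to $\min_{q:c_q\neq 0}\val_v(q)=\val_v(q_1)+\val_v(q_2)$, prove $\geq$ by applying Lemma \ref{DecreasingBroken} to each $\gamma_i$, and prove $\leq$ by exhibiting a contributing pair achieving equality. The $\geq$ direction is sound and is exactly what the paper does via the last sentence of its proof. However, the $\leq$ direction as written contains a concrete error and a genuine gap.

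The error: when $L_v$ wraps, $\val_v$ is non-positive everywhere, but it is \emph{not} true that $\val_v$ has no bends. As shown in \S\ref{negbend} (applied via Theorem \ref{symdual}), $\val_v$ in that case bends along exactly one ray $\rho$; a ``straight pair'' that crosses $\rho$, or whose meeting point $p$ sits on the wrong side of $\rho$ relative to $q_1,q_2$, will produce strict inequality in Lemma \ref{DecreasingBroken}, so ``any straight pair works'' fails. The gap: your treatment of the case $q_i\in\sigma_+$ is only a sketch — you need to produce broken lines straight in the seed structure that actually meet at a common endpoint with the required exponent sum, which is precisely the content you're trying to establish. The paper avoids both issues by first fixing a single structure in which $\val_v$ is \emph{globally} linear (a branch cut along $\rho$ in the non-positive case, a seed in the positive case), and then citing Theorem \ref{thetamult} and Remark \ref{nocancel} to conclude that $\vartheta_{q_1+q_2}$ (addition taken in that structure) has nonzero coefficient in $\vartheta_{q_1}\vartheta_{q_2}$; linearity of $\val_v$ in that structure then gives the equality $\Val_v(\vartheta_{q_1+q_2})=\Val_v(\vartheta_{q_1})+\Val_v(\vartheta_{q_2})$ with no equality-case analysis of Lemma \ref{DecreasingBroken} needed. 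You should restructure your $\leq$ argument along those lines.
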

 \begin{proof}
Suppose that $\val_v$ is non-positive everywhere.  Then it only bends along a single ray $\rho$.  If we take a branch cut along $\rho$, $U^{\trop}$ can be identified with a convex cone on which $\val_v$ is linear.  On the other hand, if $\val_v$ is positive somewhere then we have seen that there is some seed with respect to which $\val_v$ is linear.

In either case, Theorem \ref{thetamult} and Remark \ref{nocancel} imply that $\vartheta_{q_1} \vartheta_{q_2}$ has a $\vartheta_{q_1+q_2}$ term (addition performed with respect to the above-mentioned linear structure or branch cut on $U^{\trop}$ that makes $\val_v$ linear).  The linearity of $\val_v$ then gives us $\Val_v(\vartheta_{q_1+q_2}) = \Val_v(\vartheta_{q_1}) + \Val_v(\vartheta_{q_2})$.  On the other hand, Theorem \ref{thetamult} and Lemma \ref{DecreasingBroken} imply there is no term $\vartheta_q$ in the expansion $\vartheta_{q_1}\vartheta_{q_2}=\sum a_q\vartheta_q$ with $a_q\neq 0$ such that $\Val(\vartheta_q)<\Val_v(\vartheta_{q_1+q_2})$, so $\Val_v(\vartheta_{q_1}) + \Val_v(\vartheta_{q_2})$ indeed gives the desired minimum in the definition of $\Val_v(\vartheta_{q_1}\vartheta_{q_2})$.
 \end{proof}

\begin{thm}\label{ValFibers}
Under the identification $w_U:U^{\trop}(\bb{Z})\rar V^{\trop}(\bb{Z})$ described in \S \ref{wU}, $\val_v(q) = \val_{\f{D}_{v}}(\vartheta_q)$.  Thus, $\Val_v (f) = \val_{\f{D}_{v}}(f)$.
\end{thm}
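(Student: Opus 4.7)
The plan is to compactify $V$ so that $\f{D}_v$ appears as a boundary divisor in the construction of \S \ref{compact}, then read off $\val_{\f{D}_v}(\vartheta_q)$ from Equation (\ref{val}) applied to the broken-line expansion of $\vartheta_q$.

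Since $w_U$ sends a primitive $v \in U^{\trop}(\bb{Z})$ to $\f{D}_{L_v^{d>0}}$, I would first choose a convex rational nonsingular polygon $\Delta \subset U^{\trop}$ with $0$ in its interior, vertices on the rays of $\Sigma$, and one edge $F$ lying along $L_v^{d>0}$ with tangent direction $v$. Then $\s{V}_\Delta$ contains a boundary divisor $\s{D}_F$ restricting to $\f{D}_v$ on a generic fiber (extending linearly in $|v|$ for non-primitive $v$). Pick a vertex $p = F \cap \rho_i$; in the patch $\Spec \kk[\kappa_p]$, the restriction $\vartheta_q|_{\s{V}_{\rho_i,\rho_i}^+} = \sum_{[q,\gamma] \PIN \rho_i} c_\gamma z^{m_\gamma}$ is visible, and Equation (\ref{val}) gives
\begin{align*}
\val_{\s{D}_F}(z^{m_\gamma}) \;=\; v \wedge r_*(m_\gamma) \;=\; D\val_v(r_*(m_\gamma)),
\end{align*}
the second equality holding in the local linear structure, in which $\val_v$ takes the form $q \mapsto v \wedge q$.

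By Remark \ref{nocancel} the monomials $c_\gamma z^{m_\gamma}$ all have positive coefficients, so no cancellation occurs and
\begin{align*}
\val_{\f{D}_v}(\vartheta_q) \;=\; \min_{[q,\gamma] \PIN \rho_i} D\val_v(r_*(m_\gamma)).
\end{align*}
Lemma \ref{DecreasingBroken} says $D\val_v(r_*(m_t))$ is non-decreasing in $t$ and equals $\val_v(q)$ at $t = -\infty$, so every term has valuation $\geq \val_v(q)$. For the reverse inequality, the un-bent straight broken line with $m_t \equiv \wt{\varphi}(q)$ and initial direction $q$, ending PIN $\rho_i$, contributes the monomial $z^{\wt{\varphi}(q)}$, whose valuation is exactly $\val_v(q)$. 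This yields $\val_{\f{D}_v}(\vartheta_q) = \val_v(q)$.

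The second statement then follows for $f = \sum_i a_i \vartheta_{q_i}$ by the discussion after (\ref{valuations}): distinct theta functions have distinct tropicalizations, forcing equality in (\ref{valuations}) term-by-term, so $\val_{\f{D}_v}(f) = \min_i \val_{\f{D}_v}(\vartheta_{q_i}) = \min_i \val_v(q_i) = \Val_v(f)$. The main obstacle I anticipate is guaranteeing that the contributing straight broken line both exists in $U^{\trop}_0$ and is PIN the chosen $\rho_i$ in every configuration---in particular when $L_v^{d<0}$ wraps, or when $q$ is anti-parallel to $v$ so that the naive geodesic would pass through the singular origin---which should be handled by arranging $\Delta$ and $\rho_i$ compatibly with $q$, possibly after toric blowups, and by a case split along the lines of the non-wrapping versus wrapping dichotomy already appearing in \S \ref{ValFun}.
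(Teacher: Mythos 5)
Your approach for the lower bound $\val_{\f{D}_v}(\vartheta_q) \geq \val_v(q)$ is essentially identical to the paper's: apply Equation (\ref{val}) in a patch of a compactification $\s{V}_\Delta$, use Remark \ref{nocancel} to rule out cancellation, and invoke Lemma \ref{DecreasingBroken}. The gap is in your argument for the reverse inequality, and the obstacle you flag at the end is not the one that actually bites.

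You claim the un-bent straight broken line from direction $q$ to a point $p = F\cap\rho_i$ contributes a monomial of valuation exactly $\val_v(q)$. By Equation (\ref{val}) its valuation is $v\wedge r_*(m_\gamma)$ computed \emph{at $p$}, i.e.\ $D\val_v|_p$ applied to $q$ parallel-transported along $\gamma$ to $p$. When $\val_v(q)<0$ (and $q$ is outside the cone $\sigma$ on which $\val_v\geq 0$, which contains all the scattering rays of the relevant seed), the straight segment from $p$ back to infinity parallel to $q$ stays in the region where the seed-linear structure making $\val_v$ linear agrees with the canonical one, so $D\val_v(m_t)$ is constant along $\gamma$ and your claim holds; this is essentially the paper's first case, except the paper evaluates Equation (\ref{val}) directly at $\gamma(0)=q$ rather than at $p$, which sidesteps the wrapping subtleties you mention. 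But when $\val_v(q)\geq 0$, i.e.\ $q$ lies in $\sigma$, the straight ray from $p$ (where $\val_v(p)<0$) back toward infinity in direction $q$ must cross into $\sigma$ and will in general cross rays where $\val_v$ bends (the scattering rays, across which the canonical structure and the seed-linear structure disagree). By the proof of Lemma \ref{DecreasingBroken}, each such crossing can \emph{strictly increase} $D\val_v(m_t)$, so the straight broken line's monomial has valuation strictly greater than $\val_v(q)$ whenever such a crossing occurs. The minimum in that case is achieved by a broken line that bends at those rays so as to keep $D\val_v(m_t)$ constant, and verifying that such a broken line contributes to $\vartheta_q$ with the right endpoint is precisely what you would need to supply. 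Arranging $\Delta$, taking toric blowups, or splitting on wrapping will not remove this phenomenon, since it is forced by $\val_v$ not being linear on $\sigma$ in the canonical structure.

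The paper avoids this entirely with an algebraic trick: having established $\val_{\f{D}_v}(\vartheta_q)\geq\val_v(q)$ and having proved the theorem for $\val_v(q)<0$, it picks $q'$ with $\val_v(q')<-\val_v(q)$, applies Lemma \ref{AdditiveVal} to get $\Val_v(\vartheta_q\vartheta_{q'})=\val_v(q)+\val_v(q')<0$, notes the minimum in the theta-expansion of $\vartheta_q\vartheta_{q'}$ is achieved at some $r$ with $\val_v(r)<0$ (where the first case applies), and then uses additivity of $\val_{\f{D}_v}$ under multiplication to solve for $\val_{\f{D}_v}(\vartheta_q)$. Your write-up would need to replace the straight-broken-line claim in the positive case with either this multiplicativity argument or a careful analysis of the maximally-bent broken lines contributing to $\vartheta_q$ near $p$.
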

\begin{proof}
Suppose that $q = L_v^{-d}(t_q)\in L_v^{-d,0}$ for some $d<0$.  We see from Equation \ref{val} and the definition of theta functions that
\begin{align}\label{thetat}
    \val_{\f{D}_{v}}(\vartheta_q)  = \min_{[q,\gamma] |\gamma(0)=q}  v\wedge m_{\gamma},
\end{align}
where $v$ may be interpreted as $\gamma'(t_q)$.  By the definition of $\Val_v$ and the fact that $v\wedge m_{\gamma} = D\val_v(m_{\gamma})$, the right-hand side is $\geq \Val_v(\vartheta_q)$, which by Corollary \ref{valtheta} equals $\val_v(q)$.  The straight broken line contained in $\rho_q$ has $m_{\gamma} = q$, and so this gives us equality.

Now suppose $\val_v(q) = d \geq 0$.  Let $p\in L_v^{c}$ for some $c>0$.  Then, as in Equation \ref{thetat}, we have 
\begin{align*}
    \val_{\f{D}_{v}}(\vartheta_q)  = \min_{[q,\gamma] |\gamma(0)=p}  v\wedge m_{\gamma},
 \end{align*}
    and this is still $\geq \Val_v(\vartheta_q)=\val_v(q)=d\geq 0.$

Now, pick any $q'$ with $\val_v(q')=d' < -d$.  We can write $\vartheta_q \vartheta_{q'} = \sum_{r\in I} a_r \vartheta_r$, $a_r \neq 0$, for some $I\subset U^{\trop}(\bb{Z})$.   Lemma \ref{AdditiveVal} tells us that $\Val_v(\vartheta_q\vartheta_{q'})=d-d'<0$.  In particular, there is some $r\in I$ with $\val_{\f{D}_{v}}(\vartheta_r) = \Val_v(\vartheta_r) =d-d'<0$, so we do not need to worry about the $r\in I$ for which $\val_{\f{D}_{v}}(\vartheta_r)\geq 0$.  The previous paragraph shows that these are the $r$ for which $\val_v(r) \geq 0$.

Thus, we have 
\begin{align*}
\val_{\f{D}_{v}}(\vartheta_q) + \val_{\f{D}_{v}}(\vartheta_{q'}) = \val_{\f{D}_{v}}(\vartheta_q \vartheta_{q'}) =\min_{r\in I} \val_{\f{D}_{v}} \vartheta_r 
                                                    =  \Val_v(\vartheta_q \vartheta_{q'}) = d-d'.
\end{align*}
So $\val_{\f{D}_{v}}(\vartheta_q)=d=\val_v(q)$, as desired.
\end{proof}

\subsection{Tropical Theta Functions} \label{ttf}

The previous subsection tells us that $\vartheta_q^{\trop}(v)=\Val_v(\vartheta_q) = \val_v(q)$.  In this subsection we will explicitely describe the fibers of $\vartheta_q^{\trop}$ in $V^{\trop}$.

\begin{ntn}
We will use the notation $\wedge_{q^+}$ to indicate we are using the wedge product defined on $U^{\trop}$ by cutting along $\rho_q$ and then identifying $\rho_q$ with the clockwise-most boundary ray of $U^{\trop}\setminus \rho_q$ (so $q\wedge v\geq 0$ for nearby $v$ in $U^{\trop}\setminus \rho_q$).  Similarly, for $\wedge_{q^-}$ we identify $\rho_q$ with the counterclockwise-most boundary ray.  Let $\mu$ denote the monodromy action on $TU_0^{\trop}$ corresponding to parallel transporting vectors counterclockwise about the origin.
\end{ntn}

\begin{lem}\label{tpos}
If $\val_v(q) < 0$, then 
\begin{align*}
\val_v(q) & =  \min_{\substack{
                                        t \in \bb{R}| L_v^{d> 0}(t)\in \rho_q}}   \left\{(L_v^{d>0})'(t)\wedge q \right\} \\
          & =  \min_{\substack{i=0,\ldots,k}}   \left\{ \mu^{-i}v \wedge_{v^+} q \right\}  \\
          & =  \min_{\substack{i=0,\ldots,k}}   \left\{v\wedge_{q_-} \mu^{i} q\right\}  \\
          & =  \min_{\substack{
                                        t\in \bb{R}| L_q^{d<0}(t)\in \rho_v}}   \left\{v\wedge (L_q^{d<0})'(t)\right\}  
\end{align*}
where $k$ is the smallest non-negative integer such that $v\wedge_{q_-} \mu^{k+1} q \geq 0$.
\end{lem}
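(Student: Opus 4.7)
The plan is to work in the universal cover $\wt{U}^{\trop}$ of $U^{\trop}_0$ with its pulled-back flat integral linear structure. In this cover, lines in $U^{\trop}$ lift to Euclidean-straight geodesics, and the wedge form is globally defined since it is preserved by the $\SL_2(\bb{Z})$-valued monodromy $\mu$.

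The equality of the two middle expressions $\mu^{-i}v\wedge_{v^+}q$ and $v\wedge_{q_-}\mu^iq$ is then immediate from this $\mu$-invariance of the wedge: in the universal cover, $\mu^{-i}v\wedge q=v\wedge \mu^iq$ identically, and the subscripts $_{v^+}$ and $_{q_-}$ merely specify which branch cut of $U^{\trop}$ (along $\rho_v$ or $\rho_q$) is used to evaluate the wedge locally, both choices yielding the same number.

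For the first equality, I set $d:=-\val_v(q)>0$ and consider the line $L:=L_v^d$.  Since $\val_v(q)<0$, the point $q$ lies on the $0$-side boundary $L^{d,0}\subset L$, so I may lift $L$ to a straight line $\wt{L}\subset \wt{U}^{\trop}$ through a chosen preimage $\wt{q}$ of $q$, with constant tangent $v$ and constant signed lattice distance $\wt{L}(t)\wedge v=d$.  The lifts of $\rho_q$ in $\wt{U}^{\trop}$ are rays in the directions $\mu^iq$, and such a lift meets $\wt{L}$ at a point $\lambda_i\mu^iq$ with $\lambda_i>0$ precisely when $v\wedge \mu^iq<0$; then the distance equation gives $v\wedge \mu^iq=-d/\lambda_i$.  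At the distinguished sheet containing $\wt{q}$ one has $\lambda=1$, so $v\wedge q=-d=\val_v(q)$.  The index range $\{0,\ldots,k\}$ corresponds exactly to those sheets on which $\wt{L}$ meets the lifts of $\rho_q$ with positive $\lambda_i$, with $k$ characterized by the cut-off $v\wedge_{q_-}\mu^{k+1}q\geq 0$.  Reading $(L_v^{d>0})'(t)\wedge q$ at each intersection in its local chart as $v\wedge \mu^iq$ in the universal cover then matches the first of the minima with the second.

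The last equality comes from the symmetric analysis obtained by swapping the roles of $v$ and $q$: the hypothesis $\val_v(q)<0$ also puts $v$ on the $0$-side boundary of $L_q^{\val_v(q)}$ (now with $0$ on the right, matching the $d<0$ convention), and the same universal-cover argument applied to this line, intersecting the lifts of $\rho_v$, produces the same collection of wedge values.  The main obstacle I anticipate is the careful bookkeeping that the index range $\{0,\ldots,k\}$ matches precisely the intersections lying on the $0$-side boundary $L^{d,0}$ rather than on an outer branch of a wrapping line, and that the minimum is attained at $\lambda=1$ (rather than at some smaller $\lambda_i$ on a different sheet)---this last point will likely use positivity of $U$ to control how $\wt{L}$ wraps around the origin.
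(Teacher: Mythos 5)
Your proposal is essentially correct and follows the same route as the paper's proof; the universal-cover packaging is merely a cosmetic reframing of the paper's use of the monodromy $\mu$ and of the scaling device (the paper scales $L_v^d$ so that the $i$-th crossing passes exactly through $q$, getting $(L_v^{d_i})'(t_i)\wedge q=-d_i$, which is identically your $v\wedge\mu^iq=-d/\lambda_i$ with $d_i=d/\lambda_i$). The $\mu$-invariance argument for the middle equality and the ``swap the roles of $v$ and $q$'' argument for the last are also the paper's. The one concern you flag at the end --- that the minimum is attained at $\lambda=1$ and that this might need positivity to control the wrapping --- is not actually an obstacle: once you fix $d=-\val_v(q)$, the point $q$ lies on $L_v^{d,0}$ \emph{by the definition} of $\val_v$ in \S\ref{ValFun}, and $L_v^{d,0}=\partial Z(L_v^d)$ contains the crossing of $\rho_q$ closest to the origin for the elementary reason that a ray emanating from $0\in Z(L_v^d)$ must first exit $Z(L_v^d)$ through its boundary. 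That gives $\lambda=1$ as the minimal $\lambda_i$ with no further input. (Positivity is used only to guarantee that lines go to and come from infinity, so that the minima in the statement are over nonempty finite sets.)
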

\begin{proof}
Let $t_1,\ldots,t_k$ be the times at which $L_v^{d>0}(t)$ intersects $\rho_q$.  For the first equality, note that if for some $d_i$, $L_v^{d_i>0}(t_i)=q$, then $(L_v^{d_i>0})'(t)\wedge q$ is negative the lattice distance of the line from the origin at that time\footnote{When we multiply $d$ by a positive scalar $c$, we map $L_v^d(t)$ to $cL_v^d(t)$.  That way the times $t_1,\ldots,t_k$ are unchanged.} (i.e., $-d_i$).  Since $L_v^{d>0,0}$ contains the point of $\rho_q\cap L_v^{d>0}$ closest to the origin, say, $L_v^{d>0}(t_m)$, we have that $d_m$ is the largest of the $d_i$'s.  Hence, the $\min$ in the first equality is obtained at $L_v^{d>0}(t_m)\in L_v^{d>0,0}$.  Since $(L_v^{d>0})'(t_m)\wedge q = \val_v(q)$, this proves the first equality.

The second equality follows by noting that each time we follow $L_v^{d>0}$ around the origin (moving backwards along the line), the tangent vector (initially $v$) is multiplied by $\mu^{-1}$.  Note that $k$ as in the statement of the theorem is the number of times that the $L_v^{d>0}$ intersects $\rho_q$.

The third equality follows from the fact that $\mu\in SL_2(\bb{Z})$, and so $a\wedge b = \mu(a)\wedge \mu(b)$.  The fourth equality follows symmetrically to the second equality.  
\end{proof}

\begin{cor} \label{level}
Under the identification $w_U$ of $U^{\trop}$ with $V^{\trop}$, for $d<0$, $L_q^{d,0}$ (as defined in \S \ref{lines}) is the fiber $\{v\in U^{\trop}|\vartheta_q^{\trop}(v)=d\}$.  
\end{cor}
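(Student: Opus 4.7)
The plan is to translate the statement into a statement about $\val_v(q)$ and then read off the level sets directly from Lemma \ref{tpos}. By Theorem \ref{ValFibers}, together with the definition $\vartheta_q^{\trop}(v) = \val_{\f{D}_v}(\vartheta_q)$, the function $\vartheta_q^{\trop}$ on $V^{\trop}$ pulls back under $w_U$ to $v\mapsto \val_v(q)$ on $U^{\trop}$. So the corollary reduces to showing that for $d<0$,
$$
\{v\in U^{\trop} \mid \val_v(q)=d\} \;=\; L_q^{d,0}.
$$
Both sides are equivariant under the positive scaling $v \mapsto cv$ on a fixed ray $\rho_v$ (the left-hand side because $\val_{cv}(q) = c\,\val_v(q)$ from the $\bb R$-linear extension, the right-hand side because $L_q^{cd,0}\cap \rho_v = c\cdot(L_q^{d,0}\cap \rho_v)$), so it suffices to check the equality of intersections with each ray $\rho_v$; equivalently, to check the claim on primitive integral $v$.

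For primitive $v$, I would apply the fourth equality of Lemma \ref{tpos},
$$
\val_v(q) \;=\; \min_{\,t\,:\,L_q^{d'<0}(t)\in \rho_v}\bigl\{\,v\wedge (L_q^{d'})'(t)\,\bigr\},
$$
and observe that each intersection is of the form $L_q^{d'}(t_i)=\lambda_i v$ with $\lambda_i>0$, so by the very definition of signed lattice distance
$$
v\wedge (L_q^{d'})'(t_i) \;=\; \tfrac{1}{\lambda_i}\bigl(L_q^{d'}(t_i)\wedge (L_q^{d'})'(t_i)\bigr) \;=\; \frac{d'}{\lambda_i}.
$$
With $d'<0$ this is most negative at the smallest $\lambda_i$, i.e.\ at the intersection of $\rho_v$ with $L_q^{d'}$ closest to the origin. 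By the very definition of $L_q^{d',0}$ as the boundary of the component $Z(L_q^{d'})$ containing the origin, that closest intersection is precisely the unique point of $L_q^{d',0}\cap \rho_v$. Setting $d'=d$ and noting that $v$ primitive lies on $L_q^{d,0}$ exactly when $\lambda=1$ at this minimizing point then gives $\val_v(q)=d$, and conversely; extending by the homogeneity above recovers the full claim.

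The delicate point, which I expect to be the main obstacle, is justifying that the minimum in Lemma \ref{tpos} really is attained at the $0$-side boundary intersection even when $L_q^d$ wraps around the origin and meets $\rho_v$ at several points with tangent vectors twisted by powers of the monodromy $\mu$. One must argue that all later wraps yield larger $\lambda_i$, hence strictly larger (less negative) values $d/\lambda_i$; this uses that $L_q^d$ moves strictly away from the origin at each successive loop, together with the positivity assumption guaranteeing the line goes to and comes from infinity. The hypothesis $d<0$ is what ensures $\val_v(q)<0$ (so Lemma \ref{tpos} applies) and fixes the direction of the inequality in the minimum; the boundary case $d=0$ is excluded for exactly these reasons, consistent with Definition \ref{ZeroLine}.
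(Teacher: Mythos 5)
Your proof is correct and takes the same route the paper intends: the corollary is presented without proof as an immediate consequence of Lemma \ref{tpos}, and you supply exactly the computation that makes it so, namely applying the fourth expression of that lemma, observing that $v\wedge (L_q^{d'})'(t_i) = d'/\lambda_i$ by the definition of signed lattice distance, and noting that the minimizing $\lambda_i$ is the one on $L_q^{d',0}$ precisely because $L_q^{d',0}$ is the boundary of the component of $U^{\trop}\setminus L_q^{d'}$ containing the origin (so the first hit of $\rho_v$ as one travels out from $0$ lies on it). The reduction to primitive $v$ by the homogeneity of both sides is also the right reduction. One small remark: the ``delicate point'' you flag at the end is in fact already handled by your own computation together with the definition of $L_q^{d,0}$ --- you do not need the line to move strictly away from the origin on successive loops, only that the innermost intersection with $\rho_v$ is the one on $L_q^{d,0}$, which is immediate. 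The place that genuinely deserves a sentence is the one you wave at in your last line: for the direction $v\in L_q^{d,0}\Rightarrow\val_v(q)=d$ you invoke Lemma \ref{tpos}, whose hypothesis is $\val_v(q)<0$, so one should note why that hypothesis is satisfied before applying it (e.g.\ by first recording that the first expression of Lemma \ref{tpos} shows $\val_v(q)<0$ exactly when $L_v^{d>0}$ meets $\rho_q$, and matching this against $L_q^{d<0}$ meeting $\rho_v$, or simply by using the clean inclusion $\{\val_\cdot(q)=d\}\subseteq L_q^{d,0}$ for all $d<0$ together with the fact that both families of curves sweep out the same region). This is a presentational point rather than a real gap.
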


\begin{prop}\label{levelpos}
Under the identification $w_U$ of $U^{\trop}$ with $V^{\trop}$, for $d>0$, $\f{L}_q^{d}$ (as defined in \S \ref{ValFun}) is the fiber $\{v\in U^{\trop}|\vartheta_q^{\trop}(v)=d\}$. 
\end{prop}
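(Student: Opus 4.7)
The plan is to combine Theorem~\ref{ValFibers} and Corollary~\ref{valtheta} to reduce the claim to showing that $\{v \in U^{\trop} : \val_v(q) = d\} = \f{L}_q^d$ for $d > 0$, under the identification $w_U$. Both sides are empty unless $\val_{\cdot}(q)$ attains positive values somewhere, which by the analysis in \S\ref{ValFun} requires some $v$ to have $L_v^{c<0}$ not wrap with $q$ lying in the resulting positive cone of $\val_v$. Paralleling the symmetry exhibited in Lemma~\ref{tpos} for the negative case (and using the Weil-divisor perspective of \S\ref{PLF}), this is equivalent to $L_q^{c<0}$ not wrapping with $v$ in the corresponding positive cone, so I would assume we are in this situation; otherwise both sides are empty and we are done.

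Next, I would apply Lemma~\ref{model} to $L_q^{c<0}$ to obtain a seed $S$ whose scattering rays are confined to $\sigma_{q_-, q_+}$. In the $S$-linear structure on $U^{\trop}$, two facts hold simultaneously: by the max-bend characterization of $\f{L}_q^d$ together with \S\ref{SeedScatter}, the broken line $\f{L}_q^d$ becomes a straight line parallel to $q$ at signed lattice distance $d$; and by \S\ref{clustercomplex}, $\vartheta_q$ restricts to the monomial $z^{\wt{\varphi}(q)}$ on the cone $\sigma_q$. Choosing a polygonal compactification of the mirror in which $\f{D}_v$ arises as the boundary divisor corresponding to an edge with tangent $v$ in the $S$-structure, Equation~\ref{val} gives $\val_{\f{D}_v}(z^{\wt{\varphi}(q)}) = v \wedge q$, with the wedge computed in the $S$-structure. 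Combined with Theorem~\ref{ValFibers} and the dominance of this monomial term over the other broken-line contributions (via Lemma~\ref{DecreasingBroken}), this yields $\val_v(q) = v \wedge q$ for the relevant $v$.

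Setting $v \wedge q = d$ then cuts out a straight line in the $S$-linear structure, parallel to $q$ at signed lattice distance $d$, which is exactly $\f{L}_q^d$ by the characterization above; so the two sets coincide. The main obstacle is the ``single seed'' issue: $S$ was chosen to be adapted to $q$, and I must argue that it works uniformly for all $v$ on the fiber line. Since the fiber is a straight line in the $S$-structure going to infinity parallel to $q$, it naturally sits in a cone compatible with the seed choice; however, verifying that a single polygonal compactification suffices for every such $v$ (or that the consistency relation~\ref{consistency} lets us patch different compactifications together) needs care, especially near the transition between positive and negative fibers.
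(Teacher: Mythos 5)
Your proposal takes a genuinely different route from the paper. The paper's proof is a short, purely local computation: for broken lines $\gamma_q$ and $\gamma_v$ supported on $\f{L}_q^{d,0}$ and $\f{L}_v^{-d,0}$, it checks that the local wedge $v_i \wedge q_i$ of their (negated) tangent vectors is invariant under crossing any scattering ray $\rho_u$, using the explicit form of the scattering automorphism $v_2 \mapsto v_1 = v_2 + b_u(u \wedge v_2)u$ and similarly for $q$. Since $v\wedge q$ computes $\val_v(q)$ locally, Theorem \ref{ValFibers} then gives the result immediately, without ever selecting a global chart. Your approach instead globalizes by choosing a single seed $S$ adapted to $q$ via Lemma \ref{model}, so that $\f{L}_q^d$ straightens and $\vartheta_q$ becomes a monomial $z^{\wt\varphi(q)}$ on the seed torus, and then appeals to Equation \ref{val} and the toric model to compute $\val_{\f{D}_v}$.

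Your gap flag is the right one, and I would call it a genuine gap rather than a technicality. The missing step is precisely the assertion that, for every $v$ on the fiber, the pairing $\langle q, w_U(v)\rangle$ computed on the toric model $\?Z$ equals $v \wedge q$ in the $S$-linear structure. The identification $w_U$ is defined via lines $L_v^{d>0}$ in the \emph{canonical} structure on $U^{\trop}$, and while Corollary \ref{wLinear} says $w_U$ is linear for the canonical structures on both sides, it is not obvious that $w_U$ restricted to the half-plane $\sigma_{q_-,q_+}$ remains linear for the $S$-structure; the two structures disagree under parallel transport across the scattering rays inside $\sigma_{q_-,q_+}$. Establishing this compatibility is essentially the content of the paper's wedge-invariance computation, so your linearization approach does not so much avoid the wall-crossing argument as relocate it. A secondary (and smaller) issue: you invoke Lemma \ref{DecreasingBroken} for ``dominance of this monomial term,'' but once you know $\vartheta_q = z^{\wt\varphi(q)}$ on the dense open $T_S$, the valuation along $\f{D}_v$ is already determined by that monomial and no dominance argument is needed; the real work is all in the $w_U$ compatibility just described.
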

\begin{proof}
Let $\gamma_q$ and $\gamma_v$ be broken lines with initial tangent vectors $q$ and $v$, respectively, which are supported on $\f{L}_q^{d,0}$ and $\f{L}_v^{-d,0}$, respectively.  Let $q_1,v_1$ be negative of the tangent vectors to $\gamma_q$ and $\gamma_v$, respectively, on the counterclockwise-side of a scattering ray $\rho_v$ generated by primitive vector $v$, and similarly for $q_2$ and $v_2$ on the clockwise-side of $\rho_u$.

In light of Theorem \ref{ValFibers}, it suffices to show that $v_1\wedge q_1 = v_2 \wedge q_2$.  Let $b_{u}$ be the degree of the scattering function attached to $\rho_u$ (so for $U$ generic, it is the number of $(-1)$-curves hitting $D_u$).  Then when crossing in the counterclockwise direction, $q_2$ changes to $q_1=q_2+b_{u}(u\wedge q_2) u$, while $v_2$ changes to $v_1=v_2 + b_u (u\wedge v_2) u$.  So indeed,
\begin{align*}
v_1 \wedge q_1 = v_2 \wedge q_2 + b_u (u\wedge v_2)(u\wedge q_2) + b_u(v_2\wedge u)(u\wedge q_2 ) = v_2 \wedge q_2.
\end{align*}
\end{proof}

\subsection{Symmetry of the Dual Pairing}\label{sympair}

Note that we have a canonical pairing $\langle \cdot,\cdot\rangle_{\bb{Z}}: U^{\trop}(\bb{Z})\times V^{\trop}(\bb{Z})\rar \bb{Z}$ defined by $\langle q,v\rangle := \vartheta_q^{\trop}(v)=\val_{\f{D}_v}(\vartheta_q)$.  This can be extended to a pairing $\langle \cdot,\cdot\rangle: U^{\trop}\times V^{\trop}\rar \bb{R}$ as follows:  extending to rational points is easy because the pairing is linear with respect to multiplication by non-negative rational (and real) numbers in either variable.  Fixing one variable gives a piecewise linear (in particular, continuous) function in the other, and so we can extend continuously to the real points for both variables.

On the other hand, since $V$ is itself a log Calabi-Yau surface (deformation equivalent to $U$), we could apply the mirror constructions of \S \ref{mirror} to a compactification $(Z,\f{D})$ of $V$ to construct a mirror family $\s{U}$, with points $v\in V^{\trop}(\bb{Z})$ corresponding to canonical theta functions $\vartheta_v$ on $\s{U}$.  Let $U'$ be a generic fiber of $\s{U}$.  We obtain a map $w_V:V^{\trop}\rar (U')^{\trop}$ analogously to how we defined $w_U$ (here, it is important to remember that we take the orientation of $V^{\trop}$ to be opposite that induced by $w_U$).  By composition we obtain an identification $w_V\circ w_U:U^{\trop} \rar (U')^{\trop}$, which is in fact induced by an identification of a deformation of $U$ with $U'$.  
 Corollary \ref{level} and Proposition \ref{levelpos} hold as before with the roles of $U^{\trop}$ and $V^{\trop}$ interchanged.  We see:

\begin{thm}\label{symdual}
For $q\in U^{\trop}$ and $v\in V^{\trop}$, $\vartheta_q^{\trop}(v) = \vartheta_v^{\trop}(q)$ under the identification $w_V\circ w_U$.  In other words, the pairing $\langle \cdot,\cdot \rangle$ does not depend on which side we view as the mirror.
\end{thm}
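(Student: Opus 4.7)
The plan is to verify the identity $\vartheta_q^{\trop}(v)=\vartheta_v^{\trop}(q)$ case-by-case according to the sign of the pairing. Since the pairing is continuous and positively homogeneous in each argument, it suffices to check equality on lattice points $q\in U^{\trop}(\bb{Z})$, $v\in V^{\trop}(\bb{Z})$. By Theorem~\ref{ValFibers}, the left-hand side equals $\val_v(q)$ computed intrinsically on $U^{\trop}$ using the geodesics $L_v^d$, while the right-hand side (after translating via $w_V$) equals the analogously-defined $\val_q(v)$ computed on $V^{\trop}$. So the task reduces to comparing these two intrinsic constructions.

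For the \emph{negative case} $\vartheta_q^{\trop}(v)<0$, the entire argument is already packaged inside Lemma~\ref{tpos}: the first and fourth expressions in that chain of equalities give
\[
\val_v(q)=\min_{t\,:\,L_v^{d>0}(t)\in\rho_q}\bigl((L_v^{d>0})'(t)\wedge q\bigr) =\min_{t\,:\,L_q^{d<0}(t)\in\rho_v}\bigl(v\wedge (L_q^{d<0})'(t)\bigr).
\]
The right-hand formula is, up to the orientation flip built into the convention of \S\ref{wU} (that $V^{\trop}$ carries the orientation opposite to the one induced by $w_U$), the analogue on $V^{\trop}$ of the formula defining $\val_q(v)$: the clockwise lines $L_q^{d<0}$ in $U^{\trop}$ become counterclockwise lines in $V^{\trop}$ under $w_V$, and the wedge product is symmetric. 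Thus the symmetry in this case is essentially a translation of Lemma~\ref{tpos}.

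For the \emph{positive case} $\vartheta_q^{\trop}(v)>0$, I would use Proposition~\ref{levelpos} together with the cluster-complex analysis of \S\ref{clustercomplex}. When the pairing is positive, $q$ and $v$ both lie in the cluster complex, and by Lemma~\ref{model} and Corollary~\ref{FiniteScatter} there is a seed $S$ on $U$ (the common toric model for both $L_q$ and $L_v$) with respect to which every broken line relevant to $\val_v$ is actually straight. In this seed's linear structure $\val_v$ is simply $q\mapsto v\wedge q$, which is manifestly anti-symmetric and hence symmetric in absolute value. By the alternate construction of $U^{\trop}$ via scattering of a seed (\S\ref{SeedScatter}), the dual seed on $V$ induces the same wedge-product computation on $V^{\trop}$, so $\val_q(v)$ gives the same number. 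The \emph{zero case} then follows by continuity of the pairing along the fiber boundary, since both adjacent cases agree.

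The main obstacle I anticipate is bookkeeping rather than conceptual: carefully verifying that the two identifications $w_U$ and $w_V$ compose to (an appropriate deformation of) the identity, and that the orientation convention on $V^{\trop}$ is precisely the one that makes the ``clockwise vs.~counterclockwise'' swap in Lemma~\ref{tpos} match the geometric construction of $w_V$. A secondary subtlety is the positive-case argument, where one must confirm that the seed $S$ chosen on the $U$-side pulls back under $w_V\circ w_U$ to a compatible seed on $(U')^{\trop}$; this ultimately rests on the self-duality of \cite{GHK2}'s mirror construction, i.e.~that iterating the construction returns a family deformation-equivalent to the original.
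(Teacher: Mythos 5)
Your proposal rests on exactly the same core ingredients as the paper's proof---Corollary~\ref{level}, Proposition~\ref{levelpos}, and the explicit formulas in Lemma~\ref{tpos}---so at bottom it is the same argument. The paper's version is terser and avoids case-splitting: it observes that under $w_U$ the fibers $L_q^{d,0}$ (for $d<0$) and $\f{L}_q^d$ (for $d>0$) of $\vartheta_q^{\trop}$ (as described in Corollary~\ref{level} and Proposition~\ref{levelpos}) are carried to the sets $L_{w_U(q)}^{-d,0}$ and $\f{L}_{w_U(q)}^{-d}$ in $V^{\trop}$, which are by definition the fibers of $\val_q$; the sign reversal is absorbed by the orientation convention from \S\ref{wU}. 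Since $\vartheta_v^{\trop}(q)=\val_q(v)$ by Theorem~\ref{ValFibers} applied to the $V\to U'$ mirror, the two pairings coincide. Your case decomposition reaches the same conclusion and your negative case is a faithful unpacking of Lemma~\ref{tpos}.

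One point in your positive case is imprecise. You assert a ``common toric model for both $L_q$ and $L_v$'' in which both $\val_v$ and $\val_q$ are linear, but it is not obvious a priori that a single seed serves both directions simultaneously (nor is such a seed actually needed). What the argument actually requires is the wall-crossing invariance of the wedge product between the broken-line tangents of $\f{L}_q^{d>0}$ and of $\f{L}_v^{-d>0}$, which is precisely the computation in the proof of Proposition~\ref{levelpos} ($v_1\wedge q_1 = v_2\wedge q_2$ across a scattering ray). Once you have that, you can evaluate $\val_v(q)$ in a seed adapted to $v$ and $\val_q(v)$ in a seed adapted to $q$, and the two wedge products agree by that invariance; the sign is corrected by the orientation convention. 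With that substitution your positive-case argument is complete, and the zero case does follow by continuity as you say.
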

\begin{proof}
Note that the support of $w_U(L_q^{d,0})$ is the same as that of $L_{w_U(q)}^{-d,0}$, and similarly with $w_U(\f{L}_q^{d,0})$ and $\f{L}_{w_U(q)}^{-d,0}$.  The negation of the distance comes from the difference in orientation between $U^{\trop}$ and $V^{\trop}$.  We want to show that $\vartheta_q^{\trop}(v) = \val_q(v)$. This follows immediately from comparing the definition of $\val_q$ in \S \ref{ValFun} to the descriptions of $\vartheta_q^{\trop}$ in Corollary  \ref{level} and Proposition \ref{levelpos}.
\end{proof}

\subsection{Bending Parameters of Tropical Theta Functions}

\subsubsection{Bends of the Negative Fibers}\label{negbend}

\begin{figure} \begin{center}
    \begin{tabular}{ c  c c c}
    \includegraphics[scale=.3]{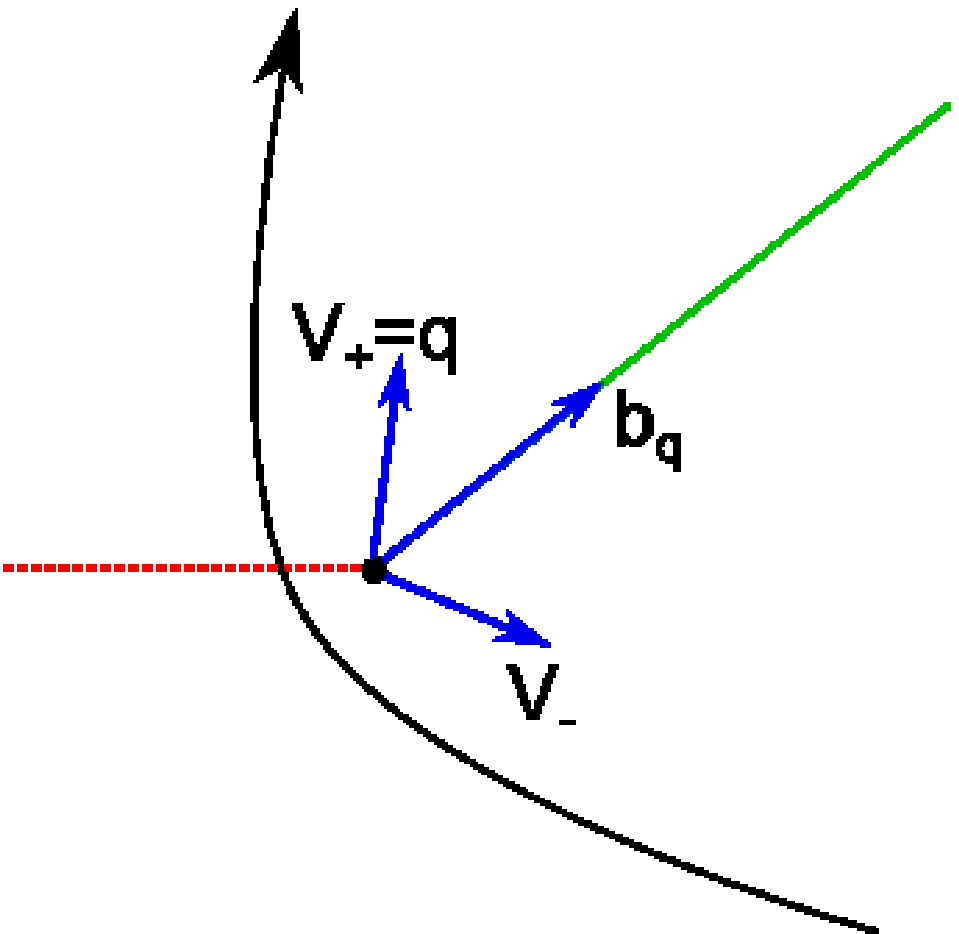} 
    &\includegraphics[scale=.3]{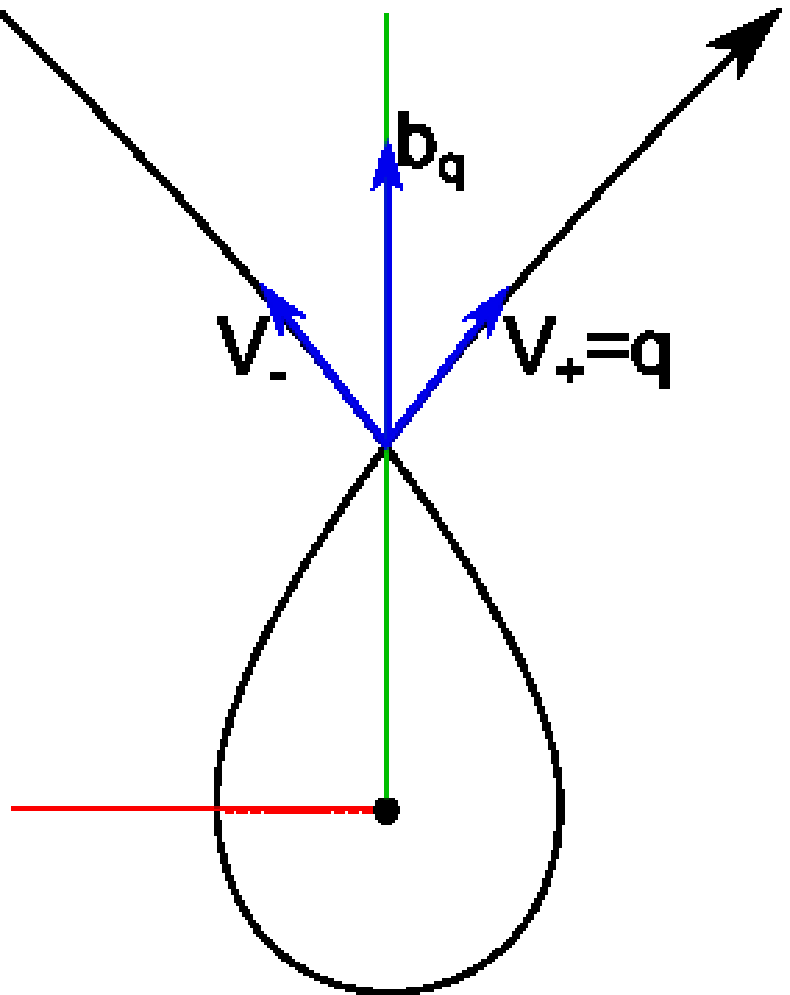} 
    &\includegraphics[scale=.38]{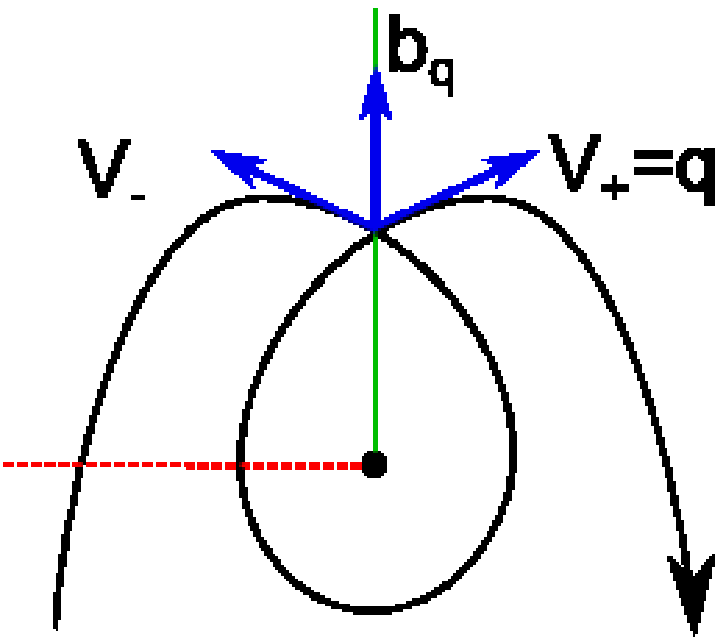} 
    &\includegraphics[scale=.38]{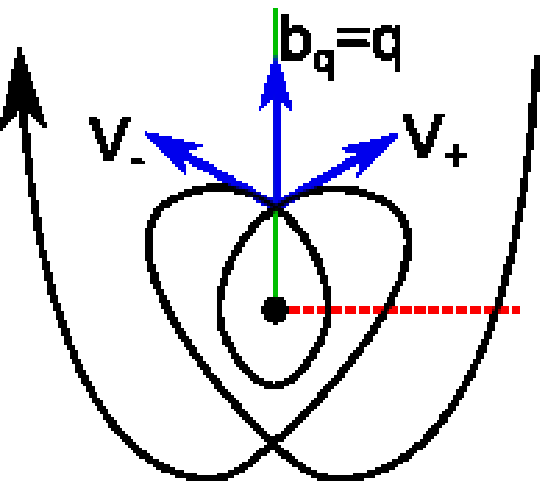} \\
        (a) & (b)  & (c)  & (d)
    \end{tabular}
\caption{Some lines $L_q$ which (a) do not wrap; (b) wrap once; (c) wrap twice (as in the $E_7$ case); and (d) wrap three times (as in the $E_8$ case).  The dashed red rays indicate our chosen branch cuts.  The blue vectors denote the boundary vectors and the bend $b_q$.  The green rays are the rays along which the functions bend.  The curved appearance of the lines occurs because the projection of $U^{\trop}$ onto the page is not an isometry. \label{linefig}}
\label{wrap}\end{center}
\end{figure}

Let $d<0$.  Recall that $L_q^{d,0}$ is the fiber $\vartheta_q^{\trop}(v)=d$ by Corollary \ref{level}.    Either $L_q^{d,0}$ is unbounded as in Figure \ref{linefig}(a), or, if $L_q^{d}$ self-intersects, then $L_q^{d,0}$ is bounded as in Figure \ref{linefig}(b,c,d).  It is clear from these figures that there is some $b_q\in V^{\trop}(\bb{Z})=w_U(U^{\trop}(\bb{Z}))$ such that the negative part of $\vartheta_q^{\trop}$ agrees with a function in $\beta_{b_q}$, where $\beta_{b_q}$ here is defined as in \S \ref{Betas} to be the set of functions with bending parameter $|b_q|$ along $\rho_{b_q}$.  Furthermore, the ray $\rho_{b_q}$ should intersect the vertex of $L_q^{d,0}$ (if there is one).  If $H =\left( D_i \cdot D_j\right)$ is invertible, then we saw in \S \ref{PLF} that each $\beta_{b_q}$ in fact consists of a single function, which we denote there as $\beta_{b_q}$ (abusing notation).

To find $b_q$, we first define the {\it boundary vectors} of $L_q^{d,0}$ (see Figure \ref{linefig}).  If $L_q^{d,0}$ is unbounded, then we say the boundary vectors of $L_q^{d,0}$ are $V_+:=L_q^d(\infty)=q$ and $V_{-}:=L_q^d(-\infty)$. Otherwise, let $t_1<t_2\in \bb{R}$ denote the initial and final times times for which $L_q^{d}(t)\in L_q^{d,0}$.  Then the boundary vectors are $V_{+}:=(L_q^d)'(t_2)$ and $V_{-}:=-(L_q^d)'(t_1)$ (so $V_+$ is the outward flow, and $V_{-}$ is the inward flow, which we negate).   Note that $V_-=-\mu(V_+)$.  We can add these tangent vectors and identify the sum with a point in $U^{\trop}$.
    We claim that ${b_q}:=V_-+V_+$.  In fact, this is easy to see: just observe that when we cross the ray $\rho_{b_q}$ in the counterclockwise direction, $\vartheta_q^{\trop}$ changes from $\cdot \wedge (-V_{-})$ to $\cdot \wedge V_+ = \cdot \wedge (-V_-+{b_q})$, which indeed means that the negative part of $\vartheta_q^{\trop}$ agrees with a function in $\beta_{b_q}$.

It follows immediately from the above argument that if $L_q^{d>0}$ wraps at most once (Figure \ref{linefig}(a,b)), then $b_q=q-\mu{q}$ (where we choose a cut which hits $L_q^d$ exactly once).  In terms of the classification of positive Looijenga interiors in \cite{Man2}, the $Q=E_7$ and $E_8$ cases\footnote{Consider $(\?{Y}=\bb{P}^2,\?{D}=\?{D}_1+\?{D}_2+\?{D}_3)$. The $E_7$ (resp. $E_8$) case refers to the Looijenga interiors $U=Y\setminus D$ which can be obtained by blowing up $2$ non-nodal points on $\?{D}_1$, $3$ on $\?{D}_2$, and $4$ (resp. $5$) on $\?{D}_3$.  $E_7$ and $E_8$ refer to the intersection forms on the lattices $D^{\perp}:=\{C\in \Pic(Y)|C \cdot D_i=0 \mbox{ for all } i\}$ in these cases.}  are the only ones where lines wrap more than once (Figure \ref{linefig}(c,d)).  We take cuts as in the figures.  In the $E_7$ case, we still have that ${b_q}=q-\mu(q)$.  In the $E_8$ case, we find ${b_q}=\mu(q)-\mu^2(q) = q$.

\subsubsection{Bends of the Positive Fibers}\label{positive} 

We continue to use the identification $w_U$.

\begin{prop}
Let $v\in U^{\trop}_0(\bb{Z})$ (identified with $V^{\trop}_0(\bb{Z})$ by $w_U$) be primitive, generating a ray $\rho$.  Suppose $\vartheta_q^{\trop}(v) \geq 0$, and assume that $\vartheta_q^{\trop}$ is positive somewhere.  Let $b_{\rho}$ be the degree of the scattering function $f_{\rho}$ (so for $U$ generic, it is the number of interior $(-1)$-curves intersecting $D_{\rho}$).  Then the bending parameter of $\vartheta_q^{\trop}$ along $\rho$ is $-b_{\rho}\vartheta_q^{\trop}(v)$.
\end{prop}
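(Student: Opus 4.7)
The plan is to combine the explicit description of the positive fibers of $\vartheta_q^{\trop}$ from Proposition \ref{levelpos} with the wall-crossing formula derived in its proof, and then read off the bending parameter by subtracting the two resulting linear functionals.

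Since $\vartheta_q^{\trop}$ is positive somewhere, we are in the non-wrapping regime of \S\ref{clustercomplex}; by Corollary \ref{FiniteScatter} the cluster-complex scattering diagram is locally finite, so I can choose sufficiently small open cones $\sigma^+$ and $\sigma^-$ counterclockwise and clockwise of $\rho$ respectively on which $\vartheta_q^{\trop}$ is linear. Let $q_\pm \in \Lambda$ denote the tangent direction of the positive fiber $\f{L}_q^d$, where $d=\vartheta_q^{\trop}(v)\geq 0$, after parallel-transporting to a common tangent space in a neighborhood of $v$. From Proposition \ref{levelpos} and its proof, $\vartheta_q^{\trop}$ is the linear functional on each chamber whose kernel is the tangent line of $\f{L}_q^d$ and whose value at $v$ is $d$; concretely
\begin{align*}
\vartheta_q^{\trop}|_{\sigma^\pm}(w)=w\wedge q_\pm, \qquad v\wedge q_\pm=d.
\end{align*}
(If $\rho$ is not a scattering ray then $b_\rho=0$ and $\vartheta_q^{\trop}$ is linear across $\rho$, so the formula holds trivially, so we may as well take $\rho$ to carry the scattering function $f_\rho$.)

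The scattering wall-crossing, exactly as computed in the proof of Proposition \ref{levelpos}, yields
\begin{align*}
q_+ \;=\; q_- + b_\rho\,(v\wedge q_-)\, v,
\end{align*}
so $q_+-q_-$ is a multiple of $v$. Therefore
\begin{align*}
\vartheta_q^{\trop}|_{\sigma^+}(w) - \vartheta_q^{\trop}|_{\sigma^-}(w)
 \;=\; w\wedge(q_+-q_-) \;=\; b_\rho\, d\,(w\wedge v) \;=\; -b_\rho\, d\,(v\wedge w).
\end{align*}
Since $\sigma^+$ is counterclockwise of $\rho$, the primitive generator $n_\rho\in\Lambda^*$ which is positive on vectors pointing from $\rho$ into $\sigma^+$ is given by $n_\rho(w)=v\wedge w$. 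Reading off coefficients gives the bending parameter $p_{\rho,\vartheta_q^{\trop}} = -b_\rho\,\vartheta_q^{\trop}(v)$, as claimed.

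I expect the only real obstacle to be sign-bookkeeping and confirming that the formula $\vartheta_q^{\trop}|_{\sigma^\pm}(w)=w\wedge q_\pm$ is valid on the whole chamber (not just along the fiber through $v$); the latter follows from the constancy of $L(t)\wedge L'(t)$ along each straight segment of the broken line $\f{L}_q^d$, which is precisely the invariant computed at the end of the proof of Proposition \ref{levelpos}. Once those two inputs are in place, the rest is a one-line linear algebra calculation.
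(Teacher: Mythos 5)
Your proof is correct and it expands exactly the route the paper itself dismisses with ``this follows immediately from Proposition \ref{levelpos}, the definition of broken lines, and the description of the scattering diagram in \S\ref{clustercomplex}.'' You take the invariance of the wedge pairing across walls (established at the end of the proof of Proposition \ref{levelpos}) to write $\vartheta_q^{\trop}$ on each chamber adjacent to $\rho$ as $w\mapsto w\wedge q_\pm$, apply the wall-crossing relation $q_+ = q_- + b_\rho(v\wedge q_-)v$, and subtract; the identification $n_\rho = v\wedge\cdot$ when $\sigma^+$ is counterclockwise is exactly the paper's convention, so the bending parameter comes out as $-b_\rho\,\vartheta_q^{\trop}(v)$ as claimed. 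The only thing you should make explicit (you flag it yourself) is the scaling/sign choice for $q_\pm$: the paper works with the \emph{negative} of the broken-line velocity, so one needs $v\wedge q_\pm = d$ rather than $-d$; with that normalization everything is consistent, and if you instead take $q_\pm$ to be the velocity the signs flip coherently and the answer is unchanged.

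Worth knowing: the paper then gives a second, genuinely different ``more geometric'' proof that you would not reconstruct from the combinatorics alone. It identifies, via a toric model, the $b_\rho$ exceptional $(-1)$-curves $E_i$ meeting $\f{D}_\rho$ in a compactification of $V$, uses the mutation formula (Equation \ref{MonomialMutation}) together with Equation \ref{val} to show $\val_{E_i}(\vartheta_q) = \vartheta_q^{\trop}(v)$ for each $i$, observes that all interior zeroes of $\vartheta_q$ lie on such curves, and then reads off the bending parameter from the intersection-number description of Lemma \ref{regconvex}, namely that the bend along $\rho$ equals $-\f{D}_\rho\cdot V(\vartheta_q)$. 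This alternative buys geometric meaning (the bending parameter is literally an intersection number counted with multiplicity), whereas your wall-crossing computation is more elementary and self-contained, needing only the combinatorics of the scattering diagram.
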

\begin{proof}
This follows immediately from Proposition \ref{levelpos}, the definition of broken lines, and the description of the scattering diagram in \S \ref{clustercomplex}.  Alternatively, we have the following more geometric proof.  Suppose $U$ is generic and $E_1,\ldots,E_{b_i}$ are the $(-1)$-curves intersecting $\f{D}_{\rho}$.  Let $\pi:Z\rar \?{Z}$ be a toric model for a compactification $Z$ of $V$ which contains these $E_i$'s as exceptional divisors, and let $z^m$ be any monomial on $\?{Z}$.  From Equation \ref{MonomialMutation} and the geometric description of mutations from \cite{GHK3}, the exceptional divisor $E_i$ comes from blowing up the intersection $\{1+z^{\eta(E_i') -\varphi(e)-e}=0\}\cap D_{\rho_v}$, where $E_i'$ is the corresponding exceptional divisor for $U$.  By Equation \ref{val}, the exponent in Equation \ref{MonomialMutation} applied to $z^m$ is $\val_{\f{D}_{\rho}}(z^m)$.  This implies that $\val_{E_i}(z^m)=\val_{\f{D}_{\rho}}(z^m)$, which implies $\val_{E_i}(\vartheta_q) = \vartheta_q^{\trop}(v)$ for each $i$.  Furthermore, all the zeroes of $\vartheta_q$ are along $(-1)$-curves like this.   The description of the bending parameters then follows from the relationship between bending parameters and intersection numbers in the proof of Lemma \ref{regconvex}.
\end{proof}

We can now prove:
\begin{prop}  \label{unique}
Suppose that two tropical theta functions $\vartheta_{q_1}^{\trop}$ and $\vartheta_{q_2}^{\trop}$ are equal on some open cone $\sigma \subseteq {U^{\trop}}$.  Then $q_1 = q_2$. 
\end{prop}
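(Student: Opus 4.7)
The plan is to reconstruct each $q_i$ from the level sets of $\vartheta_{q_i}^{\trop}$ inside $\sigma$, using the explicit fiber descriptions in Corollary~\ref{level} and Proposition~\ref{levelpos}. First I would dispose of the degenerate case: if $q_1=0$ then $\vartheta_{q_1}^{\trop}\equiv 0$, whereas for $q\neq 0$ each nontrivial level set of $\vartheta_q^{\trop}$ is a single line $L_q^{d,0}$ or broken line $\f{L}_q^d$ by the two results just cited, and the zero set is contained in the limiting line $L_q^0$; this one-dimensional union cannot contain a nonempty open set, so $q_2=0=q_1$ in this case.

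Assume from now on that both $q_i$ are nonzero. I would shrink $\sigma$ to an open subcone $\sigma'$ on which both $\vartheta_{q_1}^{\trop}$ and $\vartheta_{q_2}^{\trop}$ are linear (avoiding the finitely many bending rays classified in \S\ref{negbend} and \S\ref{positive}) and containing a point $v$ with $d:=\vartheta_{q_1}^{\trop}(v)=\vartheta_{q_2}^{\trop}(v)\neq 0$. By Corollary~\ref{level} (if $d<0$) or Proposition~\ref{levelpos} (if $d>0$), the fiber of $\vartheta_{q_i}^{\trop}$ through $v$ is the (broken) geodesic $L_{q_i}^{d,0}$ or $\f{L}_{q_i}^d$. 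Inside $\sigma'$, where the function is linear, this fiber is a straight segment, and the equality $\vartheta_{q_1}^{\trop}=\vartheta_{q_2}^{\trop}$ on $\sigma'$ forces these two segments to coincide on an open interval. Since a parametrized (broken) geodesic in $U^{\trop}$ is determined by its image together with a tangent vector at a single smooth point, it follows that $L_{q_1}^d=L_{q_2}^d$, respectively $\f{L}_{q_1}^d=\f{L}_{q_2}^d$, as parametrized (broken) geodesics, and hence their defining data agree.

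The only delicate point I anticipate is recovering the \emph{magnitude} of $q$, not merely its ray $\rho_q$. For $L_q^d$ this follows from the identity $\dist(L_q^d,0)=L(t)\wedge L'(t)$ scaling linearly in $L'$: once $d\neq 0$ is fixed, $L_{kq}^d$ has a different affine image from $L_q^d$ for $k\neq 1$. For $\f{L}_q^d$ the initial tangent vector $q$ itself is part of the defining data of the broken line. In either case equality of (broken) geodesics forces $q_1=q_2$ as vectors in $U^{\trop}$, completing the argument.
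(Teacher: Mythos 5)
Your argument is correct and is essentially the same as the paper's: both reconstruct $q_i$ from a level set of $\vartheta_{q_i}^{\trop}$ on $\sigma$, extended to all of $U^{\trop}$ via the fiber descriptions of Corollary~\ref{level} and Proposition~\ref{levelpos}. The paper splits cases by whether the common function is negative on a subcone or nonnegative throughout $\sigma$ (using the explicit bending-parameter formula of \S\ref{positive} to show the extension is deterministic in the latter case), whereas you split on $q_i=0$ versus $q_i\neq 0$ and appeal to the determinism of the maximal-bend rule for broken geodesics; the substance is identical, and your magnitude argument via $\dist(L_q^d,0)=L(t)\wedge L'(t)$ is the same scaling fact the paper uses implicitly when it says extending the $-1$ fiber ``recovers'' $q$.
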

\begin{proof}
 Suppose that there is some subcone of $\sigma$ on which the functions are negative.  Then the fiber $\vartheta_{q_1}^{\trop}=\vartheta_{q_2}^{\trop}=-1$ is a line segment $L$ in $U^{\trop}$, and extending this segment to $\infty$ (with $0$ on the right) recovers $q_1=q_2$.

Now suppose that $\vartheta_{q_1}^{\trop}=\vartheta_{q_2}^{\trop} \geq 0$ everywhere on $\sigma$.  Recall that this means $\vartheta_{q_i}^{\trop}$ will bend along each $\rho_v\subset \sigma$ with bending parameter $-b_{\rho_{v_i}}\vartheta_{q_i}^{\trop}(v_i)$, where $v_i$ is primitive on $\rho_i$ and $b_{\rho_{v_i}}$ is as above.  Thus, we know how to extend the fibers to infinity, and this determines the $q_i$'s by Proposition \ref{levelpos}.
\end{proof}

\subsection{Convexity Properties}\label{Convexity}

We saw in Lemma \ref{regconvex} that tropicalizations of regular functions are convex.  \cite{GHKK} defines a stronger version of convexity, namely, {\it convexity along broken lines}.  Recall from \S \ref{int lin fun} that to define a linear structure on a piecewise linear manifold, it suffices to specify which piecewise-straight lines are straight.

\begin{dfn}
Let $\gamma:(-\infty,0] \rar U^{\trop}$ be a broken line, and let $\varphi$ be a rational piecewise linear function on $U^{\trop}$.  In a neighborhood of a point $\gamma(t_p)=p$ contained in a ray $\rho$, we can modify the linear structure of $U^{\trop}$ so that $\gamma'(t)$ is constant in a neighborhood of $t_p$ (with adjacent tangent spaces identified using parallel transport along $\gamma$). 
 Then $\varphi$ is said to be convex along $\gamma$ at the point $p$ if it is convex across $\rho$ with respect to this affine structure.  We say that $\varphi$ is {\it convex along broken lines} if it is convex along every broken line.
\end{dfn}
Note that the usual notion of convexity is just convexity along straight lines.  Our definition is somewhat different from that used in \cite{GHKK}.  They say a function is convex along broken lines if it is decreasing along broken lines, in the sense of \S \ref{ValFun}.  These definitions are in fact equivalent:

\begin{lem}\label{ConvDec}
Convex along broken lines is equivalent to decreasing along broken lines. 
\end{lem}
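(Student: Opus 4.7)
The plan is to reduce the global equivalence to a local verification at each point $p=\gamma(t_p)$ where either $\gamma$ or $\varphi$ has a bend. Between such points $\gamma'$ is constant and $\varphi$ is linear, so $D\varphi(m_t)$ is locally constant and both conditions are automatically satisfied. Thus it suffices to check, at each bend, that the jump of $D\varphi(m_t)$ across $p$ is nonnegative if and only if the bending parameter of $\varphi$ in the modified linear structure at $p$ is nonpositive.

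At such a $p$ lying on a ray $\rho$ with primitive generator $v_\rho$, I would let $A,B$ be the two regions adjacent to $\rho$ with $\gamma$ moving from $A$ into $B$, and let $n_B\in \Lambda^*$ be the primitive covector vanishing on $T\rho$ and positive on vectors into $B$, so that $n_B(\gamma'(t_p^-))>0$. Write $d\varphi_B-d\varphi_A = p_{\rho,\varphi}\,n_B$ and $a:=d\varphi_A(v_\rho)=d\varphi_B(v_\rho)$ (these agree because $n_B$ kills $T\rho$). The broken line definition forces $m_{t_p^+}=m_{t_p^-}-k\,v_\rho$ for some $k\in \bb{Z}_{\ge 0}$ coming from the scattering term chosen at the bend.

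The computation then has two steps. Expanding $D\varphi(m_{t_p^+})-D\varphi(m_{t_p^-})$ and using $m_{t_p^-}=-\gamma'(t_p^-)$ gives
\begin{align*}
D\varphi(m_{t_p^+})-D\varphi(m_{t_p^-}) = -p_{\rho,\varphi}\,n_B(\gamma'(t_p^-)) - k\,a.
\end{align*}
Next I would characterize the modified linear structure at $p$ as the unique twist of parallel transport across $\rho$ sending $\gamma'(t_p^-)\mapsto \gamma'(t_p^+)=\gamma'(t_p^-)+k\,v_\rho$. Because this twist must be linear and trivial on $T\rho$, it has the form $w\mapsto w+\frac{k\,n_B(w)}{n_B(\gamma'(t_p^-))}v_\rho$. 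Pulling $d\varphi_B$ back through this twist and subtracting $d\varphi_A$ expresses the modified bending parameter as
\begin{align*}
p^{\mathrm{mod}} = p_{\rho,\varphi}+\frac{k\,a}{n_B(\gamma'(t_p^-))}.
\end{align*}
Substituting back produces the key identity
\begin{align*}
D\varphi(m_{t_p^+})-D\varphi(m_{t_p^-}) = -n_B(\gamma'(t_p^-))\cdot p^{\mathrm{mod}},
\end{align*}
and since the prefactor $n_B(\gamma'(t_p^-))$ is strictly positive, the jump is nonnegative if and only if $p^{\mathrm{mod}}\le 0$. Running this at every bend point of every broken line yields the equivalence.

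The main obstacle is pinning down the modified linear structure precisely as the unique twist of parallel transport across $\rho$ making $\gamma'$ continuous, and tracking signs consistently with the conventions for $n_B$ and the bending parameter in \S\ref{int lin fun}. Once the twist is identified with the explicit form above, the content of the lemma collapses into the one-line identity relating the jump to $p^{\mathrm{mod}}$.
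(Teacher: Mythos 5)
Your argument is correct, and the identity $D\varphi(m_{t_p^+})-D\varphi(m_{t_p^-}) = -n_B(\gamma'(t_p^-))\,p^{\mathrm{mod}}$ is exactly the content that makes the lemma work. The paper's proof is a one-sentence appeal to the fact that ordinary convexity of a piecewise-linear function is the same as being decreasing along straight lines, applied in the modified linear structure where $\gamma$ is locally straight; you reach the same conclusion by explicitly writing the modified transport across $\rho$ as the shear $w\mapsto w+\tfrac{k\,n_B(w)}{n_B(\gamma'(t_p^-))}v_\rho$ and computing $p^{\mathrm{mod}}$ directly, so this is essentially the same approach with the details fully unpacked.
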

\begin{proof}
This follows from recalling that the usual notion of convexity can be defined as decreasing along straight lines.
\end{proof}

\begin{dfn}
We call a function $\varphi:U^{\trop} \rar \bb{R}$ {\it tropical} if it is integral piecewise linear and convex along broken lines.  Note that tropical functions are closed under addition and $\min$.  We say $\varphi$ is an indecomposable tropical function if it cannot be written as a minimum of some finite collection $S$ of tropical  functions with $\varphi \notin S$.
\end{dfn}

\cite{FG1} defines another notion of convexity: 
\begin{dfn}
Recall that every seed (or every toric model) induces a vector space structure on $U^{\trop}$.  One says that a piecewise linear function $\varphi:U^{\trop} \rar \bb{R}$ is {\it convex with respect to every seed} if it is convex with respect to each of these vector space structures.
\end{dfn}

Recall that we can apply the mirror construction to $V$ and $V^{\trop}$, so the notion of convexity along broken lines makes sense in $V^{\trop}$.  As we will see in the proof of Theorem \ref{IndeTropical} below, convexity along broken lines in $V^{\trop}$ is equivalent to convexity along straight lines and along maximally broken lines in the cluster complex.  $w_U$ clearly identifies straight lines with straight lines, and it follows from \S \ref{clustercomplex} that it also identifies the relevant maximally broken lines.  Thus, $w_U$ preserves convexity along broken lines.

Note that Lemmas \ref{DecreasingBroken} and \ref{ConvDec} imply that the valuation functions $\val_v$ are convex along broken lines.  From the proof of Theorem \ref{symdual}, $\vartheta_q^{\trop}=\val_{w_U(q)}$, so tropical theta functions are also convex along broken lines.  We will see this in another way below.

\begin{thm}\label{IndeTropical}
If $\varphi:U^{\trop}\rar \bb{R}$ is piecewise linear, then $\varphi$ is convex along broken lines if and only if it is convex with respect to every seed. 
The tropical functions on $V^{\trop}$ are exactly the tropicalizations of regular functions on $V$, and the indecomposable tropical functions are exactly the tropicalizations of theta functions.
\end{thm}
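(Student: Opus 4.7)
The plan has three parts, mirroring the structure of the statement. For the equivalence of convexity along broken lines with convexity with respect to every seed, I would work locally at each scattering ray $\rho$ of $\f{d}$. In the integral linear structure that makes a broken line bending by $k$ across $\rho$ straight, the identification of tangent spaces across $\rho$ sends $w \mapsto w - k\langle n_{\rho}, w\rangle e$, where $e$ generates $\rho$. A short computation in terms of the linear forms $\alpha_{\pm}$ representing $\varphi$ on each side of $\rho$ shows that the bending parameter of $\varphi$ across $\rho$ in this modified structure equals $c + k\alpha_{+}(e)$, where $c$ is the bending parameter in the original structure. Since an affine function of $k$ on $[0,K]$ (with $K$ the exponent of the scattering function $f_\rho$) is $\leq 0$ throughout iff it is $\leq 0$ at both endpoints, convexity along all broken line bends across $\rho$ reduces to convexity at $k=0$ and $k=K$. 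Via the alternate construction in \S\ref{SeedScatter}, these two extremes correspond to the two seeds related by mutation across $\rho$, and since any two seeds are connected by such mutations, this gives the equivalence.

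For the claim that tropical functions on $V^{\trop}$ are exactly tropicalizations of regular functions, the forward direction proceeds in two steps. First, each $\vartheta_q^{\trop}$ is tropical: it is integral piecewise linear by Corollary \ref{level} and Proposition \ref{levelpos}, and combining Lemma \ref{DecreasingBroken} with Lemma \ref{ConvDec} gives convexity along broken lines. Second, for any regular $f = \sum a_q \vartheta_q$ on $V$, the tropicalization $f^{\trop}$ coincides with $\min_{a_q \neq 0} \vartheta_q^{\trop}$ on the dense open set where the minimum is uniquely attained --- positivity of broken line coefficients (Remark \ref{nocancel}) rules out cancellations at this minimum --- and hence equals this min as a piecewise linear function. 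A min of tropical functions is tropical. For the backward direction, which I expect to be the main obstacle, I would associate to a tropical $\varphi$ a Weil divisor $W_\varphi$ on a compactification of $V$ (after enough toric blowups so $\varphi$ bends only on rays of the chosen $\Sigma$) via \S\ref{PLF}, then use the compactification construction of \S\ref{compact} to express a generic global section of $\s{O}(W_\varphi)$ as a sum of theta functions whose tropicalization is $\varphi$. Part 1 is essential here: convexity with respect to every seed ensures $W_\varphi$ has the positivity needed for the relevant theta functions to appear as global sections across every compactification arising from every seed.

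For the last claim, indecomposability of $\vartheta_q^{\trop}$ follows by assuming a decomposition $\vartheta_q^{\trop} = \min(\varphi_1, \varphi_2)$ with tropical $\varphi_i$: on any top-dimensional cone where $\vartheta_q^{\trop} < 0$ one of the $\varphi_i$ must agree with $\vartheta_q^{\trop}$, and Proposition \ref{unique} (combined with the realization of each $\varphi_i$ as a min of tropicalizations of theta functions) promotes this to global equality, so one of $\varphi_1, \varphi_2$ is $\vartheta_q^{\trop}$ itself. Conversely, given an indecomposable tropical $\varphi$, the backward direction of the previous part writes $\varphi = f^{\trop} = \min_q \vartheta_q^{\trop}$ for some regular $f$, and indecomposability forces this min to reduce to a single term, identifying $\varphi$ as a tropical theta function.
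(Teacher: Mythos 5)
Your proposal diverges from the paper's proof in all three parts, but the degree to which each part holds up varies.

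Part 1 (broken lines vs.\ seeds): your computational argument --- that the bending parameter across $\rho$ in the $k$-bent structure is affine in $k$, so it suffices to check $k=0$ and $k=K$ --- is sound and is essentially the paper's argument made explicit: the paper reaches the same conclusion by observing that on cones where $\varphi \leq 0$ the broken-line modification only strengthens convexity, while where $\varphi > 0$ one must be in the cluster complex and check the maximal bend $\f{L}_q^{d>0}$. However, your sentence ``these two extremes correspond to the two seeds related by mutation across $\rho$'' is not quite right: $k=0$ is the canonical $U^{\trop}$ structure, which near $\rho$ agrees with a seed only if there exists a seed for which $\rho$ is not a mutation wall, and $k=K$ agrees with a seed only for $\rho$ in the cluster complex (exactly when $\varphi$ can be positive on $\rho$). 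The paper handles this by splitting on the sign of $\varphi$, and you should do the same; as stated the reduction to ``any two seeds are connected by mutations'' skips a step.

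Part 2 (tropical = tropicalization of regular) is where the real gap is, in the backward direction. You propose associating a Weil divisor $W_\varphi$ to $\varphi$ and taking a generic global section $f$ of $\s{O}(W_\varphi)$, but you never verify that $f^{\trop} = \varphi$. A generic section satisfies $\val_{\f{D}_{v_i}}(f) \geq \varphi(v_i)$ for each boundary divisor; equality requires that for each $v_i$ some $q \in Q_{W_\varphi}^{\vee} \cap U^{\trop}(\bb{Z})$ achieves $\langle q,v_i\rangle = \varphi(v_i)$, i.e., that the polytope $Q_{W_\varphi}^{\vee}$ actually has a nonempty face normal to each $v_i$. This is precisely what could fail if $\varphi$ is convex along straight lines but not strongly convex --- and showing that convexity-along-broken-lines suffices for this is the heart of the matter, not something you get for free from ``Part 1.'' The paper avoids the issue by arguing directly: on each domain of linearity $\sigma$ of $\varphi$ one extends the fibers $\{\varphi|_\sigma = d\}$ to broken lines $\f{L}_q^{d>0}$ or lines $L_{q'}^{d<0}$, identifying $\varphi|_\sigma$ with some $\vartheta_q^{\trop}|_\sigma$, and then uses the key observation that $\varphi \leq \vartheta_q^{\trop}$ globally whenever they agree on an open set (because $\vartheta_q^{\trop}$ is linear with respect to some seed or branch cut in which $\varphi$ is convex). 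Your proposal contains nothing playing the role of this inequality, and without it you cannot conclude $\varphi = \min_q \vartheta_q^{\trop}$. Part 3 is fine modulo Part 2 --- your use of Proposition \ref{unique} together with the realization of each $\varphi_i$ as a min of tropical theta functions correctly promotes local agreement to global --- but it inherits the dependency.
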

\begin{proof}
In $U^{\trop}$ (hence $V^{\trop}$) with its canonical integral linear structure, broken lines can only bend away from the origin.  Let $\f{L}^d$ denote a fiber $\varphi=d$ for some piecewise linear function $\varphi$.  $\varphi$ being convex means that when $d<0$, $\f{L}^d$ only bends towards the origin, and when $d>0$, $\f{L}^d$ only bends away from the origin.  Locally changing to an affine structure in which some broken line is straight will only cause lines to bend more {\it towards} the origin.   Thus, on a cone where $\varphi$ is non-positive, convexity of $\varphi$ along broken lines is equivalent to convexity along straight lines. 

Now, suppose that $\varphi$ is convex along straight lines and non-negative on some (necessarily convex) cone $\sigma$.  We saw in \S \ref{clustercomplex} that $\sigma$ must live in the cluster complex.  Convexity of $\varphi$ along broken lines is now equivalent to convexity along the broken lines which take the maximal allowed bend across each ray in $\sigma$.  Any such broken line lives in some $\f{L}_q^{d>0}$, and it follows from Corollary \ref{FiniteScatter} that there is a seed for which $\f{L}_q^{d>0}$ is straight in the corresponding linear structure.

In summary, convexity of $\varphi$ along broken lines is equivalent to convexity along straight lines in $V^{\trop}$ and along maximally broken lines in the cluster complex.  Any maximally broken line in the cluster complex is straight with respect to some seed, and the same is locally true for straight lines in $V^{\trop}$.  Thus, convexity with respect to every seed implies convexity along broken lines.  On the other hand, every line which is straight with respect to some seed is a broken line, so convexity along broken lines implies convexity with respect to every seed. 

Now, given any regular function $f$ on $V$, we know that the restriction of $f$ to any seed torus is regular, and so $f^{\trop}$ is convex with respect to any seed.  This gives an alternative proof of the fact that tropicalizations of regular functions are convex along broken lines.

Now suppose that $\varphi$ is not an indecomposable tropical function.  Then $\varphi = \min(f_1,f_2)$ for two tropical functions $f_1$ and $f_2$, neither of which is globally equal to $\varphi$.  So we can find cones $\sigma_1,\sigma_2$ sharing a boundary ray $\rho$ such that $\varphi|_{\sigma_i} = f_i$ and $f_1(x)\neq f_2(x)$ for $x\in \sigma_2$.  Suppose that $\varphi$ is linear across $\rho$ along some broken line $\gamma$ crossing $\rho$.  Since $f_1$ and $f_2$ are both convex along this broken line and $\varphi$ is their minimum, they must both be equal to $\varphi$ in a neighborhood of $\rho$.  This contradicts our assumption that $f_1(x)\neq f_2(x)$ for $x\in \sigma_2$, so $\varphi$ must bend across $\rho$ along $\gamma$.

When crossing from $\sigma_1$ to $\sigma_2$ above, $\varphi$ changed from $f_1$ to $f_2$.  If we continue going around the origin in the same direction, $\varphi$ must eventually change back to $f_1$ after crossing some ray (or else it would be identically equal to $f_2$), and so we find that there are in fact at least two rays $\rho_1$ and $\rho_2$ in $V^{\trop}$ such that $\varphi$ bends nontrivially across $\rho_i$ along any broken line crossing $\rho_i$, for each $i$.

If $\vartheta_q^{\trop}$ is non-positive everywhere, then there is only one ray across which $\vartheta_q^{\trop}$ bends nontrivially along straight lines.  On the other hand, if $\vartheta_q^{\trop}$ is positive somewhere, then $\vartheta_q^{\trop}$ bends across straight lines only in the interior of $\sigma_{q_-,q_+}$, but it does not bend along $\f{L}_q^{d>0}$, which crosses any ray in the interior of $\sigma_{q_-,q_+}$.  Thus, the tropicalization of any theta function is indecomposable.

Now suppose that $\varphi$ is a tropical function such that $\varphi|_{\sigma}=\vartheta_q^{\trop}|_{\sigma}$ for some $q\in U^{\trop}(\bb{Z})$ and some open $\sigma\subset V^{\trop}$.  Then $\varphi \leq \vartheta_q^{\trop}$ everywhere in $V^{\trop}$ because there is some seed or branch cut with respect to which $\vartheta_q^{\trop}$ is linear and $\varphi$ is convex, hence equal to the minimum of its linear parts.  Thus, to show that $\varphi$ is a $\min$ of tropical theta functions, we just have to show that for every domain of linearity $\sigma$ of $\varphi$, there is some $q$ such that $\varphi|_{\sigma}=\vartheta_q^{\trop}|_{\sigma}$.

Let $\sigma_+$ be the (possibly empty) cone on which $\varphi>0$.  Let $\sigma\subset \sigma_+$ be an open subcone on which $\varphi$ is linear.  Convexity of $\varphi$ along broken lines implies that $\sigma$ contains no scattering rays.  We can choose a covariantly constant integral section section $q_{\sigma}$ of $T\sigma$ such that $\varphi(p)=p\wedge [q_{\sigma}(p)]$, where $p$ is being identified with a vector in $T_p V^{\trop}$.  If we view the fiber $F_d:=\{\varphi|_{\sigma}=d>0\}$ as part of a broken line with $q_{\sigma}$ giving the negative tangent direction, then we can extend $F_d$ indefinitely each direction, taking the maximal allowed bend at each wall it crosses to get a broken line $\f{L}_q^{d>0}$.  Then $\varphi|_{\sigma}$ is equal to $\vartheta_q^{\trop}|_{\sigma}$.

Now let $\sigma$ be a cone outside of $\sigma_+$ on which $\varphi$ is linear.  We have a fiber $F_d:=\{\varphi|_{\sigma}=d<0\}$ as before, and extending indefinitely in either direction (without bends this time) gets us a line $L_{q'}^{d<0}$ containing $F_d$.  If $L_{q'}^{d<0}$ does not wrap, then we immediately see $\varphi|_\sigma=\vartheta_{q'}^{\trop}|_{\sigma}$.  If $L_{q'}^{d<0}$ does wrap, we must check that we do not actually have $\varphi|_\sigma>\vartheta_{q'}^{\trop}|_{\sigma}$.  This would contradict the convexity of $\varphi$ since $\vartheta_{q'}^{\trop}|_{\sigma}$ is obtained by developing $\varphi$ linearly along $L_{q'}^{d<0}$ as it wraps around the origin.

We have thus shown that every tropical function $\varphi$ is equal to the $\min$ of a collection of tropical theta functions.  Thus, the tropical functions are indeed all tropicalizations of regular functions, and the indecomposable tropical functions are exactly the tropical theta functions.
\end{proof}

\begin{rmk}\label{NegativeTropical}
In the above theorem, we assumed $U$ was positive.  However, we can easily extend to the negative cases.  In the negative definite cases (those where $H:=(D_i\cdot D_j)$ is negative definite), convex (along straight lines) functions on $U^{\trop}$ must be positive everywhere on $U^{\trop}_0$.  But for a positive function to be convex along broken lines, it must take the maximal possible bend along every broken line it passes.  Since $U^{\trop}$ in these cases contains infinitely many scattering rays, this is impossible.  So there are no non-trivial tropical functions on $U^{\trop}$ in these cases.  This is what we expect since there are no non-constant regular functions on $U$ in these cases.

In the strictly semi-definite cases, there are straight lines in $U^{\trop}$ which are circles, and these give fibers for a ray's worth of tropical functions. 
 In general, $U$ for these cases might not admit non-constant regular functions, so the theorem as stated does not quite hold here.  However, it is possible to deform such a $U$ to a surface admitting an elliptic fibration over $\bb{A}^1$, and powers of the fibration map give the desired regular functions.  So the theorem does hold up to deformation of $U$.
\end{rmk}

\begin{cor}\label{wLinear}
The identification $w_U:U^{\trop}(\bb{Q})\rar V^{\trop}(\bb{Q})$ extends to an integral linear isomoprhism $w_U:U^{\trop}\rar V^{\trop}$.
\end{cor}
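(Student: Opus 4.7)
The plan is to extend $w_U$ from $U^{\trop}(\bb{Q})$ to a continuous map $U^{\trop} \to V^{\trop}$ by density of the rational rays (using scaling $w_U(cq) = cw_U(q)$), and then to verify that this extension preserves the integral linear structure. The key criterion, recalled from the final paragraph of \S \ref{int lin fun}, is that in dimension two an integral linear structure on an integral piecewise-linear manifold is completely determined by the specification of which piecewise-straight lines are straight. Since $w_U$ clearly carries $U^{\trop}(\bb{Z})$ bijectively to $V^{\trop}(\bb{Z})$, it remains only to show that it sends straight lines to straight lines.

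By Theorem \ref{IndeTropical}, the indecomposable tropical functions on $V^{\trop}$ are exactly the tropical theta functions $\{\vartheta_q^{\trop}\}_{q \in U^{\trop}(\bb{Z})}$; moreover, a piecewise-straight curve in $V^{\trop}$ is straight on an open neighborhood of a point precisely when it coincides locally with a fiber of one such indecomposable tropical function on a region where that function is linear. The analogous statement holds on $U^{\trop}$, obtained by applying Theorem \ref{IndeTropical} to $U$ in its role as (a deformation of) the mirror of $V$ via $w_V \circ w_U$; here the indecomposable tropical functions are parameterized by $V^{\trop}(\bb{Z})$.

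To finish, I would combine the explicit fiber descriptions in Corollary \ref{level} and Proposition \ref{levelpos} with Theorem \ref{symdual}. The former identify the fibers $\{\vartheta_q^{\trop} = d\}$ in $V^{\trop}$ with the broken line segments $L_q^{d,0}$ (for $d<0$) and $\f{L}_q^d$ (for $d>0$) in $U^{\trop}$ under $w_U$; the latter identifies these in turn with fibers of the corresponding indecomposable tropical functions on $U^{\trop}$. Hence $w_U$ intertwines the two collections of indecomposable fibers, and in particular sends straight lines to straight lines, so it is integral linear; the inverse is integral linear by the symmetric argument. The only subtle point is that the positive fibers $\f{L}_q^d$ are maximally broken rather than intrinsically straight in $U^{\trop}$; but they become straight after passage to the appropriate seed, which is exactly the content of \S \ref{clustercomplex} together with Proposition \ref{levelpos}, so no additional argument is required.
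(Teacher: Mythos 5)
Your proposal is correct and takes essentially the same route as the paper's own (very terse) proof: both ultimately reduce to the observation that $w_U$ intertwines the tropical functions on the two sides, and that tropical functions determine the integral linear structure. The paper simply asserts that $w_U$ pulls back tropical functions to tropical functions (which was established in the paragraph preceding Theorem \ref{IndeTropical}, where it is checked that $w_U$ preserves convexity along broken lines) and leaves implicit how that forces integral linearity; your version makes the mechanism explicit by passing through the criterion from \S\ref{int lin fun} that the structure is determined by which piecewise-straight lines are straight, and then characterizing straight segments as fibers of indecomposable tropical functions on domains of linearity. One small remark: the appeal to Theorem \ref{symdual} in your last paragraph is not quite the cleanest way to close the loop --- what you actually need there is that the pullback $\vartheta_q^{\trop} \circ w_U$ agrees with the explicitly described function $\val_q$ on $U^{\trop}$, which is Theorem \ref{ValFibers} rather than \ref{symdual}; Theorem \ref{symdual} is then what justifies applying Theorem \ref{IndeTropical} with the roles of $U$ and $V$ swapped to get that $\val_q$ is indecomposable. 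With that adjustment the argument matches the paper's intent exactly.
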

\begin{proof}
$w_U$ extends to an integral linear function because it pulls back tropical functions (restricted to the rational points) to tropical functions (restricted to the rational points).  It is an isomorphism because $w_V$ gives the inverse map.
\end{proof}

The notions of convexity along broken lines and convexity with respect to every seed make sense in more general situations related to cluster varieties (cf. \cite{GHKK} and \cite{FG1}, respectively).

\begin{conj}\label{equivconvex}
Convexity along broken lines is always equivalent to convexity with respect to every seed.
\end{conj}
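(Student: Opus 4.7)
The plan is to extend the strategy of Theorem~\ref{IndeTropical} to higher-rank cluster varieties by establishing a tight correspondence between broken lines in the canonical integral linear structure and straight lines in the linear structure induced by each seed. Both implications of the conjectured equivalence should then fall out in parallel to the two halves of the surface case.

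First I would address the easier direction: convexity with respect to every seed implies convexity along broken lines. The key observation is that when a broken line $\gamma$ bends across a wall of the scattering diagram, the bending is governed by the attached scattering function, and in the cluster portion of the diagram---where walls separate adjacent seed chambers---the bending rule for $\gamma$ coincides with the coordinate change between the two seeds' linear structures. So local convexity of $\varphi$ across such a wall with respect to both adjacent seeds translates directly into convexity of $\varphi$ along $\gamma$ at the bend, exactly as in the end of the proof of Theorem~\ref{IndeTropical}. For walls not separating seed chambers one would need to reduce to convexity with respect to a seed reached by an iterated mutation, using the consistency of the scattering diagram to propagate the local bending rule.

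Next I would attack the converse: convexity along broken lines implies convexity with respect to every seed. Fixing a seed $S$, the plan is to show that every straight line in the $S$-linear structure, viewed in the canonical structure, is realized (possibly as a limit) as a broken line taking the maximal allowed bend across every wall it crosses. Since $\varphi$ is convex along all broken lines by hypothesis, this will force convexity along all $S$-straight segments, which is convexity in the $S$-structure. In the surface case this realization was made precise by Corollary~\ref{FiniteScatter} together with Lemma~\ref{model}, which gave finitely many scattering rays to cross; the higher-rank version will require an analogous finite-scattering statement, or a limiting argument relying on the positivity of broken-line coefficients from \cite{GHKK}.

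The hard part will be controlling walls of higher-rank scattering diagrams that are not of cluster type and that may accumulate densely. In the surface case every domain of linearity of a piecewise-linear function either embeds into a single seed chamber or is captured by a single branch cut around the singular point, and this collapse is what made the two notions of convexity coincide. In higher dimensions one cannot expect such a clean dichotomy, and the safest strategy is probably to bridge the two notions of convexity through the theta basis itself: show that a piecewise-linear function is convex along broken lines if and only if it is a minimum of tropicalized theta functions, and then show that the tropicalized theta functions are each convex with respect to every seed by using the fact that (by \cite{GHKK}) theta functions restrict to Laurent polynomials on every seed torus.
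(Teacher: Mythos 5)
The statement you are attempting to prove is stated in the paper as a \emph{conjecture}, not a theorem, and the paper gives no proof of it in general; the only case established is the rank-$2$ (surface) case, where it is absorbed into Theorem \ref{IndeTropical}. The paper even explicitly isolates the missing ingredient: it remarks that the key to the two-dimensional proof was Lemma \ref{model}, and it then records as a \emph{further conjecture} the statement that any scattering wall on which a tropical function is positive must lie in the cluster complex of some cluster structure. Your proposal should be read against that backdrop.

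Your plan correctly mirrors the skeleton of the paper's two-dimensional argument and, to its credit, correctly identifies the central obstruction---scattering walls in higher rank that do not separate seed chambers and may accumulate. However, the proposal does not resolve this obstruction; it just renames it ``the hard part.'' The suggested remedy of ``reducing to convexity with respect to a seed reached by an iterated mutation, using the consistency of the scattering diagram'' is precisely what fails for walls that are genuinely outside the cluster complex: by definition, no sequence of mutations puts you in a chamber adjacent to such a wall, so there is nothing to propagate. The fallback suggestion of bridging through the theta basis has a direction asymmetry---it could, at best, give ``convex along broken lines implies convex with respect to every seed'' (granting a higher-rank analog of Theorem \ref{IndeTropical}, which is itself only conjectural), but it does not touch the converse. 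Finally, you have the relative difficulty of the two directions inverted: in the paper's proof of the surface case, ``convex along broken lines implies convex with respect to every seed'' is the one-line direction (every seed-straight line is the support of a broken line), whereas ``convex with respect to every seed implies convex along broken lines'' is the direction that requires the dichotomy between non-positive regions (captured by a branch cut) and positive regions (captured by Lemma \ref{model} and Corollary \ref{FiniteScatter})---exactly the dichotomy whose higher-rank generalization is the open problem. So your proposal is a reasonable reconnaissance of the terrain, but it is not a proof, and it does not close the gap that the paper itself flags as what would need to be proved.
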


The key to proving this conjecture in dimension $2$ was Lemma \ref{model}, which says that the following conjecture holds in dimension $2$:
\begin{conj}
If $\varphi$ is a tropical function on the tropicalization of a cluster variety (or a fiber of a cluster variety) $\s{Y}$, and if $\varphi$ is positive at some point on a scattering wall $\f{w}$, then the wall-crossing formula for $\f{w}$ is the formula for some mutation in some cluster structure on $\s{Y}$  (i.e., $\f{w}$ lives in the cluster complex for some cluster structure on $\s{Y}$).
\end{conj}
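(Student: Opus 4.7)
The plan is to mimic the surface-case argument of Lemma~\ref{model}, which established Conjecture~\ref{equivconvex} in dimension two. That lemma showed that whenever a line $L_q^{d<0}$ does not wrap---the two-dimensional shadow of the hypothesis that some tropical function is positive on the cone bounded by that line---there is a compactification with a toric model having all non-toric blowups inside the half-space $\sigma_{q_-,q_+}$. The sought generalization is: positivity of $\varphi$ at a point $p \in \f{w}$ should produce a seed $S$ for $\s{Y}$ whose cluster chamber has $\f{w}$ on its boundary and whose mutation formula at $\f{w}$ equals the scattering function $f_{\f{w}}$.

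First I would localize at $p$. Fix $p \in \f{w}$ with $\varphi(p)>0$, and let $C_p$ be the maximal cone of linearity of $\varphi$ with $p$ in its closure. Using Lemma~\ref{ConvDec} (convexity along broken lines is equivalent to being decreasing along broken lines) together with positivity, I would argue that any broken line entering $C_p$ along the maximal-descent direction of $\varphi$ can bend only finitely many times before $\varphi$ goes non-positive, so only finitely many scattering walls meet $C_p$. Second, I would try to realize $C_p$ as a chamber of a seed. In dimension two, Lemma~\ref{model} achieves this by picking a transverse vector to obtain a $\bb{P}^1$-fibration and successively contracting $(-1)$-curves; the higher-dimensional analog would be a toric morphism of relative dimension $\dim \f{w}$ with $C_p$ realized as (an open subset of) the relative fan, after contracting exceptional loci sitting outside $C_p$. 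Third, given such a seed $S$, the relative Gromov--Witten counts defining $f_{\f{w}}$ reduce, for walls in the cluster complex of $S$, to multiple covers of exceptional divisors of the non-toric blowups at $S$ (cf.\ Example~\ref{ScatterExample}), yielding exactly the mutation factor $\prod_i(1+z^{-e_i})^{k_i}$ of Equation~\ref{MonomialMutation}.

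The hard part will be the second step. In higher rank, scattering diagrams are known to have walls accumulating outside every cluster complex, and there is no surface-type classification of admissible compactifications to fall back on when constructing a toric model. A related subtlety is that in dimension two, the cone $\sigma_{q_-,q_+}$ is automatically a half-space and the non-wrapping condition is very rigid, whereas in higher dimensions one must extract a seed's full combinatorial data (cluster variables, mutation directions, exchange matrix) from the local geometry of $\varphi$ near $\f{w}$, with no such half-space to anchor the construction.

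The most promising route I see is to flip the logical direction: rather than build a seed from $C_p$, characterize the cluster complex intrinsically as the locus where suitable tropical functions can be positive, generalizing Corollary~\ref{FiniteScatter} and the discussion in \S\ref{clustercomplex}. Under such a characterization, the second step would become nearly tautological, and the content would pass to ruling out ``exotic'' walls that support positive tropical functions yet lie outside every cluster complex. Lacking that, my proposal would at best reduce Conjecture~\ref{equivconvex} to this combinatorial question of exotic walls, which---in view of the dense-wall examples in \cite{GHKK}---likely requires essentially new input beyond the surface techniques of this paper.
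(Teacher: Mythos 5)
This statement is an open conjecture in the paper, not a proved result; the paper offers no proof of it. The paper only observes, immediately before the conjecture, that Lemma~\ref{model} establishes the dimension-two case, which is precisely the setting of log Calabi--Yau surfaces that the paper treats. So there is no proof to compare your attempt against.

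Your proposal correctly recognizes this: you identify Lemma~\ref{model} as the two-dimensional instance, you sketch the surface argument (choose a transverse vector, exploit the $\bb{P}^1$-fibration from $D_v^2=0$, contract $(-1)$-curves outside the half-space to obtain the toric model, then reduce the scattering function to the mutation factor via the $\bb{A}^1$-class computation of Example~\ref{ScatterExample} and Corollary~\ref{FiniteScatter}), and you candidly acknowledge that the higher-rank analog requires an input the paper does not supply---the special rigidity of the half-space $\sigma_{q_-,q_+}$ and the surface classification of compactifications have no higher-dimensional counterpart. Your suggested reformulation (characterize the cluster complex as the locus supporting positive tropical functions, and then show no ``exotic'' walls exist) is a fair restatement of what a proof would need to establish, but it is not a proof. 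In short: the proposal does not prove the conjecture, but it does not claim to, and its assessment of where the difficulty lies is consistent with the paper, which itself treats the general case as open.
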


The following conjecture is from \cite{GHKK}:
\begin{conj}[\cite{GHKK}]
The tropicalization of any regular function on any log Calabi-Yau variety is convex along broken lines.
\end{conj}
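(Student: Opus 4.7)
The plan is to reduce to the case of theta functions and then argue locally at each wall of the scattering diagram. Assuming $f=\sum_q a_q \vartheta_q$ is a theta expansion of a regular function $f$ (which is available for cluster varieties by \cite{GHKK} and conjecturally more broadly), one has $f^{\trop}=\min_q[v(a_q)+\vartheta_q^{\trop}]$, where $v$ is the relevant coefficient valuation. The GHKK positivity of broken line coefficients (cf.\ Remark \ref{nocancel}) rules out cancellation, so no term on the right gets strictly improved. Convexity along a broken line $\gamma$ at a point $p$ is a local condition preserved under $\min$, so it suffices to show that each tropical theta function $\vartheta_q^{\trop}$ is convex along every broken line.

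To prove this, I would proceed by a direct local computation at a bend. Fix a broken line $\gamma$ meeting a wall $\f{w}$ at $p=\gamma(t_p)$, with associated scattering function $f_{\f{w}}$. By the definition used in the surface case (cf.\ Theorem \ref{ValFibers} and its generalization), $\vartheta_q^{\trop}(v)$ is computed as the minimum, over equivalence classes of broken lines $\gamma'$ with initial direction $q$ and endpoint near $v$, of $\langle n_{D_v}, m_{\gamma'}\rangle$. When we modify the linear structure in a neighborhood of $p$ so that $\gamma$ becomes straight, the identification of tangent (and cotangent) vectors across $\f{w}$ is adjusted by the bend of $\gamma$ across $\f{w}$. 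The key observation is that the wall-crossing formula multiplies $z^m$ by a power of $(1+z^{-u}f_{\f{w}}')$ with exponent controlled by the pairing of $m$ with a primitive normal $n_{\f{w}}$, and the allowed new exponents differ from the old ones only by non-negative multiples of $-u$, i.e.\ in exactly the direction broken lines are allowed to bend. This one-sided bending of monomials is precisely what translates, after the local change of linear structure, into convexity of $\vartheta_q^{\trop}$ across $\f{w}$. Combining with the ambient convexity along straight lines (which follows from Lemma \ref{regconvex}) gives convexity along all broken lines.

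The hard part will be twofold. First, in dimensions $\geq 3$ the scattering diagram is genuinely more complicated: walls intersect in codimension-$2$ joints, broken lines can bend multiple times in cascading ways across interacting walls, and the local model argument that worked cleanly in the surface case (using Lemma \ref{model} to find a seed straightening any given line) is not directly available. One must check that the convexity-producing ``bend away from the origin'' picture survives these joint interactions, presumably using consistency of the scattering diagram. Second, the argument as sketched relies on having a suitable theta basis together with broken-line positivity; outside the cluster setting these are conjectural, so a fully general proof of the GHKK conjecture would either need to first establish a theta basis for arbitrary log Calabi-Yau (in a form sufficient for expanding regular functions) or proceed by some entirely different route, for instance via a direct log-geometric/Gromov-Witten argument bypassing theta functions altogether.
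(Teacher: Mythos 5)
This statement is labeled a \emph{conjecture} in the paper, not a theorem; the paper gives no proof. The only remark the paper makes is that it would follow from Conjecture~\ref{equivconvex} (``convexity along broken lines is always equivalent to convexity with respect to every seed''): a globally regular function on a cluster variety restricts to a regular function on each seed torus, hence has tropicalization convex with respect to each seed, and under Conjecture~\ref{equivconvex} this would be exactly convexity along broken lines. That is the entirety of the paper's commentary, and it is explicitly presented as a reduction of one open conjecture to another, not as a proof.

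Your sketch goes by a genuinely different route: you decompose $f$ into theta functions, invoke broken-line positivity to control the $\min$ over theta tropicalizations, and then argue convexity of each $\vartheta_q^{\trop}$ directly by a local wall-crossing computation — this is essentially the generalization of the paper's Lemma~\ref{DecreasingBroken}/Lemma~\ref{ConvDec} argument rather than of the seed-by-seed argument. The seed route is attractive because the only input needed about $f$ is that it is regular (restriction to seed tori is automatic), whereas your route needs a theta basis with positivity to hold for the class of varieties under consideration. Conversely, your route avoids having to prove the equivalence in Conjecture~\ref{equivconvex} as a prerequisite. You have correctly identified the two obstructions to carrying out either route in general: (a) in dimension $\geq 3$ the surface-specific Lemma~\ref{model} (every non-wrapping line lies in a chamber straightened by some seed) is unavailable, so neither the seed-by-seed reduction nor the ``maximal bend equals some seed's straight line'' step of the surface proof goes through without new ideas about joints and chamber structure; and (b) a theta basis for arbitrary log Calabi--Yau varieties is itself conjectural. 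Given that this is stated as an open conjecture, your sketch is a reasonable roadmap, but it should not be presented as a proof — and it should be clear to you that the paper does not claim one either.
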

Proving Conjecture \ref{equivconvex} would immediately imply this, because globally regular functions are of course regular on each seed torus, and they therefore give convex functions with respect to every seed.  Of course, we also conjecture that the other parts of Theorem \ref{IndeTropical} generalize to other cluster situations (and more generally, to other log Calabi-Yau situations).

\section{Toric Constructions for Log Calabi-Yau Surfaces}\label{App}

Throughout this section, it is always possible to switch the roles of $U$ and $V$ using the symmetry of the pairing $\langle \cdot,\cdot\rangle$.  We will therefore only define and prove things for one side.

\subsection{Dual Cones}\label{DualCones}

\begin{dfn}
Let $\sigma\subseteq V^{\trop}$ be a closed cone of any dimension.
The dual cone to $\sigma$ is $\sigma^{\vee}:=\{ q \in U^{\trop}|\langle q,v\rangle \geq 0 \mbox{ for all } v\in \sigma\}$.
\end{dfn}

Let $(Z_{\sigma}, \f{D}_{\sigma})$ be the partial compactification of $V$ which includes the non-nodal points of $\f{D}_{\rho}$ for each boundary ray $\rho$ of  $\sigma$, along with the point $\f{D}_{\rho}\cap \f{D}_{\rho'}$ if $\sigma$ is two-dimensional.  From the definitions, we have:
\begin{lem}\label{reg}
$q\in \sigma^{\vee}\cap U^{\trop}(\bb{Z})$ if and only if the global regular function $\vartheta_q$ on $V$ extends to a regular function on $(Z_{\sigma},\f{D}_{\sigma})$.
\end{lem}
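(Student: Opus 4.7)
The plan is to unpack the two sides of the claimed equivalence and check they coincide.

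First I would observe that, since the boundary components of $\f{D}_\sigma$ added in passing from $V$ to $Z_\sigma$ are exactly $\f{D}_\rho$ for the boundary rays $\rho$ of $\sigma$ (plus the nodal point $\f{D}_{\rho_1}\cap\f{D}_{\rho_2}$ when $\sigma$ is two-dimensional), a regular function on $V$ extends regularly to $Z_\sigma$ if and only if it has non-negative valuation along each such $\f{D}_\rho$ (regularity at the nodal point being an automatic consequence once valuations along both adjacent divisors are non-negative, since the construction of $Z_\sigma$ in \S \ref{compact} is locally toric near that point and preserved by the scattering automorphisms). By Theorem \ref{ValFibers} we have $\val_{\f{D}_v}(\vartheta_q)=\vartheta_q^{\trop}(v)=\langle q,v\rangle$, so this regularity condition is equivalent to $\langle q,v_\rho\rangle\geq 0$ for the primitive generators $v_\rho$ of the boundary rays of $\sigma$.

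Next I would compare this with the condition $q\in\sigma^\vee$, which by definition requires $\langle q,v\rangle\geq 0$ for \emph{every} $v\in\sigma$. The forward implication ($q\in\sigma^\vee\Rightarrow$ extension) is then immediate from restricting to boundary rays. For the reverse implication one needs non-negativity on boundary rays to propagate to all of $\sigma$, and the key input is the explicit description of $\vartheta_q^{\trop}$ from Corollary \ref{level}, Proposition \ref{levelpos}, and \S \ref{negbend}: the non-negativity locus $\{\vartheta_q^{\trop}\geq 0\}\subset V^{\trop}$ is either all of $V^{\trop}$ (when $q=0$), a single ray (when $L_q^{d<0}$ wraps and $\vartheta_q^{\trop}\le 0$ everywhere), or the closure of the convex cone $\sigma_{q_-,q_+}$ in which the positive broken-line fibers $\f{L}_q^{d>0}$ live. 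In each case $\{\vartheta_q^{\trop}\geq 0\}$ is a closed convex cone in $V^{\trop}$, so once both boundary rays of $\sigma$ lie in it, so does all of $\sigma$.

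The main obstacle I anticipate is making the convexity statement in the previous paragraph fully rigorous in $V^{\trop}$, which has non-trivial monodromy around the origin and in which the naive notion of ``convex cone'' has to be interpreted with care; a clean way is to cut $V^{\trop}$ along a ray outside $\{\vartheta_q^{\trop}\geq 0\}$, after which the non-negativity locus becomes a genuine convex cone in a half-plane and the usual convexity argument applies. A secondary point to verify is the regularity at the nodal point in the two-dimensional case: one should check that the local ring at $\f{D}_{\rho_1}\cap\f{D}_{\rho_2}$ in $Z_\sigma$ is the localization described by $\kappa_p$ in \S \ref{compact}, so that the Hartogs-type conclusion (no pole along either divisor implies regular at the intersection) really holds in this mirror-constructed, non-toric setting.
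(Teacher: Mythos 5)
Your proof is correct and is a careful fleshing-out of what the paper (which simply says ``From the definitions, we have:'') treats as immediate. You obtain the same conclusion, and both directions of your argument are sound, but the route you take for the reverse implication is slightly different from the most direct one implicit in the paper. You reduce the claim to non-negativity of $\langle q,\cdot\rangle$ on the boundary rays of $\sigma$ and then propagate this inward using convexity of the superlevel set $\{\vartheta_q^{\trop}\geq 0\}$; the more direct observation is that once $\vartheta_q$ is regular at the nodal point $\f{D}_{\rho_1}\cap\f{D}_{\rho_2}$, the divisorial valuation $\val_{\f{D}_v}$ is non-negative for \emph{every} $v$ in the interior of $\sigma$, since each such $\f{D}_v$ is a boundary divisor on a blowup of $Z_\sigma$ lying over the node (cf.\ \S\ref{compact}), so $q\in\sigma^\vee$ follows without any convexity input. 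Both arguments work; yours has the advantage of making the role of the concavity (decreasing-along-broken-lines property) of $\vartheta_q^{\trop}$ explicit, while the direct route is closer to ``from the definitions'' and avoids needing the case analysis you perform.

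One small inaccuracy worth flagging in your case analysis: when $\vartheta_q^{\trop}\le 0$ everywhere and $q\neq 0$, the set $\{\vartheta_q^{\trop}\geq 0\}$ need not be a full ray. In the cubic-surface case the negative fibers $L_q^{d,0}$ are unbounded strips and the superlevel set degenerates to the ray $\rho_q$, but in the $E_7$ and $E_8$ cases the lines $L_q^{d<0}$ self-intersect, so the fibers $L_q^{d,0}$ are bounded loops shrinking to the origin as $d\to 0^-$, and one gets $\{\vartheta_q^{\trop}\geq 0\}=\{0\}$ (equivalently, $\langle q,w_U(q)\rangle<0$ there). This does not damage your argument --- $\{0\}$ is still a (degenerate) closed convex cone, and no two-dimensional $\sigma$ can have both boundary rays inside it --- but the characterization as stated is incomplete. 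Your closing worry about normality of $Z_\sigma$ at the node is the right thing to check and is indeed supplied by the local description $\kk[\kappa_{p_i}]$ in \S\ref{compact}, which gives a normal affine neighborhood, so the Hartogs-type extension is legitimate.
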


\begin{prop}\label{ConeAffine}
Let $A_{\sigma}$ be the subalgebra of $\Gamma(V,\s{O}_V)$ generated by the $\vartheta_q$'s with $q\in \sigma^{\vee}$.  Then $A_{\sigma}=\Gamma(Z_{\sigma},\s{O}_{Z_{\sigma}})$.  Furthermore, $\{\vartheta_q\}_{q\in \sigma^{\vee}}$ forms an additve basis for $A_{\sigma}$.  If $\sigma^{\vee}$ is two-dimensional, then $\Spec A_{\sigma}$ is obtained from $Z_{\sigma}$ by contracting all the $(-1)$ curves which intersect $\f{D}_{\sigma}$.
\end{prop}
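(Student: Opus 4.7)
The plan is to first translate the condition "$f \in \Gamma(Z_\sigma, \s{O}_{Z_\sigma})$" directly into a support condition on the theta function expansion of $f$, and then handle the final geometric statement separately by interpreting $\Spec A_\sigma$ as the affinization of $Z_\sigma$. For the first step, any $f \in \Gamma(V,\s{O}_V)$ admits a unique theta function expansion $f = \sum_q c_q\vartheta_q$ by Theorem \ref{thetamult}. Combining Corollary \ref{valtheta} with the $\min$-convention in the definition of $\Val_v$ (\S \ref{ValFun}) and with Theorem \ref{ValFibers} yields the key identity
\[
\val_{\f{D}_v}(f) \;=\; \min_{q\,:\,c_q\neq 0}\langle q,v\rangle
\]
for every primitive $v$ on a boundary ray of $\sigma$. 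Hence $f$ is regular along $\f{D}_\sigma$ precisely when $\langle q,v\rangle\geq 0$ for every such $v$ and every $q$ with $c_q\neq 0$.

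If $\sigma$ is a single ray this already says $q\in\sigma^\vee$; if $\sigma$ is two-dimensional I would upgrade to $q\in\sigma^\vee$ using convexity of $\vartheta_q^\trop$ (Lemma \ref{regconvex}, in this paper's sign convention). Concavity above the chord forces $\vartheta_q^\trop\geq 0$ on the segment joining the primitive generators $v_1,v_2$ of the two boundary rays of $\sigma$, and then homogeneity of degree one along rays from the origin (none of which cross any scattering wall) propagates the non-negativity to the whole cone $\sigma$. Combined with the reverse containment supplied by Lemma \ref{reg}, this yields
\[
\Gamma(Z_\sigma,\s{O}_{Z_\sigma}) \;=\; \bigoplus_{q\in\sigma^\vee\cap U^\trop(\bb{Z})} \kk[P]\vartheta_q,
\]
and since $A_\sigma$ is sandwiched between this $\kk[P]$-span (by definition of the subalgebra) and $\Gamma(Z_\sigma,\s{O}_{Z_\sigma})$ (by Lemma \ref{reg}), all three agree; the additive basis property is inherited from the global theta function basis of Theorem \ref{thetamult}.

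For the last statement, when $\sigma^\vee$ is two-dimensional I would view the morphism $Z_\sigma\rar\Spec A_\sigma$ induced by $A_\sigma=\Gamma(Z_\sigma,\s{O}_{Z_\sigma})$ as the affinization of $Z_\sigma$: any complete curve $C\subset Z_\sigma$ is collapsed by this map, since $\Gamma(C,\s{O}_C)=\kk$ forces every global function to be constant on $C$. Because $V$ is affine (positivity), every such $C$ must meet $\f{D}_\sigma$; removing the finitely many intersection points with $\f{D}_\sigma$ yields an $\bb{A}^1$-curve in $V$. By Corollary \ref{FiniteScatter}, for generic $U$ these are precisely the exceptional $(-1)$-curves of the toric models from Lemma \ref{model}, and their completions inside $Z_\sigma$ are exactly the $(-1)$-curves meeting $\f{D}_\sigma$; contracting them leaves a surface with no remaining complete curves, hence affine with coordinate ring $A_\sigma$. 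The main obstacle will be this last identification: one must verify both that no reducible or chain-like configuration of affine curves in $V$ closes up to a complete subscheme of $Z_\sigma$ after the partial compactification by only $\f{D}_{\rho_1}$ and $\f{D}_{\rho_2}$, and that the contraction of the identified $(-1)$-curves genuinely produces an affine scheme (as opposed to merely a proper birational model). Both should be handled by the explicit control over $\bb{A}^1$-classes in Corollary \ref{FiniteScatter} and Lemma \ref{model}, combined with the affineness of $V$ itself.
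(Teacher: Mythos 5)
Your treatment of the first two statements is correct and follows essentially the paper's route, with more detail. The paper dismisses the containment $\Gamma(Z_\sigma,\s{O}_{Z_\sigma})\subseteq A_\sigma$ as ``immediate from Lemma \ref{reg}'' and then uses $\val_v(\vartheta_{q_1}\vartheta_{q_2})\geq 0$ to argue that products stay in the span; your version supplies the no-cancellation/$\min$ identity and the convexity-plus-homogeneity upgrade from ``non-negative on the boundary rays of $\sigma$'' to ``non-negative on all of $\sigma$.'' That is the content implicit in the paper's shorter argument, and your citation chain (Cor.~\ref{valtheta}, the definition of $\Val_v$, Thm.~\ref{ValFibers}) is the right one.

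For the last statement your route is genuinely different from the paper's, and the gaps you flag at the end are real. You are attempting to \emph{classify} the complete curves of $Z_\sigma$ and match them with the $(-1)$-curves of Lemma~\ref{model}/Corollary~\ref{FiniteScatter}. That requires ruling out complete curves meeting $\f{D}_\sigma$ in more than one point (your ``removing the finitely many intersection points yields an $\bb{A}^1$-curve'' is only valid for a single intersection point), ruling out reducible/chain configurations, and is asking Corollary~\ref{FiniteScatter} --- a statement about $\bb{A}^1$-\emph{classes} and non-vanishing Gromov--Witten counts --- to do a job it does not do on the nose, namely classify actual curves in an arbitrary compactification $Z_\sigma$. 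Your final inference ``no remaining complete curves, hence affine'' is also false as a general implication (e.g.\ $\bb{A}^2\setminus\{0\}$). The paper sidesteps all of this: it only needs (i) that contracting the named $(-1)$-curves, which are complete, leaves $A_\sigma$ unchanged, (ii) that the boundary divisors are not collapsed because boundary $\vartheta_q$'s restrict to non-constant functions there, and (iii) the existence of an $f\in A_\sigma$ whose zero set is exactly $\f{D}_\sigma$ together with those $(-1)$-curves, so that the complement is the distinguished affine open $D(f)$ of $\Spec A_\sigma$, on which the affinization map is an isomorphism. This gives ``nothing else is contracted'' without ever classifying all complete curves. If you want to complete your proposal, you would need to supply exactly the verifications you list; adopting the paper's distinguished-open argument is the cleaner way out.
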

\begin{proof}
The first statement is immediate from Lemma \ref{reg}.  For the second, note that if $\langle q_i,v\rangle \geq 0$ for $i=1,2$, then $\val_v(\vartheta_{q_1}\vartheta_{q_2})\geq 0$.  Thus, a product of any $\vartheta_{q_i}$'s with $q_i \in \sigma^{\vee}$ is a sum of $\vartheta_q$'s with $q\in \sigma^{\vee}$, and the second statement follows.

For the final statement, first note that since the $(-1)$-curves are complete, all global functions are constant along them, so their contraction does not change the ring of global functions.  On the other hand, taking $\Spec$ of the global functions does not contract the boundary divisors since there {\it are} non-constant functions along them corresponding to $\vartheta_q$ with $q$ in the boundary of $\sigma^{\vee}$.  Now, we can find a global function $f\in A_{\sigma}$ whose zeroes consist of the boundary divisors and the contracted $(-1)$ curves, so the rest of $Z_{\sigma}$ is the distinguished open subset $D(f)$.  Thus, the described $(-1)$-curves are all that gets contracted from taking $\Spec$ of the global sections.\footnote{A previous version of this paper incorrectly claimed that $\Spec A_{\sigma} = Z_{\sigma}$ when $\sigma^{\vee}$ is two-dimensional, neglecting to contract the $(-1)$-curves.}
\end{proof}

This resembles the well-known construction of toric varieties from fans presented in \cite{Fult}.  Unfortunately, the contracted $(-1)$-curves prevent us from actually being able to construct compactifications of $V$ from fans in quite the way one does with toric varieties.  Furthermore, the condition that $\sigma^{\vee}$ is two-dimensional is rather strong---for example, if all lines in $U^{\trop}$ wrap, then $\sigma^{\vee}$ is never two-dimensional unless $\sigma = \{0\}$.

\subsection{Polytopes}\label{BundlePolygons}

\begin{dfn}\label{ConvexityDefinitions}
Let $Q$ be any subset of $V^{\trop}$.  The {\it polar polytope} $Q^{\circ}$ is the set $\{q\in U^{\trop}|\langle q,v\rangle \geq -1 \mbox{ for all } v\in Q\}$. 

The {\it strong convex hull}\footnote{It follows from Theorem \ref{symdual} that this is equivalent to the version of convex hull used in \cite{Shen}.  Similarly for the Minkowski sums of \S \ref{Minkowski}.  Also, Proposition \ref{ConvexCondition} below implies that our notion of strongly convex is equivalent to \cite{FG2}'s notion of convex.} of a set $Q\subset U^{\trop}$ is the set 
\begin{align*}
\Conv(Q) = \left\{ x \in U^{\trop}|\langle x,v\rangle \geq \inf_{q\in Q} \langle q,v\rangle \mbox{ for all } v\in V^{\trop}\right\}.
\end{align*}

Let $f=\sum_{q\in Q} a_q \vartheta_q \in \Gamma(V,\s{O}_V)$, $a_q \neq 0$, for some finite set $Q\subset U^{\trop}(\bb{Z})$.  The {\it Newton Polytope} of $f$ is the set $\Conv(Q)$.  Equivalently, $\Newt(f) = \{x\in U^{\trop}|\langle x,v\rangle \geq f^{\trop}(v) \mbox{ for all } v\in V^{\trop}\}$.  By a {\it vertex} of $\Newt(f)$, we mean a point $q\in Q$ as above such that $\Conv(Q\setminus \{q\}) \neq \Conv(Q)$.

A set $Q$ is called {\it strongly convex} if $Q=\Conv(Q)$.
\end{dfn}

The following lemma follows directly from the definitions.
\begin{lem}\label{PolarLemmas}
For any set $Q\subseteq V^{\trop}$, $Q\subseteq (Q^{\circ})^{\circ}$.  If $Q\subseteq S$, then $S^{\circ} \subseteq Q^{\circ}$.
\end{lem}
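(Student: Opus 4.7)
The plan is to verify both inclusions by unpacking the definition of the polar polytope given in Definition \ref{ConvexityDefinitions}. Since $Q^\circ$ is defined by a pointwise inequality indexed by the elements of $Q$, both statements are essentially tautologies: the first uses that each $v \in Q$ itself witnesses the defining inequality of $(Q^\circ)^\circ$, and the second uses that enlarging $Q$ to $S$ imposes more constraints, hence shrinks the feasible set.

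For the first inclusion, I would fix an arbitrary $v \in Q \subseteq V^{\trop}$ and ask what it means for $v$ to lie in $(Q^\circ)^\circ$. By definition, this requires $\langle q, v \rangle \geq -1$ for every $q \in Q^\circ$. But any such $q$ satisfies $\langle q, v'\rangle \geq -1$ for all $v' \in Q$ by the defining property of $Q^\circ$, and applying this with $v' = v$ gives exactly the required inequality. Hence $v \in (Q^\circ)^\circ$.

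For the second inclusion, suppose $Q \subseteq S$, and take $q \in S^\circ$, so $\langle q, v\rangle \geq -1$ for every $v \in S$. Since $Q \subseteq S$, this inequality holds in particular for every $v \in Q$, so $q \in Q^\circ$. This gives $S^\circ \subseteq Q^\circ$.

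There is no substantive obstacle here; the only point worth a line of caution is that the pairing $\langle\cdot,\cdot\rangle$ has already been extended to all of $U^{\trop}\times V^{\trop}$ in the real-valued setting of \S\ref{sympair}, so the defining inequalities make sense for arbitrary (not necessarily integral) subsets $Q$. Both statements then follow directly from the definition without requiring any further properties of theta functions or of the integral linear structure.
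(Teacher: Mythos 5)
Your proof is correct and is exactly the argument the paper has in mind: the paper merely asserts that the lemma ``follows directly from the definitions,'' and your unpacking of the defining inequalities for $Q^{\circ}$ and $(Q^{\circ})^{\circ}$ is precisely that direct verification. No further comment is needed.
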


\begin{dfn}
We call a polytope $Q$ {\it polar} if $Q=(Q^{\circ})^{\circ}$.
\end{dfn}

\begin{lem}
$Q^{\circ}$ is polar.  Thus, $Q$ being polar is equivalent to $Q$ being the polar polytope of some set.
\end{lem}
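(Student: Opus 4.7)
The plan is to derive everything from the two formal properties recorded in Lemma \ref{PolarLemmas}, namely that $(-)^{\circ}$ is order-reversing and that $Q\subseteq (Q^{\circ})^{\circ}$ for every $Q$. These are the only inputs needed; no geometry of $U^{\trop}$ or $V^{\trop}$ enters.

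First, I would show $Q^{\circ}=((Q^{\circ})^{\circ})^{\circ}$. For the inclusion $Q^{\circ}\subseteq ((Q^{\circ})^{\circ})^{\circ}$, apply the general inclusion $S\subseteq (S^{\circ})^{\circ}$ of Lemma \ref{PolarLemmas} with $S:=Q^{\circ}$. For the reverse inclusion, start from $Q\subseteq (Q^{\circ})^{\circ}$ and apply the order-reversing half of Lemma \ref{PolarLemmas} to get $((Q^{\circ})^{\circ})^{\circ}\subseteq Q^{\circ}$. Combining gives equality, so $Q^{\circ}$ is polar. This establishes that every polar polytope arises as $S^{\circ}$ for some $S$ (take $S=Q^{\circ}$ itself).

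For the converse in the second sentence, suppose $Q=S^{\circ}$ for some $S\subseteq V^{\trop}$. Then by the previous paragraph applied to $S$, $S^{\circ}$ is polar, i.e.\ $Q$ is polar. Thus polar polytopes are exactly the polar polytopes of arbitrary sets, which is the claim.

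There is essentially no obstacle here: the whole argument is a three-line formal manipulation using only the two properties in Lemma \ref{PolarLemmas}, identical in shape to the classical fact in convex geometry that bipolars are idempotent. The only thing to watch is that we never use any structure of $Q$ beyond being a subset; in particular the word ``polytope'' in the statement is not needed for the proof, only for the terminology.
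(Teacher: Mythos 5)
Your argument is correct and is essentially identical to the paper's proof: both show $Q^{\circ}=((Q^{\circ})^{\circ})^{\circ}$ by applying $S\subseteq(S^{\circ})^{\circ}$ directly to $S=Q^{\circ}$ for one inclusion, and applying the order-reversing property to $Q\subseteq(Q^{\circ})^{\circ}$ for the other. The only difference is that you spell out the trivial converse ("being a polar polytope of some set implies polar") which the paper leaves implicit.
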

\begin{proof}
The first statement of Lemma \ref{PolarLemmas} immediately gives us $P^{\circ}\subseteq ((P^{\circ})^{\circ})^{\circ}$.  It also gives us $P\subseteq (P^{\circ})^{\circ}$, and then the second statement gives us $((P^{\circ})^{\circ})^{\circ} \subseteq P^{\circ}$.
\end{proof}

\begin{prop}\label{ConvexCondition}
A set $Q\subset U^{\trop}$ is strongly convex if and only if it is an intersection of sets of the form $\{\langle \cdot,v\rangle \geq a_v\}$.
\end{prop}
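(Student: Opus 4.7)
The plan is to unwind the definition of $\Conv(Q)$ and observe that it already presents $Q$ as exactly such an intersection whenever $Q$ is strongly convex, while conversely any such intersection is automatically fixed by the $\Conv$ operator.

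For the forward direction, suppose $Q$ is strongly convex, so $Q = \Conv(Q)$. Setting $a_v := \inf_{q \in Q}\langle q,v\rangle \in \bb{R}\cup\{-\infty\}$ for each $v \in V^{\trop}$ (and simply omitting from the intersection those $v$ with $a_v = -\infty$, since they impose no condition), the very definition of $\Conv(Q)$ gives
\begin{align*}
Q = \Conv(Q) = \bigcap_{v \in V^{\trop}} \{x \in U^{\trop} \mid \langle x,v\rangle \geq a_v\},
\end{align*}
which is the desired presentation.

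For the converse, suppose $Q = \bigcap_{v \in S} \{x \mid \langle x,v\rangle \geq a_v\}$ for some index set $S \subseteq V^{\trop}$ and real numbers $a_v$. The inclusion $Q \subseteq \Conv(Q)$ is immediate from the definition (every $x\in Q$ satisfies $\langle x,v\rangle \geq \inf_{q\in Q}\langle q,v\rangle$ tautologically). For the reverse inclusion, fix $x \in \Conv(Q)$ and any $v \in S$. Since $\langle q,v\rangle \geq a_v$ for every $q \in Q$, we have $\inf_{q \in Q}\langle q,v\rangle \geq a_v$, and then by membership in $\Conv(Q)$,
\begin{align*}
\langle x,v\rangle \;\geq\; \inf_{q\in Q}\langle q,v\rangle \;\geq\; a_v.
\end{align*}
As this holds for every $v \in S$, we conclude $x \in Q$, so $\Conv(Q)\subseteq Q$ and hence $Q$ is strongly convex.

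There is no real obstacle here: the proof is essentially a formal manipulation of the definition of $\Conv(Q)$, using only that $\langle \cdot,v\rangle$ is a well-defined $\bb{R}$-valued function on $U^{\trop}$ for each fixed $v$ (guaranteed by the pairing constructed earlier) and the monotonicity of $\inf$. The only conceptual step worth flagging is the harmless convention that coordinates $v$ with $a_v = -\infty$ may be dropped from the intersection, so that the strong convex hull itself qualifies as an intersection of half-spaces of the stated form.
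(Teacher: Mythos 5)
Your proof is correct and follows essentially the same approach as the paper: both directions unwind the definition of $\Conv(Q)$. Your converse is phrased a bit more cleanly — the paper argues by contradiction and invokes attainment of the infimum (hence implicitly closedness of $Q$), whereas you bypass that by simply noting $\inf_{q\in Q}\langle q,v\rangle\geq a_v$ whenever $Q\subseteq\{\langle\cdot,v\rangle\geq a_v\}$, which makes the argument work without any closedness hypothesis.
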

\begin{proof}
$\Conv(Q)$ is by definition an intersection of sets of this form, with $a_v:=\inf_{q\in Q} \langle q,v\rangle$.  So $Q$ being strongly convex implies it has this form.

Conversely, suppose $Q=\bigcap_{v\in I} \{q\in U^{\trop}|\langle q,v\rangle \geq a_v\in \bb{R}\}$.  If $Q$ is not convex, then there is some $x\notin Q$ such that for every $v\in V^{\trop}$, $\langle x,v\rangle \geq \langle q_v,v\rangle$ for some $q_v\in Q$ (since $Q$ is closed, the infimum in the definition of $\Conv(Q)$ is obtained for some $q_v\in Q$).  But this implies $x$ is in each of the sets in the intersection defining $Q$, hence in $Q$.
\end{proof}

\begin{cor}\label{dualconvex}
Polar polytopes are exactly the strongly convex polytopes containing the origin in their interiors.  These are the same as the ordinary convex polytopes with the origin in their interiors.
\end{cor}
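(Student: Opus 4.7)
The plan is to chain two equivalences: polar polytopes coincide with strongly convex polytopes containing $0$ in their interior, and for polytopes with $0$ in the interior, strong convexity coincides with ordinary convexity. The first equivalence is a direct unpacking. If $Q = S^{\circ}$ is polar, then $Q = \bigcap_{v \in S}\{q : \langle q, v\rangle \geq -1\}$ is strongly convex by Proposition \ref{ConvexCondition}, and $\langle 0, v\rangle = 0 > -1$ places $0$ in the interior. Conversely, if $Q$ is strongly convex with $0 \in \mathrm{int}(Q)$, Proposition \ref{ConvexCondition} gives $Q = \bigcap_i \{\langle \cdot, v_i\rangle \geq a_i\}$, and interiority of $0$ forces each $a_i < 0$ (otherwise $\langle 0, v_i\rangle = 0 \leq a_i$ would place $0$ on a boundary face). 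Using the positive scaling equivariance of $\langle \cdot, \cdot\rangle$, each constraint rewrites as $\{\langle \cdot, -v_i/a_i\rangle \geq -1\}$, presenting $Q$ as the polar polytope $\{-v_i/a_i\}^{\circ}$.

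For the second equivalence, the forward direction uses that each half-space $H_v := \{\langle \cdot, v\rangle \geq -1\}$ is ordinary convex: by Theorem \ref{symdual}, $\langle \cdot, v\rangle$ is the tropical theta function $\vartheta_v^{\trop}$ viewed on $U^{\trop}$, and Lemma \ref{regconvex} says its bending parameters along every straight line are $\leq 0$. This is equivalent to classical concavity along straight lines, so its classical superlevel sets along any straight line are (connected) intervals, meaning $H_v$ is closed under straight line segments (including wrapping ones); arbitrary intersections of such half-spaces remain ordinary convex. For the reverse direction, let $Q$ be an ordinary convex polytope with $0$ in its interior. By Lemma/Definition \ref{PolyLemma}, $Q = \{\varphi \geq -1\}$ for a convex piecewise linear $\varphi$ with finitely many edges. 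Each edge $E$ of $Q$ extends to a straight line of the form $L_{v_E}^{-1}$: pick a primitive tangent direction, view it as an element $v_E \in V^{\trop}$ via $w_U$, then rescale so the signed lattice distance from $0$ is $-1$, which is possible because $0$ lies on the interior side. By Corollary \ref{level} combined with Theorem \ref{symdual}, the $0$-side of this line is precisely $H_{v_E}$. Then $Q = \bigcap_E H_{v_E}$ over the finitely many edges, whence Proposition \ref{ConvexCondition} yields strong convexity.

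The delicate point is justifying the equality $Q = \bigcap_E H_{v_E}$ in the last step: since $U^{\trop}$ carries non-trivial monodromy and its straight lines may wrap around $0$, the standard toric ``separation by a supporting hyperplane'' argument requires care. The cleanest approach is radial: given $x \notin Q$, the straight segment from $0$ to $x$ (drawn inside a cone containing $x$) first exits $Q$ at a relative-interior point $p$ of some edge $E$. Then $\langle p, v_E\rangle = -1$ and $\langle 0, v_E\rangle = 0$, while classical concavity of $\langle \cdot, v_E\rangle$ along this straight segment forces its value at $x$ (which lies beyond $p$) to drop strictly below $-1$, placing $x$ outside $H_{v_E}$ as desired.
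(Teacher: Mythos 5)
Your proof is correct and follows essentially the same route as the paper. The first equivalence (polar $\Leftrightarrow$ strongly convex with $0$ interior) matches the paper's argument exactly: both directions go through Proposition \ref{ConvexCondition} and the positive-scaling normalization of the $a_v$'s to $-1$. For the second equivalence, the paper argues more tersely: it identifies each half-space $\{\langle \cdot,v\rangle \geq a_v\}$ ($a_v<0$) with the $0$-side of a straight line, and then simply cites the characterization in Lemma/Definition \ref{PolyLemma} of ordinary convex polygons as closures of finite intersections of $0$-sides of lines. You instead re-derive that containment $\bigcap_E H_{v_E} \subseteq Q$ by hand via the radial ``first exit point'' argument and concavity of $\langle \cdot, v_E\rangle$ along straight lines. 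This is a valid and slightly more self-contained route (it makes explicit what PolyLemma packages up), at the cost of one small gap: you silently assume the exit point $p$ lies in the relative interior of an edge. If $p$ is a vertex $E_1\cap E_2$, one still needs to observe that the ray must leave at least one of the two half-planes $H_{v_{E_1}}$, $H_{v_{E_2}}$ strictly (since it strictly exits $Q$), and then concavity pushes the relevant $\langle\cdot,v_{E_i}\rangle$ strictly below $-1$ at $x$; this is easily patched but worth stating. Alternatively, citing Lemma/Definition \ref{PolyLemma} directly, as the paper does, sidesteps the case analysis entirely.
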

\begin{proof}
Polar polytopes by definition have the form given in Proposition \ref{ConvexCondition}.  So polar polytopes are convex.  It is easy to see that they contain $0$ in their interiors.

Conversely, strongly convex polytopes with $0$ in their interiors have the form given in Proposition $\ref{ConvexCondition}$ with each $a_v < 0$.  Thus, by multiplying the $v$'s by positive scalars, we can assume each $a_v$ equals $-1$.  The form from Proposition \ref{ConvexCondition} is then the definition for a polar polytope.

For the last statement, sets of the form $\{\langle \cdot,v\rangle \geq a_v\}$ with $a_v < 0$ are exactly the zero-sides of straight lines in $U^{\trop}$, and ordinary convex polytopes (i.e., those which are convex with respect to the canonical integral linear structure on $U^{\trop}$) with the origin in their interiors are the intersections of such sets.
\end{proof}

Recall our notation $Q_{\varphi}= \{q\in U^{\trop}|\varphi(q)\geq -1\}$, for $\varphi$ a piecewise linear function on $U^{\trop}$.  We use the analogous notation in $V^{\trop}$.

\begin{prop}\label{TropSelfDual}
If $\varphi:V^{\trop}\rar \bb{R}$ is tropical, then $Q_{\varphi}$ is polar.  If $\varphi$ is integral piecewise linear and $Q_{\varphi}$ is polar and bounded, then $\varphi$ tropical.
\end{prop}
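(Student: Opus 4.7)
For the forward direction, suppose $\varphi:V^{\trop}\to\bb{R}$ is tropical. By the proof of Theorem \ref{IndeTropical}, since $\varphi$ has finitely many domains of linearity, it can be written as a finite minimum $\varphi=\min_{i\in I}\vartheta_{q_i}^{\trop}$ with $q_i\in U^{\trop}(\bb{Z})$ (taking one $q_i$ per domain of linearity). Using $\vartheta_{q_i}^{\trop}(v)=\langle q_i,v\rangle$, one computes
\[
Q_\varphi \;=\; \{v\in V^{\trop}: \langle q_i,v\rangle\geq -1\text{ for all }i\in I\} \;=\; \{q_i\}^{\circ},
\]
which is polar by the preceding lemma.

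For the reverse direction, assume $\varphi$ is integral piecewise linear and $Q_\varphi$ is polar and bounded. Set $R:=(Q_\varphi)^{\circ}\subset U^{\trop}$, so that $Q_\varphi=R^{\circ}$. By Corollary \ref{dualconvex}, $R$ is an ordinary convex bounded polytope containing $0$ in its interior, and by Lemma/Definition \ref{PolyLemma}, boundedness of $Q_\varphi$ forces $\varphi<0$ on $V_0^{\trop}$ while polarity of $Q_\varphi$ forces $\varphi$ convex. Since both $\varphi$ and each pairing $\langle r,\cdot\rangle$ are homogeneous of degree one, rescaling the defining relation $Q_\varphi=R^{\circ}$ yields
\[
\varphi(v) \;=\; \inf_{r\in R}\langle r,v\rangle \qquad\text{for every } v\in V^{\trop}.
\]

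To finish, I would reduce this infimum to a finite minimum. Let $\sigma_1,\ldots,\sigma_N$ be the top-dimensional domains of linearity of $\varphi$. On each $\sigma_j$, $\varphi|_{\sigma_j}$ is a non-trivial integral linear function, and Proposition \ref{unique} furnishes a unique $r_j\in U^{\trop}(\bb{Z})$ with $\vartheta_{r_j}^{\trop}|_{\sigma_j}=\varphi|_{\sigma_j}$. Once we check $r_j\in R$, the inf identity combined with the fact that the $\sigma_j$ cover $V^{\trop}$ yields $\varphi=\min_{j=1}^{N}\vartheta_{r_j}^{\trop}$ globally; since tropical functions are closed under finite minima, $\varphi$ is tropical.

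The crux is the verification $r_j\in R$---equivalently, that the candidate tangent tropical theta function $\vartheta_{r_j}^{\trop}$ extracted from a single linearity domain $\sigma_j$ globally dominates $\varphi$, so that $\langle r_j,w\rangle\geq -1$ for all $w\in Q_\varphi$ and not merely for $w\in\sigma_j\cap Q_\varphi$. This does not reduce to the usual convexity of $\varphi$ because $\vartheta_{r_j}^{\trop}$ is itself piecewise linear on $V^{\trop}$ rather than globally linear. My plan is to argue it via the polar description $Q_\varphi=R^{\circ}$ together with the explicit fiber descriptions of tropical theta functions from Corollary \ref{level} and Proposition \ref{levelpos}: a hypothetical $w\in Q_\varphi$ with $\langle r_j,w\rangle<-1$ would force the level set $L_{r_j}^{1,0}$ to cut through the interior of the convex bounded region $Q_\varphi$, which is incompatible with $\vartheta_{r_j}^{\trop}$ agreeing with $\varphi$ on $\sigma_j$ together with the convexity and boundedness of $Q_\varphi$.
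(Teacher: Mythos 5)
Your forward direction is essentially the paper's argument: write the tropical function as a finite minimum of tropicalized theta functions (via Theorem~\ref{IndeTropical}) and observe that $Q_\varphi$ is then the polar of a finite set of integral points, hence polar by Lemma~\ref{PolarLemmas}. That part is fine.

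Your reverse direction, however, has a genuine gap — one you already flag yourself. You construct an $r_j\in U^{\trop}(\bb{Z})$ for each domain of linearity $\sigma_j$ and then need $r_j\in R:=Q_\varphi^{\circ}$ (equivalently $\vartheta_{r_j}^{\trop}\geq\varphi$ globally, not merely on $\sigma_j$), but the passage from local agreement on $\sigma_j$ to global domination is exactly the nontrivial content, and your closing paragraph offers only an informal intention involving the level set $L_{r_j}^{1,0}$, not an argument. A second, smaller issue: you cite Proposition~\ref{unique} to ``furnish'' $r_j$, but that proposition is a \emph{uniqueness} statement, not an existence statement; existence in the $\varphi<0$ regime does hold (extend the $\{\varphi=-1\}$ segment in $\sigma_j$ to a straight line $L_{r_j}^{d<0}$ and invoke Corollary~\ref{level}), but you should say so rather than lean on a result that doesn't assert it.

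The paper avoids your crux entirely by starting from the structural characterization in Proposition~\ref{ConvexCondition}: since $Q_\varphi$ is polar it is strongly convex (Corollary~\ref{dualconvex}), hence by Proposition~\ref{ConvexCondition} a finite intersection $\bigcap_{q\in S}\{\langle q,\cdot\rangle\geq -1\}$ with $S\subset U^{\trop}(\bb{Z})$ (finiteness and the normalization $-1$ coming from $\varphi$ being integral PL with $Q_\varphi=\{\varphi\geq-1\}$). With this representation the containment $S\subset Q_\varphi^{\circ}$ is automatic, so one just sets $f=\sum_{q\in S}\vartheta_q$, notes $Q_\varphi=Q_{f^{\trop}}$, and uses boundedness (hence $\varphi<0$ on $V_0^{\trop}$) to conclude $\varphi=f^{\trop}$. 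I would redo your reverse direction along these lines: invoke \ref{dualconvex} and \ref{ConvexCondition} to get the finite half-plane description, rather than building the $r_j$ one domain at a time and then struggling to certify they lie in $R$.
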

\begin{proof}
First suppose that $\varphi$ is tropical.  Lemma \ref{PolarLemmas} gives us $Q\subseteq (Q^{\circ})^{\circ}$. On the other hand, Theorem \ref{IndeTropical} tells us that there is some regular function $f$ on $V$ with $f^{\trop}=\varphi$.  We can write $f=\sum_{q\in S} a_q \vartheta_q$, $a_q\neq 0$ for some finite set $S\subset U^{\trop}(\bb{Z})$.  Since $f^{\trop}(v)=\min_{q\in S} \langle q,v\rangle$ and $v\in Q$ if and only if $f^{\trop}(v) \geq -1$, this means that $S \subseteq Q^{\circ}$.  Now, $v\in (Q^{\circ})^{\circ}$ means that $\langle q,v\rangle \geq -1$ for all $q \in Q^{\circ}$, hence all $q \in S$, and this implies that $f^{\trop}(v) \geq -1$.  This means that $v \in Q$, as desired.

  On the other hand, $Q_{\varphi}$ being strongly convex and having the form $\{\varphi\geq -1\}$ for $\varphi$ integral piecewise linear means that it has the form $\bigcap_{q\in S} \{\langle q,\cdot\rangle \geq -1\}$ for some finite set $S\subset U^{\trop}(\bb{Z})$.  Let $f=\sum_{q\in S} \vartheta_q$.  Then $Q_{\varphi}=Q_{f^{\trop}}$.  $Q_{\varphi}$ bounded implies that $\varphi<0$ everywhere on $U^{\trop}_0$, so $Q_{\varphi}$ determines $\varphi$.  Hence, $\varphi=f^{\trop}$.
\end{proof}

Recall that in the usual vector space situation, a polytope being convex means that any line segment with endpoints in the polytope is entirely contained in the polytope.  The following theorem generalizes that characterization.
\begin{thm}\label{StrongConvexLines}
If a set $Q\subseteq U^{\trop}$ is strongly convex, then every broken line segment with endpoints in $Q$ is contained entirely within $Q$.  Conversely, if $Q$ is a rational polytope containing every broken line segment\footnote{Here we must include broken lines through the origin, by which we mean limits of sequence of broken lines whch are all equivalent to eachother in the sense of \S \ref{theta}.  In other words, in addition to the usual broken lines, we allow sets of the form $\langle \cdot,v\rangle = 0$ for $v\in V^{\trop}(\bb{Z})$.  These are the lines $L_q^0$ from Definition \ref{ZeroLine}.} with endpoints in $Q$, then $Q$ is strongly convex.  
\end{thm}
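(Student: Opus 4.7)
The forward direction reduces to a property of tropical theta functions. By Proposition~\ref{ConvexCondition}, a strongly convex $Q$ is an intersection $Q=\bigcap_v\{x\in U^{\trop}:\psi_v(x)\geq a_v\}$, where $\psi_v(x):=\langle x,v\rangle$. Theorem~\ref{symdual} identifies each $\psi_v$ on $U^{\trop}$ with the tropicalization of the theta function $\vartheta_v$ coming from the mirror construction with the roles of $U$ and $V$ swapped, and Theorem~\ref{IndeTropical} says this tropicalization is a tropical function. By Lemma~\ref{ConvDec} the tropical function $\psi_v$ is decreasing along every broken line, which exactly says that $\psi_v\circ\gamma$ is a concave piecewise-linear function in the parameter of each broken line $\gamma$; one-variable concavity makes super-level sets intervals, so every broken-line segment with endpoints in $\{\psi_v\geq a_v\}$ stays in $\{\psi_v\geq a_v\}$, and this property passes to $Q$ as an intersection.

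For the converse, let $Q$ be a rational polytope with the broken-line property. By Proposition~\ref{ConvexCondition}, it suffices to show that for every $p\notin Q$ there exist $v\in V^{\trop}$ and $a\in\bb{R}$ with $Q\subseteq\{\psi_v\geq a\}$ and $\psi_v(p)<a$. Pick an edge $F$ of $\partial Q$ locally separating $p$ from $\operatorname{int}(Q)$; since $Q$ is a rational polytope, $F$ is a rational straight segment lying on a unique rational straight line $L$. In the cleaner case $0\in\operatorname{int}(Q)$, one takes the separating half-space to be the closed $0$-side $\overline{Z(L)}$, which is a super-level set of some $\psi_v$ by Corollary~\ref{level}. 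Since straight segments are broken-line segments, the broken-line property forces $Q\cap L$ to be connected; as $F$ is an edge of $Q$, this intersection cannot extend beyond $F$, so $Q\cap L=F$. Then $Q\setminus F$ is connected, disjoint from $L$, and contains $0$, hence lies in $Z(L)$; taking closures gives $Q\subseteq\overline{Z(L)}$, while $p$ lies outside by the separation condition.

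The principal obstacle is handling two generalizations of this template. First, the line $L$ may wrap, so that $U^{\trop}\setminus L$ has more than two components and the $0$-side $Z(L)$ is bounded only by the initial portion $L^{d,0}$ of $L$; here one needs to verify both that the edge $F$ lies on $L^{d,0}$ rather than on a later loop of $L$, and that $Q\setminus F$ selects the component $Z(L)$ from among the several available. Both follow from the connectedness of $Q\setminus F$ together with the broken-line property applied to straight segments from $F$ to interior points of $Q$ near $0$. Second, when $0\notin Q$ the relevant bounding curve for the separating half-space attached to an edge $F$ is no longer a straight line but a positive-valued fiber $\f{L}_v^{-d}$ in the cluster complex, as described by Proposition~\ref{levelpos}. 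One extends $F$ through each adjacent scattering ray by the maximal allowed bend to obtain this broken-line fiber, and invokes the broken-line property one more time---applied to the broken line $\f{L}_v^{-d}$ itself---to prevent the extension from re-entering $Q$, which produces the desired positive super-level set $\{\psi_v\geq d\}$ containing $Q$ but not $p$. In either situation one produces the required half-space, and Proposition~\ref{ConvexCondition} concludes the argument.
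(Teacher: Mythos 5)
Your proposal is correct, and its skeleton matches the paper's. The forward direction is essentially identical: you establish that each $\psi_v=\langle\cdot,v\rangle$ is convex along broken lines and then reduce to one-variable concavity. Your route passes through Theorem~\ref{symdual}, Theorem~\ref{IndeTropical}, and Lemma~\ref{ConvDec}; the paper's discussion just before the theorem appeals more directly to Lemma~\ref{DecreasingBroken}, but these are equivalent inputs. The genuine divergence is in the converse. Both you and the paper reduce to showing that every edge $F$ of $\partial Q$ lies on a fiber of some $\psi_v$ which supports $Q$ from one side, but the mechanism differs. The paper argues by contradiction: assuming $Q$ extends to both sides of the fiber through $F$, one picks a rational $q'\in Q$ slightly on the wrong side and rotates the fiber about an interior point of $F$ (working in a seed where the fiber is straight) to manufacture a broken line with endpoints in $Q$ that exits $Q$. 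You argue directly: since ordinary segments are broken lines, $Q$ is convex in the ordinary sense (so no reflex vertices), hence $Q\cap L$ is a connected segment equal to $F$; then $Q\setminus F$ is connected, disjoint from $L$, and contains the origin, so it is trapped in $Z(L)$, which by Corollary~\ref{level} identifies $\overline{Z(L)}$ as a super-level set. A pleasant feature of your version is that the same connectedness reasoning automatically forces $F$ onto the initial loop $L^{d,0}$ and selects the correct component of $U^{\trop}\setminus L$ when $L$ wraps, whereas the paper's rotation argument handles the positive-fiber and wrapping cases uniformly via the remark that $\widetilde\Gamma$ is straight with respect to some seed. Both proofs treat the $0\notin\operatorname{int}(Q)$ case and the covering of $\partial Q$ by fibers at a comparable level of informality; your sketch of the positive-fiber case (extending $F$ through the cluster complex by maximal bends and re-applying the broken-line property to $\mathfrak{L}_v^{-d}$) is sound but would benefit from noting explicitly that, by Corollary~\ref{FiniteScatter}, only finitely many scattering rays are relevant so the construction terminates.
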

\begin{proof}
Suppose $Q$ is strongly convex.  So $Q$ is an intersection of sets of the form $\{ \langle \cdot,v\rangle \geq a_v\in \bb{R}\}$.  Let $\gamma$ be a segment of a broken line with endpoints in $Q$.  We know that each $\langle \cdot,v\rangle$ is convex along $\gamma$, so if we give $U^{\trop}$ a linear structure in which $\gamma$ is straight, then the usual notion of convexity tells us that indeed $\gamma \subset \{ \langle \cdot,v\rangle \geq a_v\in \bb{R}\}$.  Thus, $\gamma \subset Q$.

Now suppose that $Q$ is a rational polytope and that every broken line with endpoints in $Q$ is contained entirely within $Q$.  Assume that $Q$ is two-dimensional (the lower dimensional cases are easier).  We claim that the boundary of $Q$ is a finite union of closed sets $\Gamma$ each of which satisfies $\langle \Gamma, v_{\Gamma}\rangle = a_\Gamma\in \bb{Q}$ for some $v_{\Gamma}\in V^{\trop}(\bb{Z})$ such that $\langle q,v_{\Gamma}\rangle \geq a_{\Gamma}$ for all $q\in Q$.  This implies that $Q=\bigcap_{\Gamma} \{q\in U^{\trop}| \langle q,v_{\Gamma} \rangle \geq a_{\Gamma}\}$, which by Proposition  \ref{ConvexCondition} means that $Q$ is strongly convex.

It is not hard to see that each point of the boundary is contained in a closed interval $\Gamma$ (of length $>0$) which can be extended to a fiber $\wt{\Gamma}=\{\langle \cdot, v_{\Gamma}\rangle = a_v\}$ for some $v\in V^{\trop}(\bb{Z})$,  $a_v\in \bb{Q}$, satisfying $\langle q,v\rangle > a_v$ for some $q\in Q$.  Suppose there is also a $q'\in Q$ such that $\langle q',v\rangle = a_v'< a_v$.  Since $Q$ is connected, we may assume $a_v' = a_v-\epsilon$ for any sufficiently small $\epsilon>0$.  We may also assume $q'$ is a rational point.  Let $p$ be a point in the interior of $\Gamma$.  If $\wt{\Gamma}$ is a straight line, then it is clear that rotating it slightly about $p$ will give a straight line connecting $p$ to $q'$ which is not contained in $Q$ in between, a contradiction.  If $\wt{\Gamma}$ is not straight, then there is some seed with respect to which it is straight, and here we can preform a similar rotation.  This proves the claim.
\end{proof}

\begin{rmk}
We note that rather than checking the above condition for every broken line, it suffices to check for broken lines which are rational fibers of $\langle \cdot,v\rangle$ for some $v\in V^{\trop}(\bb{Z})$.  Such broken lines are either straight in $U^{\trop}$ or are contained in the cluster complex and are straight with respect to some seed structure.  So it is not necessary to understand the entire scattering diagram to understand strong convexity.  Similarly for convexity of functions along broken lines.
\end{rmk}

\begin{egs}
\hspace{.01 in}
\begin{itemize}[noitemsep] 
\item Let $p\in U^{\trop}$ be the self-intersection point of some straight line $L$ that wraps once.  Then $\Conv(p) = Z(L)$ (the $0$-side of $L$).  Since a point is convex with respect to every seed, this shows that a polytope being strongly convex is stronger than being convex with respect to every seed.
\item In the cubic surface case from Example \ref{cubicdevelop}, the convex hull of a point $q\in U^{\trop}(\bb{Z})$ is the line segment connecting $0$ to $q$.  This illustrates the need for considering $L_q^0$.
\end{itemize}
\end{egs}

\subsubsection{Line Bundles and Polytopes}

Let $W=\sum a_i \f{D}_{v_i}$ be a $\bb{Q}$-divisor in a compactification $(Z,\f{D})$ of $V$, with $\f{D}_{v_i}$ being the divisor corresponding to some primitive $v_i \in V^{\trop}(\bb{Z})$, and $a_i \in \bb{Q}$.  Recall that $\varphi_{W}$ denotes the piecewise linear function on $V^{\trop}$ which takes the value $a_i$ at $v_i$ and is linear off the rays generated by the $v_i$'s.  Let $Q_{W} : = Q_{-\varphi_{W}}= \{v\in V^{\trop}|-\varphi_{W}(v)\geq -1\}$.  We note that if $-\varphi_{W}$ is non-positive (i.e., if $W$ is effective), then $Q_{W}$ is the convex hull of the points $\frac{1}{a_i} v_i$ (since $0\in Q_{W}$, convex and strongly convex are equivalent).

\begin{dfn}
$Q_{W}^{\vee}:= \{q\in U^{\trop}| \langle q,v_i \rangle \geq -a_i \mbox{ for all } i\}$.  Note that this actually depends on $W$, not just on the polytope $Q_{W}$.
\end{dfn}

It follows easily from the definitions that:
\begin{prop}\label{BundleGlobal}
\hspace{.01 in}
\begin{itemize}
\item If $W$ is integral, then $q\in Q_{W}^{\vee}\cap U^{\trop}(\bb{Z})$ if and only if $\vartheta_q\in \Gamma(Z,\s{O}(W))$.  Thus, as a vector space, $\Gamma(Z,\s{O}(W)) = \bigoplus_{q\in Q_{W}^{\vee}\cap U^{\trop}(\bb{Z})} \kk \vartheta_q$.
\item $Q_{W}^{\vee}$ is the Newton polytope of a generic section of $\s{O}(W)$.
\item If $W$ is effective, then $Q_{W}^{\vee} = Q_{W}^{\circ}$.  In general, $Q_{W}^{\vee} \subseteq Q_{W}^{\circ}$.
\item Let $f$ be a regular function on $V$.  Let $W_f$ be negative the boundary divisor corresponding to $f^{\trop}$.  That is, for a compactification with $\f{D}=\sum_{v_i} \f{D}_{v_i}$ and $f^{\trop}$ bending only along $\rho_{v_i}$'s, $W_f:=-\sum f^{\trop}(v_i) \f{D}_{v_i}$.  Then $\Newt(f)=Q_{W_f}^{\vee}$, which from above equals $Q_{W_f}^{\circ}$ if $f^{\trop}$ is negative everywhere on $U^{\trop}_0$.
 \end{itemize}
\end{prop}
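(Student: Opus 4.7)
The plan is to verify each of the four bullet points in turn, all built on the identification $\vartheta_q^{\trop}(v)=\langle q,v\rangle=\val_{\f{D}_v}(\vartheta_q)$ from Theorem \ref{ValFibers} and Corollary \ref{valtheta}, together with the no-cancellation property of theta expansions (Remark \ref{nocancel}).

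For the first bullet, a rational function $g$ on $V$ lies in $\Gamma(Z,\s{O}(W))$ iff $\val_{\f{D}_{v_i}}(g)\geq -a_i$ for every boundary component $\f{D}_{v_i}$. Specializing to $g=\vartheta_q$ and substituting the valuation formula immediately gives the characterization of $Q_W^{\vee}\cap U^{\trop}(\bb{Z})$. For the basis statement, I expand any $f\in\Gamma(Z,\s{O}(W))\subseteq\Gamma(V,\s{O}_V)$ in the theta basis of Theorem \ref{thetamult} as $f=\sum a_q\vartheta_q$; Remark \ref{nocancel} ensures $\val_{\f{D}_{v_i}}(f)=\min_{a_q\neq 0}\langle q,v_i\rangle$, and demanding this exceed $-a_i$ forces every $q$ in the support of $f$ to lie in $Q_W^{\vee}$. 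For the second bullet, a generic section has the form $f=\sum_{q\in Q_W^{\vee}\cap U^{\trop}(\bb{Z})}c_q\vartheta_q$ with all $c_q\neq 0$, so $\Newt(f)=\Conv(Q_W^{\vee}\cap U^{\trop}(\bb{Z}))$. Since $Q_W^{\vee}$ is an intersection of half-spaces $\{\langle\cdot,v_i\rangle\geq -a_i\}$ with integral data, Proposition \ref{ConvexCondition} gives that it is strongly convex, and its integer points fill it out after enough toric blowups of $(Z,\f{D})$ to resolve its rational vertices.

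For the third bullet, the content is the inclusion $Q_W^{\vee}\subseteq Q_W^{\circ}$. Given $q\in Q_W^{\vee}$ and $v\in Q_W$, I pick the cone $\sigma_{i,i+1}$ containing $v$ and write $v=c_iv_i+c_{i+1}v_{i+1}$ with $c_i,c_{i+1}\geq 0$. Since $\varphi_W$ is linear on $\sigma_{i,i+1}$, the membership $v\in Q_W$ reads $c_ia_i+c_{i+1}a_{i+1}\leq 1$. Convexity of $\vartheta_q^{\trop}$ (Lemma \ref{regconvex}) on the straight segment from $v_i$ to $v_{i+1}$, combined with positive homogeneity, yields $\langle q,v\rangle\geq c_i\langle q,v_i\rangle+c_{i+1}\langle q,v_{i+1}\rangle\geq -c_ia_i-c_{i+1}a_{i+1}\geq -1$. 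When $W$ is effective, so each $a_i\geq 0$, the converse $Q_W^{\circ}\subseteq Q_W^{\vee}$ follows by testing the polar inequality at the point $v_i/a_i\in\partial Q_W$ (when $a_i>0$), or at arbitrarily large multiples $tv_i\in Q_W$ (when $a_i=0$, forcing $\langle q,v_i\rangle\geq 0$). The fourth bullet is now a direct consequence: setting $a_i=-f^{\trop}(v_i)$, the first bullet gives $\Newt(f)\subseteq Q_{W_f}^{\vee}$, and the same convexity argument as above extends the boundary inequality $\langle q,v_i\rangle\geq f^{\trop}(v_i)$ to all $v\in V^{\trop}$, yielding equality. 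The identification with $Q_{W_f}^{\circ}$ when $f^{\trop}$ is negative on $V^{\trop}_0$ is then immediate from the third bullet, as this hypothesis is exactly the statement that $W_f$ is strictly effective.

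The only subtlety in this plan is that $\vartheta_q^{\trop}$ generally bends not only along the boundary rays $\rho_{v_i}$ but also along scattering rays in the interior of each $\sigma_{i,i+1}$, so it is not linear on the full cone. I expect this to cause no real trouble because the argument uses convexity of $\vartheta_q^{\trop}$ on the straight segment from $v_i$ to $v_{i+1}$ rather than linearity there, and $\varphi_W$ (or $f^{\trop}$, in the fourth bullet) remains linear on the same cone. The linear lower bound from the endpoint values therefore controls the inequality throughout the segment, which is all the third and fourth bullets require.
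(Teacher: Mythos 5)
The paper itself offers no proof of this proposition, introducing it with the phrase ``It follows easily from the definitions,'' so there is no argument of record to compare against; your proposal supplies the missing details and does so correctly. The technical crux, which you identify and resolve properly, is that $\vartheta_q^{\trop}$ is in general only piecewise linear on $\sigma_{i,i+1}$ (it can bend along scattering rays in the interior), so one cannot simply interpolate linearly between the endpoint values. Your observation that the paper's notion of convexity (non-positive bending parameters, Lemma~\ref{regconvex}) together with positive homogeneity gives superadditivity, and hence the one-sided estimate $\langle q, c_iv_i + c_{i+1}v_{i+1}\rangle \geq c_i\langle q,v_i\rangle + c_{i+1}\langle q,v_{i+1}\rangle$, is exactly the right tool for the third and fourth bullets, and you use it consistently. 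The first bullet is a direct unwinding of $\Gamma(Z,\s{O}(W))$ via valuations plus the theta basis with the no-cancellation input from Remark~\ref{nocancel}, as it should be.

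The only soft spot is the second bullet. From the first bullet a generic section $f$ satisfies $\Newt(f) = \Conv\bigl(Q_W^{\vee}\cap U^{\trop}(\bb{Z})\bigr)$, and you then assert that this strong convex hull recovers all of $Q_W^{\vee}$, invoking toric blowups to resolve rational vertices. But performing toric blowups of $(Z,\f{D})$ replaces $W$ with its pullback, and one still needs to know that the corresponding polytope is unchanged and that the \emph{integral} points of $Q_W^{\vee}$ actually span it under $\Conv(\cdot)$; equivalently, that for each $i$ some integral $q\in Q_W^{\vee}$ achieves $\langle q,v_i\rangle = -a_i$ (so that $W_f = W$ for generic $f$, and the fourth bullet applies with $W_f=W$). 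This is the content of the second bullet and deserves a sentence rather than being folded into a blowup remark; that said, the paper glosses over exactly the same point, so you are not out of step with the source. Everything else in the proposal is sound.
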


\begin{prop}\label{ConvexDual}
The strongly convex integral (resp. rational) polytopes are exactly those of the form $Q_{W}^{\vee}$ for some divisor (resp. some $\bb{Q}$-divisor) $W$.  Thus, strongly convex integral polytopes are the same as Newton polytopes.
\end{prop}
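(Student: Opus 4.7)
The plan is to prove this biconditional by combining Proposition \ref{ConvexCondition} with the integrality of the pairing on integral points. For the ``$\Leftarrow$'' direction, I would observe that
\begin{align*}
Q_W^\vee = \bigcap_i\{q\in U^{\trop}\mid \langle q,v_i\rangle \geq -a_i\}
\end{align*}
is by definition an intersection of sets of the form $\{\langle \cdot,v\rangle \geq a\}$, hence strongly convex by Proposition \ref{ConvexCondition}, and rational whenever $W$ is a $\bb{Q}$-divisor.

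For the ``$\Rightarrow$'' direction, given a strongly convex rational polytope $Q\subset U^{\trop}$, I would follow the last paragraph of the proof of Theorem \ref{StrongConvexLines}: cover $\partial Q$ by finitely many closed segments $\Gamma_1,\ldots,\Gamma_k$ (the edges of $Q$), where each $\Gamma_i$ lies on a fiber $\{\langle \cdot,v_i\rangle = a_i\}$ for some primitive $v_i\in V^{\trop}(\bb{Z})$ and some $a_i\in \bb{Q}$, with $\langle q,v_i\rangle \geq a_i$ for all $q\in Q$. Proposition \ref{ConvexCondition} then gives
\begin{align*}
Q = \bigcap_{i=1}^k \{q\in U^{\trop}\mid \langle q,v_i\rangle \geq a_i\}.
\end{align*}
Next, I would choose a compactification $(Z,\f{D})$ of $V$ in which each $\rho_{v_i}$ is realized by a boundary component $\f{D}_{v_i}$, and set $W := -\sum_i a_i \f{D}_{v_i}$; unwinding the definition of $Q_W^\vee$ then yields $Q_W^\vee = Q$.

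For the integral case, I note that if $Q$ is integral, then each endpoint $q$ of an edge $\Gamma_i$ lies in $U^{\trop}(\bb{Z})$, and by the definition of the pairing (cf.\ \S \ref{sympair}) we have $a_i = \langle q,v_i\rangle = \val_{\f{D}_{v_i}}(\vartheta_q)\in \bb{Z}$, since valuations of regular functions along boundary divisors are integers. Hence each $a_i\in \bb{Z}$ and $W$ is an integral divisor. The main technical obstacle is the decomposition of $\partial Q$ into edges supported on fibers with primitive integral normals $v_i$, but this is precisely what is accomplished in the last paragraph of the proof of Theorem \ref{StrongConvexLines}; the remainder of the argument reduces to unwinding definitions.
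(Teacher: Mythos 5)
Your argument for the first sentence of the proposition is correct and in fact more careful than the paper's, which simply declares the first part ``immediate from the definition of $Q_W^{\vee}$ and Proposition \ref{ConvexCondition}.'' You fill in the real content: how to go from the abstract characterization in Proposition \ref{ConvexCondition} (an arbitrary intersection of half-spaces $\{\langle\cdot,v\rangle\geq a_v\}$) to a \emph{finite} intersection with rational data, by invoking the edge-decomposition argument from the proof of Theorem \ref{StrongConvexLines}, and how integrality of $Q$ forces integrality of $W$ via the fact that $\langle q,v_i\rangle=\val_{\f{D}_{v_i}}(\vartheta_q)\in\bb{Z}$. That reasoning is sound.

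However, you have omitted the second sentence of the proposition entirely: ``Thus, strongly convex integral polytopes are the same as Newton polytopes.'' This is part of the statement you are asked to prove, and your proof never mentions it. The paper disposes of it in one line by citing the second bullet of Proposition \ref{BundleGlobal}, which states that $Q_W^{\vee}$ is the Newton polytope of a generic section of $\s{O}(W)$; combined with the first sentence (already proved), this gives both inclusions. You should add a sentence to that effect: every Newton polytope is a strong convex hull of a finite set of integral points and hence a strongly convex integral polytope, while conversely any strongly convex integral polytope is some $Q_W^{\vee}$ with $W$ integral, which by Proposition \ref{BundleGlobal} is the Newton polytope of a generic section of $\s{O}(W)$.
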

\begin{proof}
The first part follows immediately from the definition of $Q_{W}^{\vee}$ and Proposition \ref{ConvexCondition}.  The second part follows because $Q_W^{\vee}$ is the Newton polytope of a generic section of $\s{O}(W)$.
\end{proof}

Using our descriptions of the fibers of $\val_{v_i}$ from Corollary \ref{level} and Proposition \ref{levelpos}, we can easily describe $Q_{W}^{\vee}$ explicitely.  In particular:
\begin{prop}\label{PolarDescription}
Use $w_U$ to identify $U^{\trop}$ with $V^{\trop}$.  Assume that $W=\sum a_i \f{D}_{v_i}$ is strictly effective (so each $a_i >0$).  Then: 
\begin{align*}
Q_{W}^{\vee} = Q_W^{\circ} = \bigcap_i \?{Z(L_{v_i}^{a_i})}.
\end{align*}
\end{prop}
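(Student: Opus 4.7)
The plan is to derive the second equality directly from the description of $\val_v$ in \S\ref{ValFun} together with Theorem~\ref{ValFibers}; the first equality $Q_W^{\vee}=Q_W^{\circ}$ is already contained in Proposition~\ref{BundleGlobal} since $W$ is (strictly) effective.

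Unwinding the definition gives
\[
Q_W^{\vee} = \bigcap_i \{q\in U^{\trop} : \langle q,v_i\rangle \geq -a_i\},
\]
so the task is to show that, for each $i$, $\{q : \langle q,v_i\rangle \geq -a_i\} = \overline{Z(L_{v_i}^{a_i})}$. By definition $\langle q,v_i\rangle = \vartheta_q^{\trop}(v_i) = \val_{\f{D}_{v_i}}(\vartheta_q)$, and Theorem~\ref{ValFibers} identifies this quantity with $\val_{v_i}(q)$, where $w_U$ is used to view $v_i$ as a point of $U^{\trop}$. The construction in \S\ref{ValFun} specifies that for $-a_i<0$ the fiber $\{q : \val_{v_i}(q) = -a_i\}$ is exactly $L_{v_i}^{a_i,0}$, which is precisely the boundary of $Z(L_{v_i}^{a_i})$.

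It remains to verify that the sublevel set $\{\val_{v_i} \geq -a_i\}$ is the closure of $Z(L_{v_i}^{a_i})$ rather than its complement. Since $\val_{v_i}$ is convex (cf.~\S\ref{ValFun}) and $\val_{v_i}(0)=0 > -a_i$, the origin lies in this sublevel set; as $Z(L_{v_i}^{a_i})$ is by definition the component of $U^{\trop}\setminus L_{v_i}^{a_i,0}$ containing $0$, convexity combined with the fact that the boundary of the sublevel set is $L_{v_i}^{a_i,0}$ forces $\{\val_{v_i} \geq -a_i\} = \overline{Z(L_{v_i}^{a_i})}$. Intersecting over $i$ then yields the claim. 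The only mild care needed is the case where $L_{v_i}^{a_i}$ wraps, so that $Z(L_{v_i}^{a_i})$ is a bounded region whose boundary $L_{v_i}^{a_i,0}$ is only a proper subset of $L_{v_i}^{a_i}$; the convexity argument applies uniformly and handles this case automatically.
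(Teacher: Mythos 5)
Your proof is correct and follows essentially the route the paper intends: the paper itself does not write out an argument but explicitly points to Proposition~\ref{BundleGlobal} for $Q_W^\vee=Q_W^\circ$ and to the fiber descriptions of $\val_{v_i}$ (Corollary~\ref{level}/\S\ref{ValFun}) for the second equality, which is exactly what you do. One small imprecision worth flagging: you describe $Z(L_{v_i}^{a_i})$ as the component of $U^{\trop}\setminus L_{v_i}^{a_i,0}$ containing $0$, whereas by Definition it is the component of $U^{\trop}\setminus L_{v_i}^{a_i}$ containing $0$; this does not affect the conclusion since both removals isolate the same $0$-component, but it should be stated correctly. Also, you could avoid the convexity detour altogether by just noting that the fibers $\{\val_{v_i}=d\}=L_{v_i}^{-d,0}$ for $-a_i\leq d\leq 0$, together with the fibers over $d>0$ and the origin, sweep out exactly $\overline{Z(L_{v_i}^{a_i})}$ by construction, since these fibers are nested with respect to $|d|$; the convexity argument is a valid substitute but is not strictly needed.
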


This is analogous to the toric picture of a ``normal polytope,'' except that using the wedge form in place of the dot product results in ``parallel polytopes.''  This description was previously observed in \cite{GHK2}.

May other facts about polytopes from the toric world generalize to our situation.  For example:

\begin{prop}\label{PointsIntersection}
Let $W=\sum_{i=1}^n a_i \f{D}_{v_i}$ be an integral Weil divisor (subscripts cyclically ordered), and let $F_{v_i}$ be the (possibly empty) set $Q_{W}^{\vee} \cap \{q\in U^{\trop}| \langle q,v_i\rangle = -a_i\}$.  Let $d_i$ be one less than the number of points in $F_{v_i}\cap U^{\trop}(\bb{Z})$.  If $1\leq d_i < \infty$, then $W\cdot \f{D}_{v_i} \geq d_i$.  If $n\geq 2$, then $W\cdot \f{D}_{v_i} = d_i$ if and only if $F_{v_i}\cap F_{v_{i-1}}\neq \emptyset$ and $F_{v_i}\cap F_{v_{i+1}}\neq \emptyset$.
\end{prop}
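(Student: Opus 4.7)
The plan is to compare the restriction map $\Gamma(Z,\s{O}(W)) \to \Gamma(\f{D}_{v_i}, \s{O}(W)|_{\f{D}_{v_i}})$ using the theta basis, together with a dimension count on $\f{D}_{v_i}\cong \bb{P}^1$ (a smooth rational boundary curve of the Looijenga pair $Z$). First observe that, as a section of $\s{O}(W)$, the theta function $\vartheta_q$ vanishes to order $\langle q,v_j\rangle + a_j$ along $\f{D}_{v_j}$ (combining Proposition \ref{BundleGlobal} with Theorem \ref{ValFibers}). In particular $\vartheta_q|_{\f{D}_{v_i}}$ is nonzero exactly when $q\in F_{v_i}$, while the kernel of the restriction map is $\Gamma(Z,\s{O}(W-\f{D}_{v_i}))$, spanned by those $\vartheta_q$ with $q\in Q_W^\vee\setminus F_{v_i}$. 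Hence the $d_i+1$ restrictions $\{\vartheta_q|_{\f{D}_{v_i}}\}_{q\in F_{v_i}\cap U^{\trop}(\bb{Z})}$ are linearly independent in $\Gamma(\f{D}_{v_i},\s{O}(d))$, where $d:=W\cdot \f{D}_{v_i}$. Since $\dim \Gamma(\bb{P}^1,\s{O}(d))=d+1$ (and $d_i\geq 1$ rules out $d<0$), this yields $d\geq d_i$.

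For the equality criterion, I would study the orders of vanishing of these restrictions at the two nodes $p_\pm := \f{D}_{v_i}\cap \f{D}_{v_{i\pm 1}}$. Working in local coordinates $(z,w)$ at $p_-$ with $\f{D}_{v_i}=\{z=0\}$ and $\f{D}_{v_{i-1}}=\{w=0\}$, a short trivialization computation gives $o_-(q):=\val_{p_-}(\vartheta_q|_{\f{D}_{v_i}})=\langle q,v_{i-1}\rangle+a_{i-1}$, and analogously $o_+(q)=\langle q,v_{i+1}\rangle+a_{i+1}$, with $o_-(q)+o_+(q)=d$ since the restricted section has zeros only at $p_\pm$. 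Because $\Gamma(\bb{P}^1,\s{O}(d))$ is spanned by sections realizing every value in $\{0,1,\ldots,d\}$ as their order at $p_-$, equality $d=d_i$ is equivalent to $\{o_-(q):q\in F_{v_i}\cap U^{\trop}(\bb{Z})\}=\{0,1,\ldots,d\}$.

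As $q$ steps by the primitive integer tangent $e$ to $F_{v_i}$, the value $o_-(q)$ changes by $\langle e, v_{i-1}\rangle$. Using Theorem \ref{symdual} together with the non-singularity of $Z$ at $p_-$---so that $v_{i-1},v_i$ form a $\bb{Z}$-basis of the local lattice in $V^{\trop}$ and the corresponding dual basis in $U^{\trop}$ is an integer basis of the appropriate cone---this pairing equals $\pm 1$, i.e., the step is one. The same dual-basis argument shows that whenever $F_{v_i}\cap F_{v_{i\pm 1}}\neq \emptyset$, the intersection point is automatically integral, obtained by solving $\langle q,v_i\rangle=-a_i$ and $\langle q,v_{i\pm 1}\rangle=-a_{i\pm 1}$ in integer dual coordinates. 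Consequently $\min o_-=0$ iff $F_{v_i}\cap F_{v_{i-1}}\neq\emptyset$ and $\max o_-=d$ iff $F_{v_i}\cap F_{v_{i+1}}\neq \emptyset$, and the conjunction of these is the stated equivalence.

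The main obstacle is the tropical translation of smoothness of $Z$ into the step-one and integer-intersection facts. Once Theorem \ref{symdual} and the local linearity of $\langle\cdot,\cdot\rangle$ on the relevant two-dimensional cone are combined with the $\bb{Z}$-basis property supplied by non-singularity of $Z$, this reduces to a standard toric dual-basis computation, but some care is needed to correctly identify the primitive tangent $e$ along $F_{v_i}$ with the appropriate dual-basis vector and to verify surjectivity of the pairing $\langle\cdot,v_{i-1}\rangle$ onto $\bb{Z}$ along $F_{v_i}$.
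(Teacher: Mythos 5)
Your proof takes essentially the same approach as the paper's: both bound $W\cdot\f{D}_{v_i}$ from below by producing $d_i+1$ linearly independent sections of $\s{O}(W)|_{\f{D}_{v_i}}\cong\s{O}_{\bb{P}^1}(d)$ from the restrictions of $\{\vartheta_q\}_{q\in F_{v_i}\cap U^{\trop}(\bb{Z})}$, and both characterize equality in terms of whether the edge $F_{v_i}$ touches the adjacent edges $F_{v_{i\pm 1}}$. The paper's write-up of the equality criterion is much terser---it asserts the equivalence in a single sentence after noting that $\val_{\f{D}_{v_{i\pm 1}}}(\vartheta_q) \geq -a_{i\pm 1}$ corresponds to $\langle q, v_{i\pm 1}\rangle\geq -a_{i\pm 1}$, without spelling out why the $d_i+1$ restrictions span precisely in this case. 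Your version fills in the two facts the paper leaves implicit: that the orders of vanishing $o_-(q)$ at the node $p_-$ are distinct integers that increment by exactly one as $q$ moves by the primitive step along $F_{v_i}$ (so the span of the restrictions is a flag determined entirely by $\min o_-$ and $\max o_-$), and that the corner $F_{v_i}\cap F_{v_{i\pm 1}}$, if nonempty, is automatically an integral point because $v_i, v_{i\pm1}$ form a $\bb{Z}$-basis locally by smoothness of $Z$. These step-one and integral-vertex verifications are exactly what justifies passing from ``nonempty intersection'' to ``equality of degree,'' and your explicit treatment of them is a genuine improvement in rigor over the paper's compressed argument, though the underlying idea is identical.
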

\begin{proof}
Elements of $\Gamma(\f{D}_{v_i},\s{O}_{Z}(W)|_{\f{D}_{v_i}})$ correspond to rational functions $f$ on $V$ with $\val_{\f{D}_{v_i}}(f) = -a_i$ and $\val_{\f{D}_{v_{i\pm 1}}}(f) \geq -a_{i\pm 1}$.  The latter condition is satisfied by all $\vartheta_q$ with $q\in Q_W^{\vee}$, while the former is satisfied for all $\vartheta_q$ with $\langle q,v_i\rangle = -a_i$.  There are thus at least $d_i+1$ global sections of $\s{O}_{Z}(W)|_{\f{D}_{v_i}}$, so this line bundle has degree at least $d_i$.  This gives us $W\cdot \f{D}_{v_i} \geq d_i$.  Since the condition $\val_{\f{D}_{v_{i\pm 1}}}(\vartheta_q) \geq -a_{i\pm 1}$ is satisfied if and only if $\langle q,v_{i\pm 1}\rangle \geq -a_{i\pm 1}$, we see that there is indeed equality precisely when $F_{v_i}\cap F_{v_{i\pm 1}}\neq \emptyset$.
\end{proof}

\begin{cor}\label{AmpleConvex}
Any strongly convex polytope $Q$ in $U^{\trop}$ is $Q_W^{\vee}$ for some not necessarily effective $\f{D}$-ample divisor $W$ in some compactification of $V$.  $W$ is effective if and only if $Q$ contains the origin.
\end{cor}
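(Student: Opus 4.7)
The plan is to realize $W$ explicitly via the support function of $Q$.  Given a strongly convex polytope $Q\subset U^{\trop}$, define $\varphi_Q\colon V^{\trop}\rar\bb{R}$ by $\varphi_Q(v):=\inf_{q\in Q}\langle q,v\rangle$.  By Proposition \ref{ConvexCondition}, $Q=\{q\in U^{\trop}:\langle q,v\rangle\geq \varphi_Q(v)\text{ for all }v\in V^{\trop}\}$.  Since $Q$ is a polytope, the infimum is attained on a vertex, so
\begin{align*}
\varphi_Q=\min_{q\text{ a vertex of }Q}\vartheta_q^{\trop}.
\end{align*}
Theorem \ref{IndeTropical} then identifies $\varphi_Q$ as a tropical function (a minimum of indecomposable tropicals), so it is integral piecewise linear and bends nontrivially along only finitely many rays $\rho_{v_1},\ldots,\rho_{v_n}\subset V^{\trop}$, one per edge of $Q$.

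Next, I would choose a compactification $(Z,\f{D})$ of $V$ whose boundary is exactly $\f{D}_{v_1}+\ldots+\f{D}_{v_n}$; this is a valid Looijenga pair as long as $n\geq 3$, which is automatic in the main case of $Q$ a polygon with at least three vertices.  On $Z$ set
\begin{align*}
W:=\sum_{i=1}^{n}\bigl(-\varphi_Q(v_i)\bigr)\f{D}_{v_i}.
\end{align*}
Both $\varphi_W$ and $-\varphi_Q$ are $\Sigma$-piecewise linear functions (bending only along the $\rho_{v_i}$) with matching values at the $v_i$, so $\varphi_W=-\varphi_Q$ globally.  By the formula of \S \ref{PLF}, the intersection $W\cdot \f{D}_{v_i}$ equals the bending parameter of $\varphi_W$ at $\rho_{v_i}$, which is the negative of the bending parameter of $\varphi_Q$ there.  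Since $\varphi_Q$ is a tropical function it is convex (bending parameters non-positive) and by construction bends nontrivially at every $\rho_{v_i}$, whence $W\cdot \f{D}_{v_i}>0$ for each $i$.  Thus $W$ is $\f{D}$-ample, and the identity $Q_W^{\vee}=Q$ follows from the definition of $Q_W^{\vee}$ and Proposition \ref{ConvexCondition}, together with the fact that $\varphi_Q$ is linear between consecutive $\rho_{v_i}$ (so its values elsewhere are forced by those at the $v_i$, using the tropical convexity of $\vartheta_q^{\trop}$ along the bounding rays of each cone).

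For the effectivity statement, $W$ is effective iff $-\varphi_Q(v_i)\geq 0$ for every $i$.  Since $\varphi_Q$ is linear on each two-dimensional cone between consecutive bending rays $\rho_{v_i},\rho_{v_{i+1}}$, non-positivity at the two bounding rays of the cone forces non-positivity throughout.  Hence $W$ effective is equivalent to $\varphi_Q\leq 0$ on all of $V^{\trop}$, which is in turn equivalent to $\langle 0,v\rangle=0\geq\varphi_Q(v)$ for every $v$, namely $0\in Q$.  The main obstacle is producing the Looijenga compactification with precisely the desired boundary components in the degenerate cases where $Q$ has fewer than three vertices (e.g.\ $Q$ a point or a segment); these should be handled by adding auxiliary boundary rays via toric blowups and compensating with a small perturbation of $W$ that does not alter $Q_W^{\vee}$, but the bookkeeping requires some care.
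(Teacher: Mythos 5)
Your proof is essentially correct for the main case and takes a genuinely different route to $\f{D}$-ampleness than the paper's.  The paper writes $Q=\bigcap_{v\in S}\{\langle\cdot,v\rangle\geq -a_v\}$ with $S$ minimal, constructs $W$ from the $a_v$'s, and then invokes Proposition \ref{PointsIntersection}: minimality forces each face $F_v$ to contain at least two integral points, so $W\cdot\f{D}_v\geq d_v\geq 1$.  You bypass the lattice-point count entirely and appeal directly to the identification $W\cdot\f{D}_{v_i}=\text{(bending parameter of $\varphi_W$ at $\rho_{v_i}$)}=-p_{\rho_{v_i},\varphi_Q}>0$ from \S \ref{PLF}, using that $\varphi_Q$ is a tropical (hence convex) function bending nontrivially at exactly the rays you keep.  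This is cleaner in that it avoids a lattice-point argument, though it leans on Theorem \ref{IndeTropical} where the paper only needs the earlier Proposition \ref{PointsIntersection}; both routes yield $\f{D}$-ampleness equivalently in the non-degenerate case.  (Note that your set $\{\rho_{v_1},\ldots,\rho_{v_n}\}$ should be taken as the full set of rays where $\varphi_Q$ bends, not literally ``one per edge of $Q$''; the conclusion is unchanged since all bending parameters of $\varphi_Q$ are strictly negative, and adding rays where $\varphi_Q$ does not bend is what you must \emph{avoid}, as it would produce $W\cdot\f{D}_u=0$.)

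Where you go astray is the degenerate case.  You write that the Looijenga compactification requires $n\geq 3$ and propose ``adding auxiliary boundary rays via toric blowups and compensating with a small perturbation of $W$''.  This does not work: adding a boundary ray $\rho_u$ where $\varphi_Q$ is linear gives $W\cdot\f{D}_u=0$, so $W$ fails to be $\f{D}$-ample, and no small integral perturbation of $W$ can fix this without enlarging $Q_W^{\vee}$.  The correct resolution is to not require $n\geq 3$ at all: Looijenga pairs with $n=1$ (an irreducible nodal anticanonical curve) or $n=2$ are perfectly valid compactifications.  If $\varphi_Q$ bends along only one ray, take $\f{D}$ irreducible; then $W$ is a positive multiple of $\f{D}$, which is $\f{D}$-ample precisely because $U$ was assumed positive (so $D$ supports a $D$-ample divisor, and in the $n=1$ case any strictly effective boundary divisor is $\f{D}$-ample).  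This is exactly the clause the paper uses: ``If $\partial Q$ only contains one integral point, then it contains only one edge, and so the corresponding $\f{D}$ is irreducible, hence ample by our positivity assumption.''
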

\begin{proof}
We can write $Q=\bigcap_{v\in S} \{ \langle \cdot,v\rangle \geq -a_v\}$ with $S$ minimal.  Choose the compactifying divisor to be $\f{D}=\sum \f{D}_{v'}$, where the sum is over all primitive vectors $v'\in S$.  Then $W=\sum_{v\in S} a_v \f{D}_v$, where if $v=|v|v'$ with $v'$ primitive, then $\f{D}_v$ denotes $|v|\f{D}_{v'}$.  Since $S$ is minimal, if $\partial Q$ contains at least two integral points then each $F_v$ must contain at least two integral points, and Proposition \ref{PointsIntersection} implies that $W$ is $\f{D}$-ample.  If $\partial Q$ only contains one integral point, then it contains only one edge, and so the corresponding $\f{D}$ is irreducible, hence ample by our positivity assumption.
\end{proof}

\cite{GHK2} describes the corresponding maps to projective space in the cases where the $\f{D}$-ample divisor $W$ is effective.  
 We do not need $W$ to be effective because we do not require the origin to be in the interior of the stongly convex polytope.  However, $\f{D}$-ample divisors which are not effective are typically not ample on $V$, even if $V$ is generic.

\begin{eg}\label{CubicReflexive}
For $U$ the affine cubic surface from Example \ref{cubicdevelop}, $U^{\trop}$ contains a reflexive polytope $Q$ which includes four integral points---with respect to the coordinates used for the first patch in Example \ref{cubicdevelop}, these points are the origin, $(1,0)$, $(0,1)$, and $(-1,1)$.  The corresponding divisor $\f{D}=\f{D}_1+\f{D}_2+\f{D}_3$ compactifying $V$ is $\f{D}$-ample by Corollary \ref{AmpleConvex}, hence ample in $Z:=V\cup \f{D}$ since it is also strictly effective.  Furthermore $\f{D}^2=3$ by Proposition \ref{PointsIntersection}.  Thus, $(Z,\f{D})$ is a degree $3$ del Pezzo surface, hence a cubic surface.  Since the only information we used about $U$ was that its tropicalization was $U^{\trop}$, this shows that any $U'$ whose tropicalization is $U^{\trop}$ has a cubic surface as (a compactification of) its mirror.  So then, since $U'$ can (up to deformation or contraction of interior $(-2)$-curves) be identified with a fiber of the mirror by \cite{GHK2}, $U'$ is itself an affine cubic surface.  I.e., in this case, $U^{\trop}$ determines $U$ up to deformation.  A similar argument applies whenever $U^{\trop}$ contains a reflexive polytope $Q$.
\end{eg}

\begin{eg}
Suppose $\Newt(f)$ is two-dimensional and is given by $\bigcap_{v\in I} \{\langle \cdot,v\rangle \geq a_v\}$ with $I$ being minimal in the sense that removing some $v$ would result in the intersection being a larger set.   Then $r$ being a vertex (recall the definition of a vertex from Definition \ref{ConvexityDefinitions}) means that it is a point on the boundary where $\{\langle \cdot,v\rangle = a_v\}$ intersects $\{\langle \cdot,v'\rangle = a_{v'}\}$ for some points $v\neq v'$ in $I$.  In case $I$ has only one element, $r$ can be a self-intersection point of $\{\langle \cdot,v\rangle = a_v\}$.  Vertices, however, do not include points that only look like vertices because they are kinks in some broken line (after all, such points no longer look like vertices when viewed with respect to some seed).
\end{eg}

\subsection{Tropical Multiplication and Minkowski Sums}\label{Minkowski}

The theta function multiplication formula in Theorem \ref{thetamult} is quite complicated, potentially involving very large numbers of broken lines.  However, tropicalization allows us to at least see which theta functions might have nonzero coefficients in a product $\vartheta_{q_1} \vartheta_{q_2}$.  If $f=\sum c_q \vartheta_q$ is a regular function, then $c_q=0$ unless $q\in \Newt(f)$, and if $q$ is a vertex 
 of $f$, then $c_q\neq 0$.  We would therefore like to describe $\Newt(\vartheta_{q_1}\vartheta_{q_2})$.

\begin{dfn}
The {\it Minkowski sum} of a collection of strongly convex integral polytopes $\Newt(f_i)$, $i=1,\ldots,s$, is $\Newt(f_1)+\ldots+\Newt(f_n):=\Newt(f_1\cdots f_n)$.

More generally,\footnote{This more general definition for the Minkowski sum was previously used in \cite{FG2} and \cite{Shen}.} the Minkowski sum of a set of strongly convex (not necessarily integral) polytopes $Q_i:=\bigcap_{v\in V^{\trop}(\bb{Z})} \langle \{x\in U^{\trop}|\langle q,v\rangle \geq a_{v,i}\in \bb{R}\}$ is $Q_1+\ldots+Q_s:=\bigcap_{v\in V^{\trop}(\bb{Z})}\left\{x\in U^{\trop} | \langle x,v\rangle \geq a_{v,1}+\ldots + a_{v,s} \right\}$.
\end{dfn}

To see the equivalence of the definitions in the case of integral polytopes, recall that $(fg)^{\trop}=f^{\trop}+g^{\trop}$, so $\Newt(fg) = \{x\in U^{\trop}|\langle x,v\rangle \geq f^{\trop}(v)+g^{\trop}(v) \mbox{ for all } v\in V^{\trop}\}$.

\begin{eg}
It is immediate from the definition and the fact that $\langle \cdot,\cdot\rangle$ respects scaling that,
for any $k \in \bb{Z}_{\geq 0}$ and any strongly convex polytope $Q$, $\sum_{i=1}^k Q = kQ:= \{ ku|u \in Q\}$.  In particular, $\Newt(f^k) = k\Newt(f)$.
\end{eg}
Finding a nice formula for $\Newt(f_1,\cdots,f_s)$ in general is a bit more complicated due to the fact that the monodromy in $U^{\trop}$ prevents addition from being well-defined.  We will assume that the $f_i^{\trop}$'s are all non-positive (i.e., their Newton polytopes contain $0$).

\begin{thm}\label{MinkowskiSum}
Let $Q_1,\ldots,Q_s\subset U^{\trop}$ be strongly convex integral polytopes containing the origin.   Let $\rho_1,\ldots,\rho_m$ be a collection of rays in $U^{\trop}$ not intersecting the vertices of the $Q_k$'s such that no two non-equal vertices from different $Q_k$'s lie in the same component of $U^{\trop}\setminus \bigcup_{i=1}^m \rho_i$.  Then
\begin{align}\label{SumFormula}
Q_1+\ldots +Q_s = \Conv \left(\bigcup_{i=1}^m \left(Q_1+_i \ldots +_i Q_s\right)\right),
\end{align}
where $+_i$ denotes addition on the complement of $\rho_i$, and $Q_1+_i \ldots +_i Q_s:=\{q_1+_i\ldots +_i q_s \in U^{\trop}| q_k \in Q_k\}$.\footnote{If some $q_1+_i\ldots +_i q_s$ is not defined in $U^{\trop}$, we simply do not include it in the set.  Alternatively, we could only include $q_1+_i \ldots +_i q_s$ in the set if there is some convex cone $\sigma$ in the complement of $\rho_i$ containing each $q_k$---addition in a convex cone is always well-defined.}
\end{thm}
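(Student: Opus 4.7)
Using the tropical description
\[
Q_1+\ldots+Q_s \;=\; \bigcap_{v\in V^{\trop}(\bb{Z})}\bigl\{x\in U^{\trop}\bigm| \langle x,v\rangle \geq a_{v,1}+\ldots+a_{v,s}\bigr\},
\]
where $a_{v,k}=\inf_{q\in Q_k}\langle q,v\rangle$, the theorem amounts to comparing this intersection with a strong convex hull of local sums. Under $w_U$ the pairing $\langle\cdot,v\rangle$ equals the tropical theta function $\vartheta_v^{\trop}$, which is positively homogeneous of degree one and has non-positive bending parameters (Lemma~\ref{regconvex}). On any simply connected flat region---in particular on $U^{\trop}\setminus \rho_i$ via its developing map---concavity plus positive homogeneity yield the superadditivity inequality $\vartheta_v^{\trop}(q_1+_i\ldots+_iq_s)\geq \sum_k\vartheta_v^{\trop}(q_k)$ whenever the left-hand side is defined.

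\textbf{Easy inclusion.} For $\Conv\bigl(\bigcup_i(Q_1+_i\cdots+_iQ_s)\bigr)\subseteq Q_1+\ldots+Q_s$, fix any $q_k\in Q_k$ making $q_1+_i\cdots+_iq_s$ defined. Superadditivity combined with $\langle q_k,v\rangle\geq a_{v,k}$ gives $\langle q_1+_i\cdots+_iq_s,v\rangle\geq \sum_k a_{v,k}$ for every $v$, placing the local sum in the Minkowski sum. Strong convexity of the right-hand side, which follows from Proposition~\ref{ConvexCondition} applied to the defining half-space intersection, then promotes this to the strong convex hull.

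\textbf{Hard inclusion and main obstacle.} For the reverse inclusion I would use Theorem~\ref{StrongConvexLines} to reduce to showing that every vertex of $Q_1+\ldots+Q_s$ already lies in some $Q_1+_i\cdots+_iQ_s$. A vertex $x$ uniquely minimizes $\langle\cdot,v\rangle$ for some generic $v$, with value $\sum_k a_{v,k}$; picking minimizing vertices $q_k\in Q_k$ and invoking the principle from the section outline---that vertices of $\Newt(\vartheta_{q_1}\cdots\vartheta_{q_s})$ arise only from straight broken lines---should exhibit $x$ as $q_1+_\sigma\cdots+_\sigma q_s$ for a convex cone $\sigma$ containing all the $q_k$'s. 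The separation hypothesis on the $\rho_i$'s then produces an $i$ with $\sigma\subseteq U^{\trop}\setminus \rho_i$, so $x=q_1+_i\cdots+_iq_s$. The main obstacle is precisely to verify existence of this cone $\sigma$: in general $\vartheta_v^{\trop}$ may bend on many rays---when positive somewhere it is linear only relative to some seed, not with respect to the canonical structure---so superadditivity is strict on most of $U^{\trop}\setminus \rho_i$. It is only by restricting attention to the specific minimizers $q_k$ that the straight-line argument places them inside a single cone of linearity of $\vartheta_v^{\trop}$, making superadditivity an equality; the case analysis (negative tropical theta functions bending on one ray versus those linear with respect to some seed) is where the bulk of the work lies.
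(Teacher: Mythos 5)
Your strategy tracks the paper's fairly closely, and you correctly identify the central obstacle, but you do not resolve it, so the proposal is incomplete.

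For the easy inclusion $\supseteq$, your superadditivity argument (positive $1$-homogeneity of $\val_v=\vartheta_v^{\trop}$ plus non-positive bending parameters, used on a developed chart of $U^{\trop}\setminus\rho_i$) is a sensible variant of the paper's, which instead invokes the multiplication formula of Theorem~\ref{thetamult} together with positivity of broken-line coefficients (Remark~\ref{nocancel}); either works.

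For the hard inclusion $\subseteq$, you reduce (dually to the paper's step of fixing a generic $v$) to showing that for some minimizing $q_k\in Q_k$ there is a single cone $\sigma$ in which $q_1+_\sigma\cdots+_\sigma q_s$ equals the given vertex of the Minkowski sum. You explicitly flag this as "the main obstacle" and leave it unproved. The paper closes the gap with Lemma~\ref{tpos}. First, since $0\in Q_i$ and $\vartheta_0^{\trop}\equiv 0$, one may restrict to the case $\vartheta_q^{\trop}(v)\leq 0$ for all $q\in S$; so your envisaged dichotomy between "bending on one ray" and "seed-linear positive" is mostly moot for the minimizers. Then Lemma~\ref{tpos} gives $\vartheta_q^{\trop}(v)=v\wedge_{\rho}\mu^j(q)$ for a branch-cut ray $\rho$ and integer $j\geq 0$ which can be chosen \emph{uniformly} over the minimizing set $S=\{q_1,\ldots,q_s\}$: each boundary-of-zero-side piece $L_q^{d<0,0}$ crosses $\rho$ the same number $j$ of times before crossing $\rho_v$. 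With this uniform $(\rho,j)$,
\begin{align*}
\sum_{q\in S}\vartheta_q^{\trop}(v) \;=\; \sum_{q\in S} v\wedge_\rho \mu^j(q) \;=\; v\wedge_\rho\mu^j\bigl(q_1+_\rho\cdots+_\rho q_s\bigr) \;=\; \vartheta_{q_1+_\rho\cdots+_\rho q_s}^{\trop}(v),
\end{align*}
which exhibits the vertex as the desired local sum in $U^{\trop}\setminus\rho$. Your informal appeal to the straight-broken-lines principle is the right heuristic, but the explicit $v\wedge_\rho\mu^j(\cdot)$ formula is what actually produces the cone; without it the argument does not close.
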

\begin{proof}

Fix a generic $v\in V^{\trop}$, and choose some generic $f_i  = \sum a_{q,i} \vartheta_{q}$ for each $i$ so that $Q_i=\Newt(f_i)$.  Define $f:=\prod f_i$.  Then \begin{align*}
f^{\trop}(v)=\min_S (\prod_{q\in S} \vartheta_q)^{\trop}(v) = \min_S \sum_{q\in S} \vartheta_{q}^{\trop}(v),\end{align*} where the minimum is over all sets $S$ containing exactly one vertex of each $Q_i$.  

 The containment $\supseteq$ is easy, since by the multiplication formula in Theorem \ref{thetamult}, each vertex of the right-hand side of Equation \ref{SumFormula} corresponds to a theta function showing up in the expansion of some $\prod_{q\in S} \vartheta_q$.

For the other direction, it suffices to consider only $S$'s such that $\vartheta_q^{\trop}(v) \leq 0$ for each $q\in S$.  Fix one such $S$.  For some choice of ray $\rho:=\rho_{S,v}$ and $j\in \bb{Z}_{\geq 0}$, each $L_q^{d<0}$ with $q\in S$ crosses $\rho$ $j$ times before the $L_q^{d<0,0}$ part crosses $\rho_v$.  Then as in the proof of Lemma \ref{tpos}, we have $\vartheta_q^{\trop}(v)= v\wedge_{\rho_{S,v}} \mu^j(q)$ for each $q\in S$.  So for $S=\{q_1,\ldots,q_s\}$, we have 
\begin{align*}
\sum_{q\in S} \vartheta_{q}^{\trop}(v) &= \sum_{q\in S} (v\wedge_{\rho} \mu^j(q)) \\
                                       &= v\wedge_{\rho} \mu^{j}(q_1+_{\rho} \ldots +_{\rho} q_s) = \vartheta_{q_1+_{\rho}\ldots+_{\rho} q_s}^{\trop}(v).
\end{align*}

So now we have $f^{\trop}(v)= \min_{S=\{q_1,\ldots,q_s\}} \vartheta_{q_1+_{\rho_{S,v}}\ldots +_{\rho_{S,v}} q_s}^{\trop}(v)$, where we are now assuming that $\vartheta_{q_i}^{\trop}(v)\leq 0$ for each $q_i\in S$.  Thus, 
\begin{align*}
Q_1+\ldots + Q_S \subseteq \Conv \left(\bigcup_{S,v} \left(Q_1+_{\rho_{S,v}} \ldots +_{\rho_{S,v}} Q_s\right)\right),
\end{align*}
and this is clearly contained in the right-hand side of Equation \ref{SumFormula}, as desired.
\end{proof}

We now give another Minkowski sum formula which is perhaps more elegant and easier to prove directly, but requires more work to apply.  Consider the universal cover $\xi:\wt{U}_0^{\trop} \rar U^{\trop}_0$ with the pulled back integral linear structure.  Consider a collection of points $q_1,\ldots,q_s\in \wt{U}_0^{\trop}$.  If these points live in some convex cone $\sigma \subset \wt{U}_0^{\trop}$, then they can be added together in an obvious and canonical way, and we say that the sum is {\it well-defined}.

\begin{dfn}
Given a collection of subsets $\wt{Q_1},\ldots,\wt{Q_s} \subseteq \wt{U}_0^{\trop}$, define 
\begin{align*}
\wt{Q_1}+\ldots+\wt{Q_s}:=\left\{q_1+\ldots+q_s|q_i\in \wt{Q_i} \mbox{ and $q_1+\ldots+q_s$ is well-defined}\right\}.
\end{align*}
\end{dfn}

\begin{cor}
Let $Q_1,\ldots,Q_s\subset U^{\trop}$ be strongly convex integral polytopes containing the origin.  Let $\wt{Q_i}:=\xi^{-1}(Q_i\setminus \{0\})$.  Then
\begin{align}\label{UniversalSumFormula}
Q_1+\ldots+Q_s = \xi(\wt{Q_1}+\ldots+\wt{Q_s})\cup \{0\}.
\end{align}
\end{cor}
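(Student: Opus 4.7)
This corollary will follow from Theorem~\ref{MinkowskiSum} after rewriting the right-hand side as a union over rays. Given $\wt{q_k}\in\wt{Q_k}$, the sum $\wt{q_1}+\ldots+\wt{q_s}$ is well-defined exactly when the $\wt{q_k}$'s lie in a common convex cone $\sigma\subset\wt{U}_0^{\trop}$; then $\xi|_\sigma$ is a linear embedding of $\sigma$ as a convex cone in $U^{\trop}$ contained in $U^{\trop}\setminus\rho$ for some ray $\rho$, and $\xi(\wt{q_1}+\ldots+\wt{q_s}) = q_1 +_\rho \ldots +_\rho q_s$ with $q_k = \xi(\wt{q_k})$. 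Conversely, any well-defined ray-sum $q_1 +_\rho \ldots +_\rho q_s$ in $U^{\trop}$ forces the $q_k$'s into a common convex cone of $U^{\trop}\setminus\rho$, which lifts to a convex cone on any sheet of $\xi^{-1}(U^{\trop}\setminus\rho)$. This yields the identification
\begin{align*}
\xi(\wt{Q_1}+\ldots+\wt{Q_s})\cup\{0\} \;=\; \bigcup_\rho (Q_1 +_\rho \ldots +_\rho Q_s),
\end{align*}
where $\rho$ now ranges over all rays in $U^{\trop}$.

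It remains to prove that this union equals $Q_1+\ldots+Q_s$. The inclusion $\subseteq$ is immediate from Theorem~\ref{MinkowskiSum}: any given $\rho$ can be extended to a finite collection of rays satisfying the hypotheses of the theorem, and then $Q_1 +_\rho \ldots +_\rho Q_s \subset \Conv(\bigcup_i Q_1 +_i \ldots +_i Q_s) = Q_1+\ldots+Q_s$.

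The reverse inclusion is the main obstacle, because Theorem~\ref{MinkowskiSum} only places a given $p\in Q_1+\ldots+Q_s$ in the strong convex hull of finitely many $+_i$-sums, not in any single one of them. My strategy is to exploit the freedom to choose $\rho$: for $p\neq 0$, pick any ray $\rho$ that avoids $p$, so that $p$ lies in a sheet $S_0$ of the universal cover over $U^{\trop}\setminus\rho$, and lift $p$ to $\wt{p}\in S_0$. In the linear chart $S_0$, the decomposition $\wt{p} = \wt{q_1}+\ldots+\wt{q_s}$ reduces to an ordinary Minkowski decomposition problem, solvable by classical convex geometry once we know that the relevant portions of each $Q_k$ lift to convex subsets containing enough room to realize $\wt{p}$. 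The needed realization follows by combining three ingredients: (i) the extraction from the proof of Theorem~\ref{MinkowskiSum} that for each $v\in V^{\trop}$ the support value $f^{\trop}(v)$ is attained by some $\vartheta_{q_1 +_{\rho(v)} \ldots +_{\rho(v)} q_s}^{\trop}(v)$, giving a ray-sum realization at each boundary point of $Q_1+\ldots+Q_s$; (ii) the strong convexity of each $Q_k$ together with $0\in Q_k$, which lets one interpolate summand choices along segments inside $Q_k$ connecting the attaining face to $0$, handling portions of $Q_k$ that don't fit in the chart $U^{\trop}\setminus\rho$; and (iii) sliding to interior points $p$ by scaling a boundary decomposition toward the all-zeros summand in the chart. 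The most delicate step is (ii), where care is needed to ensure that all chosen summands remain in a common convex cone when $Q_k$ wraps around the origin.
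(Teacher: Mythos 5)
Your observation that Theorem~\ref{MinkowskiSum} a priori only places a point $p \in Q_1+\ldots+Q_s$ in the \emph{strong convex hull} of the ray-sums, rather than exhibiting it directly as a ray-sum, is a genuine and correct subtlety; the paper's own two-sentence proof elides it. Your $\supseteq$ direction is fine (you deduce it from Theorem~\ref{MinkowskiSum} rather than re-running the theta-multiplication argument as the paper does, but both work), and the rewriting of the right-hand side as $\bigcup_\rho (Q_1+_\rho\ldots+_\rho Q_s)$ is exactly the right translation. But your $\subseteq$ direction is a plan, not a proof: step (ii), the ``interpolation along segments inside $Q_k$ connecting the attaining face to $0$,'' is precisely where the content lies and you leave it as ``delicate,'' and step (iii) presupposes that boundary points of $Q_1+\ldots+Q_s$ have already been realized as ray-sums, which is not yet established. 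As written, the argument would need to rule out the possibility that, for some boundary direction $v$, the faces $F_k^v \subset Q_k$ minimizing $\langle\,\cdot\,,v\rangle$ are segments of the parallel fibers $L_v^{d_k,0}$ that wrap so far around the origin that no single choice of a summand from each $F_k^v$ adding to a given point of the face $F^v$ of $Q_1+\ldots+Q_s$ lies in one convex cone. That this always works (roughly: the faces are segments of parallel lines, and for each point of $F^v$ one can choose the summands ``at the same angular parameter,'' so they lie on a common ray from the origin) is plausible and likely what the paper has in mind, but it is the step that must actually be carried out.

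To close the gap, the cleanest route following the paper's own template is probably: (1) show that $X := \xi(\wt{Q}_1+\ldots+\wt{Q}_s)\cup\{0\}$ is star-shaped from the origin, which follows because each $Q_k$ is star-shaped (being strongly convex with $0\in Q_k$) and scaling by $t\in(0,1]$ is available inside any convex cone of the universal cover; (2) show that $X$ contains the full boundary $\partial(Q_1+\ldots+Q_s)$ by realizing each boundary face $F^v$ as a chartwise sum of the $F_k^v$'s, which upgrades the vertex-level ray-sum realization extracted in the proof of Theorem~\ref{MinkowskiSum} to an edge-level one. Together (1) and (2) give $X \supseteq Q_1+\ldots+Q_s$ without ever invoking $\Conv$. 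Since you already have $\supseteq$ in the other direction, this would finish the corollary. Until (2) is actually proved, though, the proposal has a real gap at the same spot the paper glosses over.
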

\begin{proof}
The containment $\supseteq$ follows easily as in the proof of Theorem \ref{MinkowskiSum}.  For the other containment, it is easy to see that every sum from the right-hand side of Equation \ref{SumFormula} also comes from a sum in the right-hand side of Equation \ref{UniversalSumFormula}.
\end{proof}

We say that $U$ is of {\it finite type} if no lines in $U^{\trop}$ wrap---equivalently, if the cluster complex includes all of $U^{\trop}$.  See \cite{Man2} for several other equivalent characterizations, including the corresponding cluster structures being finite type in the usual sense for cluster varieties.  The following Minkowski sum formula was proven for cluster varieties of type $A_n$ in \cite{Shen}.

\begin{cor}
Suppose that $U$ is of finite type.  Let $Q_1,\ldots,Q_s$ be strongly convex integral polytopes containing the origin.  Then
\begin{align}\label{SeedSum}
Q_1+\ldots+Q_k =\bigcup_E Q_1+_E+\ldots+_E Q_s,
\end{align}
where the union is over all seeds $E$, and $+_E$ denotes addition in $U^{\trop}$ with respect to the vector-space structure induced by the seed $E$.
\end{cor}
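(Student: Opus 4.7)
The plan is to derive this from Theorem \ref{MinkowskiSum} by establishing both inclusions. For $\supseteq$, fix a seed $E$ and take $q_i \in Q_i$. By Proposition \ref{ConvexCondition},
\[
Q_1 + \cdots + Q_s = \bigcap_{v \in V^{\trop}} \{x : \langle x, v\rangle \geq \textstyle\sum_i a_{v,i}\}, \qquad a_{v,i} := \inf_{q\in Q_i} \langle q, v\rangle.
\]
Each pairing $\langle \cdot, v\rangle$ is a tropical function on $U^{\trop}$ by Theorem \ref{IndeTropical}, hence convex with respect to every seed, including $E$. Combined with positive homogeneity under scaling, this makes $\langle \cdot, v\rangle$ superadditive under $+_E$ (recalling that ``convex'' in the paper's sign convention is concave in the usual sense), so
\[
\langle q_1 +_E \cdots +_E q_s,\, v\rangle \;\geq\; \textstyle\sum_i \langle q_i, v\rangle \;\geq\; \textstyle\sum_i a_{v,i},
\]
placing $q_1 +_E \cdots +_E q_s$ in $Q_1 + \cdots + Q_s$.

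For the reverse inclusion I would begin from Theorem \ref{MinkowskiSum}:
\[
Q_1 + \cdots + Q_s = \Conv\Bigl(\bigcup_\sigma \{q_1 +_\sigma \cdots +_\sigma q_s : q_i \in Q_i\cap\sigma\}\Bigr).
\]
The finite-type hypothesis means no line in $U^{\trop}$ wraps, so by Lemma \ref{model} every convex cone $\sigma$ is contained in some seed's half-plane $\sigma_E$; on $\sigma_E$, the canonical integral linear structure of $U^{\trop}$ coincides with the $E$-vector-space structure (the toric model identifies them), so $+_\sigma$ and $+_E$ agree. Each cone sum therefore equals a seed sum, yielding $\bigcup_\sigma(\cdots) \subseteq \bigcup_E P_E$, where $P_E := Q_1 +_E \cdots +_E Q_s$.

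The main obstacle is then dropping the outer $\Conv$, i.e., proving that $\bigcup_E P_E$ is already strongly convex. I would argue this via Theorem \ref{StrongConvexLines}: given $x \in Q_1 + \cdots + Q_s$, the finite-type hypothesis places $x$ in some seed half-plane $\sigma_E$, and I would show directly that $x \in P_E$ by verifying that $x^E \in Q_1^E + \cdots + Q_s^E$ in the $E$-vector-space structure. This reduces to checking the usual linear-functional inequalities cutting out the ordinary Minkowski sum in $N_{\bb{R}}$: each relevant $E$-linear functional should be realizable as the restriction to $\sigma_E$ of some global tropical pairing $\langle \cdot, v\rangle$ (using that tropical functions on $U^{\trop}$ include all minima of linear functionals on seed chambers by Theorem \ref{IndeTropical}), so the inequality $\langle x, v\rangle \geq \sum_i a_{v,i}$ translates to the required $E$-linear inequality for membership in $P_E$. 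The delicate point is ensuring this $v$-to-functional correspondence is surjective enough on each seed half-plane; I expect this to follow from the explicit descriptions in Section \ref{trop} of the tropical theta functions that are positive on a seed chamber, together with Corollary \ref{wLinear}.
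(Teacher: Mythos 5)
Your proof follows the paper's skeleton for the $\subseteq$ direction (reduce to Theorem \ref{MinkowskiSum}, then convert each cone sum to a seed sum via Lemma \ref{model}) but takes a genuinely different route for $\supseteq$. The paper derives $\supseteq$ ``as in the proof of Theorem \ref{MinkowskiSum},'' i.e., from the multiplication formula of Theorem \ref{thetamult}: vertices of $P_E$ arise from products of theta functions and hence lie in $\Newt(f_1\cdots f_s)$. You instead observe that each $\langle\cdot,v\rangle$, being tropical and hence convex with respect to every seed, is superadditive under $+_E$, giving $\langle q_1+_E\cdots+_E q_s,v\rangle\geq\sum_i\langle q_i,v\rangle\geq\sum_i a_{v,i}$ directly. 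This is a cleaner argument and applies to every point of $P_E$, not just the vertices.

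For $\subseteq$, you correctly flag a subtlety that the paper's terse proof glosses over: Lemma \ref{model} shows the \emph{inner union} of Equation \ref{SumFormula} lies in $\bigcup_E P_E$, but one still has to discard the outer $\Conv$, i.e., show $\bigcup_E P_E$ is strongly convex. However, your proposed fix has a directional problem. You suggest placing $x\in Q_1+\cdots+Q_s$ in a seed chamber $\sigma_E$ and deducing $x^E\in Q_1^E+\cdots+Q_s^E$ from the constraints $\langle x,v\rangle\geq a_v$. But $\langle\cdot,v\rangle$ in $E$-coordinates is a \emph{minimum} of linear pieces $\ell_{v,j}$, so for any single piece $\ell_{v,j}$ one has $a_v=\sum_i\min_{q\in Q_i}\langle q,v\rangle\leq\sum_i\min_{q\in Q_i}\ell_{v,j}(q^E)$, and hence the constraint $\langle x,v\rangle\geq a_v$ is \emph{weaker} than the $E$-linear inequality $\ell_{v,j}(x^E)\geq\sum_i\min_{q\in Q_i}\ell_{v,j}(q^E)$ cutting out $P_E$. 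To salvage this one must argue that the particular supporting functionals of $P_E$ that are relevant near $\rho_x$ arise from $v$'s whose $\langle\cdot,v\rangle$ is \emph{globally} $E$-linear (equivalently, whose tropical theta function is positive on the $E$-chamber, where by \S \ref{ValFun} and \S \ref{clustercomplex} the associated broken-line fiber is $E$-straight), in which case $a_v$ and the $E$-linear bound coincide; as written, the surjectivity you hope for is in the wrong direction.
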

\begin{proof}
The containment $\supseteq$ is easy as in the proof of Theorem \ref{MinkowskiSum}.  For the reverse, note that for any set $S=\{q_1,\ldots,q_s\}$ contributing the right-hand side of Equation \ref{SumFormula}, we can say that there is a strictly convex cone $\sigma\supset S$ in which the addition is preformed.  By Lemma \ref{model}, there is some seed $E_{\sigma}$ in which all non-toric blowups correspond to rays in the complement of $\sigma$, so addition in $\sigma$ is the same as addition with respect to the seed $E_{\sigma}$.  Thus, the right-hand side of Equation \ref{SumFormula} is contained in the right-hand side of Equation \ref{SeedSum}, as desired.
\end{proof}

\section{Integral Formulas} \label{period}

For this section, let $\kk=\bb{C}$.  Recall that since $V$ is log Calabi-Yau like $U$, it has a holomorphic volume form $\Omega$ with log poles along the boundary $\f{D}$ of any maximal-boundary compactification $(Z,\f{D})$.  \cite{GHK2} defines a class $\gamma \in H_2(V,\bb{Z})$ as follows.  Take any nonsingular $(Z,\f{D}=\f{D}_1+\ldots+\f{D}_n)$ as above.  Then $\gamma$ is the class of a torus $0<|z_i| = |z_{i+1}| = \epsilon \ll 1$, where $z_i$ and $z_{i+1}$ are local coordinates for $Z$ in a neighborhood of $p=\f{D}_i\cap \f{D}_{i+1}$ such that $\f{D}_i$ is locally given by $z_i=0$.

\begin{lem}\label{gammaClass}
The class $\gamma$ is canonical (it does not depend on our choice of compactification or vertex $p$).  This remains true even if we remove from $Z$ a curve $C$ which intersects only one boundary divisor.
\end{lem}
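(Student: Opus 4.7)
The plan is to establish canonicity in three steps: independence of the chosen node within a fixed compactification; independence of the compactification itself; and finally, the extension to $V\setminus C$. Throughout I will freely take toric blowups, which change neither $V$ nor the candidate class.

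For Step 1, fix $(Z,\f{D})$ and consider two adjacent nodes $p=\f{D}_j\cap \f{D}_{j+1}$ and $p'=\f{D}_{j+1}\cap \f{D}_{j+2}$. I would construct an oriented $3$-chain in $V$ with boundary $\gamma_p-\gamma_{p'}$. Concretely, work in a tubular neighborhood $N$ of $\f{D}_{j+1}$: the locus $\{|z_{j+1}|=\epsilon\}\subset N$ is an $S^1$-bundle over $\f{D}_{j+1}\cong \bb{P}^1$. Pull this bundle back over a closed annulus in $\f{D}_{j+1}$ whose two boundary circles are small loops of radius $\epsilon$ around $p$ and $p'$; with appropriate orientation conventions, the resulting oriented $3$-manifold lies in $V$ and has boundary exactly $\gamma_p-\gamma_{p'}$. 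Iterating around the boundary cycle yields independence of the node.

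For Step 2, any two Looijenga-pair compactifications of $V$ are dominated by a common toric blowup (cf. \S\ref{IntroSetup} and \cite{GHK1}), so it suffices to check invariance under a single toric blowup $\pi:\wt{Z}\rar Z$ at a node $p$. Such a blowup leaves every other node of $\f{D}$ untouched, and the Clifford torus at any such node is the same $2$-cycle in $V$ whether computed in $Z$ or $\wt{Z}$; by Step 1, this common cycle represents $\gamma$ in both compactifications.

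For the final assertion, suppose $C$ meets only $\f{D}_i$, necessarily at non-nodal points. Then $C$ is disjoint from every node of $\f{D}$, and for $\epsilon$ small enough the Clifford torus at any node lies in $V\setminus C$, giving a cycle there. To see the resulting class is independent of the node, repeat Step 1 using only $3$-chains supported in tubular neighborhoods of components $\f{D}_j$ with $j\neq i$: any such neighborhood can be shrunk to be disjoint from $C$, since $\f{D}_j\cap C=\emptyset$. After a toric blowup to ensure $n\geq 3$, any two nodes can be chained through such components by traversing the boundary cycle along the arc avoiding $\f{D}_i$. The main technical nuisance is orientation bookkeeping in Step 1, which I expect to pin down using the normalization coming from $\Omega$; otherwise the argument is essentially a topological cobordism.
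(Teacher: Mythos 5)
Your proof is correct, but it takes a genuinely different route from the paper's. Where the paper works with the toric model $(Z,\f{D})\rar(\?{Z},\?{\f{D}})$ and appeals to the standard moment map $\?{Z}\rar Q\subset M_{\bb R}$ --- all moment-map fibers over the interior of $Q$ are homologous, and $\gamma_p$ is such a fiber near the vertex $\?{p}$, so independence of $p$ (and of $p$ modulo removal of $C$, since the image of $\?{C}$ meets only one edge of $Q$) is immediate --- you instead build the cobordism by hand: the $S^1$-bundle $\{|z_{j+1}|=\epsilon\}$ over an annulus in $\f{D}_{j+1}$ is an explicit oriented $3$-chain in $V$ with boundary $\gamma_p-\gamma_{p'}$ for the two nodes adjacent to $\f{D}_{j+1}$. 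Iterating around the cycle of boundary components, and for the $C$-removal case shrinking the tubular neighborhoods of the $\f{D}_j$ with $j\neq i$ so that they miss the compact curve $C$ and traversing the boundary cycle along the arc avoiding $\f{D}_i$ (after a toric blowup to guarantee $n\geq 3$), gives the result. Your argument is more elementary and self-contained --- it avoids invoking the toric model, the moment map, and the identification of the seed torus inside $V$ --- whereas the paper's moment-map proof is slicker and handles all vertices at once; your chain-of-adjacent-nodes approach requires the extra observation that one can route around $\f{D}_i$, which the moment-map picture packages automatically. The one point you flag as a nuisance --- orientation of the $3$-chain --- is genuinely the only thing to pin down, and your suggestion of normalizing via $\Omega$ (equivalently, via the cyclic ordering of the components of $\f{D}$) is the right way to do it. The step-2 observation that a toric blowup at one node leaves the Clifford tori at the remaining nodes unchanged as $2$-cycles in $V$ is correct and matches what the paper implicitly uses.
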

\begin{proof}
Suppose we have two different choices of compactification of $V$.  Then we apply the following argument to a common toric blowup of the two:

Observe that each toric model $(Z,\f{D})\rar (\?{Z},\?{\f{D}})$ gives us a torus $T=(\bb{C}^*)^2$ in $V$, equal to the complement of the exceptional divisors in $V$ (in the language of cluster varieties, this is the corresponding seed torus).  In fact, the complement of the images of the exceptional divisors in $\?{Z}$ can be identified with a subvariety of $Z$.  It is well-known that there is a ``moment map'' from $\?{Z}$ to a polygon $Q$ in $M_{\bb{R}}$ with $\f{D}$ mapping to the boundary of the polygon and with fibers over the $k$-dimensional faces being $k$-dimensional tori in the $k$-strata of $\?{Z}$.  So each $p_i=\f{D}_i\cap \f{D}_{i+1}$ maps to a vertex $\?{p_i}$ of $Q$.  $z_i$ and $z_{i+1}$ can be chosen so that $\gamma$ is a fiber of the moment map over a point very close to $p$.  Since all the fibers are homologous, the first claim follows from taking fibers near different vertices.

Suppose we remove a curve $C$ intersecting, say, $\f{D}_i$.  Let $\?{C}$ denote the closure in $\?{Z}$ of $C\cap T$.  Then the image of $\?{C}$ under the moment map only intersects the edge $F_i$ which is the image of $\f{D}_i$.  So even on the complement of the image of $\?{C}$, there is a path in $Q$ between any two of $Q$'s vertices, showing that the claim still holds.
\end{proof}

See \cite{GHK2} for a slightly different proof of the first statement of the lemma. 

\begin{rmk} Conjecturally, $\gamma$ is the homology class of a fiber of an SYZ fibration of $V$ over $V^{\trop}$.  At the very least, if we factor the singularity in $V^{\trop}$ into focus-focus singularities which are still contained in some convex polytope $Q$, then $V$ admits a Largangian fibration over the interior of $Q$. 
 See \cite{Sym} for the details.  This fibration can be used for an alternative proof of the lemma. 
\end{rmk}

Assume $\Omega$ is normailized\footnote{Recall that if we take the cyclic ordering of $D=D_1+\ldots+D_n$ as part of our data, then we can use this to orient $U^{\trop}$, and $V^{\trop}$ gets the opposite orientation.  This can be used to orient $\gamma$ (by ordering $z_i$ and $z_{i+1}$).  Alternatively, we can take the sign of $\Omega$ as part of our data and say that $\gamma$ is oriented to make $\int_{\gamma}\Omega>0$.} so that $\int_{\gamma} \Omega = 1$.  Following \cite{GHK2}, we define a function $\Tr:\s{O}_V(V)\rar \bb{C}$,
\begin{align*}
    Tr(f) := \int_{\gamma} f \Omega.
\end{align*}
\cite{GHK2} shows that $\Tr(f)$ is equal to the coefficient of $\vartheta_0=1$ in the unique expression of $f$ as a linear combination of theta functions.  We will now describe how to modify this to give the coefficients of the other theta functions.

For $q \in U^{\trop}(\bb{Z})$, define $\Tr_q:\s{O}_V(V)_{\vartheta_q} \rar \bb{C}$ by
\begin{align*}
    \Tr_q(f) := \int_{\gamma} f \vartheta_{q}^{-1} \Omega
\end{align*}
\begin{lem}
$\Tr_q$ is well-defined. 
\end{lem}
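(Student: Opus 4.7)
The plan is to show three things: the integrand $f\vartheta_q^{-1}\Omega$ is a well-defined holomorphic $2$-form off a finite union of interior $(-1)$-curves in $V$; a representative of the class $\gamma$ can be chosen in the complement of these curves; and the integral depends only on the homology class of such a representative (and not on the choice of representative of $f$ in the localization).

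First, write any $f\in \s{O}_V(V)_{\vartheta_q}$ as $g\vartheta_q^{-n}$ with $g\in \s{O}_V(V)$ and $n\in \bb{Z}_{\geq 0}$, so that $f\vartheta_q^{-1}=g\vartheta_q^{-(n+1)}$ is a rational function on $V$ whose value is manifestly independent of the choice of representative $(g,n)$. Its polar locus on $V$ is contained in the zero divisor $Z(\vartheta_q)\subset V$. By the analysis of bending parameters at positive values in \S\ref{positive}, together with the remark there that all zeroes of $\vartheta_q$ lie along $(-1)$-curves each meeting a single boundary divisor, $Z(\vartheta_q)$ is a finite union $E_1\cup\cdots\cup E_m$ of such interior $(-1)$-curves. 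Since $\Omega$ is holomorphic on $V$, the form $f\vartheta_q^{-1}\Omega$ is holomorphic, and hence closed as a top-degree form on a complex surface, on $V\setminus\bigcup_i E_i$.

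Next, I would extend Lemma \ref{gammaClass} to establish that $\gamma$ has a canonical class in $H_2(V\setminus\bigcup_i E_i,\bb{Z})$. Running the moment-map argument of that lemma on a sufficiently fine toric blowup of a nonsingular compactification $(Z,\f{D})$, each $E_i$ has image in the moment polygon contained in the single edge corresponding to the unique boundary divisor it meets. The same connectivity argument then furnishes, for any two vertices of the polygon, a path through the polygon's interior whose endpoints lie in small neighborhoods of those vertices but not in the finite union of these edge-images; lifting this path gives a $3$-chain in $V\setminus\bigcup_i E_i$ showing that the small torus fibers of the moment map near different vertices are all homologous there. Stokes's theorem applied to the closed form $f\vartheta_q^{-1}\Omega$ on $V\setminus\bigcup_i E_i$ then shows that $\int_\gamma f\vartheta_q^{-1}\Omega$ depends only on this canonical class. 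Convergence is immediate since the integrand is smooth on the compact torus representing $\gamma$.

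The main obstacle will be the extension of the moment-polygon argument to handle the finite union of $(-1)$-curves simultaneously rather than a single one as in the proof of Lemma \ref{gammaClass}. However, since each $E_i$ meets only one boundary divisor, and since we are always free to refine by further toric blowups so that the polygon has enough edges for the path-connectivity argument to go through cleanly, this appears to be a routine modification of the original proof.
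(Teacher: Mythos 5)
Your overall strategy is sound, and it is actually a genuinely different route from what the paper does. The paper avoids any extension of Lemma \ref{gammaClass} by splitting into cases: if $\vartheta_q^{\trop}\leq 0$ everywhere, the zero set $Z(\vartheta_q)$ meets only a \emph{single} boundary divisor (this comes from the description of negative tropical theta functions in \S\ref{negbend}, combined with the observation in Lemma \ref{regconvex} that the bending parameter at $\rho_v$ equals $-\f{D}_v\cdot V(f)$), so the original statement of Lemma \ref{gammaClass} applies directly; if $\vartheta_q^{\trop}>0$ somewhere, $q$ lies in the cluster complex, so $\vartheta_q$ restricts to a nowhere-vanishing monomial on an open seed torus $T\subset V$, and one invokes the fact---implicit in the \emph{proof} of Lemma \ref{gammaClass}---that $\gamma$ is already canonical in $H_2(T,\bb{Z})$. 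Your unified approach instead removes the zero locus of $\vartheta_q$ in all cases and extends the moment-polygon argument to the removal of several curves. This is conceptually clean, and the extension you sketch does appear to go through when each excised curve meets at most one boundary divisor and one is free to refine by toric blowups.

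There is, however, a real mis-step in how you justify the structure of $Z(\vartheta_q)$. You cite the remark in \S\ref{positive} that all zeroes of $\vartheta_q$ lie along interior $(-1)$-curves, each meeting exactly one boundary divisor. But the proposition in \S\ref{positive} and its proof are stated under the hypothesis that $\vartheta_q^{\trop}$ is positive somewhere, i.e.\ that $q$ lies in the cluster complex; the paper does not claim, and it need not be true, that $Z(\vartheta_q)$ is a union of $(-1)$-curves when $\vartheta_q^{\trop}\leq 0$ everywhere. What \emph{is} available in that case is the weaker but sufficient fact that $Z(\vartheta_q)$ meets only one boundary divisor (so every component of $Z(\vartheta_q)$ meets at most one). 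Your argument survives with this weaker input, but the citation as written overreaches, and a referee would flag it. You should either split into the two cases to import the correct fact from each part of the paper, or explicitly state and justify the weaker property you actually need (each component of $Z(\vartheta_q)$ meets at most one boundary divisor). Relatedly, you describe the extension of Lemma \ref{gammaClass} to several curves as ``a routine modification'' without carrying it out; this is plausible but worth a sentence establishing that, after refinement, the complement of the edge-images of the $\?{E}_i$'s in the moment polygon's interior still contains arcs linking small neighborhoods of all vertices.
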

\begin{proof}
Since $\vartheta_q^{-1}$ is only regular on $V\setminus Z(\vartheta_q)$, it is not immediately clear from Stokes' theorem that this definition is independent of our choice of $p$ for defining $\gamma$.  If $\vartheta_q^{\trop} \leq 0$ everywhere, then our description of tropical theta functions shows that the zero set $V(\vartheta_q)$ intersects only one boundary divisor, so the well-definedness follows from Lemma \ref{gammaClass}.  If $\vartheta_q^{\trop}$ is positive somewhere, then $q$ is in the cluster complex, and so there is some open torus $T$ in $V$ on which $\vartheta_q$ is a monomial and therefore has no zeroes.  The claim then follows from Lemma \ref{gammaClass} applied to $T$.
\end{proof}

\begin{lem}\label{InnerProduct}
Let $q, r\in U^{\trop}(\bb{Z})$, and suppose that $r\notin \Conv(q)\setminus \{q\}$.  Then $\Tr_r(\vartheta_q) = \delta_{q,r}$.
\end{lem}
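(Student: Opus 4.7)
The case $q=r$ is immediate: $\vartheta_q\vartheta_r^{-1}=1$, so $\Tr_r(\vartheta_q)=\int_\gamma \Omega=1$ by the normalization of $\Omega$.

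For the remaining case, $q\neq r$ with $r\notin \Conv(q)$, the plan is to compute $\Tr_r(\vartheta_q)=\int_\gamma \omega$ (where $\omega:=\vartheta_q\vartheta_r^{-1}\Omega$) as a residue at a vertex of the boundary and show that this residue vanishes. By the definition of the strong convex hull, $r\notin \Conv(q)$ means there exists $v\in V^{\trop}$ with $\langle r,v\rangle<\langle q,v\rangle$. After scaling by a positive real number and perturbing slightly if necessary (using the fact that $\langle q,\cdot\rangle-\langle r,\cdot\rangle$ is a continuous piecewise linear function), I can take $v\in V^{\trop}(\bb{Z})$ primitive with $\vartheta_q^{\trop}(v)-\vartheta_r^{\trop}(v)\geq 1$. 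Then I choose a nonsingular compactification $(Z,\f{D})$ of $V$ such that $v$ corresponds to a boundary divisor $\f{D}_v$, with neighboring boundary divisors $\f{D}_{v^{\pm}}$ meeting $\f{D}_v$ at vertices $p^{\pm}=\f{D}_v\cap \f{D}_{v^{\pm}}$; further toric blowups allow me to vary $v^{\pm}$ freely.

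Using Lemma \ref{gammaClass}, I realize $\gamma$ as a small torus around $p^+$. The form $\omega$ is meromorphic on $Z$, holomorphic on $V\setminus Z(\vartheta_r)$, and has at most log poles along $\f{D}$ away from $Z(\vartheta_r)$. As in the proof of the well-definedness of $\Tr_r$, either $Z(\vartheta_r)$ meets only one boundary divisor (when $\vartheta_r^{\trop}\leq 0$ everywhere), or $r$ lies in the cluster complex and $\vartheta_r$ is a nonvanishing monomial on some seed torus, so in either case the last part of Lemma \ref{gammaClass} applies. Combining this with the freedom of toric blowups for $v^+$, I arrange that $p^+\notin Z(\vartheta_r)$.

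Choose local coordinates $(z_1,z_2)$ at $p^+$ with $\f{D}_v=\{z_1=0\}$ and $\f{D}_{v^+}=\{z_2=0\}$. Since $\vartheta_q$ is regular on $V$ and $\vartheta_r$ is nonvanishing at $p^+$, while $\Omega$ is a log $2$-form with a simple pole along $\f{D}$, I can write
\[
\omega = u(z_1,z_2)\, z_1^{a-1} z_2^{b-1}\, dz_1\wedge dz_2,
\]
where $u$ is holomorphic at $p^+$, $a=\vartheta_q^{\trop}(v)-\vartheta_r^{\trop}(v)$, and $b=\vartheta_q^{\trop}(v^+)-\vartheta_r^{\trop}(v^+)$. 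Since $a\geq 1$, the exponent $a-1$ of $z_1$ is nonnegative, so the integrand has no $z_1^{-1}$ term in its Laurent expansion at $p^+$; hence $\Tr_r(\vartheta_q)=\int_\gamma \omega=(2\pi i)^2\mathrm{Res}_{p^+}(\omega)=0$, as desired.

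The main obstacle is the geometric setup: ensuring that $p^+$ can be arranged to avoid $Z(\vartheta_r)$, which is where Lemma \ref{gammaClass} and the flexibility to take toric blowups are essential. Once that is in place, the residue calculation is routine.
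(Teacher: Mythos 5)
Your proof is correct and follows essentially the same strategy as the paper's: both identify a primitive $v\in V^{\trop}(\bb{Z})$ with $\val_{\f{D}_v}(\vartheta_q\vartheta_r^{-1})>0$ (forced by $r\notin\Conv(q)$) and then kill the integral via the residue theorem along $\f{D}_v$. The paper phrases the final step as a Poincar\'e residue computation along the divisor $\f{D}_v$ (the residue $1$-form vanishes because $\omega$ is generically regular there, then integrate over a loop $\gamma'\subset\f{D}_v$), whereas you compute an iterated point residue at a vertex $p^+$ after explicitly arranging $p^+\notin Z(\vartheta_r)$; these are just two formulations of the same calculation, and your extra care in justifying the choice of integral $v$ and in locating $p^+$ away from the zero set makes the ``generically regular'' step of the paper explicit rather than changing the argument.
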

\begin{proof}
If $r=q$, then the claim is obvious.  Otherwise, $r\notin \Conv(q)$, so there is some primitive $v\in V^{\trop}(\bb{Z})$ such that $\langle r,v\rangle < \langle q,v\rangle$.  Then $\val_{\f{D}_v}(\vartheta_q\vartheta_r^{-1}) >0$.  Since $\Omega$ only has a simple pole along $\f{D}_v$, $\vartheta_q \vartheta_r^{-1} \Omega$ is generically regular along $\f{D}_v$.  If we view $\gamma$ as the class of an $S^1$ bundle over a loop $\gamma'$ in $\f{D}_v$, then the claim follows from the Residue Theorem:
\begin{align*}
\int_{\gamma} \vartheta_q \vartheta_r^{-1} \Omega = \int_{\gamma'} \Residue_{\f{D}_v}\left(\vartheta_q \vartheta_r^{-1} \Omega\right) = \int_{\gamma'} 0 = 0.
\end{align*}
\end{proof}

\begin{thm}\label{Fourier}
Let $f=\sum_q c_{q} \vartheta_q$ be a function on $V$.  Suppose that at least one of the following hold:
\begin{itemize}[noitemsep] 
\item $r$ is not in the convex hull of any point $q\in \Newt(f)\cap U^{\trop}(\bb{Z})$ with $q\neq r$.  In particular, this includes cases where
 $r$ is a vertex of $\Newt(f)$, as well as cases where $r$ is in the complement of $\Newt(f)$.
\item $r\in U^{\trop}(\bb{Z})$ is in the cluster complex (i.e., $r=0$ or $\langle r,v\rangle >0$ for some $v$).
\end{itemize}
 Then $c_{r}=\Tr_r(f)$.
In particular, if every point of $\Newt(f)\cap U^{\trop}(\bb{Z})$ which is not a vertex is in the cluster complex, then
\begin{align}\label{FSeries}
f=\sum_{r\in U^{\trop}(\bb{Z})} \Tr_r (f) \vartheta_r.
\end{align}
\end{thm}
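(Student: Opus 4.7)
The plan is to prove $c_r=\Tr_r(f)$ for each individual $r\in U^{\trop}(\bb{Z})$; the series expansion (\ref{FSeriesIntro}) then follows by summing over $r$, since under the stated hypothesis every $r\in U^{\trop}(\bb{Z})$ falls under one of the two bulleted cases (vertices of $\Newt(f)$ and points outside $\Newt(f)$ by the first, points of $\Newt(f)$ in the cluster complex by the second). In each case, $\bb{C}$-linearity of the integral defining $\Tr_r$ reduces matters to understanding $\Tr_r(\vartheta_q)$ for the finitely many $q$ with $c_q\neq 0$.

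The first bullet follows essentially directly from Lemma \ref{InnerProduct}, which gives $\Tr_r(\vartheta_q)=\delta_{q,r}$ whenever $r\notin\Conv(q)\setminus\{q\}$; summing over $q\in\Newt(f)$ yields $\Tr_r(f)=c_r$. The two sub-cases singled out in the statement fit here: if $r$ is a vertex of $\Newt(f)$ then by definition $r\notin\Conv(\Newt(f)\setminus\{r\})$, so by the monotonicity of $\Conv$ in Lemma \ref{PolarLemmas} we get $r\notin\Conv(q)$ for every other $q\in\Newt(f)$; and if $r\notin\Newt(f)=\Conv(\Newt(f))$, the same monotonicity forces $r\notin\Conv(q)$ for each $q\in\Newt(f)$.

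For the second bullet, I would reduce the integral to a toric Fourier-coefficient computation. Since $r$ is in the cluster complex, \S\ref{clustercomplex} furnishes a seed and a top-dimensional non-singular cone $\sigma_r$ (with no scattering rays in its interior) on whose torus chart $\s{V}_{\sigma_r,\sigma_r}\subset V$ the theta function $\vartheta_r$ is the single monomial $z^{\wt\varphi(r)}$. Using Lemma \ref{gammaClass}, I would represent $\gamma$ by a standard product of small circles inside this chart; there $\Omega$ coincides with the standard toric volume form (pinned down by $\int_\gamma\Omega=1$), so $\Tr_r(f)=\int_\gamma f\vartheta_r^{-1}\Omega$ computes the ordinary Laurent coefficient $[z^{\wt\varphi(r)}](f|_{\s{V}_{\sigma_r,\sigma_r}})$. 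Expanding $f=\sum_q c_q\vartheta_q$ then reduces the equality $c_r=\Tr_r(f)$ to the atomicity statement $[z^{\wt\varphi(r)}](\vartheta_q|_{\s{V}_{\sigma_r,\sigma_r}})=\delta_{q,r}$ for each $q$. Each Laurent monomial of $\vartheta_q|_{\s{V}_{\sigma_r,\sigma_r}}$ comes from a broken line with initial direction $q$ ending in $\sigma_r$, whose attached monomial's spatial part can only shift by $-kv_\rho$ with $k\ge 0$ at each scattering-wall crossing (from the definition in \S\ref{theta}); combined with the outgoing orientation of the seed scattering diagram (\S\ref{SeedScatter}), the finiteness of scattering rays in the cluster-complex cone (Corollary \ref{FiniteScatter}), and the fact that $\sigma_r$ has no scattering rays in its interior (so the final segment is straight), this should pin down $q=r$ as the unique starting direction producing the monomial $z^{\wt\varphi(r)}$.

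The hardest step will be this last atomicity check. Even with the geometric control provided by Lemma \ref{model} and Corollary \ref{FiniteScatter}, ruling out every broken line with $q\neq r$ that could terminate in $\sigma_r$ with final monomial $z^{\wt\varphi(r)}$ requires tracking trajectory, velocity, and monomial data simultaneously across each wall, and in particular ruling out paths that spiral inward along the initial rays $\bb{R}m_i$ of \S\ref{SeedScatter}. This is the genuine combinatorial heart of the ``reduction to the toric case.''
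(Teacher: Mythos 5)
Your overall strategy matches the paper's: the first bullet is handled by Lemma \ref{InnerProduct} plus monotonicity and idempotence of $\Conv$, and the second bullet is handled by reducing to a Laurent-coefficient computation on the torus chart $\s{V}_{\sigma_r,\sigma_r}$ associated to a cluster cone $\sigma_r\ni r$ with no interior scattering rays, using Lemma \ref{gammaClass} to move $\gamma$ into that chart and then appealing to $\int_\gamma z^a z^{-b}\Omega=\delta_{a,b}$. So the architecture is the same.

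The one place you stop short is the atomicity claim $[z^{\wt\varphi(r)}]\bigl(\vartheta_q|_{\s{V}_{\sigma_r,\sigma_r}}\bigr)=\delta_{q,r}$, which you flag as ``the genuine combinatorial heart'' and leave as a sketch. The paper closes it with a two-line observation that removes the case analysis you were worried about: suppose a broken line $\Gamma$ contributing to some $\vartheta_q$ has final attached monomial $z^{\wt\varphi(r)}$ at an endpoint $x$ in the interior of $\sigma_r$. Its final velocity is $-r$, so flowing backward along $\Gamma$ from $x$ means moving in the direction $+r$; since $x$ is interior to the cone $\sigma_r$ and $r\in\sigma_r$, the backward ray $x+tr$, $t\ge 0$, stays in the interior of $\sigma_r$, which contains no scattering rays. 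So $\Gamma$ never hits a wall going backward, hence never bends, hence is a straight line whose initial monomial is already $z^{\wt\varphi(r)}$, forcing $q=r$. In particular, the worry about trajectories ``spiraling inward along $\bb{R}m_i$'' does not arise: that behavior belongs to the $N_{\bb R}$ (seed) linear structure of \S\ref{SeedScatter}, whereas in the canonical $U^{\trop}$ linear structure used to define $\vartheta_q$, broken lines only bend away from the origin, and the backward-flow argument sidesteps tracking bends entirely. You should also note that the construction of $\sigma_r$ from \S\ref{clustercomplex} requires $r\neq 0$; the paper handles $r=0$ by citing \cite{GHK1} (though it is also immediate since $\vartheta_0=1$).
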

\begin{proof}
If $r$ is not in the convex hull of any point in $\Newt(f)\cap \left(U^{\trop}(\bb{Z})\setminus \{r\}\right)$, then the claim follows immediately from Lemma \ref{InnerProduct}.

Suppose that $r\neq 0$ is in the cluster complex.  We can refine our fan $\Sigma$ from the construction of $\s{V}$ so that there is some cone $\sigma\ni r$ which has no scattering rays on its interior.  Then there is a torus $T_{\sigma}\cong (\bb{C}^*)^2$ in $V$ corresponding to $\sigma$ on which $\vartheta_r$ is just the restriction of the monomial $z^{\wt{\varphi}(r)}$, which we may view as a constant times $z^r$.  Let $\Gamma$ be a broken line in $\sigma$ with attached monomial $z^{\wt{\varphi}(r)}$.  By flowing backwards (in the $r$ direction) along $\Gamma$, we see that $\Gamma$ does not hit any scattering walls, hence does not bend.  So $z^{\wt{\varphi}(r)}$ must have been the initial monomial attached to $\Gamma$.  Hence, $\vartheta_r$ is the only theta function whose expansion in terms of monomials in $T_{\sigma}$ contains a $z^{r}$ term.  Since $\int_{\gamma} z^q z^{-r}\Omega=\delta_{q,r}$ always holds (a standard fact about tori, and also a corollary of Lemma \ref{InnerProduct}), the claim follows.  The $r=0$ case was proven in \cite{GHK1}.
\end{proof}

\begin{eg}
In Example \ref{CubicThetas}, we saw that in the cubic surface situation with $q\neq 0$, we have $\vartheta_q^3= 3\vartheta_q+\vartheta_{3q}$, and $\vartheta_q^2=2+\vartheta_{2q}$.  Thus, \[\Tr_q(\vartheta_{3q}) = \Tr_0[(\vartheta_q^3-3\vartheta_q)\vartheta_q^{-1}] = \Tr_0(\vartheta_q^2 - 3) = \Tr_0(\vartheta_{2q}-1) = -1,\] and so Equation \ref{FSeries} fails here.
\end{eg}

\begin{rmk}
We note that Equation \ref{FSeries} resembles the formula for the Fourier series expansion of a function on a compact torus.  Indeed, in the case that $V$ is a toric variety, applying this theorem to monomials and restricting to the orbits of the torus action recovers the usual formula for finite Fourier expansions.
\end{rmk}

\begin{rmk}
Suppose that $\Newt(f)\cap U^{\trop}(\bb{Z})$ contains points which are neither vertices nor in the cluster complex.  We can still use $\Tr_q$ with various $q$ to get all the coefficients in the theta function expansion for $f$ as follows: we first use the theorem to get the coefficients for the vertices $\{q_1,\ldots,q_s\}$ of $\Newt(f)$.  We then subtract the contributions of these theta functions to get $\wt{f}:=f-\sum_{i=1}^s \Tr_{q_i}(f) \vartheta_{q_i}$.  $\Newt(\wt{f})$ is now smaller than $\Newt(f)$ (it is contained in the convex hull of $\Newt(f)\cap U^{\trop}(\bb{Z})\setminus \{q_1,\ldots,q_s\}$), so we have a new set of vertices and can apply the process again.  Repeating this will eventually yield all the coefficients.
\end{rmk}

\subsection{Theta Functions up to Scalar Multiplication}
Consider $(Z,\f{D})$, $V=Z\setminus \f{D}$, as usual.  For any regular function $f$ on $V$, let $\f{D}(f):=\sum f^{\trop}(v_i) \f{D}_{v_i}$.  Then \[f\in \Gamma[Z,\s{O}(\f{D}(f))] = \left\{\left.\sum_{q\in \Newt(f)\cap U^{\trop}(\bb{Z})} a_q \vartheta_q \right| a_q\in \bb{C}\right\}.\]  Since knowledge of $f^{\trop}$ and $U^{\trop}$ is sufficient to define $\Newt(f)$, we find that $f^{\trop}$ is often sufficient to significantly narrow down the possibilities for $f$.  We apply this in the following examples:

\begin{egs}
\hspace{.01 in}
\begin{itemize}[noitemsep] 
\item If $\Newt(f)$ is just a single point $q\in U^{\trop}(\bb{Z})$, then $f$ is uniquely determined up to scaling.  Of course, in this case, $q$ is in the cluster complex, and we have already seen an explicit description of such functions.
\item If $\Newt(\vartheta_q)\cap U^{\trop}(\bb{Z})$ is contained entirely in the cluster complex except for the point $q$, then we can identify $\vartheta_q$ as the unique (up to scaling) nonzero global section $f$ of $\s{O}(\f{D}(\vartheta_q))$ such that $\Tr_r(f)=0$ for all $r\in \Newt(\vartheta_q)\cap U^{\trop}(\bb{Z})\setminus \{q\}$.
\item One can show that for any $U^{\trop}$ with all lines wrapping, there is some $q$ with $\Conv(q)\cap U^{\trop}(\bb{Z}) = \{q,0\}$.  Then $\vartheta_q$ is uniquely determined up to scaling by $\vartheta_q^{\trop}$ and the fact that $\Tr_0(\vartheta_q) = 0$.  For example, in the cubic surface case, any primitive $q$ satisfies this condition.
\end{itemize}
\end{egs}

\bibliographystyle{modified_amsalpha}  
\bibliography{mandel}        
\index{Bibliography@\emph{Bibliography}}

\end{document}